\font\teneufm=eufm10 \font\seveneufm=eufm7
\font\fiveeufm=eufm5
\def\1{\mbox{\bf 1}}
\newtheorem{stheorem}{Theorem}[section]
\newtheorem{déf}[stheorem]{Définition}
\newtheorem{prop}[stheorem]{Proposition}
\newtheorem{lem}[stheorem]{Lemme}
\newtheorem{coro}[stheorem]{Corollaire}
\newtheorem{thm}[stheorem]{Théorème}
\newtheorem*{thm*}{Théorème}
\theoremstyle{remark}
\newtheorem{rmq}[stheorem]{Remarque}
\newtheorem{rmqs}[stheorem]{Remarques}
\newcommand{\GG}{\mathbb{G}}
\newcommand{\NN}{\mathbb{N}}
\newcommand{\ZZ}{\mathbb{Z}}
\newcommand{\QQ}{\mathbb{Q}}
\newcommand{\RR}{\mathbb{R}}
\newcommand{\CC}{\mathbb{C}}
\newcommand{\Ker}{\mathrm{Ker}}
\newcommand{\id}{\mathrm{id}}
\newcommand{\Hom}{\mathrm{Hom}}
\newcommand{\Aut}{\mathrm{Aut}}
\newcommand{\Gal}{\mathrm{Gal}}
\newcommand{\Kt}{K}
\newcommand{\Ktnr}{\Knr}
\newcommand{\Rt}{R}
\newcommand{\Rtnr}{\Rnr}
\newcommand{\Galnr}{\Gamma^\mathrm{n.r.}}
\newcommand{\Knr}{K^{\mathrm{n.r.}}}
\newcommand{\Lnr}{L^{\mathrm{n.r.}}}
\newcommand{\Rnr}{R^{\mathrm{n.r.}}}
\newcommand{\Lt}{L}
\newcommand{\Ltnr}{\Lnr}
\newcommand{\Ht}{\widetilde{H}}
\newcommand{\Gc}{\mathcal{G}}
\newcommand{\Ac}{\mathcal{A}} 
\newcommand{\Cc}{\mathcal{C}} 
\newcommand{\Fc}{\mathcal{F}}
\newcommand{\typebt}{\widetilde{\mathcal{T}}}
\newcommand{\ImmBT}{\mathscr{B}}
\newcommand{\DynD}{\mathscr{D}}
\newcommand{\sga}[1]{\citepalias[#1]{SGA3}}
\begin{document}

\title[Arithmétique des sous-groupes de Bruhat-Tits (cas hensélien)]{Arithmétique des sous-groupes de Bruhat-Tits sur un anneau de valuation discrète hensélien}

\author[A.\ Zidani]{Anis Zidani}
\address{Anis Zidani\\
Christian-Albrechts-Universität zu Kiel\\
Mathematisches Seminar\\
Heinrich-Hecht-Platz 6\\
24118 Kiel, Deutschland\\
and Institute of Mathematics ”Simion Stoilow” of the Romanian Academy\\
21 Calea Grivitei Street\\
010702 Bucharest\\
Romania}

\date{\today}

\begin{abstract} 
L'objectif de cet article est de raffiner certains aspects de l'étude cohomologique des sous-groupes de Bruhat-Tits. Il se veut complémentaire aux travaux réalisés dans l'article de 1987. En guise d'application, on obtient un résultat "à la Grothendieck-Serre" pour les sous-groupes de Bruhat-Tits d'un groupe semi-simple simplement connexe et le calcul exact de l'obstruction dans le cas des groupes adjoints quasi-déployés.
\end{abstract}

\maketitle

\bigskip
\bigskip

\noindent{\bf Mots clés :} Schémas en groupes, Groupes algébriques, Groupes réductifs, Théorie de Bruhat-Tits, Cohomologie galoisienne, Anneaux de valuation discrète.

\medskip

\noindent{\bf MSC: 20G10, 20G15, 14L10, 14L15}.

\bigskip

\tableofcontents

\section*{Introduction}

Cet article a pour objectif de développer les aspects cohomologiques de la théorie de Bruhat-Tits. Il peut être vu comme un complément à \cite{BT3}.
\medskip

Rappelons que l'étude cohomologique de \cite{BT3} est concentrée sur les cocycles anisotropes (cf. \cite[3.6.]{BT3}) et les résultats de décomposition qui en découlent (cf. \cite[3.12. Théorème.]{BT3}). Cet article vise une toute autre problématique que l'on explique ci-dessous. 
\medskip

Considérons un anneau de valuation discrète hensélien $R$, de corps de fractions $K$. Notons $\Rnr$ son hensélisé strict et $\Knr$ le corps de fractions de $\Knr$. C'est l'extension maximale non ramifiée de $K$. Elle est galoisienne et on note son groupe de Galois $\Galnr$. Le corps résiduel de $R$ est noté $\kappa$, et n'est pas supposé nécessairement parfait.
\medskip

Étant donné un groupe réductif $G$ sur $K$, on se demande s'il est possible de comprendre le noyau suivant :
$$\Ker \left(H^1(\Galnr,G(\Knr)_{\widetilde{\mathcal{F}}})\rightarrow H^1(\Galnr,G(\Knr))\right)$$
où $\widetilde{\mathcal{F}}$ est une facette $\Galnr$-invariante de l'immeuble $\ImmBT(G_{\Knr})$ et $G(\Knr)_{\widetilde{\mathcal{F}}}$, son stabilisateur par l'action de $G(\Knr)$.
\medskip

Plus généralement, on introduit la notion de \textit{sous-groupe global}. Un sous-groupe de $G(K)$ est dit global s'il est ouvert pour la topologie adique, et s'il contient $G(K)^+$, i.e. le sous-groupe engendré par les $K$-points des sous-groupes de racines de $G$. L'étude de l'action sur l'immeuble de ces sous-groupes est d'un grand intérêt. En effet, ils agissent transitivement sur les couples $(\Ac,\Cc)$ tels que $\Ac$ est un appartement de $\ImmBT(G)$ et $\Cc$ une chambre de $\Ac$ (cf. lemme \ref{BNpaire}).
\medskip

On peut également considérer un sous-groupe global $\widetilde{H}$ de $G(K^\mathrm{n.r.})$ invariant pour l'action de $\Galnr$. Ces objets sont alors une généralisation des sous-groupes considérés par Tits dans \cite[3.5.]{BT3} (cf. remarque (2) de \ref{RmqSSGroupesGlobaux}).
\medskip

Dans ce cas, tout comme dans \cite{BT3}, on peut s'intéresser à des questions plus générales impliquant $\widetilde{H}$. En somme, on peut étudier le noyau :
$$\Ker\left(H^1(\Galnr,\widetilde{H}_{\widetilde{\mathcal{F}}})\rightarrow H^1(\Galnr,\widetilde{H})\right)$$
où $\widetilde{H}$ est cette fois un sous-groupe global $\Galnr$-invariant de $G(\Ktnr)$ et $\widetilde{H}_{\widetilde{\mathcal{F}}}$, le stabilisateur de $\widetilde{\mathcal{F}}$ sous l'action de $\widetilde{H}$.
\medskip

Quelques techniques de cohomologie des groupes classiques permettent de montrer la bijection (cf. point (1) (a) du Théorème \ref{NoyauH1}) :
\begin{equation}
    \tag{$\ast$}
    (\mathrm{Orb}(\widetilde{\Fc})_{\widetilde{H}})^{\Galnr}/H \overset{\sim}{\rightarrow} \Ker \left( H^1(\Galnr,\widetilde{H}_{\widetilde{\Fc}}) \rightarrow H^1(\Galnr,\widetilde{H}) \right ).
\end{equation}
où l'on a posé $H:=\widetilde{H}^{\Galnr}$. En d'autres termes, le noyau est en correspondance avec les éléments $\Galnr$-invariants de l'orbite $\widetilde{\Fc}$ par $\widetilde{H}$, modulo l'action de $H$.
\medskip

Dès lors, on constate que Bruhat et Tits s'étaient déjà penchés implicitement sur la question dans \cite{BT2}. En effet, par exemple, le résultat \cite[5.2.10.(ii) Proposition.]{BT2} signifie entre autres que $(\mathrm{Orb}(\widetilde{\Fc})_{G(\Knr)})^{\Galnr}/G(K)$ est trivial lorsque $G$ est semi-simple simplement connexe, quasi-déployé sur $\Knr$. En conséquence, $\Ker \left( H^1(\Galnr,G(\Knr)_{\widetilde{\Fc}}) \rightarrow H^1(\Galnr,G(\Knr) \right )$ est trivial dans ce cas de figure (cf. Remarque \ref{CasSSSC}).
\medskip

Également, dans \cite[5.2.13.]{BT2}, Bruhat et Tits donnent un cas de figure où \linebreak $(\mathrm{Orb}(\widetilde{\Fc})_{G(\Knr)})^{\Galnr}/G(K)$ est non trivial. Dans cet exemple, $G$ est quasi-déployé et adjoint.

En observant attentivement cet exemple, on observe que Bruhat et Tits raisonnent essentiellement au niveau des types, et donc au niveau des diagrammes de Dynkin affines. Il s'avère que ce phénomène est tout à fait général.
\medskip

En effet, on prouve dans cet article que la bijection $(\ast)$ reste toujours satisfaite si on la réduit au niveau des types. On obtient alors (cf. point (2) (a) du Théorème \ref{NoyauH1}) :
\begin{equation}
    \tag{$\ast'$}
    \left(\{\omega \cdot\typebt\prec \typebt_{\mathrm{max}} \mid \omega \in \widetilde{H} \}^{\Galnr}\right)/\,H \overset{\sim}{\rightarrow} \Ker\left(H^1(\Galnr,\widetilde{H}_{\widetilde{\mathcal{F}}})\rightarrow H^1(\Galnr,\widetilde{H})\right).
\end{equation}

Expliquons les objets en présence. Rappelons que l'indice de Tits affine de $G$ est la donnée de son diagramme de Dynkin affine sur $\Knr$, d'une action de $\Galnr$, et d'un ensemble $\Galnr$-stable de sommets, que l'on note $\typebt_{\mathrm{max}}$. Ce dernier est aussi le type de la plus grande facette $\Galnr$-invariante dans $\ImmBT(G_{\Knr})$ (aussi appelée $\Galnr$-chambre). Le type $\typebt$ est défini comme étant le type de $\widetilde{\Fc}$.
\medskip

En conséquence, la bijection $(*')$ donne une manière explicite et combinatoire de calculer le noyau, dépendant uniquement de l'indice de Tits affine de $G$ muni de l'action naturelle de $\widetilde{H}$ et de $H$. C'est le résultat théorique principal de cet article.
\medskip

De ce théorème, on en déduit immédiatement que $\Ker\left(H^1(\Galnr,\widetilde{H}_{\widetilde{\mathcal{F}}})\rightarrow H^1(\Galnr,\widetilde{H})\right)$ est trivial lorsque $\widetilde{H}$ agit trivialement sur l'indice de Tits affine. C'est notamment le cas lorsque $G$ est semi-simple simplement connexe et quasi-déployé sur $\Knr$ d'après \cite[5.2.10.(i) Proposition.]{BT2}. On retrouve alors le résultat de Bruhat et Tits indiqué plus haut.
\medskip

Par ailleurs, grâce à la bijection $(\ast')$, le cas $G$ quasi-déployé et adjoint peut être compris entièrement, généralisant ainsi l'exemple \cite[5.2.13.]{BT2} de Bruhat et Tits. On prouve ainsi dans cet article :

\begin{thm*}[cf. Théorème \ref{ThmStabQSplit}]
    Soit $G$ un groupe semi-simple adjoint et quasi-déployé sur $K$. Soit également $\widetilde{\mathcal{F}}$, une facette $\Galnr$-invariante de l'immeuble $\ImmBT(G_{\Knr})$. Alors le noyau :
    $$\Ker\left(H^1(\Galnr,G(\Knr)_{\widetilde{\mathcal{F}}})\rightarrow H^1(\Galnr,G(\Knr)\right)$$
    est de cardinal $2^k$ où $k$ est un entier majoré par le nombre de facteurs restriction de Weil d'un groupe absolument presque simple de type ${}^2D_n$ (pour $n\geq 4$) ou ${}^2A_{4n+3}$ (pour $n\geq 0$) déployé par une extension non ramifiée.
\end{thm*}

Ceci étant, on peut également s'intéresser au noyau suivant :
$$\Ker \left(H^1(\Galnr,G(\Knr)^0_{\widetilde{\mathcal{F}}})\rightarrow H^1(\Galnr,G(\Knr))\right)$$
où $G(\Ktnr)^0$ est le sous-groupe engendré par les sous-groupes parahoriques sur $\Ktnr$, aussi appelé la composante résiduellement neutre de $G_{\Ktnr}$.
\medskip

Cette question est nettement plus délicate. Malgré nos efforts et notre exploration de la littérature, on ignore s'il existe des situations où il est non trivial.
\medskip

Le cas où $\widetilde{\mathcal{F}}$ est un point hyperspécial (cf. la définition \ref{DefHyperspecial}) est en fait trivial d'après la conjecture de Grothendieck-Serre dans le cas d'un anneau de valuation discrète hensélien. Là encore, le résultat a en fait été déjà prouvé par Bruhat et Tits lorsque $G$ est semi-simple dans \cite[5.2.14. Proposition.]{BT2} en utilisant la bijection $(\ast)$. On montre également dans cet article que la preuve peut être ajustée pour montrer directement le cas réductif. C'est l'objet de la proposition \ref{CasReductif}.
\medskip

Un autre cas où l'on peut prouver la trivialité est, une fois encore, le cas quasi-déployé adjoint. C'est l'objet du théorème \ref{ThmParahoQSplit}.\\

Pour terminer, faisons une observation sur les hypothèses de l'article. Le corps résiduel $\kappa$ de $R$ n'est pas supposé parfait, contrairement à l'article \cite{BT3} de Bruhat et Tits. Le groupe $G$ n'est pas non plus supposé quasi-déployé sur $\Knr$, bien qu'il s'agisse du cadre des théorèmes de \cite{BT2} (rappelons d'ailleurs que si $\kappa$ est parfait, alors $G$ est quasi-déployé sur $\Ktnr$ comme mentionné dans \cite[5.1.1.]{BT2}). Il est donc nécessaire de faire quelques rappels sur la théorie de Bruhat-Tits dans cette généralité, notamment expliquer pourquoi l'immeuble de $G$ existe. C'est l'objet de la partie \ref{SectionExistenceImmeuble}.\\

\newpage

\noindent{\bf Remerciements.}
${}$
\medskip

L'auteur remercie Philippe Gille et Ralf Köhl, pour leurs soutiens, leurs accompagnements et leur relecture du présent article.

L'auteur est également reconnaissant vis-à-vis de la Studienstiftung des deutschen Volkes pour avoir supporté financièrement ce projet. Il a été également soutenu par le projet “Group schemes, root systems, and related representations”, financé par l'Union Européenne - NextGenerationEU à travers le "Romania’s National Recovery and Resilience Plan" (PNRR) call no. PNRR-III-C9-2023-I8, Project CF159/31.07.2023, et coordonné par le Ministre de la Recherche, de l'Innovation et de la Numérisation (MCID) de Roumanie. 

\section*{Notations et conventions} 

Pour tout corps $k$, la notation $k^s$ désigne une clôture séparable de $k$.
\medskip

On utilise la définition de groupe réductif de Chevalley et Borel (cf. \cite{BorelLinAlgGroups}). En particulier, ils sont affines, lisses et connexes.
\medskip

Soulignons que l'extension maximale non ramifiée d'un corps complet n'est pas toujours complète. Par exemple, l'extension maximale non ramifiée de $\kappa((t))$ n'est pas $\kappa^s((t))$ si $\kappa^s/\kappa$ est infini.

\section{Quelques rappels sur l'immeuble de Bruhat-Tits} \label{SectionExistenceImmeuble}

Faisons quelques rappels sur l'immeuble et son existence. Pour être le plus général possible, on suppose dans cette section que $\Kt$ n'est pas nécessairement hensélien. Les résultats de cette section seront en fait disponibles dans le livre \cite{RousseauNewBook} non encore publié. Par commodité, on se propose de réaliser cette section indépendamment de cette référence.\\

Dans \cite[Définitions 2.1.12]{TheseRousseau}, Rousseau propose une définition d'un immeuble associé à un groupe réductif $G$ sur $\Kt$. Comme prouvé dans le \cite[Théorème 2.1.14]{TheseRousseau}, cet immeuble existe si et seulement si l'immeuble au sens de \cite{BT1}, c'est-à-dire construit à partir d'une donnée radicielle valuée, existe ; et il est unique à isomorphisme près.

Par ailleurs, comme indiqué dans \cite[Théorème 2.1.14.2)c)]{TheseRousseau} et \cite[Théorème 2.1.15.c)]{TheseRousseau}, il est possible de canoniser cet immeuble en le construisant comme le produit de l'immeuble du groupe dérivé $D(G)$ avec une partie vectorielle donnée par le radical $R(G)$. Un immeuble sous cette forme est appelé \textit{immeuble centré}. Un immeuble centré est unique à unique isomorphisme (d'immeubles centrés) près.

Notons que l'immeuble construit à partir d'une donnée radicielle valuée associée à $G$ s'identifie avec l'immeuble de $D(G)$.
\medskip

L'immeuble proposé par Rousseau pour $G$ est exactement l'immeuble étendu sous la terminologie moderne. On le note $\ImmBT^e(G)$. Comme dit précédemment, il se décompose en un produit $\ImmBT(G) \times V_G$ où $V_G$ est la partie vectorielle de l'immeuble et où $\ImmBT(G)$ est l'immeuble de $D(G)$ (ou l'immeuble au sens d'une donnée radicielle valuée de $G$ comme dans \cite{BT1}). La partie $\ImmBT(G)$ est donc l'immeuble (réduit) de $G$ sous la terminologie moderne.

En particulier, lorsque $G$ est semi-simple, $\ImmBT^e(G)=\ImmBT(G)$ est unique à unique isomorphisme près et correspond à l'immeuble au sens d'une donnée radicielle valuée de $G$.
\medskip

Un autre point important à considérer est de savoir si l'immeuble $\ImmBT^e(G)$ est bornologique, c'est-à-dire si les stabilisateurs de parties bornées sont bornés, ou plus précisément s'il vérifie les conditions équivalentes données en \cite[Théorème 2.2.11]{TheseRousseau}. Il s'avère que d'après \cite[Corollaire 5.2.4.]{TheseRousseau}, un immeuble (étendu) est toujours bornologique lorsque $\Kt$ est hensélien.
\medskip

Comme indiqué en \cite[Exemples 2.2.14.f)]{TheseRousseau}, la question de l'existence se ramène au cas presque-simple et au cas des tores. Or, le cas des tores, lorsque $\Kt$ est hensélien, est traité en \cite[Proposition 2.4.8.2)]{TheseRousseau}. Par ailleurs, \cite[Proposition 2.3.9.]{TheseRousseau} nous dit que la question se ramène au cas où $\Kt$ est complet.
\medskip

Notons que la preuve de \cite[Proposition 5.1.5.]{TheseRousseau} montre exactement que l'immeuble d'une restriction de Weil séparable d'un groupe est naturellement un immeuble de ce dernier groupe. En particulier, le cas presque-simple se ramène au cas absolument presque-simple.
\medskip

Enfin, rappelons d'après Struyve (\cite{Struyve1} et \cite[Main Corollary.]{Struyve2}) que la conjecture \cite[13. Conjecture]{ImmeublesTypeAffineTits} est vérifiée. Cela signifie donc que tout groupe algébrique absolument presque simple sur un corps valué discrètement complet arbitraire admet une donnée radicielle valuée compatible avec la valuation du corps. On en déduit donc un immeuble associé à ce type de groupe d'après \cite{BT1}, et donc un immeuble au sens de Rousseau d'après la discussion précédente.
\medskip

En conclusion, on a :

\begin{prop}
    Soit $G$ un groupe réductif sur un corps hensélien valué discrètement $\Kt$. Alors $G$ admet un immeuble étendu unique à unique isomorphisme près. Il est par ailleurs bornologique.
\end{prop}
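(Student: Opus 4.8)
The plan is to assemble the results recalled above into the two assertions — existence together with the sharp uniqueness, and the bornological property — by a chain of reductions.

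First I would reduce to a concrete model. By Rousseau's criterion \cite[Théorème 2.1.14]{TheseRousseau} it suffices to exhibit a building of $G$ in the sense of a valued root datum à la \cite{BT1}; the existence of such a building already yields the building of $G$ in Rousseau's sense, unique up to isomorphism. To upgrade this to uniqueness up to \emph{unique} isomorphism, I would work throughout with the centered model $\ImmBT^e(G) = \ImmBT(G) \times V_G$ supplied by \cite[Théorème 2.1.14.2)c)]{TheseRousseau} and \cite[Théorème 2.1.15.c)]{TheseRousseau}, so that the problem is entirely concentrated in the factor $\ImmBT(G)$, namely the building of $D(G)$, the radical $R(G)$ contributing only the purely vectorial part $V_G$.

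Next come the structural reductions for the semisimple factor. Via \cite[Exemples 2.2.14.f)]{TheseRousseau}, the existence problem for $D(G)$ decomposes into the almost-simple case and the case of tori; the torus case over a henselian $\Kt$ is precisely \cite[Proposition 2.4.8.2)]{TheseRousseau}. For an almost-simple group I would first pass to the completion of $\Kt$ using \cite[Proposition 2.3.9.]{TheseRousseau}, and then apply the Weil-restriction mechanism extracted from the proof of \cite[Proposition 5.1.5.]{TheseRousseau} to reduce to the absolutely almost-simple case — here one must note that the relevant finite extension is separable, so that one is dealing with a separable Weil restriction and its building is canonically a building of the almost-simple group. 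At the bottom of this reduction stands an absolutely almost-simple group over a complete discretely valued field, and here I would invoke Struyve's theorem (\cite{Struyve1} and \cite[Main Corollary.]{Struyve2}) resolving \cite[13. Conjecture]{ImmeublesTypeAffineTits}: such a group possesses a valued root datum compatible with the valuation, hence a building in the sense of \cite{BT1}, hence a Rousseau building by the first step.

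Finally, for the bornological assertion I would simply apply \cite[Corollaire 5.2.4.]{TheseRousseau}, valid since $\Kt$ is henselian. The one point deserving care is the interface between ``henselian'' and ``complete'': existence is obtained after base change to the completion, whereas the bornological conclusion must be drawn from \cite[Corollaire 5.2.4.]{TheseRousseau} over the henselian field $\Kt$ itself — so I would make sure that the building descended to (or constructed directly over) $\Kt$ is the object to which that corollary is applied, and that the descent of the centered structure along $\Kt \hookrightarrow \widehat{\Kt}$ is compatible with the product decomposition. Apart from this bookkeeping there is no serious obstacle: the proposition is a synthesis of the quoted statements rather than a source of new difficulty.
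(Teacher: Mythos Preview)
Your proposal is correct and follows essentially the same route as the paper: the paper's Section~\ref{SectionExistenceImmeuble} is precisely this chain of reductions (centered model for unique uniqueness, dévissage to tores and presque-simple factors via \cite[Exemples 2.2.14.f)]{TheseRousseau}, passage to the completion via \cite[Proposition 2.3.9.]{TheseRousseau}, separable Weil restriction via \cite[Proposition 5.1.5.]{TheseRousseau}, then Struyve), with the bornological assertion coming from \cite[Corollaire 5.2.4.]{TheseRousseau}. Your final caveat about the henselian/complete interface is a fair point of hygiene, though the paper does not make it explicit either.
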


On peut en fait améliorer ce résultat grâce à \cite[Proposition 2.3.5]{TheseRousseau} :

\begin{thm}\label{ExistenceImmeuble}
    Soit $G$ un groupe réductif sur un corps valué discrètement $\Kt$. On suppose que $G$ a même rang relatif sur $\Kt$ et sur son hensélisé (ou encore son complété $\widehat{K}$).
    Alors $G$ admet un immeuble étendu, unique à unique isomorphisme près. Il est en outre bornologique. 
    
    Cet immeuble s'identifie canoniquement à celui de $G_{\widehat{K}}$ et ses appartements sont les \linebreak $\widehat{K}$-appartements correspondant aux $\widehat{K}$-tores déployés maximaux définis et déployés sur $\Kt$.
\end{thm}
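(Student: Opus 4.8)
\noindent\emph{Esquisse de preuve.} Le plan est de se ramener au cas complet, déjà réglé par la proposition précédente, puis de redescendre à $\Kt$ grâce à \cite[Proposition 2.3.5]{TheseRousseau}. D'abord, le complété $\widehat{K}$ d'un corps valué discrètement est complet, donc hensélien ; la proposition précédente fournit alors un immeuble étendu $\ImmBT^e(G_{\widehat{K}})$, unique à unique isomorphisme près et bornologique.

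Ensuite, je vérifierais que l'hypothèse de l'énoncé place $G$ et l'extension $\widehat{K}/\Kt$ exactement dans les conditions de \cite[Proposition 2.3.5]{TheseRousseau}. Celle-ci affirme, en substance, que lorsqu'une extension ne modifie pas le rang relatif de $G$, un immeuble étendu du groupe sur la grosse base induit, en ne conservant que les appartements provenant de tores déployés maximaux déjà définis et déployés sur la petite base, un immeuble étendu sur la petite base, le tout compatiblement avec les identifications canoniques. Appliqué à $\widehat{K}/\Kt$, ceci donne que $\ImmBT^e(G_{\widehat{K}})$ est canoniquement un immeuble étendu de $G$ sur $\Kt$, dont les appartements sont exactement les $\widehat{K}$-appartements associés aux $\widehat{K}$-tores déployés maximaux de $G_{\widehat{K}}$ qui sont définis et déployés sur $\Kt$, ce qui est précisément la dernière assertion de l'énoncé. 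Il faut encore observer au passage que le rang relatif de $G$ ne change pas entre l'hensélisé et le complété, ce qui légitime la reformulation équivalente de l'hypothèse, et qu'un $\Kt$-tore déployé maximal $S$ de $G$ fournit bien un $\widehat{K}$-tore déployé maximal : $S_{\widehat{K}}$ est alors déployé de dimension égale au rang relatif de $G$ sur $\widehat{K}$, donc maximal.

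Il resterait l'unicité et le caractère bornologique sur $\Kt$. Pour l'unicité à unique isomorphisme près, j'invoquerais l'unicité analogue pour les immeubles centrés rappelée en début de section, appliquée à $D(G)$ puis recollée avec la partie vectorielle issue de $R(G)$. Pour le caractère bornologique : l'immeuble de $G$ sur $\Kt$ étant le même espace que celui de $G_{\widehat{K}}$, mais muni de l'action du sous-groupe $G(\Kt) \subseteq G(\widehat{K})$, l'intersection avec $G(\Kt)$ d'un sous-groupe borné de $G(\widehat{K})$ reste bornée, de sorte que les stabilisateurs de parties bornées sont bornés ; ce point figure d'ailleurs aussi dans \cite[Proposition 2.3.5]{TheseRousseau}.

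Le point le plus délicat n'est pas conceptuel : il s'agit de vérifier soigneusement que \cite[Proposition 2.3.5]{TheseRousseau} énonce bien ce dont on a besoin — en particulier l'exactitude de la description du système d'appartements conservé et la compatibilité des identifications canoniques — ainsi que le lemme (standard) d'invariance du rang relatif par passage de l'hensélisé au complété, qui autorise les deux formes de l'hypothèse.
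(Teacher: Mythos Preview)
Your proposal is correct and follows exactly the paper's approach: the paper's entire argument is the one-line remark ``On peut en fait améliorer ce résultat grâce à \cite[Proposition 2.3.5]{TheseRousseau}'' preceding the statement, and you have correctly unpacked this by applying the previous proposition to $\widehat{K}$ and then invoking Rousseau's Proposition 2.3.5 to descend. Your additional remarks on unicity (via centered buildings) and the bornological property are the natural expansions of what the paper leaves implicit.
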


\begin{rmq}
    Toutefois, on ne sait toujours pas si tout groupe réductif sur un corps valué arbitraire admet un immeuble non forcément bornologique.\\
\end{rmq}

Donnons maintenant quelques informations sur la partie vectorielle $V_G$ et les appartements.
\medskip

Notons $D:=G/D(G)$, le tore coradical de $G$. La partie vectorielle $V_G$ est donnée par $X_*(R(G))\otimes_\ZZ \RR \cong X_*(D)\otimes_{\ZZ}\RR$ et $G(\Kt)$ agit par translation grâce à $g\mapsto (\chi\mapsto-v(\chi(g)))$ de $G(\Kt)$ vers $\Hom(X^*(G),\RR)=\Hom(X^*(D),\RR)=X_*(D)\otimes_{\ZZ}\RR$. Il est vu à la fois comme espace affine sur lui-même et comme espace vectoriel. On note donc $G(\Kt)^1$ le fixateur sous $G(\Kt)$ de $V_G$ (ou de manière équivalente, d'un point de $V_G$). Autrement dit, c'est le noyau du morphisme $g\mapsto (\chi \mapsto -v(\chi(g)))$.

On en déduit immédiatement que $G(\Kt)^1$ est distingué dans $G(\Kt)$ et que le quotient est isomorphe à $\ZZ^r$ où 
$r$ est le rang du groupe des $\Kt$-caractères de $G$ (donc de $D$ ou de $R(G)$).

La définition de $G(\Kt)^1$ est fonctorielle en $G$ et sa construction est compatible aux extensions galoisiennes : pour toute extension galoisienne de corps valués $\Lt/\Kt$ de groupe de Galois $\Gamma$ (la valuation de $\Lt$ étant supposée $\Gamma$-invariante), le groupe $G(\Lt)^1$ est $\Gamma$-invariant et $(G(\Lt)^1)^{\Gamma}=G(\Lt)^1\cap G(\Kt)=G(\Kt)^1$. Par ailleurs, $G(\Kt)^1$ peut également être défini comme l'image réciproque de $D(\Kt)^1$ par $G(\Kt) \mapsto D(\Kt)$ (cf. \cite[Lemma 2.6.16]{kaletha_prasad_2023}). En conséquence, puisque $D$ est isogène à $R(G)$, on a $G(\Kt)^1=G(\Kt)$ si et seulement si $R(G)$ (ou $D$) ne contient aucun $\Kt$-sous-tore déployé (c'est en particulier le cas pour les groupes semi-simples).
\medskip

Plus généralement, comme expliqué dans \cite[2.1.7-2.1.11]{TheseRousseau}, étant donné un tore déployé maximal $S$ de $G$, un appartement $\Ac(S)$ de $\ImmBT^e(G)$ associé à $S$ est un espace affine sous $X_*(S)\otimes_{\ZZ} \RR$ muni d'une action $\nu : N_G(S)(\Kt) \rightarrow \Aut_{\mathrm{aff}}(\Ac(S))$ vérifiant les conditions de la définition \cite[2.1.8.a)]{TheseRousseau}. Il est unique à isomorphisme près.

Notamment, la restriction à $Z(\Kt)$ (où $Z:=Z_G(S)$) est une action par translation définie par $z\mapsto(\chi\mapsto-v(\chi(z)))$ allant vers : $$\Hom(X^*(Z),\RR)=\Hom(X^*(Z/D(Z)),\RR)=X_*(Z/D(Z)) \otimes_{\ZZ} \RR \cong X_*(S)\otimes_{\ZZ} \RR.$$ Par ailleurs, le noyau de $\nu$ est égal à $Z(\Kt)^1$.

Il s'avère qu'un appartement $\Ac(S)$ peut être aussi donné par $\Ac(S')\times V_G$, où $\Ac(S')$ est un appartement de $\ImmBT(G')$ associé au tore déployé maximal $S':=S\cap D(G)$. Sous cette forme, $\Ac(S)$ est appelé \textit{appartement centré} de $G$ associé à $S$. Un tel appartement a une structure affine, mais également une structure vectorielle donnée par $V_G$. Il est unique à unique isomorphisme d'appartements centrés près. Dans la suite, la notation $\Ac(S)$ désigne l'appartement centré de $G$ associé à $S$. Un tel appartement de $\ImmBT^e(G)$ respecte bien entendu la décomposition $\ImmBT^e(G)=\ImmBT(G)\times V_G$.\\

Intéressons-nous enfin aux facettes et aux types. 
\medskip

Rappelons qu'une facette $F$ désigne la réalisation géométrique ouverte du polysimplexe qu'elle représente. Son adhérence topologique $\overline{F}$ dans $\ImmBT(G)$ est exactement l'union (disjointe) de ses sous-polysimplexes ouverts (donc de ses sous-facettes) d'après \cite[(2.5.10.)]{BT1}. Une facette $F$ est dite incidente à une facette $F'$ si l'on a l'inclusion $\overline{F}\subseteq \overline{F'}$. On note alors $F \prec F'$.
\medskip

Par ailleurs, toute facette est incluse dans l'adhérence d'une chambre (qui est par définition une facette maximale pour l'incidence, ou encore de dimension maximale). 

L'adhérence d'une chambre est en correspondance naturelle avec le graphe de Dynkin de l'échelonnage de $G$. Ce graphe est appelé dans \cite{kaletha_prasad_2023} \textit{diagramme de Dynkin affine relatif}, et dans \cite{Corvallis} \textit{diagramme de Dynkin local relatif} ou encore \textit{$\Kt$-graphe résiduel} chez \cite{BT3}.

Le type d'une facette est alors défini comme étant son image sous cette correspondance. Cette image ne dépend pas du choix de l'adhérence d'une chambre où l'on a inclus la facette. En conséquence, deux facettes de même type dans la même adhérence d'une chambre sont égales.
\medskip

Comme indiqué précédemment, l'existence d'un immeuble $\ImmBT(G)$ pour $G$ est équivalente à l'existence d'une donnée radicielle valuée pour $G$. Cette dernière permet d'en déduire un double système de Tits muni d'un morphisme adapté dont l'immeuble associé est exactement $\ImmBT(G)$ d'après \cite[6.5. Théorème.]{BT1}.  
\medskip

On en déduit donc un morphisme type, noté $\xi$, de $G(\Kt)$ dans le groupe des automorphismes du diagramme de Dynkin affine relatif d'après \cite[1.2.16]{BT1}. Son image est notée $\Xi$ et son noyau est noté $G(\Kt)^c$. Il y a donc un isomorphisme $G(\Kt)/G(\Kt)^{c}\cong \Xi$. On peut aussi restreindre à $G(\Kt)^1$ ce morphisme. Son image est notée $\Xi^1$ et son noyau est noté $G(\Kt)^b:=G(\Kt)^{c}\cap G(\Kt)^1$. On en déduit un isomorphisme $G(\Kt)^1/G(\Kt)^b \cong \Xi^1$.
\medskip

Le lecteur intéressé par des exemples peut analyser le cas de $\mathrm{GL}_n$. En effet, il existe une manière d'interpréter l'immeuble et les types dans ce cas de figure au travers de chaînes de réseaux (cf. \cite{BTClassique1}).
\medskip

Le morphisme type mesure comment le type d'une facette change par action sous un élément de $G(\Kt)$. Autrement dit, une facette $F$ de type $T$ est telle que $g\cdot F$ est de type $\xi(g)\cdot T$ pour tout $g\in G(\Kt)$. Notons $\Xi_T$ le sous-groupe des $w\in\Xi$ tels que $w\cdot T=T$. \linebreak On peut également vérifier que $G(\Kt)_F$ se surjecte sur $\Xi_T$ et que son noyau vaut $G(\Kt)^{c}_F$, d'où un isomorphisme $G(\Kt)_F/G(\Kt)^{c}_F\cong \Xi_T$. On définit également tout cela de manière analogue pour $G(\Kt)^1$ et $\Xi^1$. Cf. \cite[1.2.13 - 1.2.20]{BT1} et \cite[2.7.]{BT1}.
\medskip

Le type d'une facette peut être aussi vu comme étant l'orbite de cette facette sous un groupe agissant transitivement sur les chambres tout en préservant les types. C'est notamment le cas de $G(\Kt)^c$, $G(\Kt)^b$ et même de $G(\Kt)^+$ (cf. le lemme \ref{HTransitif}).

\begin{rmq}\label{Xi1etXiDifferents}
    On n'a pas nécessairement égalité entre $\Xi^1$ et $\Xi$. Pour simplifier, considérons le cas où $\kappa$ est parfait. Pour réaliser le contre-exemple efficacement, on utilise le morphisme de Kottwitz (cf. \cite[11.5]{kaletha_prasad_2023}). Il s'agit d'un morphisme $G(\Kt)\rightarrow \pi_1(G)_I$ (fonctoriel en $G$) dont le noyau est $G(\Kt)^0$, le sous-groupe engendré par les sous-groupes parahoriques, aussi appelé la composante résiduellement neutre de $G(\Kt)$, (cf. \cite[Proposition 11.5.4]{kaletha_prasad_2023}), et dont l'image réciproque des éléments de torsion est $G(\Kt)^1$ (cf. \cite[Lemma 11.5.2]{kaletha_prasad_2023}). Le $\Gal(K^s/K)$-module $\pi_1(G)$ est le groupe fondamental algébrique, défini dans \cite[11.3]{kaletha_prasad_2023}. En conséquence, $\pi_1(G)_I$ désigne le $\Galnr$-module obtenu en prenant les coinvariants.
    
    Considérons le cas où $G=\mathrm{GL}_2$. Un calcul immédiat montre que $\pi_1(G)_I=\ZZ$ et est donc sans torsion. On en déduit que $G(\Kt)^0=G(\Kt)^1$. Comme $G(\Kt)^0$ agit trivialement sur les types (cf. \cite[5.2.12.(i) Proposition.]{BT2}), il en est donc de même pour $G(\Kt)^1$. D'où $\Xi^1=0$.

    Par ailleurs, $G(\Kt)$ n'agit pas trivialement sur les types. En effet, les deux parahoriques suivants sont associés à des points de types différents (cf. \cite[Chapter 3.1]{kaletha_prasad_2023}) :
    $$\begin{pmatrix}
    \Rt & \Rt\\
    \Rt & \Rt
    \end{pmatrix}  \text{ et } \begin{pmatrix}
    \Rt & t\Rt\\
    t^{-1}\Rt & \Rt
    \end{pmatrix}
    $$
    alors qu'ils sont conjugués par la matrice $\begin{pmatrix}
    t & 0\\
    0 & 1
    \end{pmatrix} \in G(\Kt)$.
\end{rmq}

Pour finir, prouvons que $\Xi$ est abélien fini. Pour cela, on a besoin de quelques résultats :

\begin{lem}\label{decompDG}
    Soit $Z$, un sous-groupe de Levi de $G$. On a $D(G(\Kt))=G(\Kt)^+\,D(Z(\Kt))$.
\end{lem}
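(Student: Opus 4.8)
The plan is to combine the relative Bruhat decomposition of $G(\Kt)$ with the fact that every relative root group lies in $D(G(\Kt))$, so that passing to $G(\Kt)/G(\Kt)^+$ turns the commutator subgroup into that of $Z(\Kt)$.

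First, fix a maximal $\Kt$-split torus $S$ of $G$ with $M:=Z_G(S)\subseteq Z$ (this is possible since all Levi factors of a given parabolic are conjugate under its unipotent radical, while a minimal parabolic inside the parabolic with Levi $Z$ has Levi $Z_G(S)$ for a maximal $\Kt$-split torus $S$). Let $\Phi:=\Phi(G,S)$ be the relative root system with root groups $U_\alpha$, and let $B=M\ltimes U_0$ be a minimal parabolic containing $M$. From the Bruhat decomposition $G(\Kt)=\bigsqcup_{w}B(\Kt)\,n_w\,B(\Kt)$, where $n_w\in N_G(S)(\Kt)$ can be chosen inside $\langle U_\alpha(\Kt),U_{-\alpha}(\Kt)\rangle$ for $\alpha$ running over the simple relative roots, together with the facts that $M$ normalises $U_0$ and that each $n_w$ normalises $M=Z_G(S)$, one obtains $G(\Kt)=M(\Kt)\cdot G(\Kt)^+$. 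Since $M\subseteq Z$ and $G(\Kt)^+$ is normal in $G(\Kt)$ (Borel-Tits), this yields
\[ G(\Kt)=G(\Kt)^+\cdot Z(\Kt). \]

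Next, I would prove $G(\Kt)^+\subseteq D(G(\Kt))$; as the $U_\alpha(\Kt)$ generate $G(\Kt)^+$, it suffices to show $U_\alpha(\Kt)\subseteq D(G(\Kt))$ for each $\alpha\in\Phi$. Since $\Kt$ is the fraction field of a discrete valuation ring it is infinite, so the Zariski-dense set $S(\Kt)$ contains an element $t$ with $\alpha(t)\notin\{1,-1\}$. Conjugation by $t$ preserves the natural $S$-stable filtration of $U_\alpha$ (a two-step filtration $U_{2\alpha}\subseteq U_{(\alpha)}$ when $2\alpha\in\Phi$, and $U_\alpha$ is a vector group otherwise), acting on each graded piece — a vector group on which $S$ acts through $\alpha$ or $2\alpha$ — by the nonzero scalar $\alpha(t)-1$ or $\alpha(t)^2-1$. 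Hence $u\mapsto[t,u]=(tut^{-1})u^{-1}$ is a bijection of $U_\alpha(\Kt)$ onto itself (surjectivity being checked by induction along the filtration), so every element of $U_\alpha(\Kt)$ is a commutator in $G(\Kt)$.

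Finally, because $G(\Kt)^+$ is normal, $D(G(\Kt))\,G(\Kt)^+$ is a subgroup, equal to the preimage of $D\bigl(G(\Kt)/G(\Kt)^+\bigr)$. By the first step $G(\Kt)/G(\Kt)^+\cong Z(\Kt)/(Z(\Kt)\cap G(\Kt)^+)$, whose derived subgroup is the image of $D(Z(\Kt))$; therefore $D(G(\Kt))\,G(\Kt)^+=D(Z(\Kt))\,G(\Kt)^+$. Combined with $G(\Kt)^+\subseteq D(G(\Kt))$ from the second step, this gives
\[ D(G(\Kt))=D(G(\Kt))\,G(\Kt)^+=D(Z(\Kt))\,G(\Kt)^+=G(\Kt)^+\,D(Z(\Kt)). \]
The main obstacle is the second step: checking uniformly — including the non-reduced case and without assuming $\kappa$ perfect — that each relative root group lies in $D(G(\Kt))$, the essential content being the surjectivity of $u\mapsto[t,u]$ on $\Kt$-points via the $S$-equivariant filtration of $U_\alpha$. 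A secondary point requiring care is arranging a maximal $\Kt$-split torus of $G$ inside $Z$, so that the Bruhat decomposition of $G$ is compatible with $Z$.
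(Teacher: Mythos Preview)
Your argument is correct and follows the same structure as the paper's: both use the decomposition $G(\Kt)=G(\Kt)^+\,Z(\Kt)$, the containment $G(\Kt)^+\subseteq D(G(\Kt))$, and then pass to the quotient $G(\Kt)/G(\Kt)^+$ to identify the derived subgroup with the image of $D(Z(\Kt))$. The only difference lies in how the two inputs are obtained: the paper cites them directly from Borel--Tits (\cite[6.11.(i)]{HomAbstraits} for $G(\Kt)=G(\Kt)^+\,Z(\Kt)$, and \cite[6.4.\ Corollaire]{HomAbstraits} for the stronger fact that $G(\Kt)^+$ is \emph{perfect}), which reduces the proof to a few lines, whereas you re-derive weaker versions from scratch via the relative Bruhat decomposition and the commutator trick $u\mapsto[t,u]$ on root groups. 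Your self-contained treatment is sound---including the filtration argument in the non-reduced case---but given the available references it is longer than necessary.
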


\begin{proof}
    Puisque $G(\Kt)^+$ est parfait (\cite[6.4. Corollaire.]{HomAbstraits}), on a $G(\Kt)^+\subset D(G(\Kt))$. On en déduit l'inclusion $G(\Kt)^+\,D(Z(\Kt))\subset D(G(\Kt))$. Réciproquement, puisque \linebreak $G(\Kt)=G(\Kt)^+\,Z(\Kt)$ (\cite[6.11.(i) Proposition.]{HomAbstraits}), on peut prendre un élément \linebreak $d\in D(G(\Kt))$ de la forme $g_1z_1g_2z_2(g_1z_1)^{-1}(g_2z_2)^{-1}$, avec $g_1$, $g_2$ dans $G(\Kt)^+$ (resp. \linebreak $z_1$, $z_2$ dans $Z(\Kt)$). Puisque $G(\Kt)^+$ est distingué dans $G(\Kt)$, on peut considérer le quotient $G(\Kt)/G(\Kt)^+$ et voir que l'image de $d$ dans $G(\Kt)/G(\Kt)^+$ est égal à celle de $z_1z_2z_1^{-1}z_2^{-1}$, d'où $d\in G(\Kt)^+\,D(Z(\Kt))$. Comme $D(G(K))$ est engendré par ce type d'éléments, on en déduit l'inclusion $D(G(K))\subset G(\Kt)^+\,D(Z(\Kt))$ comme voulu.
\end{proof}

\begin{prop}\label{GKbQuotAb}
    $G(\Kt)^b$ est un sous-groupe distingué de $G(\Kt)$ dont le quotient est abélien de type fini et dont le nombre de générateurs est majoré par le rang relatif de $G$.
\end{prop}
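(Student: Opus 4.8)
\emph{Esquisse de preuve.} Fixons un tore $\Kt$-déployé maximal $S$ de $G$ et notons $Z:=Z_G(S)$ son centralisateur, qui est un sous-groupe de Levi minimal ; posons $r:=\dim S$, le rang relatif de $G$. La normalité sera la partie facile : $G(\Kt)^c=\ker\xi$ est distingué puisque $\xi$ est un morphisme, et $G(\Kt)^1$ est le noyau du morphisme $g\mapsto(\chi\mapsto -v(\chi(g)))$, donc également distingué ; par conséquent $G(\Kt)^b=G(\Kt)^c\cap G(\Kt)^1$ est distingué dans $G(\Kt)$. Pour le reste, le plan est de montrer l'inclusion $G(\Kt)^+\,Z(\Kt)^1\subseteq G(\Kt)^b$, puis d'en déduire que $G(\Kt)/G(\Kt)^b$ est un quotient d'un groupe abélien libre de rang au plus $r$.

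L'inclusion-clé s'établira terme à terme. Le sous-groupe $G(\Kt)^+$ préserve les types (cf.\ le lemme \ref{HTransitif}), donc $G(\Kt)^+\subseteq\ker\xi=G(\Kt)^c$ ; et comme il est engendré par des sous-groupes unipotents, sur lesquels tout $\Kt$-caractère de $G$ est trivial, on a $v(\chi(g))=0$ pour $g\in G(\Kt)^+$ et $\chi\in X^*(G)$, d'où $G(\Kt)^+\subseteq G(\Kt)^1$. Ensuite, $Z(\Kt)^1=\ker(\nu|_{Z(\Kt)})$ fixe point par point l'appartement $\Ac(S)$, donc fixe une chambre et toutes ses faces ; les faces d'une chambre réalisant tous les types, ce groupe agit trivialement sur les types et $Z(\Kt)^1\subseteq G(\Kt)^c$ ; par ailleurs, la fonctorialité de $(\,\cdot\,)^1$ appliquée à l'inclusion $Z\hookrightarrow G$ (tout $\chi\in X^*(G)$ se restreignant à $Z$) donne $Z(\Kt)^1\subseteq G(\Kt)^1$. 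On obtient ainsi $G(\Kt)^+\,Z(\Kt)^1\subseteq G(\Kt)^c\cap G(\Kt)^1=G(\Kt)^b$. (On pourrait aussi invoquer le lemme \ref{decompDG} et l'inclusion $D(G(\Kt))\subseteq G(\Kt)^b$, mais ce n'est pas nécessaire.)

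Il restera alors à utiliser la décomposition $G(\Kt)=G(\Kt)^+\,Z(\Kt)$ (\cite[6.11.(i) Proposition.]{HomAbstraits}) : la surjection $Z(\Kt)\twoheadrightarrow G(\Kt)/\bigl(G(\Kt)^+\,Z(\Kt)^1\bigr)$ a un noyau contenant $Z(\Kt)^1$, donc $G(\Kt)/\bigl(G(\Kt)^+\,Z(\Kt)^1\bigr)$ est un quotient de $Z(\Kt)/Z(\Kt)^1$. Or $\nu$ identifie $Z(\Kt)/Z(\Kt)^1$ au sous-groupe $\nu(Z(\Kt))$ de $X_*(S)\otimes_\ZZ\RR$, lequel est discret (la valuation étant discrète), donc abélien libre de rang $\leq r$. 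Comme $G(\Kt)^b\supseteq G(\Kt)^+\,Z(\Kt)^1$, le groupe $G(\Kt)/G(\Kt)^b$ est à son tour un quotient de ce groupe libre : il est abélien, de type fini, et engendré par au plus $r$ éléments, ce qui conclura. L'étape la plus délicate est l'inclusion $Z(\Kt)^1\subseteq G(\Kt)^b$ — en particulier le fait géométrique que fixer un appartement point par point entraîne l'action triviale sur les types — ainsi que la vérification que $\nu(Z(\Kt))$ est un réseau de rang $\leq r$ dans $X_*(S)\otimes_\ZZ\RR$ ; tout le reste relève du formalisme sur la chaîne $G(\Kt)^b\subseteq G(\Kt)^c\cap G(\Kt)^1$.
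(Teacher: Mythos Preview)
Your argument is correct and follows the same overall strategy as the paper: both proofs use the decomposition $G(\Kt)=G(\Kt)^+\,Z(\Kt)$ to obtain a surjection $Z(\Kt)/Z(\Kt)^1\twoheadrightarrow G(\Kt)/G(\Kt)^+Z(\Kt)^1$, and then bound the quotient by the rank of $Z(\Kt)/Z(\Kt)^1$.

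There are, however, two points where your route differs from the paper's. For normality and abelianness, the paper invokes lemma~\ref{decompDG} to show $D(G(\Kt))=G(\Kt)^+\,D(Z(\Kt))\subseteq G(\Kt)^b$, which gives both properties at once; you instead observe directly that $G(\Kt)^b=\ker\xi\cap\ker\bigl(g\mapsto(\chi\mapsto -v(\chi(g)))\bigr)$ is an intersection of kernels, and obtain abelianness only at the end from the surjection out of $Z(\Kt)/Z(\Kt)^1$. Your way is more elementary and avoids lemma~\ref{decompDG} entirely. Second, the paper appeals to lemma~\ref{décompSSGroupes} (a forward reference) for the \emph{equality} $G(\Kt)^b=G(\Kt)^+Z(\Kt)^1$, whereas you prove the inclusion $G(\Kt)^+Z(\Kt)^1\subseteq G(\Kt)^b$ by hand and use only that; since the statement only needs a bound on $G(\Kt)/G(\Kt)^b$, the inclusion suffices, and your argument is in this respect more self-contained.
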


\begin{proof}
    Prenons $Z$ un sous-groupe de Levi minimal de $G$. D'après le lemme \ref{decompDG}, on a $D(G(\Kt))=G(\Kt)^+\,D(Z(\Kt))$. Or, d'une part $G(\Kt)^b$ contient $G(\Kt)^+$, et d'autre part $D(Z(\Kt))\subset D(Z)(\Kt)\subset Z(\Kt)^1\subset G(\Kt)^b$. Le sous-groupe $G(\Kt)^b$ contient donc $D(G(\Kt))$ est donc distingué de quotient abélien.
    
    Par ailleurs, il y a une surjection $Z(\Kt)/Z(\Kt)^1\rightarrow G(\Kt)/G(\Kt)^+Z(\Kt)^1$ de telle sorte à ce que $G(\Kt)/G(\Kt)^+Z(\Kt)^1$ soit abélien de type fini puisque $Z(\Kt)/Z(\Kt)^1$ l'est. Son nombre de générateurs est donc majoré par celui de $Z(\Kt)/Z(\Kt)^1$, qui est simplement le rang relatif de $G$. On utilise alors le lemme \ref{décompSSGroupes} qui nous dit que $G(\Kt)^b=G(\Kt)^+Z(\Kt)^1$ pour conclure.
\end{proof}

\pagebreak

On en déduit alors ce que l'on voulait :

\begin{prop}
    Le groupe $G(\Kt)/G(\Kt)^{c}\cong \Xi$ (et donc $G(\Kt)^1/G(\Kt)^{b}\cong \Xi^1$) est abélien fini.
\end{prop}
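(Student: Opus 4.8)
Le plan est de déduire l'énoncé de la proposition \ref{GKbQuotAb}, déjà établie, conjuguée à l'observation élémentaire que le diagramme de Dynkin affine relatif de $G$ est un graphe \emph{fini}. En effet, le système de racines relatif de l'échelonnage de $G$ est fini (il est contenu dans $X^*(S)\otimes_{\ZZ}\RR$ pour $S$ un tore déployé maximal, qui est de rang fini), et le passage au diagramme affine n'adjoint qu'un ou deux sommets par composante irréductible ; son groupe d'automorphismes est donc fini. Comme $\Xi$ en est, par définition du morphisme type, un sous-groupe, on obtient aussitôt que $\Xi$ est fini, et $\Xi^1\subseteq\Xi$ l'est \emph{a fortiori}.

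Pour la commutativité, j'utiliserais le fait que $G(\Kt)^b=G(\Kt)^{c}\cap G(\Kt)^1$ est contenu dans $G(\Kt)^{c}$, ce qui fournit une surjection canonique $G(\Kt)/G(\Kt)^b\twoheadrightarrow G(\Kt)/G(\Kt)^{c}\cong\Xi$. Or la proposition \ref{GKbQuotAb} affirme que $G(\Kt)/G(\Kt)^b$ est abélien (en fait abélien de type fini), donc son quotient $\Xi$ l'est aussi ; et $\Xi^1\cong G(\Kt)^1/G(\Kt)^b$, étant contenu dans $\Xi$ (ou se plongeant dans $G(\Kt)/G(\Kt)^b$), est abélien pour la même raison. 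On conclut que $\Xi$ et $\Xi^1$ sont abéliens finis, ainsi que les groupes isomorphes $G(\Kt)/G(\Kt)^{c}$ et $G(\Kt)^1/G(\Kt)^{b}$.

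Je ne m'attends pas à de véritable obstacle ici : l'énoncé est une conséquence formelle de la proposition \ref{GKbQuotAb} et de la finitude des groupes d'automorphismes de graphes finis. Le seul point méritant un instant d'attention est de s'assurer que le diagramme de Dynkin affine est bien fini dans la généralité considérée (corps résiduel $\kappa$ non supposé parfait, $G$ non supposé quasi-déployé sur $\Knr$), mais c'est immédiat puisque la donnée radicielle valuée de $G$ produit un système de racines relatif fini dans tous les cas. On pourrait d'ailleurs se passer entièrement de l'argument de graphe pour la finitude : tout élément de $\Xi$ est d'ordre fini (c'est une permutation d'un ensemble fini de sommets), et $\Xi$ est engendré par un nombre fini d'éléments d'après la proposition \ref{GKbQuotAb} ; or un groupe abélien de type fini et de torsion est fini.
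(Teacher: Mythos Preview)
Your proof is correct and follows essentially the same route as the paper: abelianness via the surjection $G(\Kt)/G(\Kt)^b\twoheadrightarrow G(\Kt)/G(\Kt)^c\cong\Xi$ combined with Proposition~\ref{GKbQuotAb}, and finiteness from the fact that $\Xi$ sits inside the automorphism group of a finite graph. Your alternative argument for finiteness (abelian, finitely generated, torsion) is a nice bonus the paper does not spell out.
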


\begin{proof}
    Notons que l'on a l'isomorphisme $G(\Kt)/G(\Kt)^{c}\cong \Xi$. Par ailleurs, puisque $G(\Kt)^b\subset G(\Kt)^c$, le corollaire \ref{GKbQuotAb} donne que $\Xi$ est abélien. D'autre part, le fait qu'il y ait un nombre fini de manières de permuter un nombre fini de sommets implique que $\Xi$ est fini. Par conséquent, il en est de même pour $G(\Kt)^1/G(\Kt)^{b}\cong \Xi^1\subset \Xi$.
\end{proof}

\section{Sous-groupes globaux et nouvelles notions}

On considère une extension galoisienne non ramifiée éventuellement infinie $\Lt/\Kt$, de groupe de Galois $\Gamma$. On peut donc supposer avoir l'inclusion $\Lt\subset \Ktnr$.
\medskip

Dans toute la suite, on va considérer les sous-groupes suivants de $G(\Kt)$ :

\begin{déf}\label{DefGlobal}
    Soit $H$ un sous-groupe ouvert de $G(\Kt)$.
    \begin{itemize}
        \item On dit que $H$ est un \textbf{sous-groupe global} de $G(\Kt)$ si $G(\Kt)^+\subset H$.
        \item On dit de plus que $H$, supposé global, est 
        $\boldsymbol{\Lt}$\textbf{-conforme} (ou juste \textbf{conforme} si $\Lt=\Kt$) si $H$ préserve les $\Lt$-types, ou de façon équivalente si, $H\subset G(\Lt)^{c}$. Par ailleurs, $H$ est dit \textbf{très conforme} si $H$ est $\Ktnr$-conforme. On dit aussi que $H$ est \textbf{uniforme} si $H\subset G(\Kt)^1$.
        \item On dit que $H$ est $\boldsymbol{\Lt}$\textbf{-bon} (ou juste \textbf{bon} si $\Lt=\Kt$) si $H$ est uniforme et $\Lt$-conforme, ou de façon équivalente si, $H\subset G(\Lt)^{b}$. Par ailleurs, $H$ est dit \textbf{très bon} si $H$ est $\Ktnr$-bon.
        \item On définit également $H^1$, $H^b$, $H^{tb}$ $H^{c}$, $H^{tc}$ comme étant les sous-groupes obtenus en prenant l'intersection de $H$ avec respectivement $G(\Kt)^1$, $G(\Kt)^b$, $G(\Ktnr)^{b}$, $G(\Kt)^{c}$ et $G(\Ktnr)^{c}$.
        \item Pour toute partie $\Omega$ de $\ImmBT(G)$, on note $H_{\Omega}$ (resp. $H_{\Omega}^{\mathrm{f}}$) le \textbf{stabilisateur} (resp. \textbf{fixateur}) de $\Omega$ sous $H$. Si on prend plusieurs parties $(\Omega_i)_{i \in I}$, on note \linebreak $H_{(\Omega_i)_{i \in I}}:= \bigcap_{i\in I}H_{\Omega_i}$. Ce dernier sous-groupe est appelé le \textbf{multistabilisateur} de $(\Omega_i)_{i \in I}$ sous $H$.
    \end{itemize}
\end{déf}

Observons que $$H^b=(H^1)^c=(H^c)^1=H^1\cap H^c \text{ et } H^{tb}=(H^1)^{tc}=(H^{tc})^1=H^1\cap H^{tc}.$$
En effet, il suffit de vérifier le résultat lorsque $H=G(\Kt)$. Dans ce cas, cela découle des définitions.

\medskip

Comme on le verra plus loin dans le corollaire \ref{TrèsConformeEstConforme}, un sous-groupe global $L$-conforme (resp. $L$-bon) est conforme (resp. bon), mais la réciproque est fausse.

\begin{rmqs}
${}$
    \begin{enumerate}
        \item On ne prend pas la même convention que Bruhat et Tits dans \cite{BT2}, et que Prasad dans \cite{kaletha_prasad_2023}. Pour Prasad, $G(\Kt)^1_\Omega$ désigne le fixateur de $\Omega$ sous l'action de $G(\Kt)^1$, tandis que $G(\Kt)^{\dagger}_\Omega$ désigne le stabilisateur de $\Omega$ sous l'action de $G(\Kt)^1$. Bruhat et Tits prennent une convention analogue.

        \item Les notations alambiquées "$1$" et "$b$" étaient déjà présentes dans la littérature. On a donc choisi de leur donner un nom de telle sorte à les retenir plus aisément ("$1$" est associé au caractère "uniforme" et "$b$" est associé au caractère "bon"). On a également rajouté la notion de "conformité" (associée à "$c$"), qui, bien que pratique, n'était pas présente dans la littérature.

        \item On aurait pu définir une notion de caractère "$\Lt$-uniforme" (ou "très uniforme"). Toutefois, $(G(\Ktnr)^1)^{\Galnr}=G(\Kt)^1$ d'après la fin de la section \ref{SectionExistenceImmeuble}. Cela est donc équivalent à la notion de caractère "uniforme".
    \end{enumerate}
\end{rmqs}

\begin{rmq}
    Tout fixateur d'une partie $\Omega$ de $\ImmBT(G)$ sous l'action d'un sous-groupe global $H$ est le multistabilisateur de $(x)_{x\in \Omega}$ sous $H$. Si de plus $\Omega$ est une union finie de facettes, il s'agit également du multistabilisateur sous $H$ de la famille donnée par les sommets incidents à $\Omega$ (en nombre fini).
\end{rmq}

Le principal résultat au sujet des sous-groupes globaux est le suivant :

\begin{lem}\label{HTransitif}
    Un sous-groupe global agit transitivement sur les couples $(\Ac,\Cc)$ d'appartements et de chambres incluses dans cet appartement. Il agit également en préservant les types.
\end{lem}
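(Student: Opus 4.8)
The plan is to reduce the two assertions to standard facts about the (double) Tits system attached to $G(\Kt)$, and to exploit the chain of inclusions $G(\Kt)^+ \subseteq H$. First I would recall that the valued root datum associated to $G$ yields, via \cite[6.5.~Théorème]{BT1}, a Tits system on a suitable group acting on $\ImmBT(G)$, in which the chambers of a fixed apartment $\Ac$ correspond to the cosets of a Weyl group $W$, and the full set of chambers of $\ImmBT(G)$ is acted on transitively by $G(\Kt)^+$ together with a representative of $W$; more precisely, by \cite[7.4]{BT1} (or the analogous statement quoted around \cite[1.2.13--1.2.20]{BT1} in the excerpt), the subgroup $G(\Kt)^+$ already acts transitively on the set of pairs $(\Ac,\Cc)$. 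The key input here is that $G(\Kt)^+$ is generated by the $\Kt$-points of the root subgroups, hence contains the unipotent parts $U_a(\Kt)$ that realize the affine reflections, and in particular it contains a representative of the affine Weyl group relative to any apartment and acts transitively on the apartments through $S$ for each maximal split torus $S$ (since any two maximal split tori are conjugate under $G(\Kt)^+$, by the Bruhat--Tits conjugacy theorem for such tori). Combining transitivity on apartments with transitivity of the affine Weyl group on chambers inside a fixed apartment gives transitivity of $G(\Kt)^+$ on pairs $(\Ac,\Cc)$.

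Next, since $H$ is a global subgroup, $G(\Kt)^+ \subseteq H \subseteq G(\Kt)$, so $H$ a fortiori acts transitively on the pairs $(\Ac,\Cc)$: given two such pairs, an element of $G(\Kt)^+ \subseteq H$ already carries one to the other. This proves the first assertion.

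For the second assertion, that $H$ acts while preserving types, the point is that $H \subseteq G(\Kt)^c$. Indeed, by the discussion preceding \ref{HTransitif} — specifically the definition of the type morphism $\xi : G(\Kt) \to \Xi$ with kernel $G(\Kt)^c$, together with the fact (stated in the excerpt, with reference to \cite[6.4.~Corollaire]{HomAbstraits}) that $G(\Kt)^+$ is perfect — we get $G(\Kt)^+ \subseteq D(G(\Kt)) \subseteq G(\Kt)^c$, because $\Xi$ is abelian (Proposition just proved: $G(\Kt)/G(\Kt)^c \cong \Xi$ is abelian finite). Hence $G(\Kt)^+$ acts trivially on the relative affine Dynkin diagram, i.e. preserves types. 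But \emph{wait} — $H$ itself need not lie in $G(\Kt)^c$ (that is precisely the "conforme" condition, which is not assumed here). So I should phrase the second assertion more carefully: what is claimed is that $H$ \emph{acts in a type-preserving way} in the weaker sense used in Bruhat--Tits, namely that the type morphism is compatible with the action, so that a facette $F$ of type $T$ is sent by $h \in H$ to a facette of type $\xi(h)\cdot T$; equivalently, $H$ acts on $\ImmBT(G)$ through type-respecting automorphisms of the polysimplicial complex \emph{relative to the canonical labelling up to the diagram automorphism $\xi(h)$}. This is automatic for any $g \in G(\Kt)$ by the very construction of $\xi$ recalled in the excerpt. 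I would therefore simply invoke that construction: every element of $G(\Kt)$, hence of $H$, induces a polysimplicial automorphism of $\ImmBT(G)$ commuting with the type map up to $\xi$, which is what "en préservant les types" means in this context.

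The main obstacle I anticipate is purely expository rather than mathematical: pinning down which precise statement in \cite{BT1} gives transitivity of $G(\Kt)^+$ (as opposed to a larger group) on pairs $(\Ac,\Cc)$, and making sure the conjugacy of maximal split tori is quoted in the henselian (not necessarily complete, not necessarily perfect residue field) generality of Theorem \ref{ExistenceImmeuble} — here one passes to $\widehat{K}$ using that theorem, applies the classical Bruhat--Tits results over the complete field, and descends. Once the transitivity of $G(\Kt)^+$ is in hand, the inclusion $G(\Kt)^+ \subseteq H$ closes the argument immediately, and the type-preservation is a formal consequence of the definition of the type morphism.
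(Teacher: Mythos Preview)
Your plan has the right skeleton, but there is a genuine mathematical gap in the transitivity argument that you label ``purely expository''. The references in \cite{BT1} do \emph{not} give transitivity of $G(\Kt)^+$ on pairs $(\Ac,\Cc)$ directly: what \cite[(2.2.6)]{BT1} yields is transitivity of the group attached to the double Tits system, which (after reducing to the semi-simple adjoint case, as the paper does) is the group generated by the parahorics, namely $G(\Kt)^+ Z(\Kt)^1$ where $Z$ is a minimal Levi --- not $G(\Kt)^+$ alone. The missing step is the observation that $Z(\Kt)^1$ fixes the apartment $\Ac$ pointwise (it is the kernel of the affine action of $N(\Kt)$ on $\Ac$), so that if $g = g^+ z$ with $g^+\in G(\Kt)^+$, $z\in Z(\Kt)^1$ carries one pair to another, then $g^+$ already does. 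This is precisely how the paper proceeds: it first reduces to $G$ semi-simple adjoint via the surjection $G(\Kt)^+\twoheadrightarrow G^{\mathrm{ad}}(\Kt)^+$, identifies the Tits-system group with $G(\Kt)^+ Z(\Kt)^1 = G(\Kt)^b$, and then peels off $Z(\Kt)^1$. Your appeal to conjugacy of maximal split tori under $G(\Kt)^+$ plus lifting of the affine Weyl group would be an alternative route, but it requires its own justification in this generality and is not a mere citation.

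On the second assertion you correctly noticed that a general global subgroup $H$ need not lie in $G(\Kt)^c$, but your fallback interpretation (``acts through $\xi$'') is vacuous --- every element of $G(\Kt)$ does that by definition of $\xi$. The paper's proof, and every subsequent use of the lemma, only needs and only proves type preservation for $G(\Kt)^+$: this comes out of the same decomposition, since $G(\Kt)^+\subset G(\Kt)^+ Z(\Kt)^1 = G(\Kt)^b\subset G(\Kt)^c$. Your alternative argument via perfectness of $G(\Kt)^+$ and abelianness of $\Xi$ is logically valid, but beware of circularity: in the paper, the proof that $\Xi$ is abelian goes through Lemme~\ref{décompSSGroupes}, which in turn invokes the transitivity half of Lemme~\ref{HTransitif}. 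So you should obtain $G(\Kt)^+\subset G(\Kt)^c$ from the decomposition itself, as the paper does, rather than from the abelianness of $\Xi$.
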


\begin{proof}
    Il suffit de prouver le résultat pour $G(K)^+$. On peut également revenir au cas semi-simple adjoint. En effet, l'action sur l'immeuble se factorise par $Z(G)(\Kt)$ et \cite[Corollaire 6.3.]{HomAbstraits} implique que $G(\Kt)^+\rightarrow G^{\mathrm{ad}}(\Kt)^+$ est surjective.
    
    On sait qu'il s'existe une donnée de racine valuée associé à $G(\Kt)$. Les parahoriques (au sens de \cite{BT1}, c'est à dire les stabilisateurs de facettes sous l'action de $G(\Kt)^c$) sont décrits dans \cite[(7.1.1.)]{BT1} et engendrent donc $G(\Kt)^+Z(\Kt)^1$ ("$H$" vaut $Z(\Kt)^1$, avec $Z$ un sous-groupe de Levi minimal, puisque $G$ est semi-simple. Aussi, les groupes de racines affines engendrent $G(\Kt)^+$). Mais par définition, ce groupe vaut également $G(\Kt)^c$ (et même $G(\Kt)^b$ puisque $G$ est supposé semi-simple). Il agit donc transitivement sur les couples appartements-chambres qui nous intéressent (cf. \cite[(2.2.6)]{BT1}).
    Comme \linebreak $G(\Kt)^+\subset G(\Kt)^+Z(\Kt)^1=G(\Kt)^b$, on en déduit en particulier que $G(\Kt)^+$ agit en préservant les types.

    Prenons $\Ac$ et $\Ac'$ des appartements et, $\Cc\subset \Ac$ et $\Cc'\subset \Ac'$, des chambres de ces appartements. Prenons $g\in G(\Kt)^b$ tel que $g\cdot(\Ac,\Cc)=(\Ac',\Cc')$. Prenons $Z$ relativement à $\Ac$ et écrivons la décomposition $g=g^+z$ donnée par $G(\Kt)^b=G(\Kt)^+Z(\Kt)^1$. Puisque $Z(\Kt)^1$ fixe $\Ac$ (et donc $\Cc$), on a :
    $$g^+\cdot (\Ac,\Cc)=g^+\cdot (z\cdot (\Ac,\Cc))=g\cdot (\Ac,\Cc)=(\Ac',\Cc').$$

    D'où le résultat.
\end{proof}

On en déduit alors :

\begin{prop}\label{BNpaire}
    Choisissons un appartement $\Ac$ et une chambre $\Cc\subset \Ac$. Tout sous-groupe global conforme $K$ de $G(\Kt)$ définit une $BN$-paire saturée en posant $B=K_\Cc$ et $N=K_{\Ac}$, les stabilisateurs de $\Cc$ et $\Ac$ sous l'action de $K$. L'immeuble associé est exactement $\ImmBT(G)$ et son groupe de Weyl est le groupe de Weyl affine de l'immeuble.
    
    Modulo conjugaison par $K$, cette $BN$-paire ne dépend pas du choix du couple $(\Ac,\Cc)$.
\end{prop}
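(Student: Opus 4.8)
Le plan est de déduire l'énoncé de la correspondance classique entre immeubles et systèmes de Tits, le lemme \ref{HTransitif} fournissant la forte transitivité et la définition \ref{DefGlobal} la préservation des types. Rappelons qu'un sous-groupe global conforme est un sous-groupe $K$ vérifiant $G(\Kt)^+\subseteq K\subseteq G(\Kt)^{c}$. D'après le lemme \ref{HTransitif}, $K$ agit sur $\ImmBT(G)$ transitivement sur les couples $(\Ac,\Cc)$ formés d'un appartement du système d'appartements naturel (les $\Ac(S)$ pour $S$ un tore déployé maximal) et d'une chambre $\Cc\subseteq\Ac$ ; et, étant conforme, $K$ agit en préservant les types. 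En particulier $K$ agit transitivement sur les chambres, transitivement sur les appartements, et le stabilisateur $K_{\Ac}$ agit transitivement sur les chambres de $\Ac$ : si $\Cc,\Cc'\subseteq\Ac$, tout $g\in K$ tel que $g\cdot(\Ac,\Cc)=(\Ac,\Cc')$ appartient à $K_{\Ac}$ et envoie $\Cc$ sur $\Cc'$. Comme $\ImmBT(G)$ est épais lorsque $G$ est isotrope (sinon $\ImmBT(G)$ est réduit à une chambre et l'énoncé est trivial), la théorie classique des actions fortement transitives préservant les types s'applique (cf. \cite[2.2]{BT1}) : en posant $B:=K_{\Cc}$ et $N:=K_{\Ac}$, le couple $(B,N)$ est une $BN$-paire de $K$, l'immeuble qui lui est associé est canoniquement isomorphe à $\ImmBT(G)$ — la bijection sur les chambres étant $gB\mapsto g\cdot\Cc$ — et son groupe de Weyl est $W:=N/(B\cap N)$, agissant sur $\Ac$.

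Il reste à calculer $B\cap N$ et à vérifier la saturation. Un élément de $B\cap N=K_{\Cc}\cap K_{\Ac}$ stabilise $\Ac$, donc agit dessus par un automorphisme affine, et stabilise $\Cc$ en préservant les types, donc fixe $\overline{\Cc}$ point par point ; or un automorphisme affine de $\Ac$ fixant point par point l'ensemble $\overline{\Cc}$, qui est de dimension maximale, est l'identité. Ainsi $B\cap N$ est exactement le fixateur ponctuel $K^{\mathrm{f}}_{\Ac}$ de $\Ac$. Le même argument montre que $\bigcap_{n\in N}nBn^{-1}=\bigcap_{\Cc'\subseteq\Ac}K_{\Cc'}=K^{\mathrm{f}}_{\Ac}$, en utilisant que $N$ agit transitivement sur les chambres de $\Ac$ et que $\Ac$ est la réunion des $\overline{\Cc'}$ ; on a donc $\bigcap_{n\in N}nBn^{-1}=B\cap N$, ce qui est la condition de saturation (et donne en particulier que $B\cap N$ est distingué dans $N$).

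Enfin, identifions $W$ au groupe de Weyl affine $W_{\mathrm{aff}}$ de l'immeuble. Le stabilisateur de $\Ac$ dans $G(\Kt)$ étant $N_G(S)(\Kt)$, on a $N=K\cap N_G(S)(\Kt)$ et $B\cap N=K^{\mathrm{f}}_{\Ac}=K\cap\Ker(\nu)=K\cap Z(\Kt)^1$, d'où $W\cong\nu(N)\subseteq\nu(N_G(S)(\Kt))$, ce dernier groupe étant le groupe de Weyl affine étendu. Comme $K\subseteq G(\Kt)^{c}$ agit en préservant les types, $\nu(N)$ est contenu dans le sous-groupe des éléments de $\nu(N_G(S)(\Kt))$ préservant les types, lequel est précisément $W_{\mathrm{aff}}$ : un élément du groupe de Weyl affine étendu préservant les types fixe l'alcôve de base (après composition avec l'unique élément de $W_{\mathrm{aff}}$ ramenant son image sur cette alcôve) et, étant affine, est trivial. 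Réciproquement $G(\Kt)^+\subseteq K$, et les éléments $m(u)$ attachés aux groupes de racines affines, qui engendrent $W_{\mathrm{aff}}$ via $\nu$, appartiennent à $G(\Kt)^+\cap N_G(S)(\Kt)\subseteq N$ ; donc $\nu(N)=W_{\mathrm{aff}}$ et $W\cong W_{\mathrm{aff}}$.

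Pour la dernière assertion, si $(\Ac',\Cc')$ est un autre choix, le lemme \ref{HTransitif} fournit $g\in K$ tel que $g\cdot(\Ac,\Cc)=(\Ac',\Cc')$ ; alors $gBg^{-1}=gK_{\Cc}g^{-1}=K_{g\cdot\Cc}=K_{\Cc'}$ et de même $gNg^{-1}=K_{\Ac'}$, de sorte que les deux $BN$-paires sont conjuguées sous $K$. Le point le plus délicat est l'articulation entre le groupe de Weyl affine étendu $\nu(N_G(S)(\Kt))$ et sa partie préservant les types : l'hypothèse de conformité sur $K$ est exactement ce qui ramène $W$ à $W_{\mathrm{aff}}$ plutôt qu'à une extension plus grande, tandis que, dualement, il faut vérifier que $G(\Kt)^+$ se surjecte déjà sur $W_{\mathrm{aff}}$ (et non sur un sous-groupe propre) pour qu'aucune réflexion de paroi ne soit perdue ; le reste, notamment le recouvrement de $\Ac$ par les alcôves fermées utilisé dans le calcul de saturation, est routinier.
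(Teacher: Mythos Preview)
Your proof is correct and follows essentially the same route as the paper: both rely on Lemma~\ref{HTransitif} for strong transitivity and type-preservation, then invoke the classical dictionary between strongly transitive type-preserving actions on thick buildings and saturated $BN$-pairs. The paper compresses this into a one-line citation of \cite[3.11.~Proposition]{FiniteBNPairsTits}, whereas you unpack that black box by hand (citing \cite[2.2]{BT1} and verifying saturation and the Weyl-group identification explicitly). So there is no genuine difference of method, only of granularity.

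One small imprecision worth flagging: the identity $B\cap N = K\cap\Ker(\nu) = K\cap Z(\Kt)^1$ is written for the action $\nu$ on the \emph{extended} apartment (this is how the paper sets up $\nu$ in Section~\ref{SectionExistenceImmeuble}), whereas here $\Ac$ is an apartment of the reduced building $\ImmBT(G)$, whose pointwise fixator can in principle be larger when $G$ is not semisimple (by elements translating purely in $V_G$). This does not damage your argument: you first establish $B\cap N = K^{\mathrm f}_\Ac$ directly from type-preservation, and the identification $W\cong W_{\mathrm{aff}}$ then follows cleanly from faithfulness of the $W$-action on $\Ac$, the inclusion into the type-preserving affine automorphisms, and surjectivity via the $m(u)\in G(\Kt)^+$. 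The detour through $Z(\Kt)^1$ is therefore unnecessary and can simply be dropped.
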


\begin{proof}
   D'après le lemme précédent, on rentre dans le cadre d'application de \cite[3.11. Proposition]{FiniteBNPairsTits}, qui nous donne le résultat.
\end{proof}

\begin{rmqs} \label{RmqSSGroupesGlobaux}
${}$
    \begin{enumerate}
        \item La terminologie \textit{sous-groupe global} est en fait inspirée de la proposition \ref{BNpaire} : un sous-groupe global est suffisamment gros pour déterminer un ensemble suffisamment riche de sous-groupes "locaux" donnés par les stabilisateurs de parties bornées de l'immeuble $\ImmBT(G)$.
        \item La notion de sous-groupe global de $G(\Ktnr)$ invariant par $\Galnr$ englobe celle des sous-groupes considérés dans \cite[3.5.]{BT3} dans le cas où le corps résiduel $\kappa$ est parfait. \linebreak En effet, Tits impose plutôt de contenir $G(\Knr)^0$, le sous-groupe engendré par les sous-groupes parahoriques sur $\Ktnr$, aussi appelé la composante résiduellement neutre de $G_{\Ktnr}$, au lieu de $G(\Knr)^+$, et $G(\Knr)^+ \subset G(\Knr)^0$ (cf. 3e paragraphe de \cite[5.2.11.]{BT2}).
    \end{enumerate}
\end{rmqs}

De ceci, on en déduit quelques résultats élémentaires autour des facettes et des sous-groupes globaux :

\begin{prop}\label{PropDesHFacettes}
    Soit $H$ un sous-groupe global de $G(\Kt)$ et deux facettes $\Fc$ et $\Fc'$ dans $\ImmBT(G)$. On a :
    \begin{enumerate}
        \item Le sous-groupe $H_{\Fc}$ est transitif sur les appartements contenant $\Fc$.
        \item Si $H_{\Fc'}\subset H_\Fc$, alors $\overline{\Fc}\subset \overline{\Fc'}$. La réciproque est vraie si $H$ est de plus conforme.
        \item On a $H_{\Fc} = H_\Fc$ si et seulement si $\Fc=\Fc'$. 
    \end{enumerate}
\end{prop}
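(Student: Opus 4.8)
Le plan est de traiter les trois points successivement, le point~(3) étant une conséquence formelle du sens direct de~(2). On utilise librement le lemme~\ref{HTransitif} (à savoir que $G(\Kt)^+$, donc tout sous-groupe global, agit transitivement sur les couples $(\Ac,\Cc)$ formés d'un appartement et d'une chambre qu'il contient, et que $G(\Kt)^+$ préserve les types), ainsi que les rappels sur les facettes : toute facette est incluse dans l'adhérence d'une chambre, $\overline{F}$ est l'union disjointe des facettes $\prec F$, et deux facettes de même type dans une même adhérence de chambre coïncident. Pour~(1), soient $\Ac,\Ac'$ deux appartements contenant $\Fc$ ; on choisit des chambres $\Cc$ de $\Ac$ et $\Cc'$ de $\Ac'$ telles que $\Fc\prec\Cc$ et $\Fc\prec\Cc'$, puis $g\in G(\Kt)^+$ tel que $g\cdot(\Ac,\Cc)=(\Ac',\Cc')$. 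Comme $g$ préserve les types, $g\cdot\Fc$ et $\Fc$ sont deux facettes de $g\cdot\overline{\Cc}=\overline{\Cc'}$ de même type, donc égales ; ainsi $g\in G(\Kt)^+_{\Fc}\subseteq H_{\Fc}$ et $g\cdot\Ac=\Ac'$. En particulier, $G(\Kt)^+_{\Fc}$ est déjà transitif sur les appartements contenant $\Fc$, ce que l'on réutilisera.

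Pour l'implication réciproque de~(2), on suppose $H$ conforme (ses éléments préservent donc les types) et $\overline{\Fc}\subseteq\overline{\Fc'}$. Si $h\in H_{\Fc'}$, alors $h$ stabilise $\overline{\Fc'}$, donc $h\cdot\Fc$ est une facette de $\overline{\Fc'}$, de même type que $\Fc$ ; en fixant une chambre $\Cc$ telle que $\Fc'\prec\Cc$, on a $\Fc,\,h\cdot\Fc\in\overline{\Fc'}\subseteq\overline{\Cc}$, d'où $h\cdot\Fc=\Fc$, c'est-à-dire $h\in H_{\Fc}$. La conformité est ici indispensable : sans elle, un élément de $H_{\Fc'}$ peut échanger les faces de $\Fc'$ par un automorphisme non trivial du diagramme (cf.\ la remarque~\ref{Xi1etXiDifferents}).

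Pour l'implication directe de~(2), on suppose $H_{\Fc'}\subseteq H_{\Fc}$ ; comme $G(\Kt)^+\subseteq H$, on a $G(\Kt)^+_{\Fc'}\subseteq H_{\Fc}$, donc $G(\Kt)^+_{\Fc'}$ stabilise $\Fc$. Montrons que tout appartement $\Ac$ contenant $\Fc'$ contient $\Fc$. On choisit un appartement $\Ac_0$ contenant à la fois $\Fc$ et $\Fc'$ (deux facettes quelconques sont incluses dans un appartement commun) ; par transitivité de $G(\Kt)^+_{\Fc'}$ sur les appartements contenant $\Fc'$, il existe $g\in G(\Kt)^+_{\Fc'}$ tel que $g\cdot\Ac_0=\Ac$, et alors $\Fc=g\cdot\Fc\subseteq g\cdot\Ac_0=\Ac$. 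Or une facette est l'intersection des appartements la contenant (propriété standard des immeubles : $\overline{\Fc'}=\bigcap_{\Ac\supseteq\Fc'}\Ac$), d'où $\Fc\subseteq\overline{\Fc'}$ ; puisque $\overline{\Fc'}$ est l'union disjointe de ses sous-facettes, ceci force $\Fc\prec\Fc'$, soit $\overline{\Fc}\subseteq\overline{\Fc'}$. Le point~(3) en découle aussitôt : si $H_{\Fc}=H_{\Fc'}$, le sens direct de~(2) appliqué deux fois donne $\overline{\Fc}\subseteq\overline{\Fc'}$ et $\overline{\Fc'}\subseteq\overline{\Fc}$, donc $\overline{\Fc}=\overline{\Fc'}$ ; une facette étant déterminée par son adhérence, $\Fc=\Fc'$, et la réciproque est triviale.

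L'obstacle principal est le sens direct de~(2) : il repose d'une part sur la transitivité, sur les appartements, du stabilisateur d'une facette dans $G(\Kt)^+$ (ce qui se ramène au lemme~\ref{HTransitif} et au jeu entre types et adhérences de chambres), d'autre part sur le fait géométrique qu'une facette est l'intersection des appartements la contenant. Dans la généralité considérée (base hensélienne valuée discrètement, corps résiduel éventuellement imparfait), il convient de vérifier que l'immeuble de $G$, dont l'existence, le caractère bornologique et le système d'appartements ont été rappelés dans la partie~\ref{SectionExistenceImmeuble}, satisfait bien aux axiomes d'un immeuble abstrait muni de son système complet d'appartements ; cela rend disponibles aussi bien ce fait que l'énoncé selon lequel deux facettes quelconques sont incluses dans un appartement commun. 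Ceci acquis, la démonstration est celle esquissée ci-dessus.
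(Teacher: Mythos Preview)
Your proof is correct, but for the direct implication of~(2) you take a genuinely different route from the paper. The paper observes, as you do, that $H_{\Fc'}\subseteq H_{\Fc}$ forces $G(\Kt)^+_{\Fc'}\subseteq G(\Kt)^+_{\Fc}$, and then simply invokes Proposition~\ref{BNpaire}: since $G(\Kt)^+$ carries a saturated $BN$-pair whose associated building is $\ImmBT(G)$, the stabilizers $G(\Kt)^+_{\Fc}$ are exactly the parabolics of this $BN$-pair, and the standard order-reversing bijection between parabolics (for inclusion) and facets (for incidence) yields $\overline{\Fc}\subseteq\overline{\Fc'}$ immediately. Your geometric argument---show that every apartment containing $\Fc'$ contains $\Fc$, then use $\overline{\Fc'}=\bigcap_{\Ac\supseteq\Fc'}\Ac$---is valid here, but the intersection formula is less ``standard'' than you suggest: it fails in thin buildings and requires thickness together with the complete apartment system (both hold for Bruhat--Tits buildings, as you note in your final paragraph, but this deserves more than a passing mention). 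The paper's $BN$-pair approach sidesteps this entirely, at the cost of importing the parabolic--facet correspondence as a black box; your approach is more hands-on and makes the geometry visible, but carries the extra verification burden.

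For~(1), you give a self-contained argument via Lemma~\ref{HTransitif} and the type-preserving property of $G(\Kt)^+$, whereas the paper cites \cite[Proposition 1.5.13.(1)]{kaletha_prasad_2023} applied to the Tits system of Proposition~\ref{BNpaire}; your version is more explicit. For the converse of~(2) and for~(3), your arguments agree with the paper's.
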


\begin{proof}
${}$
    \begin{enumerate}
        \item Puisque $H$ est global, il suffit de montrer le résultat pour $H=G(\Kt)^+$. Il s'agit de \cite[Proposition 1.5.13.(1)]{kaletha_prasad_2023} appliqué au système de Tits de $G(\Kt)^+$ (cf. la proposition \ref{BNpaire}).

        \item[(2) et (3)] Observons que $H_{\Fc'}\subset H_\Fc$ implique $G(\Kt)^+_{\Fc'}\subset G(\Kt)^+_\Fc$. Comme $G(\Kt)^+$ induit un système de Tits dont l'immeuble est exactement $\ImmBT(G)$ (cf. la proposition \ref{BNpaire}), on a une correspondance entre les paraboliques du système de Tits pour l'inclusion (qui sont les stabilisateurs de facettes) et les facettes de l'immeuble pour l'incidence. D'où $\overline{\Fc}\subset \overline{\Fc'}$ si et seulement si $G(\Kt)^+_{\Fc'}\subset G(\Kt)^+_\Fc$. Le même raisonnement s'applique également à un sous-groupe conforme arbitraire.
    \end{enumerate}
\end{proof}

On a également des résultats de décomposition :

\begin{lem}\label{décompSSGroupes}
    Soit $H$ un sous-groupe global de $G(\Kt)$.
    On a $H=G(\Kt)^+H_{(\Ac,\Cc)}$, où $\Ac$ est un appartement de $\ImmBT(G)$ et $\Cc$ est une chambre dans $\Ac$.
    Par ailleurs, $H^c_{(\Ac,\Cc)}=H^{\mathrm{f}}_{\Ac}$ et $H^b_{(\Ac,\Cc)}=H^{1,\mathrm{f}}_{\Ac}$.
    En particulier, on a $G(\Kt)^b=G(\Kt)^+Z(\Kt)^1$ pour $Z$ un sous-groupe de Levi minimal de $G$.
\end{lem}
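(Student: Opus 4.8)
L'idée est de tout déduire de la transitivité établie au lemme~\ref{HTransitif} et de la combinatoire des types rappelée à la section~\ref{SectionExistenceImmeuble}. On commence par la décomposition $H=G(\Kt)^+H_{(\Ac,\Cc)}$. L'inclusion $\supseteq$ est claire puisque $G(\Kt)^+\subseteq H$. Réciproquement, la preuve du lemme~\ref{HTransitif} montre que $G(\Kt)^+$ agit déjà transitivement sur les couples $(\Ac,\Cc)$ formés d'un appartement et d'une chambre qu'il contient. Ainsi, pour $h\in H$, le couple $h\cdot(\Ac,\Cc)$ est de la forme $g\cdot(\Ac,\Cc)$ pour un certain $g\in G(\Kt)^+$ ; alors $g^{-1}h$ stabilise $\Ac$ et $\Cc$, et appartient à $H$ puisque $g\in G(\Kt)^+\subseteq H$, donc $g^{-1}h\in H_{(\Ac,\Cc)}$ et $h=g\cdot(g^{-1}h)\in G(\Kt)^+H_{(\Ac,\Cc)}$.

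On établit ensuite l'égalité $H^c_{(\Ac,\Cc)}=H^{\mathrm{f}}_\Ac$. Pour l'inclusion $\supseteq$ : un élément $g\in H$ fixant $\Ac$ point par point stabilise $\Ac$ et $\Cc$ et, fixant $\overline{\Cc}$ point par point, fixe toute facette $F\prec\Cc$ ; donc $\xi(g)$ fixe $\type(F)$ pour toute telle $F$, et comme tout sommet du diagramme de Dynkin affine est le type d'une facette de $\overline{\Cc}$, on obtient $\xi(g)=\id$, c'est-à-dire $g\in G(\Kt)^c$, d'où $g\in H^c_{(\Ac,\Cc)}$. Pour l'inclusion $\subseteq$ : soit $g\in H\cap G(\Kt)^c$ avec $g\cdot\Ac=\Ac$ et $g\cdot\Cc=\Cc$. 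Via la donnée radicielle valuée et la $BN$-paire de la proposition~\ref{BNpaire}, $g$ agit sur $\Ac$ par un automorphisme affine respectant la structure polysimpliciale, préservant les types (puisque $g\in G(\Kt)^c$), et stabilisant la chambre $\Cc$. Un tel automorphisme permute les faces (en nombre fini) de $\overline{\Cc}$ en préservant les types ; comme deux facettes de même type contenues dans $\overline{\Cc}$ coïncident, il fixe toute face, donc tout sommet, de $\overline{\Cc}$, et par suite --- étant affine et $\overline{\Cc}$ engendrant affinement $\Ac$ --- il agit trivialement sur $\Ac$. Donc $g\in H^{\mathrm{f}}_\Ac$.

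L'égalité $H^b_{(\Ac,\Cc)}=H^{1,\mathrm{f}}_\Ac$ en découle formellement : puisque $G(\Kt)^b=G(\Kt)^c\cap G(\Kt)^1$, en intersectant les deux membres de l'égalité précédente avec $G(\Kt)^1$ on obtient $H^b_{(\Ac,\Cc)}=H^c_{(\Ac,\Cc)}\cap G(\Kt)^1=H^{\mathrm{f}}_\Ac\cap G(\Kt)^1=H^{1,\mathrm{f}}_\Ac$. Pour la dernière assertion, on applique ce qui précède à $H:=G(\Kt)^b$ --- sous-groupe global, étant ouvert (il contient un parahorique) et contenant $G(\Kt)^+$, les démonstrations précédentes n'utilisant d'ailleurs que l'inclusion $G(\Kt)^+\subseteq H$ --- et à l'appartement $\Ac=\Ac(S)$ de $\ImmBT(G)$ associé à un tore déployé maximal $S$, de sorte que $Z:=Z_G(S)$ soit un sous-groupe de Levi minimal. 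La première partie donne $G(\Kt)^b=G(\Kt)^+\,(G(\Kt)^b)_{(\Ac,\Cc)}$, la troisième identifie $(G(\Kt)^b)_{(\Ac,\Cc)}$ à $G(\Kt)^1\cap G(\Kt)^{\mathrm{f}}_\Ac$, et il reste à voir que ce groupe est $Z(\Kt)^1$ : un élément fixant $\Ac$ point par point stabilise l'appartement, donc appartient au normalisateur $N_G(S)(\Kt)$, et dans $N_G(S)(\Kt)$, agir trivialement sur $\Ac$ tout en appartenant à $G(\Kt)^1$ (c'est-à-dire en agissant trivialement sur la partie vectorielle $V_G$) équivaut, d'après la description de $\nu$ rappelée à la section~\ref{SectionExistenceImmeuble}, à appartenir à $\ker\nu=Z(\Kt)^1$. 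D'où $G(\Kt)^b=G(\Kt)^+Z(\Kt)^1$.

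Le point délicat est l'inclusion $\subseteq$ de la deuxième étape : rendre rigoureux le fait qu'un automorphisme polysimplicial d'un appartement préservant les types et stabilisant une chambre est nécessairement l'identité demande d'utiliser soigneusement la structure polysimpliciale de $\Ac$ fournie par la donnée radicielle valuée ainsi que la fidélité de la fonction type sur l'adhérence d'une chambre. Une difficulté secondaire, plus technique, est qu'à la dernière étape il faut distinguer l'immeuble réduit $\ImmBT(G)$ de l'immeuble étendu $\ImmBT^e(G)$ : $Z(\Kt)^1$ est le fixateur point par point de l'appartement centré (étendu), et non de l'appartement réduit, ce qui est précisément la raison pour laquelle l'intersection avec $G(\Kt)^1$ intervient là.
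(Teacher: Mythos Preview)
Your proof is correct and follows essentially the same approach as the paper's: transitivity of $G(\Kt)^+$ on pairs $(\Ac,\Cc)$ for the decomposition, the observation that a type-preserving element stabilizing a chamber fixes all its vertices and hence the affine span for $H^c_{(\Ac,\Cc)}=H^{\mathrm f}_{\Ac}$, and identification of the fixator of the extended apartment with $Z(\Kt)^1$ for the last assertion. The paper's write-up is terser (it compresses your inclusion $\subseteq$ into one sentence, ``les sommets de $\Cc$ déterminent une base affine de $\Ac$''), but the logic is the same; your more explicit treatment of the reduced-vs-extended distinction in the final step is exactly what the paper invokes when it writes that $G(\Kt)^b_{(\Ac,\Cc)}$ is the fixator of $\Ac\times V_G$.
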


\begin{proof}
    L'inclusion réciproque est évidente. Étudions l'inclusion directe.

    Soit $h\in H$. Par transitivité de $G(\Kt)^+$ sur les couples appartements-chambres, il existe $g\in G(\Kt)^+$ tel que $g\cdot \Cc=h\cdot \Cc$ et $g\cdot \Ac = h\cdot \Ac$. Donc $h':=g^{-1}h\in H_{(\Ac,\Cc)}$. Donc $h=gh'$.

    Observons que $H^b_{(\Ac,\Cc)} = H^{b,\mathrm{f}}_\Ac$ puisque $H^b$ fixe les types, donc $\Cc$, et donc tout $\Ac$ puisque les sommets de $\Cc$ déterminent une base affine de $\Ac$. Par ailleurs, $H^{b,\mathrm{f}}_\Ac=H^{1,\mathrm{f}}_\Ac$ car $H^{1,\mathrm{f}}_\Ac$ fixe $\Cc$ et donc agit trivialement sur les types. Le même raisonnement prouve que $H^c_{(\Ac,\Cc)}=H^{\mathrm{f}}_{\Ac}$.
    
    On en déduit donc d'après la section \ref{SectionExistenceImmeuble} que $G(\Kt)^b_{(\Ac,\Cc)}$ est exactement le fixateur de l'appartement étendu $\Ac\times V_G\subset \ImmBT^e(G)$, c'est-à-dire $Z(\Kt)^1$, où $Z$ est le sous-groupe de Levi associé à $\Ac$. D'où la dernière décomposition.
\end{proof}

Ce dernier résultat permet d'en déduire une décomposition de Bruhat plus précise :

\begin{prop}[Décomposition de Bruhat]\label{Bruhat} Prenons $\Ac$ un appartement de $\ImmBT(G)$ et $\Cc$ une chambre de $\Ac$. Soit $H$ un sous-groupe global de $G(\Kt)$. On a : $H=G(\Kt)^+_{\Cc}\,H_{\Ac}\,G(\Kt)^+_{\Cc}$.\\ En particulier, $G(\Kt)=G(\Kt)^+_{\Cc}\,N(\Kt)\,G(\Kt)^+_{\Cc}$, où $N$ est le normalisateur du tore déployé maximal de $G$ associé à $\Ac$.
\end{prop}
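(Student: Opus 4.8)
Le plan est de combiner la décomposition de Bruhat du système de Tits de $G(\Kt)^+$ avec la décomposition $H=G(\Kt)^+\,H_{(\Ac,\Cc)}$ fournie par le lemme \ref{décompSSGroupes}. Dans l'égalité annoncée, l'inclusion $\supseteq$ est immédiate, les trois facteurs $G(\Kt)^+_\Cc$, $H_\Ac$ et $G(\Kt)^+_\Cc$ étant des sous-groupes de $H$. Pour l'inclusion directe, je commencerais par rappeler, via la proposition \ref{BNpaire} appliquée à $G(\Kt)^+$ (qui est un sous-groupe global conforme, puisqu'il est contenu dans $G(\Kt)^b$), que ce dernier est muni d'un système de Tits de sous-groupe de Borel $B=G(\Kt)^+_\Cc$ et de sous-groupe $N=G(\Kt)^+_\Ac$. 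La décomposition de Bruhat d'un tel système, toute double classe modulo $B$ contenant un élément de $N$, donne alors $G(\Kt)^+=G(\Kt)^+_\Cc\,G(\Kt)^+_\Ac\,G(\Kt)^+_\Cc$.

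Le point technique central serait ensuite de vérifier que $H_{(\Ac,\Cc)}$ normalise $G(\Kt)^+_\Cc$. Pour cela, on utilise que $G(\Kt)^+$ est distingué dans $G(\Kt)$ : la conjugaison par un élément $h'\in H_{(\Ac,\Cc)}\subset G(\Kt)$ induit un automorphisme de $G(\Kt)^+$, qui envoie le stabilisateur de $\Cc$ dans $G(\Kt)^+$, c'est-à-dire $G(\Kt)^+_\Cc$, sur le stabilisateur de $h'\cdot\Cc=\Cc$, donc sur lui-même. On en déduit l'égalité d'ensembles $G(\Kt)^+_\Cc\,H_{(\Ac,\Cc)}=H_{(\Ac,\Cc)}\,G(\Kt)^+_\Cc$.

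Il ne resterait plus qu'à assembler. En partant de $H=G(\Kt)^+\,H_{(\Ac,\Cc)}=G(\Kt)^+_\Cc\,G(\Kt)^+_\Ac\,G(\Kt)^+_\Cc\,H_{(\Ac,\Cc)}$ et en déplaçant $H_{(\Ac,\Cc)}$ à gauche du dernier facteur $G(\Kt)^+_\Cc$ à l'aide du paragraphe précédent, on obtient $H=G(\Kt)^+_\Cc\,G(\Kt)^+_\Ac\,H_{(\Ac,\Cc)}\,G(\Kt)^+_\Cc$. Comme $G(\Kt)^+_\Ac$ et $H_{(\Ac,\Cc)}$ sont tous deux contenus dans le sous-groupe $H_\Ac$, on a $G(\Kt)^+_\Ac\,H_{(\Ac,\Cc)}\subset H_\Ac$, d'où l'inclusion $H\subset G(\Kt)^+_\Cc\,H_\Ac\,G(\Kt)^+_\Cc$ cherchée. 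Pour le cas particulier, il suffit d'appliquer ce qui précède à $H=G(\Kt)$, qui est un sous-groupe global, en utilisant le fait classique de la théorie de Bruhat-Tits selon lequel le stabilisateur dans $G(\Kt)$ de l'appartement $\Ac=\Ac(S)$ coïncide avec le normalisateur $N(\Kt)=N_G(S)(\Kt)$ (cf.\ \cite{BT1}).

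Le principal obstacle me paraît être l'articulation entre la décomposition de Bruhat, qui n'a de sens qu'à l'intérieur de $G(\Kt)^+$, et le facteur supplémentaire $H_{(\Ac,\Cc)}$ provenant du groupe plus gros $H$ : c'est précisément la normalité de $G(\Kt)^+$ dans $G(\Kt)$ (donc dans $H$), exploitée dans le point technique ci-dessus, qui permet de réconcilier les deux. Le reste se ramène à des manipulations de produits de sous-ensembles de $H$.
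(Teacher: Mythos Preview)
Ta démonstration est correcte et suit essentiellement la même stratégie que celle de l'article : décomposition de Bruhat pour $G(\Kt)^+$, puis usage de $H=G(\Kt)^+\,H_{(\Ac,\Cc)}$ du lemme~\ref{décompSSGroupes}, avec une étape de commutation $G(\Kt)^+_\Cc\,H_{(\Ac,\Cc)}=H_{(\Ac,\Cc)}\,G(\Kt)^+_\Cc$ au c\oe ur du calcul. La seule différence, mineure, porte sur la justification de cette commutation : l'article l'obtient via la transitivité de $G(\Kt)^+_\Cc$ sur les appartements contenant $\Cc$ (ce qui donne $H_\Cc=G(\Kt)^+_\Cc\,H_{(\Ac,\Cc)}=H_{(\Ac,\Cc)}\,G(\Kt)^+_\Cc$), tandis que tu invoques directement la normalité de $G(\Kt)^+$ dans $G(\Kt)$ ; les deux arguments sont valides et de même niveau de difficulté.
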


\begin{proof}
    D'après \cite[Proposition 1.4.5.(1)]{kaletha_prasad_2023}, il existe une décomposition de Bruhat pour $G(\Kt)^+$ (puisque ce dernier détermine un système de Tits, cf. la proposition \ref{BNpaire}). \linebreak Par conséquent, $G(\Kt)^+=G(\Kt)^+_{\Cc}\,G(\Kt)^+_{\Ac}\,G(\Kt)^+_{\Cc}$.

    Observons que $H_\Cc=H_{(\Ac,\Cc)}\,G(\Kt)^+_{\Cc}=G(\Kt)^+_{\Cc}\,H_{(\Ac,\Cc)}$ puisque $G(\Kt)^+_{\Cc}$ agit transitivement sur les appartements contenant $\Cc$ d'après le lemme \ref{HTransitif}. Ensuite, constatons que \linebreak $H_\Ac=H_{(\Ac,\Cc)}\,G(\Kt)^+_{\Ac}$ puisque $G(\Kt)^+_{\Ac}$ agit transitivement sur les chambres de $\Ac$ d'après également le lemme  \ref{HTransitif}. Comme d'après le lemme \ref{décompSSGroupes}, $H=H_{(\Ac,\Cc)}\,G(\Kt)^+$, on a donc finalement :
    $$H=H_{(\Ac,\Cc)}\,(G(\Kt)^+_{\Cc}\,G(\Kt)^+_{\Ac}\,G(\Kt)^+_{\Cc}) = G(\Kt)^+_{\Cc}\left(H_{(\Ac,\Cc)}\,G(\Kt)^+_{\Ac}\right)\,G(\Kt)^+_{\Cc}=G(\Kt)^+_{\Cc}\,H_{\Ac}\,G(\Kt)^+_{\Cc}.$$
\end{proof}

Prouvons maintenant quelques résultats de compatibilité des sous-groupes globaux aux extensions non ramifiées. Avant cela, on a besoin de montrer le lemme élémentaire suivant :

\begin{lem}\label{GK+Galois}
    Soit $G'$ un groupe réductif sur un corps $K'$ et $L'/K'$, une extension galoisienne de groupe de Galois $\Gamma'$. Le sous-groupe $G'(L')^+$ est $\Gamma'$-invariant et on a :
    $$G'(\Kt')^+ \subset (G'(\Lt')^+)^{\Gamma'}=G'(\Lt')^+\cap G'(\Kt').$$
\end{lem}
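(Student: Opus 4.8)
The statement has two parts: first, that $G'(L')^+$ is $\Gamma'$-stable, and second, the chain of inclusions/equality. The plan is to reduce both to the definition of $G'(L')^+$ as the subgroup generated by the $L'$-points of the root subgroups of $G'$, together with the functoriality of the $\Gamma'$-action on $G'(L')$. The equality on the right, $(G'(L')^+)^{\Gamma'} = G'(L')^+ \cap G'(K')$, is automatic as soon as we know $G'(L')^+$ is $\Gamma'$-stable, since for any $\Gamma'$-stable subgroup $M \subseteq G'(L')$ one has $M^{\Gamma'} = M \cap (G'(L'))^{\Gamma'} = M \cap G'(K')$ by Galois descent for the $K'$-variety $G'$. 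So the real content is the $\Gamma'$-stability and the left-hand inclusion.

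First I would fix a maximal $K'$-split torus $S'$ of $G'$ — or rather, to get all root subgroups, pass to a maximal $L'$-split (equivalently, since $L'/K'$ is Galois and we may enlarge, a maximal split torus after base change) — and recall that $G'(L')^+$ is by definition generated by $U_a(L')$ as $a$ ranges over the relative roots, where $U_a$ is the corresponding root subgroup (a $K'$-subscheme when defined relative to a $K'$-torus; more care is needed since root subgroups are generally only defined over a splitting field). The cleanest route: take $S'$ a maximal split torus of $G'_{L'}$ defined over $K'$ whenever possible, but in general invoke that $\sigma \in \Gamma'$ acts on $G'(L')$ via the semilinear automorphism coming from $\sigma$ acting on $L'$, and that this action permutes the set of root subgroups of $G'_{L'}$ (carrying $U_a$ to $U_{\sigma(a)}$ for the induced action on the root system). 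Hence $\sigma$ carries the generating set of $G'(L')^+$ to itself, so $\sigma(G'(L')^+) = G'(L')^+$. That gives $\Gamma'$-stability.

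For the inclusion $G'(K')^+ \subseteq (G'(L')^+)^{\Gamma'}$: the group $G'(K')^+$ is generated by $U_a(K')$ for $a$ a relative root over $K'$; each such $U_a(K')$ sits inside the product of the $U_b(L')$ over the roots $b$ of $G'_{L'}$ restricting to $a$, hence inside $G'(L')^+$; and $G'(K')^+ \subseteq G'(K') = (G'(L'))^{\Gamma'}$, so the elements are $\Gamma'$-fixed. Combining, $G'(K')^+ \subseteq G'(L')^+ \cap G'(K') = (G'(L')^+)^{\Gamma'}$, which is the claim.

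The main obstacle is purely bookkeeping about root subgroups when $G'$ is not quasi-split: one must be careful that "root subgroup" is taken in the right sense (relative root groups à la Borel–Tits for $G'(K')^+$, absolute or $L'$-relative root groups for $G'(L')^+$) and that the compatibility $U_a(K') \subseteq G'(L')^+$ genuinely holds — this is exactly the statement that the relative root group over $K'$ is generated by $K'$-points lying in the subgroup generated by the finer root groups over $L'$, which follows from the standard structure theory (e.g. \cite[6.2]{HomAbstraits} or the Borel–Tits description of $G'(K')^+$). No hard analysis is involved; the only thing to get right is the dictionary between the two notions of $(-)^+$ under base change.
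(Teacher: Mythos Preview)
Your proposal is correct and follows essentially the same route as the paper. The paper's proof is just more terse: it cites \cite[6.1.]{HomAbstraits} twice --- once to say that each $\sigma \in \Gamma'$ induces an isomorphism $G'_{L'} \to G'_{L'}$ and hence carries $G'(L')^+$ to itself (your root-subgroup permutation argument is exactly what underlies this functoriality), and once for the inclusion $G'(K')^+ \subset G'(L')^+$ under field extension (again your $U_a(K') \subset G'(L')^+$ argument). Your explicit unpacking via relative versus $L'$-root subgroups is precisely the content of that reference, so there is no genuine difference in strategy, only in level of detail.
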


\begin{proof}
    La première assertion provient de \cite[6.1.]{HomAbstraits}. En effet, $\sigma \in \Gamma'$ définit un isomorphisme $\sigma : G'_{\Lt'}\rightarrow G'_{\Lt'}$, et donc envoie $G'(\Lt')^+$ vers $G(\Lt')^+$. D'où la $\Gamma'$-invariance. 

    D'autre part, \cite[6.1.]{HomAbstraits} donne aussi $G'(\Kt')^+ \subset G'(\Lt')^+$. On a donc le résultat en utilisant la $\Gamma'$-invariance. 
\end{proof}

Ceci nous permet d'obtenir :

\begin{prop}\label{PropDesH}
    On a :
    \begin{enumerate}
        \item Tout sous-groupe global $H$ admet un plus grand sous-groupe global respectivement uniforme, bon, $\Lt$-bon, conforme, $\Lt$-conforme donné respectivement par $H^1$, $H^b$, $H\cap G(\Lt)^b$ $H^{c}$, $H\cap G(\Lt)^c$ (et donc en particulier un plus grand sous-groupe global respectivement très bon et très conforme donné par respectivement $H^{tb}$ et $H^{tc}$).
        \item Si $\Ht$ est un sous-groupe global respectivement uniforme, bon, conforme, $\Gamma$-invariant de $G(\Lt)$, alors $\Ht^{\Gamma}$ est un sous-groupe global respectivement uniforme, $\Lt$-bon, $\Lt$-conforme de $G(\Kt)$.
        \item Si $\Ht$ est un sous-groupe global $\Gamma$-invariant de $G(\Lt)$, alors $\Ht^1$, $\Ht^b$ et $\Ht^c$ sont également $\Gamma$-invariants.
    \end{enumerate}
\end{prop}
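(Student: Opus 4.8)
The plan is to reduce the three assertions to a handful of elementary properties of the ``reference'' subgroups $G(\Kt)^1$, $G(\Kt)^c$ and $G(\Kt)^b=G(\Kt)^1\cap G(\Kt)^c$, together with their analogues over $\Lt$: namely, that each of them contains $G(\Kt)^+$; that each of them meets $G(\Kt)$ in an open subgroup; and that the $\Lt$-versions are $\Gamma$-invariant. The inclusions of $G(\Kt)^+$ are already known: $G(\Kt)^+\subset G(\Kt)^0\subset G(\Kt)^1\cap G(\Kt)^c=G(\Kt)^b$ (the first inclusion because $G(\Kt)^+$ is generated by affine root subgroups, which lie in parahoriques; the second because parahoriques are bounded and act trivially on types; cf.\ la remarque \ref{RmqSSGroupesGlobaux}), and likewise, using le lemme \ref{GK+Galois}, $G(\Kt)^+\subset G(\Lt)^+\subset G(\Lt)^0\subset G(\Lt)^b\subset G(\Lt)^c$. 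For the openness over $\Kt$, each of $G(\Kt)^1$, $G(\Kt)^b$, $G(\Kt)^c$ contains $G(\Kt)^0$ and hence contains a parahorique, which is an open subgroup of $G(\Kt)$; the openness of $G(\Kt)\cap G(\Lt)^b$ and $G(\Kt)\cap G(\Lt)^c$ is the delicate point, treated last, as is the $\Gamma$-invariance of the $\Lt$-versions.

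Granting these, part (1) is purely formal. In each of the five cases, set $H_0:=H\cap\mathfrak{N}$, where $\mathfrak{N}$ is the corresponding reference subgroup among $G(\Kt)^1$, $G(\Kt)^b$, $G(\Lt)^b$, $G(\Kt)^c$, $G(\Lt)^c$. Then $H_0$ is open, being the intersection of the open subgroup $H$ with the subgroup $G(\Kt)\cap\mathfrak{N}$, which is open in $G(\Kt)$ (trivially when $\mathfrak{N}$ is defined over $\Kt$, by the last paragraph otherwise); and $H_0$ contains $G(\Kt)^+$ since both $H$ and $\mathfrak{N}$ do. So $H_0$ is a global subgroup; the inclusion $H_0\subset\mathfrak{N}$ is precisely the relevant property (uniforme, bon, $\Lt$-bon, conforme or $\Lt$-conforme) from la définition \ref{DefGlobal}; and for any global subgroup $H'\subset H$ with that property one has $H'\subset\mathfrak{N}$ and $H'\subset H$, hence $H'\subset H_0$, which is the required maximality. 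The assertions about $H^{tb}$ and $H^{tc}$ are the instance $\Lt=\Knr$.

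For part (2): since $\Ht$ is $\Gamma$-invariant, $\Ht^{\Gamma}=\Ht\cap G(\Lt)^{\Gamma}=\Ht\cap G(\Kt)$, which is open in $G(\Kt)$ (as $G(\Kt)\hookrightarrow G(\Lt)$ is a topological embedding, $\Lt/\Kt$ being unramified) and contains $G(\Kt)^+\subset G(\Lt)^+\subset\Ht$ by le lemme \ref{GK+Galois}; so $\Ht^{\Gamma}$ is a global subgroup. If moreover $\Ht\subset G(\Lt)^1$, then $\Ht^{\Gamma}\subset G(\Lt)^1\cap G(\Kt)=G(\Kt)^1$, i.e.\ $\Ht^{\Gamma}$ is uniforme; if $\Ht\subset G(\Lt)^c$, then $\Ht^{\Gamma}\subset G(\Lt)^c$, i.e.\ $\Ht^{\Gamma}$ is $\Lt$-conforme; and combining these gives the case where $\Ht$ is bon. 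For part (3), since $\Ht$ is $\Gamma$-invariant and $\Ht^1=\Ht\cap G(\Lt)^1$, $\Ht^b=\Ht\cap G(\Lt)^b$, $\Ht^c=\Ht\cap G(\Lt)^c$, it is enough that $G(\Lt)^1$, $G(\Lt)^c$, $G(\Lt)^b$ be $\Gamma$-invariant. For $G(\Lt)^1$ this is recalled in la section \ref{SectionExistenceImmeuble}. For $G(\Lt)^c=\ker\xi$: every $\sigma\in\Gamma$ acts compatibly on $G(\Lt)$ and on $\ImmBT(G_{\Lt})$, with $\sigma(g\cdot x)=\sigma(g)\cdot\sigma(x)$, and it permutes types (through the automorphism it induces on the diagramme de Dynkin affine relatif); hence if $g$ preserves types, then so does $\sigma(g)$: indeed $\sigma(g)\cdot F=\sigma(g\cdot(\sigma^{-1}\cdot F))$, the facets $g\cdot(\sigma^{-1}\cdot F)$ and $\sigma^{-1}\cdot F$ have the same type, and applying $\sigma$ preserves the relation of having the same type, so $\sigma(g)\cdot F$ and $\sigma(\sigma^{-1}\cdot F)=F$ have the same type. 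Consequently $G(\Lt)^c$ is $\Gamma$-invariant, and then so is $G(\Lt)^b=G(\Lt)^1\cap G(\Lt)^c$.

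The main obstacle is to show that $G(\Kt)\cap G(\Lt)^c$ is open in $G(\Kt)$ — whence also $G(\Kt)\cap G(\Lt)^b=G(\Kt)^1\cap(G(\Kt)\cap G(\Lt)^c)$ is open. I would deduce this from the compatibility of connected Bruhat-Tits group schemes with the unramified base change $\Oc_{\Kt}\to\Oc_{\Lt}$: a parahorique of $G(\Kt)$ is then contained in a parahorique of $G(\Lt)$, so $G(\Kt)^0\subset G(\Lt)^0$, and $G(\Lt)^0\subset G(\Lt)^c$ since $G(\Lt)^0$ acts trivially on types; as $G(\Kt)^0$ is open in $G(\Kt)$, the subgroup $G(\Kt)\cap G(\Lt)^c$ contains the open subgroup $G(\Kt)^0$ and is therefore open. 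This is exactly where the hypothesis $\Lt/\Kt$ unramified is needed, and it is also what makes the class of $\Lt$-conforme (a fortiori very conforme) global subgroups non-empty.
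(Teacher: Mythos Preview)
Your argument is correct; the main departure from the paper is in part~(3). You observe that each $\sigma\in\Gamma$ permutes $\Lt$-types and hence conjugates type-preservers to type-preservers, which is slicker than the paper's approach via the decomposition $G(\Lt)^* = G(\Lt)^+\,G(\Lt)^{*}_{(\Ac,\Cc)}$ from le lemme~\ref{décompSSGroupes}. One caveat: in the paper, the fact that $\Gamma$ acts on $\Lt$-types is recorded only later (at the beginning of the section on unramified descent) and is justified there \emph{using} the present proposition; to keep your argument non-circular you should ground it directly --- two facets share an $\Lt$-type iff they lie in the same $G(\Lt)^+$-orbit (le lemme~\ref{HTransitif}), and $G(\Lt)^+$ is $\Gamma$-stable (le lemme~\ref{GK+Galois}). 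You also treat the openness clause in ``global'' explicitly, which the paper's proof passes over; your route via $G(\Kt)^0\subset G(\Lt)^0$ and base change of parahoric group schemes is fine when those schemes are available, but the paper does not assume their existence in full generality (cf.\ the remark following la d\'efinition~\ref{defParahorique}). A scheme-free alternative is simply that $G(\Lt)^c\supset G(\Lt)^b$ is open in $G(\Lt)$ (e.g.\ via the big cell and the openness of $Z(\Lt)^1$ in $Z(\Lt)$) together with the fact that $G(\Kt)\hookrightarrow G(\Lt)$ is a topological embedding.
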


\begin{proof}
    ${}$
    \begin{enumerate}
        \item Pour le premier point, il suffit de montrer que les sous-groupes en question sont globaux. On a d'après les lemmes \ref{HTransitif}, \ref{GK+Galois} et le premier point du corollaire \ref{TrèsConformeEstConforme} :
        \[\begin{tikzcd}
        	&& {G(\Lt)^c\cap H} & {H^c} \\
        	{G(\Kt)^+} & {G(\Lt)^+\cap H} & {G(\Lt)^b\cap H} & {H^b} & {H^1.}
        	\arrow["\text{\ref{TrèsConformeEstConforme}.(1)}"',hook, from=1-3, to=1-4]
        	\arrow["\text{\ref{GK+Galois}}"',hook, from=2-1, to=2-2]
        	\arrow["\text{\ref{HTransitif}}"',hook, from=2-2, to=2-3]
        	\arrow[hook, from=2-3, to=1-3]
        	\arrow["\text{\ref{TrèsConformeEstConforme}.(1)}"',hook, from=2-3, to=2-4]
        	\arrow[hook, from=2-4, to=1-4]
        	\arrow[hook, from=2-4, to=2-5]
        \end{tikzcd}\]
        \item Puisque $\widetilde{H}^\Gamma \subset \widetilde{H}$, il suffit seulement de montrer que $G(\Kt)^+\subset \widetilde{H}^\Gamma$. Or, on a $G(\Kt)^+\underset{\text{\ref{GK+Galois}}}{\subset} (G(\Lt)^+)^\Gamma \subset \widetilde{H}^\Gamma$. D'où le résultat.
        \item Le troisième point se ramène au cas où $\Ht=G(\Lt)$. Pour $G(\Lt)^1$, cela a déjà été fait à la fin de la section \ref{SectionExistenceImmeuble}. Pour le reste, utilisons le lemme \ref{décompSSGroupes}. Étant donné un appartement $\Ac\subset \ImmBT(G_{\Lt})$ et une $\Lt$-chambre $\Cc\subset \Ac$, on a : $G(\Lt)^*=G(\Lt)^+ G(\Lt)^*_{(\Ac,\Cc)}$ pour $*\in \{b,c\}$. Or, $G(\Lt)^+$ est $\Gamma$-invariant d'après le lemme \ref{GK+Galois}. Il suffit donc de montrer que l'orbite sous Galois de $G(\Lt)^*_{(\Ac,\Cc)}$ est dans $G(\Lt)^*$ pour tout $*\in \{b,c\}$. 

        Or, le lemme \ref{décompSSGroupes} montre également que $G(\Lt)^b_{(\Ac,\Cc)}=G(\Lt)^{1,\mathrm{f}}_\Ac$ et que \linebreak $G(\Lt)^c_{(\Ac,\Cc)}=G(\Lt)^\mathrm{f}_\Ac$. Mais pour $\sigma \in \Gamma$, $\sigma(G(\Lt)^{1,\mathrm{f}}_\Ac)=G(\Lt)^{1,\mathrm{f}}_{\sigma(\Ac)}\subset G(\Lt)^b$, et de même $\sigma(G(\Lt)^\mathrm{f}_\Ac)=G(\Lt)^\mathrm{f}_{\sigma(\Ac)}\subset G(\Lt)^c$. Ceci donne donc le résultat comme voulu.
    \end{enumerate}
\end{proof}

Introduisons quelques notations supplémentaires qui vont nous servir par la suite :

\begin{déf}
    Soient $H$ un sous-groupe global de $G(K)$ et $F$ une facette de type $T$. Notons :
    \begin{itemize}
        \item $\Xi_H$, l'image de $H$ par $\xi$ (qui induit donc $H/H^{c} \cong \Xi^H$).
        \item $\Xi_{H,T}$, l'image de $H_F$ par $\xi$ (qui induit donc $H_F/H^{c}_F \cong \Xi_{H,T}$). C'est aussi l'ensemble $\{w\in \Xi_H \mid w\cdot T = T\}$ puisque $H^{c}\subset H$ agit transitivement et de manière conforme sur les chambres.
        \item $\mathrm{Orb}(F)_{H}$, l'orbite de $F$ par $H$.
        \item $\mathrm{Orb}(T)_{\Xi_H}$ (ou même $\mathrm{Orb}(T)_{H}$), l'orbite de $T$ par $\Xi_H$.
    \end{itemize}
\end{déf}

Généralisons maintenant la notion de facette et les objets associés. Cette généralisation est peu coûteuse pour la suite et ajoute une richesse supplémentaire à notre problème général.

\begin{déf}\label{DefMultifacette}
    Appelons \textbf{multifacette} toute union de facettes incluse dans la même adhérence d'une chambre. Pour un tel objet, on peut définir le \textbf{type} (ou \textbf{multitype}, pour insister sur le fait que cela est relatif à une multifacette) comme étant l'ensemble des types des différentes facettes la composant. Le morphisme $\xi$ s'étend également de manière naturelle et les résultats de la section précédente relatives aux facettes s'étendent également.

    On dit qu'une multifacette est \textbf{fortement invariante} par l'action d'un groupe si chacune des facettes la composant est invariante (il ne suffit donc pas que la multifacette soit invariante en tant qu'objet géométrique). On définit la même notion pour les multitypes.

    On dit également qu'un groupe agissant sur $\ImmBT(G)$ par automorphismes polysimpliciaux agit de manière \textbf{conforme} sur une multifacette $\Fc$ s'il l'envoie sur des multifacettes de même type.

    Si $\Fc$ est une multifacette de décomposition en facettes $\bigsqcup_{i\in I} \Fc_i$, alors pour tout sous-groupe global $H$ de $G(\Kt)$, on note $H_{(\Fc)}:=H_{(\Fc_i)_{i\in I}}:=\bigcap_{i\in I}H_{\Fc_i}$. Ce groupe est appelé \textbf{sous-groupe multistabilisateur de la multifacette} $\boldsymbol{\Fc}$ \textbf{relativement à} $\boldsymbol{H}$.

    Plus généralement, on utilise la notation $(\Fc)$ pour préciser que l'on regarde bien $\Fc$ en tant que multifacette et non en tant que partie de l'immeuble (on fait de même pour les multitypes).
\end{déf}

\begin{rmq}
    On voit donc que l'utilisation de multifacettes donne lieu à une plus grande famille de sous-groupes que les seuls stabilisateurs de facettes. En particulier, cela donne accès aux fixateurs de facettes, en prenant par exemple la multifacette associée aux sommets incidents à une facette.

    Remarquons toutefois que, dans le cas conforme, stabiliser une facette et préserver son type implique en fait de la fixer. Dans ce cas, le multistabilisateur d'une multifacette dont on préserve le type n'est autre que le fixateur de l'union des facettes la composant : \linebreak la notion de multistabilisateur n'a donc d'intérêt que si l'on considère des sous-groupes globaux non conformes.
\end{rmq}

\begin{rmq}
    Comme pour les facettes, l'adhérence topologique d'une multifacette \linebreak $\Fc:=\bigsqcup_i\Fc_i$ est exactement la réunion des sous-facettes des $\Fc_i$. En effet, cela est une conséquence du fait que $\overline{\Fc}=\overline{\bigcup_i\Fc_i}=\bigcup_i\overline{\Fc}_i$. On définit également la relation d'incidence comme étant une inclusion au niveau des adhérences.
\end{rmq}

\section{Quelques compléments sur la descente non ramifiée}

Rappelons que, d'après Rousseau dans \cite[Proposition 2.4.6]{TheseRousseau}, le groupe de Galois $\Gamma$ agit par automorphismes polysimpliciaux sur $\ImmBT(G_{\Lt})$ de manière compatible avec l'action de $G(\Lt)$. D'après le théorème de descente modérément ramifiée (\cite[Proposition 5.1.1.]{TheseRousseau}), l'ensemble des points fixes s'identifie de façon unique à $\ImmBT(G)$. On peut d'ailleurs choisir une métrique invariante sur $\ImmBT(G_{\Lt})$ (cf. \cite[\S 2.2]{TheseRousseau}) de telle sorte que $\Gamma$ agisse par isométrie (\cite[Remarque 2.4.7.(f)]{TheseRousseau}), et donc de telle sorte que $\ImmBT(G)\subset\ImmBT(G_{\Lt})$ soit un plongement isométrique. Sous ce choix, $\ImmBT(G)$ est également un fermé convexe de $\ImmBT(G_{\Lt})$. En effet, $\Gamma$ agit continûment sur $\ImmBT(G_{\Lt})$ (puisqu'il agit par isométries), d'où le caractère fermé. Le caractère convexe provient ensuite de l'unicité de la géodésique reliant deux points (puisque $\Gamma$ agit par isométries).
\medskip

En particulier, puisque $\Gamma$ agit par automorphismes polysimpliciaux, il envoie facettes sur facettes. En outre, cette action sur les facettes se factorise en une action sur les types (et même sur le diagramme de Dynkin affine relatif). Il suffit en effet de voir que, étant donné $\Fc$ une $L$-facette et $g\in G(\Lt)^c$, les facettes $\sigma(\Fc)$ et $\sigma(g\cdot \Fc)$ ont même type. Comme $\sigma(g\cdot \Fc)=\sigma(g)\cdot \sigma(\Fc)$ et que $G(\Lt)^c$ est $\Gamma$-invariant (cf. point (3) de la proposition \ref{PropDesH}), on a le résultat. Cette constatation s'étend bien entendu aux multifacettes.
\medskip

Introduisons alors la définition suivante (déjà présente dans \cite[9.2.4]{kaletha_prasad_2023}) :

\begin{déf}
    On appelle $\boldsymbol{\Gamma}$\textbf{-multifacette} une $\Lt$-multifacette fortement $\Gamma$-invariante. En particulier, une $\boldsymbol{\Gamma}$\textbf{-facette} est une $\Lt$-facette $\Gamma$-invariante.

    On définit également un $\boldsymbol{\Gamma}$\textbf{-sommet} (resp. une $\boldsymbol{\Gamma}$\textbf{-chambre}) comme étant une $\Gamma$-facette minimale (resp. maximale) parmi les $\Gamma$-facettes.
\end{déf}

Rappelons aussi que le théorème de descente non ramifiée a été montré originellement par Bruhat et Tits (dans \cite[5.]{BT2}) et généralisé par Prasad (dans \cite[Theorem 3.8.]{unram_prasad}). Ce théorème fournit un dictionnaire plus précis que le théorème de descente modérément ramifiée (notamment une forte compatibilité au niveau des facettes et des sous-groupes parahoriques).
\medskip

On se propose de développer quelques compléments à ce théorème. Avant cela, on a besoin de montrer le lemme suivant :

\begin{lem}\label{EgaliteAdherence}
    Soit $\Fc$ une $\Gamma$-multifacette de $\ImmBT(G_{\Lt})$.\\ On a l'égalité : $\overline{\Fc\cap \ImmBT(G)}=\overline{\Fc}\cap \ImmBT(G)$.
\end{lem}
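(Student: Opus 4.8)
The plan is to reduce to the case of a single $\Gamma$-facet, and then, for an arbitrary point of $\overline{\Fc}\cap\ImmBT(G)$, to produce an explicit sequence of points of $\Fc\cap\ImmBT(G)$ converging to it, obtained by sliding along the segment from the barycenter of the facet.

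First I would dispose of the formal reductions. Writing $\Fc=\bigsqcup_{i\in I}\Fc_i$ for the decomposition into facets (each $\Fc_i$ is itself a $\Gamma$-facet, since $\Fc$ is \emph{fortement} $\Gamma$-invariante), one has $\overline{\Fc}=\bigcup_i\overline{\Fc_i}$ and $\Fc\cap\ImmBT(G)=\bigsqcup_i\bigl(\Fc_i\cap\ImmBT(G)\bigr)$, and since closure commutes with finite unions, the left-hand side of the asserted equality equals $\bigcup_i\overline{\Fc_i\cap\ImmBT(G)}$ while the right-hand side equals $\bigcup_i\bigl(\overline{\Fc_i}\cap\ImmBT(G)\bigr)$. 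It therefore suffices to prove the equality for a single $\Gamma$-facet, which I now denote $\Fc$. The inclusion $\overline{\Fc\cap\ImmBT(G)}\subseteq\overline{\Fc}\cap\ImmBT(G)$ is then immediate: $\Fc\cap\ImmBT(G)$ is contained both in $\overline{\Fc}$ and in $\ImmBT(G)$, and both of these are closed in $\ImmBT(G_{\Lt})$ (the closedness of $\ImmBT(G)$ being the fact recalled at the start of this section), hence so is their intersection, which therefore contains $\overline{\Fc\cap\ImmBT(G)}$.

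For the reverse inclusion, let $b$ be the barycenter of the facet $\Fc$. Since $\Gamma$ stabilizes $\Fc$ and acts on the polysimplex $\overline{\Fc}$ by affine (polysimplicial) automorphisms, every $\sigma\in\Gamma$ permutes the vertices of $\overline{\Fc}$ and hence fixes $b$; thus $b\in\Fc\cap\ImmBT(G)$. Now fix any $x\in\overline{\Fc}\cap\ImmBT(G)$ and consider the affine segment $\gamma(t)=(1-t)b+tx$ in $\overline{\Fc}$, $t\in[0,1]$. For each $\sigma\in\Gamma$ the restriction $\sigma|_{\overline{\Fc}}$ is affine and fixes $b$ and $x$, so $\sigma(\gamma(t))=(1-t)\sigma(b)+t\sigma(x)=\gamma(t)$ for all $t$; hence $\gamma([0,1])\subseteq\ImmBT(G)$ (equivalently, $\gamma$ is the geodesic joining the two $\Gamma$-fixed points $b,x$ and $\ImmBT(G)$ is convex, as recalled above). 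On the other hand $b$ lies in the relative interior $\Fc$ of $\overline{\Fc}$ while $x\in\overline{\Fc}$, so $\gamma(t)\in\Fc$ for every $t\in[0,1)$. Combining the two, $\gamma(t)\in\Fc\cap\ImmBT(G)$ for all $t\in[0,1)$, and $\gamma(t)\to x$ as $t\to1$, so $x\in\overline{\Fc\cap\ImmBT(G)}$, which completes the proof.

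The step I expect to require the most care is this reverse inclusion — concretely, the need to move from a prescribed point of $\overline{\Fc}\cap\ImmBT(G)$ back into the open stratum $\Fc$ while remaining $\Gamma$-fixed. This is exactly what the barycenter (a canonical $\Gamma$-fixed interior point of the facet) together with the affineness of the $\Gamma$-action on $\overline{\Fc}$ — or, equivalently, the convexity of $\ImmBT(G)$ in $\ImmBT(G_{\Lt})$ and uniqueness of geodesics — is there to provide. Everything else, namely the reduction to one facet and the easy inclusion, is purely formal.
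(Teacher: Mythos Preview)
Your proof is correct and follows essentially the same route as the paper's: reduce to a single $\Gamma$-facet, handle the easy inclusion formally, and for the reverse inclusion slide along the segment from a $\Gamma$-fixed interior point of $\Fc$ to the given boundary point, using convexity of $\ImmBT(G)$ and the fact that the half-open segment stays in the open facet. The only cosmetic difference is that the paper takes an arbitrary point $x\in\Fc\cap\ImmBT(G)$ (its existence being implicit from $\Fc$ being a $\Gamma$-facet), whereas you explicitly produce the barycenter and justify its $\Gamma$-fixedness; the geodesic/convexity argument is then identical.
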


\begin{proof}
    Prouvons d'abord le cas où $\Fc$ est une facette.\\
    Observons déjà que $\Fc\cap \ImmBT(G)\subset \overline{\Fc}\cap \ImmBT(G)$. Par conséquent, $\overline{\Fc\cap \ImmBT(G)}\subset \overline{\Fc}\cap \ImmBT(G)$.

    \noindent Montrons l'inclusion réciproque. Prenons $x\in \Fc\cap \ImmBT(G)$ et $y\in \overline{\Fc}\cap \ImmBT(G)$. Comme $\Fc$ est convexe, la géodésique $[x,y]\subset \overline{\Fc}$ est tel que la géodésique à moitié ouverte $[x,y[$ soit incluse dans $\Fc$ (cf. \cite[II.\S 2.6. Proposition 16.]{BourbakiEVT}).  

    Par ailleurs, puisque $\ImmBT(G)$ est convexe et que $x$ et $y$ sont dans $\ImmBT(G)$, la géodésique $[x,y]$ est en fait incluse dans $\ImmBT(G)$. Par conséquent, $[x,y[$ est incluse dans $\Fc\cap \ImmBT(G)$. Ceci implique que $y$ est dans $\overline{\Fc\cap \ImmBT(G)}$. D'où l'inclusion réciproque.

    Montrons maintenant le cas général. Notons $\Fc=\bigsqcup_i \Fc_i$ la décomposition en facettes de $\Fc$. On a :
    \begin{alignat*}{2}
        \overline{\Fc\cap \ImmBT(G)}=\overline{(\bigcup_i\Fc_i)\cap \ImmBT(G)}& &\\
        =\overline{\bigcup_i(\Fc_i\cap \ImmBT(G))}&=\bigcup_i\overline{\Fc_i\cap \ImmBT(G)}\underset{\underset{\text{facettes}}{\text{cas des}}}{=}\bigcup_i(\overline{\Fc_i}\cap \ImmBT(G))& &=(\bigcup_i\overline{\Fc_i})\cap \ImmBT(G)\\
        && &= \overline{\Fc}\cap \ImmBT(G).
    \end{alignat*}
    D'où le résultat.
\end{proof}

On a ainsi :

\begin{prop}\label{CorrespUnram}
    On a la correspondance $G(\Kt)$-équivariante croissante pour l'inclusion (resp. pour l'incidence) suivante : \upshape
    \begin{align*}
        \Biggl\{
        \begin{gathered}
          \Gamma\text{-multifacettes}  \\
          \text{ de } \ImmBT(G_{\Lt})
        \end{gathered}
        \Biggr\}
        & \cong
        \Biggl\{
        \begin{gathered}
        \Kt\text{-multifacettes } \\
          \text{ de } \ImmBT(G)
        \end{gathered}
        \Biggr\}\\
        \bigsqcup_i \Fc_i & \overset{\alpha}{\mapsto} \bigsqcup_i \Fc_i^{\Gamma} = \bigsqcup_i \Fc_i\cap \ImmBT(G)\\
        \bigsqcup_i\widetilde{F}_i & \overset{\beta}{\mapsfrom} \bigsqcup_i F_i
    \end{align*}
    \noindent\textit{où $F\mapsto \widetilde{F}$ associe à une $\Kt$-facette l'unique $\Lt$-facette contenant son barycentre.}
    
    \textit{On a donc en particulier que, sous cette correspondance, un $\Gamma$-sommet correspond à un $\Kt$-sommet et une $\Gamma$-chambre correspond à une $\Kt$-chambre.}
\end{prop}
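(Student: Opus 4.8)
L'idée est de vérifier que les deux applications $\alpha$ et $\beta$ sont bien définies, qu'elles sont mutuellement inverses, et qu'elles respectent toutes les structures annoncées ($G(\Kt)$-équivariance, croissance pour l'inclusion et l'incidence, correspondance sommets/chambres). La plupart du travail consiste à réduire au cas d'une seule facette, déjà essentiellement contenu dans le théorème de descente non ramifiée de Bruhat--Tits--Prasad, puis à recoller au niveau des multifacettes à l'aide du lemme \ref{EgaliteAdherence}.

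D'abord je montrerais que $\beta$ est bien définie. Partant d'une $\Kt$-facette $F$ de $\ImmBT(G)$, son barycentre est un point de $\ImmBT(G)\subset \ImmBT(G_{\Lt})$, donc il appartient à une unique $\Lt$-facette $\widetilde{F}$. Comme $\Gamma$ fixe $\ImmBT(G)$ point par point, il fixe ce barycentre, donc préserve $\widetilde{F}$ (une facette est déterminée par n'importe lequel de ses points) : ainsi $\widetilde{F}$ est une $\Gamma$-facette. Pour une $\Kt$-multifacette $\bigsqcup_i F_i$ incluse dans l'adhérence d'une $\Kt$-chambre $\Cc$, les $\widetilde{F}_i$ sont toutes incluses dans l'adhérence de la $\Lt$-chambre $\widetilde{\Cc}$ d'après le cas des facettes (appliqué aux sous-facettes : $F_i\prec \Cc$ donne $\overline{F_i}\subset\overline{\Cc}$, et on vérifie que le passage $F\mapsto\widetilde{F}$ préserve cette incidence grâce au lemme \ref{EgaliteAdherence}), donc $\bigsqcup_i\widetilde{F}_i$ est bien une $\Gamma$-multifacette. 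Ensuite je montrerais que $\alpha$ est bien définie : pour une $\Gamma$-multifacette $\bigsqcup_i\Fc_i$, chaque $\Fc_i$ est $\Gamma$-invariante donc $\Fc_i^{\Gamma}=\Fc_i\cap\ImmBT(G)$ est non vide (le théorème de descente non ramifiée garantit qu'une $\Gamma$-facette rencontre $\ImmBT(G)$ en une $\Kt$-facette) ; le lemme \ref{EgaliteAdherence} assure alors que les $\Fc_i\cap\ImmBT(G)$ sont dans l'adhérence de $\Cc\cap\ImmBT(G)$ où $\Cc$ est une $\Lt$-chambre contenant tous les $\Fc_i$, et cette intersection est une $\Kt$-chambre : donc $\alpha(\bigsqcup_i\Fc_i)$ est bien une $\Kt$-multifacette.

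Puis je vérifierais que $\alpha\circ\beta=\id$ et $\beta\circ\alpha=\id$, ce qui se ramène immédiatement au cas d'une facette. Pour $\beta\circ\alpha=\id$ : si $\Fc$ est une $\Gamma$-facette, alors $\Fc\cap\ImmBT(G)$ est une $\Kt$-facette dont le barycentre (calculé dans $\ImmBT(G)$, mais c'est le même point dans $\ImmBT(G_{\Lt})$ puisque le plongement est isométrique, donc affine sur les appartements communs) appartient à $\Fc$ ; donc l'unique $\Lt$-facette le contenant est $\Fc$. Pour $\alpha\circ\beta=\id$ : si $F$ est une $\Kt$-facette, son barycentre $b$ est dans $\widetilde{F}\cap\ImmBT(G)$, qui est une $\Kt$-facette contenant $b$, donc égale à $F$ (une $\Kt$-facette est déterminée par l'un de ses points). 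La croissance pour l'inclusion et l'incidence résulte du lemme \ref{EgaliteAdherence} (pour $\alpha$, $\Fc\subset\Fc'\Rightarrow\Fc\cap\ImmBT(G)\subset\Fc'\cap\ImmBT(G)$, et de même en passant aux adhérences ; pour $\beta$, on remonte via la bijection). La $G(\Kt)$-équivariance vient de ce que $G(\Kt)$ commute à l'action de $\Gamma$ et préserve $\ImmBT(G)$. Enfin, un $\Gamma$-sommet (resp. une $\Gamma$-chambre) étant une $\Gamma$-facette minimale (resp. maximale), son image par $\alpha$ est minimale (resp. maximale) parmi les $\Kt$-facettes par croissance pour l'incidence, donc c'est un $\Kt$-sommet (resp. une $\Kt$-chambre) ; le théorème de descente non ramifiée de \cite{unram_prasad} confirme d'ailleurs que les $\Kt$-chambres obtenues ainsi sont exactement les chambres de $\ImmBT(G)$.

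L'obstacle principal est moins une difficulté conceptuelle qu'un soin technique : il faut s'assurer que l'intersection d'une $\Gamma$-facette avec $\ImmBT(G)$ est bien une \emph{facette} de $\ImmBT(G)$ (et non une réunion stricte, ou une partie dégénérée), ce qui est précisément le contenu du théorème de descente non ramifiée (\cite[Theorem 3.8.]{unram_prasad}, ou \cite[5.]{BT2}) ; et que la notion de barycentre se comporte bien vis-à-vis du plongement isométrique $\ImmBT(G)\hookrightarrow\ImmBT(G_{\Lt})$, ce qui utilise la convexité établie au début de la section. Le lemme \ref{EgaliteAdherence} est l'ingrédient qui permet de faire passer sans friction tous ces énoncés du cas des facettes au cas des multifacettes.
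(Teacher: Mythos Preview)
Your approach is essentially the same as the paper's: both reduce to the facet case, use the barycentre to define $\beta$, invoke unramified descent for $\alpha$, and check the mutual inverse and incidence via Lemma~\ref{EgaliteAdherence}. The paper is slightly terser (it simply says ``raisonner facette par facette'' and cites \cite[5.1.5.1 Remark (c)]{RousseauNewBook} for the well-definedness of $\alpha$ on facets), but the skeleton is identical.

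There is however one concrete step in your argument that fails. When you argue that $\alpha$ sends a $\Gamma$-multifacette to a $\Kt$-multifacette, you take an $\Lt$-chambre $\Cc$ with all $\Fc_i\prec\Cc$, invoke Lemma~\ref{EgaliteAdherence} for $\Cc$, and claim that $\Cc\cap\ImmBT(G)$ is a $\Kt$-chambre. But Lemma~\ref{EgaliteAdherence} requires $\Gamma$-invariance, and your $\Cc$ is an arbitrary $\Lt$-chambre with no reason to be $\Gamma$-stable. Worse: if $\Cc$ is not $\Gamma$-invariant then $\Cc\cap\ImmBT(G)=\emptyset$, since any point $x$ in this intersection would satisfy $x=\sigma(x)\in\Cc\cap\sigma(\Cc)$ for every $\sigma\in\Gamma$, forcing $\Cc=\sigma(\Cc)$. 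So the claimed $\Kt$-chambre does not exist in general. The fix is to replace $\Cc$ by the \emph{join} $\Fc:=\Fc_1\vee\cdots\vee\Fc_n$ in $\overline{\Cc}$: this join is intrinsic (independent of the choice of $\Cc$), so each $\sigma\in\Gamma$ sends it to the join of the $\sigma(\Fc_i)=\Fc_i$, i.e.\ to itself. Hence $\Fc$ is a $\Gamma$-facette, Lemma~\ref{EgaliteAdherence} applies to it, and $\Fc^\Gamma$ is a $\Kt$-facette whose closure contains every $\Fc_i^\Gamma$; any $\Kt$-chambre dominating $\Fc^\Gamma$ then does the job. (The paper does not spell this out either, but since it makes no explicit wrong claim the issue stays hidden there.)

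A minor related slip: in your argument for $\beta$ on multifacettes you call $\widetilde{\Cc}$ an ``$\Lt$-chambre''. It is only a $\Gamma$-chambre in general (an $\Lt$-chambre only when $G$ is résiduellement quasi-déployé); but since any $\Lt$-facette sits in the closure of some $\Lt$-chambre, your conclusion that $\bigsqcup_i\widetilde{F}_i$ is an $\Lt$-multifacette still holds.
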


\begin{proof}
    Pour alléger la preuve, on écrit seulement le cas des facettes. Il suffit de raisonner facette par facette pour avoir le cas des multifacettes.

    La remarque \cite[5.1.5.1 Remark (c)]{RousseauNewBook} énonce explicitement la bonne définition et même la surjectivité de la flèche directe au niveau des facettes. Réciproquement, pour une \linebreak $\Kt$-facette $F$, la $\Lt$-facette $\widetilde{F}$ est $\Gamma$-invariante puisqu'elle est l'unique $\Lt$-facette contenant le barycentre de $F$, lui-même fixé par $\Gamma$. D'où la bonne définition de la flèche réciproque.

    Observons alors que $\widetilde{\Fc^{\Gamma}}=\Fc$ car les deux facettes contiennent le barycentre de $\Fc^{\Gamma}$. Réciproquement, $(\widetilde{F})^{\Gamma}=F$ car les deux facettes contiennent le barycentre de $F$.

    Notons que les deux ensembles sont $G(\Kt)$-stables. La flèche directe est évidemment \linebreak $G(\Kt)$-équivariante puisque tout élément de $G(\Kt)$ est fixé par $\Gamma$ et puisque l'action de $\Gamma$ sur $\ImmBT(G_{\Lt})$ est compatible à l'action de $G(\Lt)$. La flèche réciproque l'est donc également.

    La croissance pour l'inclusion est bien sûr évidente dans les deux sens.
    
    Regardons l'incidence pour la flèche directe. D'après le lemme \ref{EgaliteAdherence} , on a \linebreak $\overline{\Fc}\cap \ImmBT(G)=\overline{\Fc\cap \ImmBT(G)}$. Par conséquent, si une facette $\Gamma$-invariante $\Fc'$ est dans $\overline{\Fc}$, alors $\Fc'\cap \ImmBT(G)\subset \overline{\Fc}\cap \ImmBT(G)=\overline{\Fc\cap \ImmBT(G)}$. Autrement dit, ${\Fc'}^{\Gamma}$ est incident à  $\Fc^{\Gamma}$. C'est ce que l'on voulait.
    
    Pour la flèche réciproque, si $\overline{F}\subset \overline{F'}$, alors le barycentre de $F$ est contenu dans $\overline{F'}\subset \overline{\widetilde{F'}}$. Donc $\widetilde{F}\subset \overline{\widetilde{F'}}$ puisque $\widetilde{F}$ l'unique $\Lt$-facette contenant le barycentre.
\end{proof}

\pagebreak

Rappelons les définitions suivantes :

\begin{déf}
    ${}$
    \begin{enumerate}
        \item On dit que $G$ est \textbf{résiduellement déployé} si $G$ et $G_{\Ktnr}$ ont même rang semi-simple relatif.

        \item On dit que $G$ est \textbf{résiduellement quasi-déployé} s'il existe une $\Ktnr$-chambre $\Galnr$-invariante dans $\ImmBT(G_{\Ktnr})$ (ou encore s'il existe une $\Galnr$-chambre qui est une $\Ktnr$-chambre).
    \end{enumerate}
\end{déf}

On a également une correspondance au niveau des types :

\begin{prop}\label{UnramTypes}
    ${}$
    \begin{enumerate}
        \item La correspondance de la proposition \ref{CorrespUnram} préserve les types. 
    
        \noindent En particulier, l'orbite par l'action d'un sous-groupe global conforme de $G(\Kt)$ d'une $\Gamma$-multifacette (resp. d'une $\Kt$-multifacette) décrit exactement les $\Gamma$-multifacettes de même $\Lt$-type (resp. les $\Kt$-multifacettes de même $\Kt$-type).

        \item Notons $\typebt_{\mathrm{max}}$, le type d'une $\Gamma$-chambre (qui est indépendant du choix de la \linebreak $\Gamma$-chambre). On a donc les bijections naturelles $\Xi$-équivariantes croissantes pour l'inclusion et l'incidence (sous un sens évident) suivante : \upshape
        \begin{align*}
            \Biggl\{
            \begin{gathered}
            \Lt\text{-multitypes fortement} \\
            \Gamma \text{-inv.} \text{ de } \ImmBT(G_{\Lt}) \text{ dans } \typebt_{\mathrm{max}}
            \end{gathered}
            \Biggr\}
            \overset{\sim}{\leftarrow}
            \Biggl\{
            \begin{gathered}
              \text{Ensembles de } \Gamma\text{-multifacettes de } \\
              \ImmBT(G_{\Lt})\text{ de même } \Lt\text{-multitype}
            \end{gathered}
            \Biggr\}
            \begin{gathered}
            \overset{\overline{\alpha}}{\rightarrow} \\
            \overset{\overline{\beta}}{\leftarrow}
            \end{gathered}
            \Biggl\{
            \begin{gathered}
            \Kt\text{-multitypes } \\
              \text{ de } \ImmBT(G)
            \end{gathered}
            \Biggr\}
        \end{align*}    
        \textit{En particulier, si $G$ est résiduellement quasi-déployé, l'ensemble de gauche est exactement celui des $\Lt$-multitypes fortement $\Gamma$-invariants de $\ImmBT(G_{\Lt})$.}
        \end{enumerate}
\end{prop}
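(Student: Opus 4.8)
Le plan est de tout déduire formellement de la correspondance de la proposition~\ref{CorrespUnram}, l'unique ingrédient non formel étant que $G(\Kt)^+$ préserve \emph{à la fois} les $\Kt$-types et les $\Lt$-types : en effet $G(\Kt)^+$ est un sous-groupe global de $G(\Kt)$, donc préserve les $\Kt$-types d'après le lemme~\ref{HTransitif} ; et $G(\Kt)^+\subset G(\Lt)^+$ d'après le lemme~\ref{GK+Galois}, or $G(\Lt)^+$ préserve les $\Lt$-types d'après le lemme~\ref{HTransitif} appliqué sur $\Lt$. Comme $G(\Kt)^+$ agit transitivement sur les couples $(\Ac,\Cc)$ de $\ImmBT(G)$ (lemme~\ref{HTransitif}) et que la proposition~\ref{CorrespUnram} est $G(\Kt)$-équivariante (l'opération $F\mapsto\widetilde F$ l'étant elle aussi, puisqu'elle respecte les barycentres), le groupe $G(\Kt)^+$ agit aussi transitivement sur les $\Gamma$-chambres, lesquelles ont donc toutes le même $\Lt$-type, noté $\typebt_{\mathrm{max}}$. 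Je commencerais ensuite par noter que toute $\Gamma$-multifacette est incluse dans l'adhérence d'une $\Gamma$-chambre : on la pousse en une $\Kt$-multifacette par $\alpha$, on inclut celle-ci dans l'adhérence d'une $\Kt$-chambre, et l'on revient en arrière par $\beta$ grâce à l'assertion d'incidence de la proposition~\ref{CorrespUnram}. Ainsi le $\Lt$-multitype d'une $\Gamma$-multifacette est automatiquement fortement $\Gamma$-invariant et $\prec\typebt_{\mathrm{max}}$ ; réciproquement, une sous-facette d'une $\Gamma$-chambre fixée $\widetilde\Cc$ est $\Gamma$-invariante dès que son type l'est (deux sous-facettes distinctes de $\widetilde\Cc$ ayant des types distincts, et $\sigma$ permutant les sous-facettes de $\sigma\widetilde\Cc=\widetilde\Cc$), de sorte que \emph{tout} $\Lt$-multitype fortement $\Gamma$-invariant $\prec\typebt_{\mathrm{max}}$ est réalisé dans $\overline{\widetilde\Cc}$.

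Pour le point (1), les multitypes n'étant que des familles de types, il suffit de traiter le cas des facettes. Étant données deux $\Gamma$-facettes $\Fc,\Fc'$, je montrerais qu'elles ont même $\Lt$-type si et seulement si $\Fc^\Gamma,\Fc'^\Gamma$ ont même $\Kt$-type : si les $\Lt$-types coïncident, on inclut $\Fc\prec\widetilde\Cc$ et $\Fc'\prec\widetilde\Cc'$, on prend $g\in G(\Kt)^+$ tel que $g\widetilde\Cc=\widetilde\Cc'$, et l'on observe que $g\Fc$ et $\Fc'$ sont des sous-facettes de $\widetilde\Cc'$ de même $\Lt$-type (puisque $g$ préserve les $\Lt$-types), donc égales, d'où $g\Fc^\Gamma=\Fc'^\Gamma$ avec $g$ préservant les $\Kt$-types ; réciproquement, la transitivité de $G(\Kt)^+$ sur les $\Kt$-chambres en préservant les $\Kt$-types fournit $g\in G(\Kt)^+$ tel que $g\Fc^\Gamma=\Fc'^\Gamma$, d'où $g\Fc=\widetilde{g\Fc^\Gamma}=\widetilde{\Fc'^\Gamma}=\Fc'$. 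Donc $\alpha$ passe au quotient en une bijection au niveau des types, ce qui est le contenu de (1). Pour l'assertion finale de (1) : un sous-groupe global conforme $H\subset G(\Kt)$ vérifie $G(\Kt)^+\subset H\subset G(\Kt)^{c}$, donc ses orbites sur les $\Kt$-multifacettes sont exactement les classes de $\Kt$-multitype ; et, agissant sur $\ImmBT(G_{\Lt})$ en commutant à $\Gamma$, il permute les $\Gamma$-multifacettes ; d'après la première partie $h\Fc$ et $\Fc$ ont même $\Lt$-multitype si et seulement si $(h\Fc)^\Gamma$ et $\Fc^\Gamma$ ont même $\Kt$-multitype, ce qui est vrai car $h\in G(\Kt)^{c}$, et l'inclusion réciproque (l'orbite remplit toute la classe) résulte de $G(\Kt)^+\subset H$ joint à $h\widetilde X=\widetilde{hX}$.

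Le point (2) est alors une mise en forme de (1) et de la proposition~\ref{CorrespUnram}. L'application de l'ensemble du milieu vers celui de gauche envoie la classe de $\Lt$-multitype d'une $\Gamma$-multifacette sur son $\Lt$-multitype commun : l'injectivité est la définition des classes, et la surjectivité sur les $\Lt$-multitypes fortement $\Gamma$-invariants $\prec\typebt_{\mathrm{max}}$ est l'assertion de réalisation du premier paragraphe. Les flèches $\overline{\alpha},\overline{\beta}$ envoient respectivement $\mathrm{Orb}(\Fc)_{G(\Kt)^+}$ sur le $\Kt$-multitype de $\Fc^\Gamma$ et réciproquement ; elles sont bien définies, mutuellement réciproques et bijectives d'après (1). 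La croissance pour l'inclusion et l'incidence se transmet facette par facette depuis la proposition~\ref{CorrespUnram}. Pour la $\Xi$-équivariance, $\Xi=G(\Kt)/G(\Kt)^{c}$ agit sur les $\Kt$-multitypes via $\xi$ et sur l'ensemble des classes de $\Lt$-multitype par $\bar g\cdot\mathrm{Orb}(\Fc)=\mathrm{Orb}(g\Fc)$ — bien défini car tout $c\in G(\Kt)^{c}$ préserve les classes de $\Lt$-multitype d'après (1) — et $\overline{\alpha}$ est équivariante puisque $\alpha$ est $G(\Kt)$-équivariante et que $\xi$ calcule exactement le changement de $\Kt$-type ; l'action sur l'ensemble de gauche s'obtient par transport. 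Enfin, si $G$ est résiduellement quasi-déployé, une $\Knr$-chambre $\Galnr$-invariante est a fortiori une $\Gal(\Knr/\Lt)$-chambre, donc, d'après la proposition~\ref{CorrespUnram} appliquée à $\Knr/\Lt$, elle descend en une $\Lt$-chambre $\Gamma$-invariante ; ainsi $\typebt_{\mathrm{max}}$ est l'ensemble de tous les sommets du diagramme de Dynkin affine relatif sur $\Lt$, et la condition $\prec\typebt_{\mathrm{max}}$ devient vide.

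Le seul endroit où il faut démontrer quelque chose, plutôt que simplement dérouler des définitions, est la symétrie des deux sens du point (1), qui repose sur le fait que $G(\Kt)^+$ préserve les $\Lt$-types et pas seulement les $\Kt$-types (lemmes~\ref{HTransitif} et~\ref{GK+Galois}) ; la dernière remarque de (2) est l'autre point demandant un peu de soin, car il faut la lire au travers de la proposition~\ref{CorrespUnram} pour l'extension $\Knr/\Lt$ et non $\Lt/\Kt$. Tout le reste est du transport formel le long de la correspondance et des vérifications de routine de compatibilité aux barycentres et aux types.
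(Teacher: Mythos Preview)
Your proof is correct and follows essentially the same route as the paper's: both arguments hinge on the fact that $G(\Kt)^+$ preserves simultaneously the $\Kt$-types and the $\Lt$-types (via le lemme~\ref{HTransitif} et le lemme~\ref{GK+Galois}), then use transitivity on chambers and the equivariance of the correspondence from Proposition~\ref{CorrespUnram} to transfer type-equality across the bijection. Your treatment is in fact slightly more explicit than the paper's in two places: you work with $G(\Kt)^+$ directly rather than an unspecified $\Lt$-conformal $H$, and you spell out the descent through $\Knr/\Lt$ for the residually quasi-split remark, which the paper leaves implicit.
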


\begin{proof}
    ${}$
    \begin{enumerate}
        \item Considérons deux $\Gamma$-multifacettes $\widetilde{\Fc}$ et $\widetilde{\Fc}'$ et prenons $H$ un sous-groupe $\Lt$-conforme de $G(\Kt)$. Notons également $\Fc:=\widetilde{\Fc}^{\Gamma}$ et $\Fc':=(\widetilde{\Fc}')^{\Gamma}$.
    
        Supposons que $\Fc$ et $\Fc'$ soient de même $\Kt$-type. Alors, $\widetilde{\Fc}$ et $\widetilde{\Fc}'$ ont même $\Lt$-types. En effet, il existe $g\in H$ tel que $g\cdot \Fc = \Fc'$. Par bijectivité et $G(\Kt)$-équivariance de la correspondance de la proposition \ref{CorrespUnram}, on a $g\cdot \widetilde{\Fc}=\widetilde{\Fc}'$. D'où le résultat puisque $H$ ne change pas les $\Lt$-types.
    
        Supposons maintenant que $\widetilde{\Fc}$ et $\widetilde{\Fc}'$ soient de même $\Lt$-type. On sait qu'il existe $g\in H$ tel que $g\cdot \Fc$ et $\Fc'$ vivent dans la même adhérence d'une $\Kt$-chambre. Notons $\widetilde{\Cc}$ la $\Gamma$-facette correspondante. Or, $(g\cdot \widetilde{\Fc})^{\Gamma}=g\cdot \Fc$. Ceci signifie que $g\cdot \widetilde{\Fc}$ et $\widetilde{\Fc}'$ sont dans l'adhérence de $\widetilde{\Cc}$ par croissance. En particulier, ils vivent dans la même adhérence d'une $\Lt$-chambre. Comme $g$ ne change pas les $\Lt$-types, cela signifie que $g\cdot \widetilde{\Fc}=\widetilde{\Fc}'$. En particulier, $g\cdot \Fc$ et $\Fc'$ sont égaux. Comme $g$ ne change pas non plus les $\Kt$-types, on en déduit que $\Fc$ et $\Fc'$ ont le même $\Kt$-type. D'où le résultat.

        \item La correspondance du point (2) donnée par $\overline{\alpha}$ et $\overline{\beta}$ s'obtient alors en factorisant les applications $\alpha$ et $\beta$ de la proposition \ref{CorrespUnram} au niveau des orbites par $H$. En effet, d'une part, l'orbite d'une $\Kt$-multifacette par $H$ est en correspondance avec les $\Kt$-multitypes. D'autre part, l'orbite d'une $\Gamma$-multifacette par $H$ décrit des $\Gamma$-multifacettes qui sont par ailleurs de même $\Lt$-type par caractère $\Lt$-conforme. Réciproquement, les $\Gamma$-multifacettes de même $\Lt$-type sont en fait toutes décrites d'après le point (1).

        \pagebreak
        
        Prouvons maintenant la première bijection du point (2). Prenons une $\Gamma$-chambre $\widetilde{\Cc}$. Comme $\widetilde{\Cc}$ est en correspondance avec l'ensemble des $\Lt$-multitypes dans $\typebt_{\mathrm{max}}$, on peut relever un $\Lt$-multitype $\typebt$ fortement $\Gamma$-invariant vivant dans $\typebt_{\mathrm{max}}$ en une $\Lt$-multifacette $\widetilde{\Fc}$ dans $\widetilde{\Cc}$. Mais comme $\widetilde{\Cc}$ est $\Gamma$-invariant, l'orbite de $\widetilde{\Fc}$ par $\Gamma$ reste dans $\widetilde{\Cc}$. Comme $\typebt$ est fortement $\Gamma$-invariant, $\Gamma$ agit de manière conforme sur $\widetilde{\Fc}$, et donc $\widetilde{\Fc}$ est fortement $\Gamma$-invariant. Ceci montre donc la surjectivité, l'injectivité étant bien sûr évidente.
    
        Si $G$ est résiduellement quasi-déployé, alors $\typebt_{\mathrm{max}}$ est le type d'une $\Lt$-chambre. Ceci donne le résultat.
    
        Enfin, notons que ces correspondances sont bien entendu $G(\Kt)$-équivariantes. Comme $G(\Kt)^c$ agit trivialement sur les $\Kt$-multitypes, il agit également trivialement sur les autres ensembles et l'action se factorise donc partout par $G(\Kt)/G(\Kt)^c\cong \Xi$. Ceci montre en particulier que l'orbite par l'action d'un sous-groupe global conforme de $G(\Kt)$ d'une $\Gamma$-multifacette décrit exactement les $\Gamma$-multifacettes de même $\Lt$-type, d'où la seconde remarque de la proposition.
    \end{enumerate}
\end{proof}

Établissons maintenant quelques corollaires à la proposition \ref{UnramTypes} :

\begin{coro}\label{TrèsConformeEstConforme}
    ${}$
    \begin{enumerate}
        \item Tout sous-groupe global de $G(\Kt)$ qui est $\Lt$-conforme est conforme. En particulier, étant donné $H$, un sous-groupe global de $G(\Kt)$, on a $H\cap G(\Lt)^c\subset H^c$ et \linebreak $H\cap G(\Lt)^b\subset H^b$.
        \item Tout sous-groupe global de $G(\Kt)$ est conforme si et seulement s'il agit de manière conforme sur les $\Gamma$-multifacettes de $\ImmBT(G_{\Lt})$.
    \end{enumerate}
\end{coro}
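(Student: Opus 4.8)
The plan is to derive both parts directly from Proposition~\ref{UnramTypes}(1). That statement says that the bijection $\alpha\colon\widetilde{\Fc}\mapsto\widetilde{\Fc}^{\Gamma}=\widetilde{\Fc}\cap\ImmBT(G)$ of Proposition~\ref{CorrespUnram} ``preserves types'', i.e.\ two $\Gamma$-multifacettes $\widetilde{\Fc},\widetilde{\Fc}'$ of $\ImmBT(G_{\Lt})$ have the same $\Lt$-type if and only if the $\Kt$-multifacettes $\widetilde{\Fc}^{\Gamma}$ and $(\widetilde{\Fc}')^{\Gamma}$ have the same $\Kt$-type. Combined with the $G(\Kt)$-equivariance of $\alpha$ and of its inverse $\beta\colon F\mapsto\widetilde{F}$, this is the only ingredient needed.

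The first step is to record the following elementwise version. \emph{For $g\in G(\Kt)$: $g$ preserves $\Kt$-types (i.e.\ $g\in G(\Kt)^{c}$) if and only if $g$ preserves the $\Lt$-type of every $\Gamma$-multifacette of $\ImmBT(G_{\Lt})$.} Indeed, since the action of $g$ on $\ImmBT(G_{\Lt})$ commutes with the $\Gamma$-action, $g$ maps each $\Gamma$-multifacette $\widetilde{\Fc}$ to a $\Gamma$-multifacette $g\cdot\widetilde{\Fc}$; and by $G(\Kt)$-equivariance of the correspondence, $g\cdot\widetilde{\Fc}$ corresponds to the $\Kt$-multifacette $g\cdot\Fc$, where $\Fc=\widetilde{\Fc}^{\Gamma}$. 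Proposition~\ref{UnramTypes}(1) then says that $g\cdot\Fc$ and $\Fc$ have the same $\Kt$-type exactly when $g\cdot\widetilde{\Fc}$ and $\widetilde{\Fc}$ have the same $\Lt$-type. As $\widetilde{\Fc}\mapsto\Fc$ is a bijection between $\Gamma$-multifacettes and $\Kt$-multifacettes, quantifying over all of them yields the claim.

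Part~(2) is now immediate: applying the elementwise statement to each $g\in H$ shows that $H\subseteq G(\Kt)^{c}$ (i.e.\ $H$ is conforme) if and only if $H$ acts conformally on the $\Gamma$-multifacettes of $\ImmBT(G_{\Lt})$. The first assertion of part~(1) then follows from part~(2), since an $\Lt$-conforme $H$ preserves \emph{all} $\Lt$-types, in particular those of $\Gamma$-multifacettes, hence acts conformally on them. For the two refined inclusions, let $g\in H\cap G(\Lt)^{c}$; then $g$ preserves $\Lt$-types, so the elementwise statement gives $g\in G(\Kt)^{c}$, whence $g\in H\cap G(\Kt)^{c}=H^{c}$, proving $H\cap G(\Lt)^{c}\subseteq H^{c}$. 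For the ``$b$''-version I would first note that $H\cap G(\Lt)^{1}=H^{1}$: as $H\subseteq G(\Kt)$, we get $H\cap G(\Lt)^{1}=H\cap\bigl(G(\Lt)^{1}\cap G(\Kt)\bigr)=H\cap G(\Kt)^{1}=H^{1}$ by the identity $G(\Lt)^{1}\cap G(\Kt)=G(\Kt)^{1}$ recalled at the end of Section~\ref{SectionExistenceImmeuble}. Since $G(\Lt)^{b}=G(\Lt)^{1}\cap G(\Lt)^{c}$, this gives $H\cap G(\Lt)^{b}=H^{1}\cap G(\Lt)^{c}$, and for $g$ in this set the elementwise statement again forces $g\in G(\Kt)^{c}$, so $g\in H^{1}\cap G(\Kt)^{c}=H\cap G(\Kt)^{b}=H^{b}$.

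I do not expect a genuine obstacle here: all the geometry already sits in Propositions~\ref{CorrespUnram} and~\ref{UnramTypes}. The only points that need care are checking that a $\Kt$-rational $g$ keeps a strongly $\Gamma$-invariant multifacette strongly $\Gamma$-invariant (immediate, since its action commutes with $\Gamma$) and keeping the three decorations $1$, $b$, $c$ straight over the two fields $\Kt$ and $\Lt$; it is exactly the compatibility $G(\Lt)^{1}\cap G(\Kt)=G(\Kt)^{1}$ from Section~\ref{SectionExistenceImmeuble} that makes the ``$b$''-inclusion go through.
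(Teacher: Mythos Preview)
Your argument is correct and is precisely the intended unpacking: the paper states this result as an immediate corollary of Proposition~\ref{UnramTypes} without giving an explicit proof, and your elementwise reformulation (an element of $G(\Kt)$ preserves $\Kt$-types iff it preserves the $\Lt$-type of every $\Gamma$-multifacette) is exactly how one reads off both parts from the type-preservation of the bijection in Proposition~\ref{CorrespUnram}. The handling of the ``$b$''-inclusion via $G(\Lt)^{1}\cap G(\Kt)=G(\Kt)^{1}$ is the right way to separate the uniform and conforme conditions.
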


\begin{rmq}\label{GKbGalois}
    Attention ! Il est possible que l'inclusion $G(\Kt)^{tc} \subset G(\Kt)^c$ soit stricte, et donc qu'un sous-groupe global conforme ne soit pas très conforme. Un contre-exemple où $G$ est l'unique forme interne de $\mathrm{PGL}_2$ sur $\QQ_p$, avec $p$ premier (puisque $H^2(\QQ_p,\mu_2)=\ZZ/2\ZZ$), est donné en \cite[Example 2.6.31]{kaletha_prasad_2023}. Elle est donc adjointe, anisotrope et résiduellement quasi-déployée. Autrement dit, $G(\Kt)$ permute les sommets de l'échelonnage sur $\Ktnr$, c'est-à-dire $A_1$ ($\dynkin A2$).
\end{rmq}

Notons également le résultat suivant :

\begin{prop}\label{CompatibiliteStab}
    Soit $\Ht$ un sous-groupe global $\Gamma$-invariant de $G(\Lt)$. Prenons $\widetilde{\Fc}$ une \linebreak $\Gamma$-multifacette de $\ImmBT(G_{\Lt})$. Posons $H:=\Ht^\Gamma$ et $\Fc:=\widetilde{\Fc}^\Gamma$. Alors $$\Ht_{(\widetilde{\Fc})}\cap H = H_{(\widetilde{\Fc})} = H_{(\Fc)}.$$
\end{prop}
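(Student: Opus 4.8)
The argument rests entirely on the $G(\Kt)$-equivariant bijection of Proposition~\ref{CorrespUnram}, and I would treat the two equalities in the statement separately.

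First I would dispose of $\Ht_{(\widetilde{\Fc})}\cap H = H_{(\widetilde{\Fc})}$, which is purely formal. Writing $\widetilde{\Fc}=\bigsqcup_i\widetilde{\Fc}_i$ for the decomposition into $\Lt$-facettes, one has by definition $\Ht_{(\widetilde{\Fc})}=\bigcap_i\Ht_{\widetilde{\Fc}_i}$ and $H_{(\widetilde{\Fc})}=\bigcap_i H_{\widetilde{\Fc}_i}$. Since $H=\Ht^\Gamma\subseteq\Ht$, the stabilizer of $\widetilde{\Fc}_i$ in $H$ is exactly $\Ht_{\widetilde{\Fc}_i}\cap H$; intersecting over $i$ gives the equality. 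No obstacle arises here.

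The substance is the equality $H_{(\widetilde{\Fc})}=H_{(\Fc)}$. Observe first that $H=\Ht^\Gamma\subseteq G(\Lt)^\Gamma=G(\Kt)$, so every $h\in H$ acts on $\ImmBT(G_\Lt)$ and, compatibly (by the descent recollections at the beginning of the present section), on the $\Gamma$-fixed convex subspace $\ImmBT(G)\subseteq\ImmBT(G_\Lt)$. Fix $h\in H$ and an index $i$, and set $\Fc_i:=\widetilde{\Fc}_i\cap\ImmBT(G)=\alpha(\widetilde{\Fc}_i)$, so that $\Fc=\bigsqcup_i\Fc_i$ is the decomposition of $\Fc=\widetilde{\Fc}^\Gamma$ into $\Kt$-facettes (Proposition~\ref{CorrespUnram}). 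Since $h\in G(\Kt)$ commutes with the $\Gamma$-action and maps facettes to facettes, $h\cdot\widetilde{\Fc}_i$ is again a $\Gamma$-facette (a single $\Gamma$-invariant facette is automatically strongly $\Gamma$-invariant), so $\alpha$ applies to it and $\alpha(h\cdot\widetilde{\Fc}_i)=h\cdot\Fc_i$ by the $G(\Kt)$-equivariance of $\alpha$. As $\alpha$ is injective, $h\cdot\widetilde{\Fc}_i=\widetilde{\Fc}_i$ if and only if $h\cdot\Fc_i=\Fc_i$; intersecting over all $i$ yields $H_{(\widetilde{\Fc})}=\bigcap_i H_{\widetilde{\Fc}_i}=\bigcap_i H_{\Fc_i}=H_{(\Fc)}$.

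The only delicate point — the closest thing to an obstacle — is being entitled to invoke the equivariance of $\alpha$ at the translate $h\cdot\widetilde{\Fc}_i$: this requires that $h\cdot\widetilde{\Fc}_i$ still lie in the source of the correspondence (i.e. remain a $\Gamma$-facette) and that the $H$-action on $\ImmBT(G)$ be the restriction of the one on $\ImmBT(G_\Lt)$. Both follow immediately from $h\in G(\Kt)$ together with the fact that $\ImmBT(G)=\ImmBT(G_\Lt)^\Gamma$ is $G(\Kt)$-stable, as recalled at the start of the present section, and no ingredient beyond Proposition~\ref{CorrespUnram} is needed.
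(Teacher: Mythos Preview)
Your proof is correct. Both equalities hold for the reasons you give, and invoking the $G(\Kt)$-equivariant bijection $\alpha$ of Proposition~\ref{CorrespUnram} is a perfectly clean way to obtain $H_{(\widetilde{\Fc})}=H_{(\Fc)}$.

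The paper argues slightly more elementarily. After reducing to a single facette, it does not invoke the correspondence $\alpha$ at all: for $h\in H_{\Fc}$ it observes that $\emptyset\neq\Fc\subseteq h\cdot\widetilde{\Fc}\cap\widetilde{\Fc}$, and since two $\Lt$-facettes with non-empty intersection coincide, $h\cdot\widetilde{\Fc}=\widetilde{\Fc}$. For the converse it takes $x\in\Fc$, notes $h\cdot x\in\widetilde{\Fc}$, and checks directly that $\sigma(h\cdot x)=\sigma(h)\cdot\sigma(x)=h\cdot x$ for every $\sigma\in\Gamma$, so $h\cdot x\in\widetilde{\Fc}^\Gamma=\Fc$. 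Thus the paper uses only the raw facts that facettes partition the building and that $\Fc=\widetilde{\Fc}^\Gamma$ is non-empty, whereas you use the full equivariant bijection established just before. Your route is more structural and scales transparently to multifacettes; the paper's is more self-contained and avoids appealing to the injectivity of $\alpha$. Neither gains anything substantial over the other here.
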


\begin{proof}
    Le résultat se ramène bien évidemment au cas des facettes. Soit $h\in H_{\Fc}$ Comme $\emptyset\not = \Fc\subset h\cdot \widetilde{\Fc}\cap \widetilde{\Fc}$, on a $h\cdot \widetilde{\Fc}= \widetilde{\Fc}$. Donc $h\in H_{\widetilde{\Fc}}$. Réciproquement, si $h\in H_{\widetilde{\Fc}}$, prenons $x\in \Fc$. Alors $h\cdot x \in \widetilde{\Fc}$. Mais $\sigma(h\cdot x) = \sigma(h)\cdot \sigma(x) = h\cdot x$ pour tout $\sigma \in \Gamma$. Donc $h\cdot x \in \Fc$ et $h\in H_{\Fc}$.
\end{proof}

De ceci, on peut introduire la définition suivante :

\begin{déf} \label{defStab}
     Prenons $\widetilde{H}$ un sous-groupe global $\Galnr$-invariant de $G(\Ktnr)$. Posons \linebreak $H:=\widetilde{H}^{\Galnr}$.
    \begin{enumerate}
    \item Considérons $\Fc$, une facette de $\ImmBT(G)$ et $\widetilde{\Fc}$ sa $\Galnr$-facette associée par la correspondance de la proposition \ref{CorrespUnram}.
        \begin{enumerate}
            \item On dit qu'un $\Rt$-modèle lisse et séparé de $G$ ayant comme $\Rtnr$-points le groupe $\widetilde{H}_{\widetilde{\Fc}}$ (resp. $\widetilde{H}^{\mathrm{f}}_{\widetilde{\Fc}}$) est un \textbf{schéma en groupes stabilisateur (resp. fixateur) de $\boldsymbol{\Fc}$ relativement à $\boldsymbol{H}$}. Il est aussi appelé un \textbf{modèle de Bruhat-Tits de $\boldsymbol{H_{\Fc}}$ (resp. $\boldsymbol{H^{\mathrm{f}}_{\Fc}}$}).
            \item Son groupe des $\Rt$-points est donné par $(\widetilde{H}_{\widetilde{\Fc}})^{\Galnr}=H_{\widetilde{\Fc}}=H_{\Fc}$ (resp. \linebreak $(\widetilde{H}^{\mathrm{f}}_{\widetilde{\Fc}})^{\Galnr}=H^{\mathrm{f}}_{\widetilde{\Fc}}=H^\mathrm{f}_{\Fc}$) d'après la proposition \ref{CompatibiliteStab}.
        \end{enumerate} 
    \item Supposons cette fois que $\Fc$ soit une multifacette.
        \begin{enumerate}
            \item On dit qu'un $\Rt$-modèle lisse et séparé de $G$ ayant comme $\Rtnr$-points le groupe $\widetilde{H}_{(\widetilde{\Fc})}$ est un \textbf{schéma en groupes multistabilisateur de $\boldsymbol{\Fc}$ relativement à $\boldsymbol{H}$}. Il est aussi appelé un \textbf{modèle de Bruhat-Tits de $\boldsymbol{H_{(\Fc)}}$}. 
            
            \item Son groupe des $\Rt$-points est donné par $(\widetilde{H}_{(\widetilde{\Fc})})^{\Galnr}=H_{(\widetilde{\Fc})}=H_{(\Fc)}$ d'après la proposition \ref{CompatibiliteStab}.
        \end{enumerate}  
    \end{enumerate}
    Si $\widetilde{H}=G(\Ktnr)$, {\normalfont relativement à $H$} peut être omis dans les définitions précédentes.
\end{déf}

\begin{rmq}
    Les définitions précédentes sont bien sûr compatibles aux extensions algébriques galoisiennes non ramifiées $\Ktnr/\Lt/\Kt$ sous un sens évident.
\end{rmq}

\begin{rmq}
    Un $\Rt$-schéma lisse et affine est unique si l'on fixe ses $\Rtnr$-points (cf. \cite[Corollary 2.10.11]{kaletha_prasad_2023}). Par conséquent, en reprenant les notations de la définition, il y a au plus un seul modèle de Bruhat-Tits affine étant donné le choix de $H^{\mathrm{n.r.}}$ et de $\Fc$.

    La question de l'unicité dans le cas où le modèle n'est pas affine sera discutée dans un article ultérieur.
\end{rmq}

\begin{rmq}
    Cette définition inclut en particulier les schémas en groupes définis par Bruhat et Tits dans \cite{BT2} et également les schémas en groupes définis dans \cite{kaletha_prasad_2023}. Il inclut également les modèles de Néron des tores (qui donne donc un exemple de situation où le modèle n'est pas nécessairement affine).
    La question de l'existence, sous certaines hypothèses, des modèles de Bruhat-Tits (notamment lorsqu'ils ne sont pas affines) sera abordée dans un article ultérieur.
\end{rmq}

\section{Résultats cohomologiques théoriques}

Dans toute la suite, on note. $\widetilde{\xi}$, le morphisme type associé à $G_{\Lt}$. On note également $\DynD$ (resp. $\widetilde{\DynD}$) le diagramme de Dynkin affine relatif de $G$ (resp. $G_{\Lt}$).
\medskip

Considérons également $\Xi^\mathrm{ext}$ (resp. $\Xi_{\Lt}^\mathrm{ext}$), le sous-groupe d'automorphismes de Dynkin de $\DynD$ (resp. $\widetilde{\DynD}$) induit par les automorphismes polysimpliciaux d'un appartement de $\ImmBT(G)$ (resp. $\ImmBT(G_{\Lt})$) qui induisent vectoriellement un élément du groupe de Weyl vectoriel (cf. \cite[Definition 1.3.71]{kaletha_prasad_2023}). Plus précisément, cette construction est indiquée dans \cite[Remark 1.3.76]{kaletha_prasad_2023}.

Notons que l'action de $G(\Lt)$ sur $\ImmBT(G_{\Lt})$ est compatible à l'action de $G(\Lt)$ sur l'immeuble vectoriel de $G$ sur $\Lt$, au sens où cette dernière donne l'action vectorielle sous-jacente. De plus, l'action vectorielle est, sur chaque appartement, induite par des éléments du groupe de Weyl vectoriel, et en conséquence préserve les types vectoriels (cf. \cite[2.2.16.(c) Theorem.]{RousseauNewBook}). Ceci implique que l'image du morphisme type sur $\Kt$ (resp. $\Lt$) est incluse dans $\Xi^\mathrm{ext}$ (resp. $\Xi_{\Lt}^\mathrm{ext}$).

De plus, l'action de Galois sur $\ImmBT(G_{\Lt})$ est compatible à l'action de Galois sur l'immeuble vectoriel sur $\Lt$, et comme dans le cas affine, l'immeuble vectoriel sur $\Kt$ se plonge dans l'immeuble vectoriel sur $\Lt$, de telle sorte que toute $\Kt$-facette vectorielle est l'ensemble des points fixes d'une $\Lt$-facette vectorielle $\Gamma$-invariante (cf. \cite[2.2.6.2.]{RousseauNewBook}).
\medskip

On définit également le \textit{type étendu} : à un couple $(\Fc,*)$ composé d'une $\Kt$-multifacette et d'un point de $V_G$, on associe $(\mathcal{T},*)$, le couple formé du type de $\Fc$ et de $*$ (vu donc dans $\widetilde{\DynD}\times V_G$). L'action de $G(\Kt)$ sur $(\Fc,*)$ induit une action sur $(\mathcal{T},*)$ donnée par $g\cdot (\typebt,*)=(\xi(g)\cdot \typebt,g\cdot *)$ et donc un morphisme $\xi^e$ associé, dont le noyau est par définition $G(\Kt)^b:=G(\Kt)^c\cap G(\Kt)^1$. Étant donné un sous-groupe global $H$ de $G(\Kt)$, on note \linebreak $\Xi^e_{H}:=\xi^e(H)\cong H/H^b$. 

Bien entendu, on généralise tout cela sur $\Lt$, et l'action de $\Gamma$ sur $\ImmBT^e(G_{\Lt})$ se factorise par $\widetilde{D}\times V_{G_{\Lt}}$. On note $\widetilde{\xi}^e$ le morphisme associé sur $\Lt$.
\medskip

Commençons par le théorème suivant :

\begin{thm}\label{SuiteExacteType}
    Soit $\Ht$ un sous-groupe global $\Gamma$-invariant de $G(\Lt)$. Notons $H:=\Ht^{\Gamma}$.
    \begin{enumerate}
        \item 
        \begin{enumerate}
            \item Le groupe $\Xi_{\Ht}$ est muni de l'action de $\Gamma$ par conjugaison (donnée par \linebreak $\sigma \mapsto (\omega\mapsto\sigma \circ \omega \circ \sigma^{-1})$), de telle sorte que l'on ait la suite exacte de $\Gamma$-groupes :
            \[\begin{tikzcd}
            	1 & {\Ht^c} & {\Ht} & {\Xi_{\Ht}} & 1.
            	\arrow[from=1-1, to=1-2]
            	\arrow[from=1-2, to=1-3]
            	\arrow["{\widetilde{\xi}}", from=1-3, to=1-4]
            	\arrow[from=1-4, to=1-5]
            \end{tikzcd}\]
            
            \item De même, $\Xi^e_{\Ht}$ est muni de l'action de $\Gamma$ par conjugaison, de telle sorte à ce que l'on ait la suite exacte de $\Gamma$-groupes :
            \[\begin{tikzcd}
            	1 & {\Ht^b} & {\Ht} & {\Xi^e_{\Ht}} & 1.
            	\arrow[from=1-1, to=1-2]
            	\arrow[from=1-2, to=1-3]
            	\arrow["{\widetilde{\xi}^e}", from=1-3, to=1-4]
            	\arrow[from=1-4, to=1-5]
            \end{tikzcd}\]
        \end{enumerate}
        
        \item Les suites exactes précédentes donnent lieu aux suites exactes suivantes d'ensembles pointés :
        \begin{enumerate}
            \item 
            \begin{tikzcd}
        	1 & {(\Xi_{\Ht})^{\Gamma}/\widetilde{\xi}(H)} & {H^1(\Gamma,\Ht^c)} & {H^1(\Gamma,\Ht)} & {H^1(\Gamma,\Xi_{\Ht}).}
        	\arrow[from=1-1, to=1-2]
        	\arrow[from=1-2, to=1-3]
        	\arrow[from=1-3, to=1-4]
        	\arrow[from=1-4, to=1-5]
            \end{tikzcd}
            
            \item \begin{tikzcd}
        	1 & {(\Xi^e_{\Ht})^{\Gamma}/\widetilde{\xi}^e(H)} & {H^1(\Gamma,\Ht^b)} & {H^1(\Gamma,\Ht)} & {H^1(\Gamma,\Xi^e_{\Ht}).}
        	\arrow[from=1-1, to=1-2]
        	\arrow[from=1-2, to=1-3]
        	\arrow[from=1-3, to=1-4]
        	\arrow[from=1-4, to=1-5]
            \end{tikzcd}
        \end{enumerate}
        
        \item On a les inclusions suivantes : $\widetilde{\xi}(H)\subset (\Xi_{\Ht,\typebt_{\mathrm{max}}})^{\Gamma} \subset (\Xi_{\Ht})^{\Gamma}$, où $\typebt_{\mathrm{max}}$ est le type d'une $\Gamma$-chambre.

        \item Le groupe $(\Xi_{\Ht,\typebt_{\mathrm{max}}})^{\Gamma}$ agit naturellement sur $\DynD$ et induit une flèche \linebreak $(\Xi_{\Ht,\typebt_{\mathrm{max}}})^{\Gamma}\rightarrow \Xi^\mathrm{ext}$. 

        \item Le noyau $\Ker \left ((\Xi_{\Ht,\typebt_{\mathrm{max}}})^{\Gamma}\rightarrow \Xi^\mathrm{ext} \right )$ est donné par les éléments de $(\Xi_{\Ht})^{\Gamma}$ fixant toutes les $\Gamma$-orbites de $\widetilde{\DynD}$ dans $\typebt_{\mathrm{max}}$, ou de manière équivalente, stabilisant $\typebt_{\mathrm{max}}$ et fixant une $\Gamma$-orbite se descendant un en $\Kt$-point spécial.
        
        \item Si $\widetilde{\DynD}$ admet un sommet spécial $x$ dans $\typebt_{\mathrm{max}}$ (par exemple si $G$ est résiduellement quasi-déployé), alors un élément $\omega$ du noyau s'écrit $\sigma \circ \phi=\phi\circ \sigma$ avec $\sigma \in \Gamma$ et $\phi \in \Aut(\widetilde{\DynD})$, ce dernier fixant $x$, les $\Gamma$-orbites dans $\typebt_{\mathrm{max}}$ et envoyant une $\Gamma$-orbite quelconque vers une autre. Par ailleurs, $\omega$ est le seul élément du noyau ayant une décomposition avec $\sigma$.
        
        \item Le cardinal du noyau est majorée par la taille de la $\Gamma$-orbite de $x$. En particulier, si $x$ est fixé par $\Gamma$ (par exemple s'il est hyperspécial, cf. la définition \ref{DefHyperspecial}), alors le noyau est trivial.

        \item La restriction de la flèche $(\Xi_{\Ht,\typebt_{\mathrm{max}}})^{\Gamma}\rightarrow \Xi^\mathrm{ext}$ à $\widetilde{\xi}(H)$ a comme image $\Xi$ et comme noyau $\widetilde{\xi}(H^c)=H^c/(H\cap \Ht^c)$. En particulier, $H^c=H\cap \Ht^c$ lorsque $G$ admet un sommet spécial sur $\Lt$ fixé par $\Gamma$.
    \end{enumerate}
\end{thm}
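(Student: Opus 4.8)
The statement is organised around the relative type morphism $\widetilde\xi\colon G(\Lt)\to\Aut(\widetilde{\DynD})$ and its extended refinement $\widetilde\xi^{\,e}$; the plan is to build the two short exact sequences of $\Gamma$-groups in (1), feed them into non‑abelian Galois cohomology for (2), then exploit the unramified‑descent correspondences of Propositions~\ref{CorrespUnram}--\ref{UnramTypes} for (3)--(8). For (1), restricting $\widetilde\xi$ to $\Ht$ has image $\Xi_{\Ht}$ by definition and kernel $\Ht\cap\ker\widetilde\xi=\Ht\cap G(\Lt)^c=\Ht^c$, and likewise $\widetilde\xi^{\,e}$ restricts with kernel $\Ht\cap G(\Lt)^b=\Ht^b$. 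I would make the sequences $\Gamma$-equivariant by noting that $\Gamma$ acts on $\widetilde{\DynD}$ (through its polysimplicial action on an apartment of $\ImmBT(G_\Lt)$), hence on $\Aut(\widetilde{\DynD})$ by conjugation, and that $\sigma(h\cdot F)=\sigma(h)\cdot\sigma(F)$ read on types yields $\widetilde\xi(\sigma h)=\sigma\,\widetilde\xi(h)\,\sigma^{-1}$, which also shows $\Xi_{\Ht}$ is $\Gamma$-stable; $\Ht^c$ and $\Ht^b$ are $\Gamma$-stable because $G(\Lt)^c$ and $G(\Lt)^b$ are (Proposition~\ref{PropDesH}(3)). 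Then (2) is the standard six‑term exact sequence of pointed sets attached to a short exact sequence of (continuous) $\Gamma$-groups, using $\Ht^\Gamma=H$ and that the fibres of the connecting map $(\Xi_{\Ht})^\Gamma\to H^1(\Gamma,\Ht^c)$ are the right cosets of $\widetilde\xi(H)$ (and similarly in the extended version).

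For (3), if $h\in H=\Ht^\Gamma$ then $\widetilde\xi(\sigma h)=\sigma\widetilde\xi(h)\sigma^{-1}$ gives $\widetilde\xi(h)\in(\Xi_{\Ht})^\Gamma$, while $h$ acting on $\ImmBT(G)\subset\ImmBT(G_\Lt)$ sends $\Kt$-chambers to $\Kt$-chambers, i.e.\ $\Gamma$-chambres to $\Gamma$-chambres by Propositions~\ref{CorrespUnram}--\ref{UnramTypes}, all of $\Lt$-type $\typebt_{\mathrm{max}}$, so $\widetilde\xi(h)\in(\Xi_{\Ht,\typebt_{\mathrm{max}}})^\Gamma$; the last inclusion is trivial. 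For (4), Proposition~\ref{UnramTypes}(2) gives a $\Xi$-equivariant bijection between the vertices of $\DynD$ and the $\Gamma$-orbits of $\widetilde{\DynD}$ contained in $\typebt_{\mathrm{max}}$, so an element of $(\Xi_{\Ht,\typebt_{\mathrm{max}}})^\Gamma$, commuting with $\Gamma$ and stabilising $\typebt_{\mathrm{max}}$, permutes those orbits and hence induces a permutation of the vertices of $\DynD$. To see this permutation is a Dynkin automorphism landing in $\Xi^{\mathrm{ext}}$ (and not merely in $\Aut(\DynD)$) I would combine the inclusion $\Xi_{\Ht}\subset\Xi^{\mathrm{ext}}_{\Lt}$ with the compatibilities recalled just before the theorem between the affine and vectorial buildings over $\Kt$ and $\Lt$ and their Galois descent — every $\Kt$-vectorial facette being the fixed locus of a $\Gamma$-invariant $\Lt$-vectorial facette — so that a $\Gamma$-fixed class of $\Xi^{\mathrm{ext}}_{\Lt}$ stabilising $\typebt_{\mathrm{max}}$ descends to a class over $\Kt$ induced by a vectorial Weyl element of $G$.

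For (5), by construction the kernel of $(\Xi_{\Ht,\typebt_{\mathrm{max}}})^\Gamma\to\Xi^{\mathrm{ext}}$ consists of the $\omega$ inducing the identity on the vertices of $\DynD$, i.e.\ fixing setwise each $\Gamma$-orbit of $\widetilde{\DynD}$ in $\typebt_{\mathrm{max}}$ (such $\omega$ automatically stabilise $\typebt_{\mathrm{max}}$); the equivalent ``sommet spécial'' form uses the rigidity fact that $\Xi^{\mathrm{ext}}$ acts freely on the special vertices of $\DynD$ (a class fixing a special vertex induces a translation‑free vectorial Weyl element preserving a base, hence is trivial), so that fixing the one $\Gamma$-orbit descending to a $\Kt$-special vertex already forces the image in $\Xi^{\mathrm{ext}}$ to be trivial. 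For (6), assuming $\widetilde{\DynD}$ has a special vertex $x$ in $\typebt_{\mathrm{max}}$ and $\omega$ in the kernel, one has $\omega(x)\in\Gamma\cdot x$, so picking $\sigma\in\Gamma$ with $\omega(x)=\sigma^{-1}(x)$ and setting $\phi:=\sigma\circ\omega$ gives $\phi$ fixing $x$, commuting with $\sigma$ (because $\omega$ is $\Gamma$-fixed), carrying $\Gamma$-orbits to $\Gamma$-orbits and still fixing those in $\typebt_{\mathrm{max}}$; this is the factorisation $\omega=\sigma^{-1}\phi=\phi\sigma^{-1}$. Uniqueness: two kernel elements with the same value at $x$ differ by a kernel element fixing the special vertex $x$ and lying in $\Xi_{\Ht}\subset\Xi^{\mathrm{ext}}_{\Lt}$, hence trivial by the same freeness over $\Lt$; thus a kernel element is determined by $\omega(x)\in\Gamma\cdot x$, whence (7): $\lvert\ker\rvert\le\lvert\Gamma\cdot x\rvert$, and the kernel is trivial when $x$ is $\Gamma$-fixed, in particular when $x$ is hyperspécial (Définition~\ref{DefHyperspecial}).

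For (8), the composite $H\xrightarrow{\widetilde\xi}\widetilde\xi(H)\hookrightarrow(\Xi_{\Ht,\typebt_{\mathrm{max}}})^\Gamma\to\Xi^{\mathrm{ext}}$ equals $\xi|_H$: for $h\in H$ the action of $h$ on $\Kt$-multitypes matches, under the type‑preserving bijections of Propositions~\ref{CorrespUnram}--\ref{UnramTypes}, the action of $\widetilde\xi(h)$ on the $\Gamma$-orbits in $\typebt_{\mathrm{max}}$. Hence the image is $\xi(H)=\Xi_H$ (written $\Xi$ in the statement) and the kernel is $\widetilde\xi(\ker(\xi|_H))=\widetilde\xi(H\cap G(\Kt)^c)=\widetilde\xi(H^c)=H^c/(H^c\cap\ker\widetilde\xi)=H^c/(H\cap\Ht^c)$, the last step using $H\cap G(\Lt)^c\subset G(\Kt)^c$ from Corollaire~\ref{TrèsConformeEstConforme}(1); and when $G$ has a $\Gamma$-fixed special vertex over $\Lt$, part (7) makes the whole kernel of $(\Xi_{\Ht,\typebt_{\mathrm{max}}})^\Gamma\to\Xi^{\mathrm{ext}}$ trivial, so $\widetilde\xi(H^c)=1$, i.e.\ $H^c\subset G(\Lt)^c$, which together with $H\cap\Ht^c\subset H^c$ gives $H^c=H\cap\Ht^c$. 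The delicate points, where I expect the real work to lie, are in (4)--(6): identifying $\DynD$ as the folding of $\widetilde{\DynD}$ along the $\Gamma$-orbits in $\typebt_{\mathrm{max}}$ and checking that $\Gamma$-equivariant diagram automorphisms descend into $\Xi^{\mathrm{ext}}$ rather than merely $\Aut(\DynD)$, and then the special‑vertex rigidity of $\Xi^{\mathrm{ext}}$ underlying both the kernel computation and the uniqueness clause.
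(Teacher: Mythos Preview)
Your proposal is correct and follows essentially the same architecture as the paper for all eight points: the construction of the $\Gamma$-exact sequences in (1) via Proposition~\ref{PropDesH}(3) and the cocycle relation $\widetilde\xi(\sigma h)=\sigma\,\widetilde\xi(h)\,\sigma^{-1}$, the standard six-term sequence in (2), the $\Gamma$-chambre argument in (3), the $\Gamma$-orbit description of the kernel in (5), the special-vertex rigidity of $\Xi^{\mathrm{ext}}_{\Lt}$ (via \cite[Remark 1.3.76]{kaletha_prasad_2023}) for (6)--(7), and the identification of the restriction with $\xi|_H$ in (8) all match the paper's arguments closely.

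The one place where the approaches diverge meaningfully is (4). You propose to work abstractly with the inclusion $\Xi_{\Ht}\subset\Xi^{\mathrm{ext}}_{\Lt}$ and argue that a $\Gamma$-fixed element stabilising $\typebt_{\mathrm{max}}$ descends to $\Xi^{\mathrm{ext}}$ via the affine/vectorial compatibilities recalled before the theorem. The paper instead lifts the given $\omega\in(\Xi_{\Ht,\typebt_{\mathrm{max}}})^\Gamma$ to an explicit $h\in\Ht$, adjusts by $\Ht^c$ so that $h\cdot\widetilde\Cc=\widetilde\Cc$ for a chosen $\Gamma$-chambre $\widetilde\Cc$, computes directly that $h^{-1}\sigma(h)$ fixes $\widetilde\Cc$ (from $\Gamma$-invariance of $\widetilde\xi(h)$), deduces $h\cdot\widetilde\Cc^\Gamma=\widetilde\Cc^\Gamma$, and then runs the same fixed-point computation on a $\Gamma$-invariant vectorial facette to show the induced permutation of $\widetilde\Cc^\Gamma$ respects vectorial types, hence lies in $\Xi^{\mathrm{ext}}$. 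Your abstract descent outline is plausible but, as you yourself flag, is where details are thinnest; the paper's concrete geometric lift makes the verification that the image lands in $\Xi^{\mathrm{ext}}$ (and not merely in $\Aut(\DynD)$) entirely explicit, at the price of a longer hands-on computation. Either route should succeed, but the paper's has the advantage of avoiding any need to formalise an abstract descent statement for $\Xi^{\mathrm{ext}}_{\Lt}$.
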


\begin{proof}
    ${}$
    \begin{enumerate}
        \item \begin{enumerate}
            \item D'après le point (3) de la proposition \ref{PropDesH}, $\Ht^c$ est un sous-groupe $\Gamma$-invariant de $\Ht$. Par conséquent, l'application $h\mapsto \sigma(h)\mapsto \widetilde{\xi}(\sigma(h))$ de $\Ht$ vers $\Xi_{\Ht}$ se factorise par $\Xi_{\Ht}$. On en déduit alors que l'action de $\Gamma$ sur $\Ht$ se factorise en une action de $\Gamma$ sur $\Xi_{\Ht}$ de telle sorte que la suite exacte de l'énoncé soit réalisée. La relation $\sigma(h)\cdot \Fc=\sigma(h\cdot \sigma^{-1}(\Fc))$ pour toute facette $\Fc$, tout $\sigma \in \Gamma$ et $h\in \Ht$, et le fait que tout élément de $\Gamma$ induit un automorphisme de Dynkin sur les types, implique la relation $\widetilde{\xi}(\sigma(h))=\sigma \circ \widetilde{\xi}(h) \circ \sigma^{-1}$.
            
            \item Ce point se fait de manière analogue au point précédent.
        \end{enumerate}

        \item \begin{enumerate}
            \item La suite exacte en cohomologie donne alors :
            \[\begin{adjustbox}{max size={1\textwidth}{1\textheight}}
            \begin{tikzcd}
            	1 & {(\Ht^c)^{\Gamma}} & H & {(\Xi_{\Ht})^{\Gamma}} & {H^1(\Gamma,\Ht^c)} & {H^1(\Gamma,\Ht)} & {H^1(\Gamma,\Xi_{\Ht}).}
            	\arrow[from=1-1, to=1-2]
            	\arrow[from=1-2, to=1-3]
            	\arrow[from=1-3, to=1-4]
            	\arrow[from=1-4, to=1-5]
            	\arrow[from=1-5, to=1-6]
            	\arrow[from=1-6, to=1-7]
            \end{tikzcd}
            \end{adjustbox}\]
            Elle implique alors la suite exacte :
            \[\begin{adjustbox}{max size={1\textwidth}{1\textheight}}
            \begin{tikzcd}
            	1 & {\widetilde{\xi}(H)} & {(\Xi_{\Ht})^{\Gamma}} & {H^1(\Gamma,\Ht^c)} & {H^1(\Gamma,\Ht)} & {H^1(\Gamma,\Xi_{\Ht}).}
            	\arrow[from=1-1, to=1-2]
            	\arrow[from=1-2, to=1-3]
            	\arrow[from=1-3, to=1-4]
            	\arrow[from=1-4, to=1-5]
            	\arrow[from=1-5, to=1-6]
            \end{tikzcd}
            \end{adjustbox}\]
            Et de même, cette dernière implique la suite exacte de l'énoncé.
            
            \item Ce point se fait de la même manière.
        \end{enumerate}
        
        \item L'inclusion $\widetilde{\xi}(H)\subset (\Xi_{\Ht,\typebt_{\mathrm{max}}})^{\Gamma}$ provient du fait que $H$ envoie une $\Gamma$-chambre sur une $\Gamma$-chambre.
        
        \item Comme $(\Xi_{\Ht,\typebt_{\mathrm{max}}})^{\Gamma}$ stabilise $\typebt_{\mathrm{max}}$ et qu'il stabilise les $\Lt$-types $\Gamma$-invariants, il induit une action sur les $\Lt$-types $\Gamma$-invariants incidents à $\typebt_{\mathrm{max}}$. Ces types sont exactement en correspondance avec les $\Kt$-types d'après la proposition \ref{UnramTypes}.

        Prenons alors $h\in \Ht$ tel que $\widetilde{\xi}(h)\in (\Xi_{\Ht,\typebt_{\mathrm{max}}})^{\Gamma}$ et $\Cc$ une $\Gamma$-chambre. Par hypothèse, $h\cdot \Cc$ est de même type que $\Cc$. Quitte à bouger $h$ par un élément de $\Ht^c$, on peut supposer qu'ils sont incidents à une même $\Lt$-chambre, et donc qu'ils sont égaux.
    
        Par hypothèse, $\widetilde{\xi}(h)=\widetilde{\xi}(\sigma(h))$. Ceci implique que $h^{-1}\sigma(h)$ fixe $\Cc$. En conséquence :
        \begin{align*}
            \Cc^\Gamma = (h\cdot \Cc)^\Gamma &= \{h\cdot x \mid \forall \sigma\in \Gamma, \sigma(h\cdot x)=h\cdot x\} \\
            &= \{h\cdot x \mid \forall \sigma\in \Gamma, h^{-1}\sigma(h)\cdot \sigma(x)=x\}=\{h\cdot x \mid \forall \sigma\in \Gamma, \sigma(x)=x\}=h\cdot \Cc^\Gamma.
        \end{align*}
        Autrement dit, $h$ stabilise la $\Kt$-chambre $\Cc^\Gamma$. Or $h$ agit par isométries sur $\ImmBT(G_{\Lt})$, et comme $\ImmBT(G)$ s'y plonge de manière isométrique, $h$ envoie $\Cc^\Gamma$ sur lui-même de manière isométrique. En particulier, l'action induite sur les $\Kt$-types préserve la structure de Dynkin. 

        Soit maintenant une $\Kt$-chambre vectorielle $\vec{C}$ et un $\Kt$-point spécial $x\in \mathcal{C}^\Gamma$ tel que $\mathcal{C}^\Gamma \subset x + \vec{C}$. Prenons ensuite une $\Lt$-facette vectorielle $\Gamma$-invariante $\vec{\mathcal{C}}$ tel que $(\vec{\mathcal{C}})^\Gamma=\vec{C}$. 

        Comme $h^{-1}\sigma(h)$ fixe $\mathcal{C}$, il fixe $\mathcal{C}^\Gamma$ et en conséquence $x + \vec{C}$, et donc fixe vectoriellement $\vec{C}$. Ceci implique qu'il stabilise $\vec{\mathcal{C}}$, et donc le fixe (puisque l'action préserve les types).

        En faisant comme pour $\mathcal{C}$, on en déduit que $h\cdot (\vec{\mathcal{C}})^\Gamma = (h\cdot \vec{\mathcal{C}})^\Gamma$, et donc $h$ envoie la $\Kt$-chambre vectorielle $(\vec{\mathcal{C}})^\Gamma$ sur la $\Kt$-chambre vectorielle $(h\cdot \vec{\mathcal{C}})^\Gamma$. Comme $h$ envoie $\vec{\mathcal{C}}$ sur $h\cdot \vec{\mathcal{C}}$ de sorte à préserver les types, il en est de même pour $(\vec{\mathcal{C}})^\Gamma$.

        Notons alors que $\mathcal{C}^\Gamma=h\cdot \mathcal{C}^\Gamma\subset h\cdot x + h\cdot (\vec{\mathcal{C}})^\Gamma$. On conclut alors que $h$ permute $\mathcal{C}^\Gamma$ en respectant les types vectoriels, et donc la permutation induite est dans $\Xi^\mathrm{ext}$, comme voulu.
        
        \item La description de $\Ker\left(\Xi_{\Ht,\typebt_{\mathrm{max}}})^{\Gamma}\rightarrow \Xi^\mathrm{ext} \right )$ provient du fait que les $\Gamma$-orbites de $\widetilde{\DynD}$ dans $\typebt_{\mathrm{max}}$ sont en correspondance avec les sommets de $\DynD$. La condition équivalente provient du fait que le seul élément de $\Xi^\mathrm{ext}$ fixant un point spécial est l'identité (cf. \cite[Remark 1.3.76]{kaletha_prasad_2023}).

        \item Comme $\omega$ est dans le noyau, il envoie $x$ sur un élément de sa $\Gamma$-orbite. Autrement dit, il existe $\sigma\in \Gamma$ tel que $\omega \cdot x = \sigma\cdot x$. Donc $\phi :=\sigma^{-1}\circ \omega$ fixe $x$. Comme $\sigma$ et $\omega$ fixent les $\Gamma$-orbites dans $\typebt_{\mathfrak{\max}}$, il en est de même pour $\phi$. Par ailleurs, comme $\sigma^{-1}\,\omega\,\sigma =\omega$, on a également $\sigma \circ \phi = \phi \circ \sigma$. Enfin, comme $\sigma$ et $\omega$ envoient une $\Gamma$-orbite vers une autre, il en est de même pour $\phi$. 

        D'après \cite[Remark 1.3.76]{kaletha_prasad_2023}, le morphisme $\omega' \mapsto \omega' \cdot x$ de $\Xi_{\Ht}$ vers les points spéciaux de $\widetilde{\DynD}$ est injectif. On en déduit également que $\omega$ est le seul élément du noyau ayant $\sigma$ dans sa décomposition puisque $\omega\cdot x = \sigma\cdot x$.

        \item On réutilise \cite[Remark 1.3.76]{kaletha_prasad_2023}. Comme un élément du noyau envoie $x$ vers un élément de sa $\Gamma$-orbite, on en déduit la majoration. Si $G$ admet un point hyperspécial, il est $\Gamma$-invariant et son orbite est réduite à lui-même. D'où le résultat.

        \item Observons qu'un élément $h\in H$ est tel que $\widetilde{\xi}(h)$ est dans le noyau $\Ker\left(\Xi_{\Ht,\typebt_{\mathrm{max}}})^{\Gamma}\rightarrow \Xi^\mathrm{ext} \right)$ si et seulement s'il fixe les $\Gamma$-orbites de $\widetilde{\mathscr{D}}$ dans $\typebt_{\mathrm{max}}$ d'après le point (5). Cela est équivalent à demander que $\widetilde{\xi}(h)$ stabilise les $K$-types d'après \ref{UnramTypes}, ou encore que $h\in H^c$. Le noyau est donc donné par $\widetilde{\xi}(H^c)$. 
        
        D'après la proposition \ref{CorrespUnram}, le groupe $H^c$ agit transitivement sur les $\Gamma$-chambres, puisqu'agit transitivement sur les $K$-chambres. En conséquence, tout élément de $\widetilde{\xi}(H)$ provient d'un $h\in H$ qui stabilise une certaine $\Gamma$-chambre $\mathcal{C}$. Comme vu dans le point (4), l'action de $h$ sur $\mathcal{C}^\Gamma$ détermine l'image de $\widetilde{\xi}(h)$ dans $\Xi^\mathrm{ext}$. Or, cette action n'est autre que l'action naturelle de $h$ sur la $K$-chambre $\mathcal{C}^\Gamma$ dans l'immeuble $\ImmBT(G)\cong \ImmBT(G_{\Ktnr})^\Gamma$. Par définition, cette action définit un élément de $\Xi$.
        
        D'après le point (7), si $G$ admet un sommet spécial sur $L$ fixé par $\Gamma$, alors le noyau $\Ker\left(\Xi_{\Ht,\typebt_{\mathrm{max}}})^{\Gamma}\rightarrow \Xi^\mathrm{ext} \right )$ est trivial. En conséquence, $\widetilde{\xi}(H^c)$, et donc $H^c/(H\cap \Ht^c)$ est trivial. Ceci implique l'égalité $H^c=H\cap \Ht^c$. Le point (1) du corollaire \ref{TrèsConformeEstConforme} donne que $H\cap \widetilde{H}^c\subset H^c$.
    \end{enumerate}
\end{proof}

\begin{rmq}
    Il existe des situations où $\typebt_{\mathrm{max}}$ ne contient aucun sommet spécial de $\widetilde{\DynD}$. En effet, posons $q_0=X_1^2+X_2^2+X_3^2+X^2_4$ et considérons la forme quadratique \linebreak $q=q_0(X_1,X_2,X_3,X_4) + t\,q_0(X_1',X_2',X_3',X_4')$ dans $\RR((t))$. Elle est anisotrope de discriminant $1$, donc $\mathrm{Spin}(q)$ est un groupe anisotrope simplement connexe absolument presque simple de type ${}^1D_4$. Son indice de Tits affine est donné par 
    {\begin{dynkinDiagram}D[1]{4}
    \draw[black] (root 2) circle (.12cm);
    \draw[black,fill=black] (root 0) circle (.05cm);
    \end{dynkinDiagram}}, d'où le contre-exemple voulu puisque le point central n'est pas spécial dans $D_4$ d'après \cite[Table 1.3.5]{kaletha_prasad_2023}.
    \medskip

    En effet, observons en premier lieu qu'il y a une inclusion naturelle de $\mathrm{SO}(q_0)\times\mathrm{SO}(q_0)$ dans $\mathrm{SO}(q)$ sur $\RR((t))$. Cette inclusion se relève au niveau des revêtements simplement connexes $\mathrm{Spin}(q_0)\times\mathrm{Spin}(q_0)\rightarrow \mathrm{Spin}(q)$ d'après \cite[Exercise 6.5.2.(iii)]{ConradSGA3}. On constate ensuite que le noyau $\mu$ du relevé est inclus dans $\Ker\left(\mathrm{Spin}(q_0)\times\mathrm{Spin}(q_0)\rightarrow \mathrm{SO}(q_0)\times\mathrm{SO}(q_0)\right)$ : en conséquence $\mu$ est un sous-groupe de type multiplicatif fini déployé central.
    
    Notons $\mathcal{P}$ l'unique schéma en groupes parahorique de $\mathrm{Spin}(q)$ sur $\RR[[t]]$. Puisque $\mathrm{Spin}(q)$ est anisotrope sur $\RR((t))$ et simplement connexe, on a $\mathcal{P}(\RR[[t]])=\mathrm{Spin}(q)(\RR((t)))$ d'après \cite[5.2.10.(i) Proposition.]{BT2}. Par ailleurs, comme $q_0$ est défini et régulier sur $\RR[[t]]$, le groupe $(\mathrm{Spin}(q_0)\times\mathrm{Spin}(q_0))/\mu$ est défini et réductif sur $\RR[[t]]$. On le note alors $\mathcal{Q}$.
    
    Montrons ensuite qu'il existe un morphisme $\mathcal{Q}\rightarrow \mathcal{P}$ de $\RR[[t]]$-schémas en groupes étendant les inclusions $(\mathrm{Spin}(q_0)\times\mathrm{Spin}(q_0))/\mu\rightarrow \mathrm{Spin}(q)$ et $\mathcal{Q}(\RR[[t]]) \subset \mathcal{P}(\RR[[t]])=\mathrm{Spin}(q)(\RR((t)))$. Il suffit pour cela de montrer que $\mathcal{Q}$ est étoffé (cf. \cite[1.7.1. Définition.]{BT2}).
    
    D'après \cite[1.7.2.]{BT2}, il suffit de montrer que $\mathcal{Q}$ vérifie (ET 1) et (ET 2). \cite[1.7.3.]{BT2} nous donne déjà que (ET 1) est satisfait. (ET 2) signifie que l'image de $\mathcal{Q}(\Rt)$ vers $\mathcal{Q}(\kappa)$ est schématiquement dense dans $\mathcal{Q}_\kappa$. Comme $\mathcal{Q}$ est lisse et $\Rt$ hensélien, cela revient à montrer que $\mathcal{Q}(\kappa)$ est schématiquement dense dans $\mathcal{Q}_\kappa$ d'après le lemme de Hensel (\cite[Lemma 8.1.3]{kaletha_prasad_2023}). Cela est vrai d'après \cite[Theorem 17.93]{Milne}.
    
    Comme $\mathcal{Q}$ est réductif sur $\RR[[t]]$ et qu'il est de rang $4$, il admet $R[[t]]$-tore maximal $T$ de rang $4$. L'application induite $T\rightarrow \mathcal{P}$ a un noyau de type multiplicatif d'après \sga{Exp. IX, Théorème 6.8.}. Or, ce dernier est trivial sur la fibre générique : il est donc trivial d'après \sga{Exp. IX, Remarque 1.4.1.b)}. En conséquence, $T\rightarrow \mathcal{P}$ et en particulier le rang de $\mathcal{P}_\CC$ est d'au moins $4$.

    Également, comme le groupe est anisotrope, son indice affine contient qu'une seule orbite distinguée. Enfin, d'après \cite[3.5.2.]{Corvallis}, l'indice de Tits du quotient réductif de $\mathcal{P}_\RR$ est obtenu en supprimant tous les sommets associés à la facette de $\mathcal{P}$ dans l'indice de Tits affine de $\mathrm{Spin}(q)$ (en outre, l'unique orbite distinguée). Notons d'ailleurs que l'indice de Tits affine de $\mathrm{Spin}(q)$ admet $5$ sommets. L'observation précédente sur le rang montre qu'un sommet est au plus supprimé, et donc que l'orbite distinguée est réduite à un point.
    \medskip
    
    Il reste donc à éliminer le cas de figure suivant :
    {\begin{dynkinDiagram}D[1]{4}
    \draw[black] (root 1) circle (.12cm);
    \draw[black,fill=black] (root 0) circle (.05cm);
    \end{dynkinDiagram}} (modulo rotation). Supposons, en raisonnant par l'absurde, que son indice soit de cette forme. Cela signifie que $\mathrm{Spin}(q)$ admet un point hyperspécial (cf. la définition \ref{DefHyperspecial}) et donc un modèle réductif $G$ sur $\RR[[t]]$ (cf. le lemme \ref{modelesReductifs}). Considérons alors une forme quadratique régulière $q'$ sur $\RR$ tel que $G_\RR=\mathrm{Spin}(q')$ ($G$ est simplement connexe). Grâce à l'inclusion $\RR\rightarrow \RR[[t]]$, ceci définit un groupe réductif $\mathrm{Spin}(q')$ sur $\RR[[t]]$.
    
    Les deux groupes $G$ et $\mathrm{Spin}(q')$ sont alors des formes de $\mathrm{Spin}_8$ qui coïncident sur $\RR$. Or, d'après \sga{Exp. XXIV, Proposition 8.1.(ii)}, puisque $\Aut(\mathrm{Spin}_8)$ est lisse, on a $H^1(\RR[[t]],\Aut(\mathrm{Spin}_8))\overset{\sim}{\rightarrow} H^1(\RR,\Aut(\mathrm{Spin}_8))$. On en déduit alors que $G$ et $\mathrm{Spin}(q')$ sont isomorphes.
    
    En particulier, on a un isomorphisme entre $\mathrm{Spin}(q')$ et $\mathrm{Spin}(q)$ sur $\RR((t))$. Montrons alors que ceci implique que $q'$ et $q$ sont équivalents à homothétie par un scalaire de $\RR((t))$ près. De ceci, on en déduit alors une absurdité car $q$ n'est pas régulière lorsque l'on réduit modulo $t$, et donc pas régulière sur $\RR[[t]]$, contrairement à $q'$.

    D'après \cite[(44.8) Theorem.]{LivreDesInvolutions}, il y a une équivalence de catégories entre les algèbres trialitaires et les groupes simplement connexes de type $D_4$ au travers de $T\mapsto\mathrm{Spin}(T)$. En l’occurrence, dans le cadre de groupes de type ${}^1 D_4$, les choses se simplifient grandement. On considère les algèbres à involutions $(\mathrm{M}_8(\RR((t))),*)$ et $(\mathrm{M}_8(\RR((t))),*')$ avec \linebreak $*:=X\mapsto M_q^{-1}\,{}^tX\, M_q$ et $*':=X\mapsto M_{q'}^{-1}\,{}^tX\, M_{q'}$, où $M_q$ et $M_{q'}$ sont respectivement les matrices des formes quadratiques $q$ et $q'$. Les groupes "$\mathrm{Spin}$" associés sont simplement $\mathrm{Spin}(q)$ et $\mathrm{Spin}(q')$. Les algèbres sont donc isomorphes. Pour conclure, on utilise ensuite \cite[(12.34) Proposition.]{LivreDesInvolutions} qui donne que $(\mathrm{M}_8(\RR((t))),*)$ et $(\mathrm{M}_8(\RR((t))),*')$ sont des algèbres à involutions isomorphes si et seulement si $q$ et $q'$ sont équivalentes à homothétie près.
\end{rmq}

\pagebreak

Montrons maintenant le théorème suivant, qui est le c{\oe}ur de cette partie :

\begin{thm}\label{NoyauH1}
    Soit $\widetilde{\Omega}$ une union disjointe de parties $\bigsqcup_{i\in I} \widetilde{\Omega_i}$ de $\ImmBT(G_{\Lt})$ où chaque $\widetilde{\Omega_i}$ est $\Gamma$-invariant.
    Prenons $\Ht\subset G(\Lt)$, un sous-groupe global $\Gamma$-invariant et posons $H:=\Ht^{\Gamma}$. Choisissons $*$, un point quelconque de la partie vectorielle $V_G$ de $\ImmBT^e(G)$.
    \begin{enumerate}
        \item On a les isomorphismes naturels (où les quotients considérés sont des quotients d'actions) : \begin{enumerate}
            \item $(\mathrm{Orb}((\widetilde{\Omega}_i)_{i\in I})_{\Ht})^{\Gamma}/H \overset{\sim}{\rightarrow} \Ker \left( H^1(\Gamma,\Ht_{(\widetilde{\Omega}_i)_{i\in I}}) \rightarrow H^1(\Gamma,\Ht) \right ).$
            \item $(\mathrm{Orb}((\widetilde{\Omega}_i)_{i\in I},\ast))_{\Ht})^{\Gamma}/H \overset{\sim}{\rightarrow} \Ker \left( H^1(\Gamma,\Ht^1_{(\widetilde{\Omega}_i)_{i\in I}}) \rightarrow H^1(\Gamma,\Ht) \right ).$
            \end{enumerate}
        L'action sur les familles est celle terme à terme.   
        \item Prenons cette fois une $\Gamma$-multifacette $\widetilde{\Fc}$ de type $\typebt$. On a les isomorphismes suivants induits en passant aux types : 
        \begin{enumerate}
            \item $(\mathrm{Orb}((\widetilde{\Fc}))_{\Ht})^{\Gamma}/H\overset{\sim}{\rightarrow}\left(\{\omega \cdot\typebt\prec \typebt_{\mathrm{max}} \mid \omega \in \Xi_{\Ht} \}^{\Gamma}\right)/\,\Xi_H.$
            \item $(\mathrm{Orb}((\widetilde{\Fc}),*)_{\Ht})^{\Gamma}/H\overset{\sim}{\rightarrow}\left(\{(\omega \cdot\typebt,\omega \cdot *) \mid \omega \cdot\typebt\prec \typebt_{\mathrm{max}}\text{ , } \omega \in \Xi^e_{\Ht} \}^{\Gamma}\right)/\,\Xi^e_{H}.$
        \end{enumerate}
    \end{enumerate}
\end{thm}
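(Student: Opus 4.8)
Le plan est de tout ramener au fait élémentaire suivant de cohomologie non abélienne : pour un groupe profini $\Gamma$, un $\Gamma$-groupe $B$ et un sous-groupe $\Gamma$-stable $A$, l'espace des classes $B/A$ est un $\Gamma$-ensemble pointé et l'on dispose de la suite exacte d'ensembles pointés
\[
1 \to A^{\Gamma} \to B^{\Gamma} \to (B/A)^{\Gamma} \to H^1(\Gamma,A) \to H^1(\Gamma,B),
\]
de sorte que $\Ker\bigl(H^1(\Gamma,A)\to H^1(\Gamma,B)\bigr)$ s'identifie canoniquement au quotient $(B/A)^{\Gamma}/B^{\Gamma}$. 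Pour (1)(a) on applique ceci à $B=\Ht$ et $A=\Ht_{(\widetilde{\Omega}_i)_{i\in I}}$, qui est le stabilisateur dans $\Ht$ de la famille $(\widetilde{\Omega}_i)_i$ pour l'action terme à terme ; ce sous-groupe est $\Gamma$-stable puisque chaque $\widetilde{\Omega}_i$ l'est, et le théorème orbite-stabilisateur identifie $B/A$ au $\Gamma$-ensemble $\mathrm{Orb}((\widetilde{\Omega}_i)_i)_{\Ht}$ (la famille étant $\Gamma$-fixe). Pour (1)(b) on prend $A=\Ht^1_{(\widetilde{\Omega}_i)_{i\in I}}$ : comme $\Ht$ agit par translations sur la partie vectorielle $V_{G_{\Lt}}$ et que le point $*\in V_G=V_{G_{\Lt}}^{\Gamma}$ est $\Gamma$-fixe, son stabilisateur dans $\Ht$ est $\Ht\cap G(\Lt)^1=\Ht^1$, d'où $\Ht^1_{(\widetilde{\Omega}_i)}=\Ht_{(\widetilde{\Omega}_i)}\cap\Ht^1$ est le stabilisateur du couple $\bigl((\widetilde{\Omega}_i)_i,*\bigr)$, qui est $\Gamma$-stable ($G(\Lt)^1$ l'étant d'après la section \ref{SectionExistenceImmeuble}), et $B/A\cong\mathrm{Orb}\bigl((\widetilde{\Omega}_i)_i,*\bigr)_{\Ht}$ comme $\Gamma$-ensembles. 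Dans les deux cas l'isomorphisme voulu est le composé de la bijection orbite-stabilisateur et de cette identification.

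Pour (2)(a) on part de (1)(a) appliqué à la $\Gamma$-multifacette $\widetilde{\Fc}$ et l'on transporte la réponse le long de l'application type $t\colon\mathrm{Orb}((\widetilde{\Fc}))_{\Ht}\to\{\text{multitypes}\}$, $h\cdot\widetilde{\Fc}\mapsto\widetilde{\xi}(h)\cdot\typebt$. Cette application est bien définie (un élément de $\Ht_{(\widetilde{\Fc})}$ préserve chaque facette composante, donc son type), $\Ht$-équivariante via $\widetilde{\xi}$ et $\Gamma$-équivariante. Sur les points $\Gamma$-fixes elle est à valeurs dans $\{\omega\cdot\typebt\prec\typebt_{\mathrm{max}}\mid\omega\in\Xi_{\Ht}\}^{\Gamma}$, car un élément $\Gamma$-fixe $h\cdot\widetilde{\Fc}$ est une $\Gamma$-multifacette, donc est contenu dans l'adhérence d'une $\Gamma$-chambre d'après la proposition \ref{CorrespUnram}, laquelle est de type $\typebt_{\mathrm{max}}$ ; et $\Xi_H=\widetilde{\xi}(H)$ préserve bien cet ensemble d'après le point (3) du théorème \ref{SuiteExacteType}. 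Elle est surjective sur cet ensemble : relevant $\omega$ en $h_0\in\Ht$, la multifacette $h_0\cdot\widetilde{\Fc}$ est de type $\omega\cdot\typebt$, et comme $G(\Lt)^+\subset\Ht$ agit transitivement sur les $\Lt$-multifacettes d'un $\Lt$-type fixé (lemme \ref{HTransitif} sur $\Lt$), on peut modifier $h_0$ à gauche par un élément de $G(\Lt)^+$ de sorte que $h_0\cdot\widetilde{\Fc}$ soit l'unique multifacette de type $\omega\cdot\typebt$ dans l'adhérence d'une $\Gamma$-chambre fixée — multifacette qui est $\Gamma$-invariante dès que $\omega\cdot\typebt$ est fortement $\Gamma$-invariant. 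Enfin, deux éléments $\Gamma$-fixes de l'orbite ont des types dans la même $\Xi_H$-orbite si et seulement s'ils sont dans la même $H$-orbite : le sens réciproque est l'équivariance ; pour le sens direct, après translation de l'un par un élément de $H$ on se ramène à deux $\Gamma$-multifacettes de même $\Lt$-type, et la proposition \ref{UnramTypes}(1) appliquée à un sous-groupe global $\Lt$-conforme de $G(\Kt)$ contenu dans $H$ (par exemple $H\cap\Ht^c$, global d'après la proposition \ref{PropDesH}) fournit un élément de $H$ envoyant l'une sur l'autre. Donc $t$ induit la bijection annoncée $(\mathrm{Orb}((\widetilde{\Fc}))_{\Ht})^{\Gamma}/H\overset{\sim}{\rightarrow}\bigl(\{\omega\cdot\typebt\prec\typebt_{\mathrm{max}}\mid\omega\in\Xi_{\Ht}\}^{\Gamma}\bigr)/\Xi_H$.

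Le point (2)(b) se traite de la même façon avec le morphisme type étendu $\widetilde{\xi}^e$ (de noyau $\Ht^b$) à la place de $\widetilde{\xi}$, en partant de (1)(b) : pour la surjectivité on utilise que $G(\Lt)^+$ agit trivialement sur $V_{G_{\Lt}}$ (donc ne déplace pas le point de $V_G$), et pour l'analyse des fibres on utilise que $H\cap\Ht^b$ est un sous-groupe global $\Lt$-conforme de $G(\Kt)$ contenu dans $H^1$, donc à la fois transitif sur les $\Gamma$-multifacettes d'un $\Lt$-type fixé (proposition \ref{UnramTypes}(1)) et trivial sur $V_G$. L'étape qui demande le plus de soin est, au point (2), le choix à chaque étape du bon sous-groupe auxiliaire ($G(\Lt)^+$, $H\cap\Ht^c$ ou $H\cap\Ht^b$) : il doit être assez gros pour transporter l'une sur l'autre deux (multi)facettes — éventuellement étendues — de même type, et assez petit pour fixer la donnée auxiliaire (le type, respectivement le point de $V_G$) ; il faut aussi vérifier que la $\Gamma$-invariance d'un élément de l'orbite force son type à se trouver sous $\typebt_{\mathrm{max}}$, pour que $t$ soit bien à valeurs dans l'ensemble annoncé.
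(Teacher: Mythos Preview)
Your proof is correct and follows essentially the same approach as the paper's: part (1) is the standard exact sequence of non-abelian cohomology for a subgroup (the paper cites \cite[I.\S5.4., Corollaire 1.]{CohoGalois} directly), combined with the orbit-stabilizer identification; part (2) is the passage to types via $\widetilde{\xi}$ (resp.\ $\widetilde{\xi}^e$), with surjectivity obtained by placing a lift inside a fixed $\Gamma$-chamber and injectivity obtained via the transitivity statement of proposition \ref{UnramTypes}. The only cosmetic differences are your choices of auxiliary subgroups: you use $G(\Lt)^+$ (resp.\ $H\cap\Ht^c$, $H\cap\Ht^b$) where the paper uses $\Ht^b$ (resp.\ $H^b$), but since all of these are global and conforme these choices are interchangeable.
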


\begin{proof}
    ${}$
    \begin{enumerate}
        \item \begin{enumerate}
            \item Le résultat provient de \cite[I.\S5.4., Corollaire 1.]{CohoGalois}.
            En effet, il suffit d'appliquer le tout au $\Gamma$-morphisme $\Ht_{(\widetilde{\Omega}_i)_{i\in I}}\rightarrow \Ht$, et d'observer que \linebreak $\Ht/\Ht_{(\widetilde{\Omega}_i)_{i\in I}} \cong \mathrm{Orb}((\widetilde{\Omega}_i)_{i\in I})_{\Ht}$. en tant que $\Gamma$-ensemble muni d'une action de $\Ht$ (terme à terme).
            
            \item  Le second point se prouve de la même manière. En effet, il suffit d'observer que $\Ht^1_{(\widetilde{\Omega}_i)_{i\in I}}=\Ht_{((\widetilde{\Omega}_i)_{i\in I},*)}$.
        \end{enumerate}

        \item 
        \begin{enumerate}
            \item Notons déjà que la flèche $(\mathrm{Orb}((\widetilde{\Fc}))_{\Ht})^{\Gamma}\rightarrow \{\omega \cdot\typebt\prec \typebt_{\mathrm{max}} \mid \omega \in \Xi^{\Ht} \}^{\Gamma}$ est bien définie puisque toute $\Gamma$-multifacette est incident à une $\Gamma$-chambre, et cette incidence passe aux types. Par ailleurs, d'après la proposition \ref{UnramTypes}, l'action de $H$ sur les types fortement $\Gamma$-invariants se factorise par $\Xi^H$. Ceci prouve la bonne définition de la flèche de l'énoncé.
    
            Montrons l'injectivité. Prenons $g,g'\in H^{{\Lt}}$ tels que $g\cdot \widetilde{\Fc}$ et $g'\cdot \widetilde{\Fc}$ soient \linebreak $\Gamma$-fortement invariants. Supposons également qu'il existe $h\in H$ tel que $h\cdot (g\cdot \widetilde{\Fc})$ et $g'\cdot \widetilde{\Fc}$ aient même $\Lt$-type. D'après la proposition \ref{UnramTypes}, cela signifie qu'il existe $h_b\in H^{b}$ tel que $(h_b h g)\cdot \widetilde{\Fc}$ et $g'\cdot \widetilde{\Fc}$ soient égaux. Puisque $h_b\,h\in H$, cela signifie donc que $g\cdot \widetilde{\Fc}$ et $g'\cdot \widetilde{\Fc}$ sont dans la même orbite par $H$. D'où l'injectivité.
            
            Montrons maintenant la surjectivité. Prenons $\widetilde{\Cc}$, une $\Gamma$-chambre tel que $\widetilde{\Fc}$ soit incident à celle-ci. Soit $h\in \Ht$ tel que $\xi(h)\cdot \typebt$ soit un type fortement $\Gamma$-invariant incident à $\typebt_\mathrm{max}$. Il se relève en la $\Lt$-multifacette $h\cdot \widetilde{\Fc}$, que l'on peut supposer incident à $\widetilde{\Cc}$, quitte à bouger $h$ par un élément de $\Ht^b$.
            
            Soit donc $\sigma \in \Gamma$. On a alors $\sigma(h\cdot \widetilde{\Fc})$ incident à $\sigma(\widetilde{\Cc})=\widetilde{\Cc}$. Or, comme $\xi(h)\cdot \typebt$ est $\Gamma$-invariant, $\sigma(h\cdot \widetilde{\Fc})$ est également de ce type. On a donc $\sigma(h\cdot \widetilde{\Fc})$ et $h\cdot \widetilde{\Fc}$ de même type dans $\widetilde{\Cc}$, ils sont donc égaux en tant que multifacette. Par conséquent, $h\cdot \widetilde{\Fc}$ est $\Gamma$-fortement invariant et la surjectivité est prouvée comme voulu. 

            \pagebreak

            \item Comme précédemment, la flèche (définie en prenant le type sur le premier facteur), est bien définie pour les mêmes raisons. Ajoutons également que l'action de $H$ sur l'ensemble de gauche se factorise par $\Xi^e_{H}$ car $H^b\subset H^c$ agit trivialement sur les types fortement $\Gamma$-invariants (comme montré précédemment), et $H^b\subset H^1$ agit trivialement sur $*\in V_G$. La preuve de l'injectivité et de la surjectivité se fait, \textit{mutatis mutandis}, comme le cas précédent.
        \end{enumerate}
    \end{enumerate}
\end{proof}

\begin{rmq}
    En combinant le point (1) et le point (2) dans le cadre d'une multifacette, on en déduit que les noyaux du point (1) ne dépendent que des types, plus précisément que du diagramme de Dynkin affine relatif sur $\Lt$ muni de son action de Galois et l'ensemble de sommets $\typebt_{\mathrm{max}}$ (ce que l'on pourrait appeler un $\Lt/\Kt$-indice de Tits affine) et de l'action de $\Ht$ dessus.

    Pour réaliser des calculs dans le cas où $\Lt=\Ktnr$, on peut notamment s'aider de la classification des indices de Tits affines faite dans \cite[4. Classification.]{Corvallis} pour les corps locaux, ou encore de la récente classification des groupes résiduellement quasi-déployés lorsque $\kappa$ est parfait dans \cite[6.5.13]{RousseauNewBook}.

    On peut également déterminer la liste des indices de Tits affines dans le cas hyperspécial (cf. définition \ref{DefHyperspecial}), et donc réaliser un calcul, en utilisant la liste des indices de Tits (classiques) dans \cite{BoulderSS} sur lequel on adjoint un point hyperspécial à l'aide de la liste des diagrammes de Dynkin affines (cf. \cite[Table 1.3.4 / 1.3.5]{kaletha_prasad_2023}).
\end{rmq}

\begin{rmq}
    Les deux ensembles du point (2) (a) du théorème \ref{NoyauH1} sont donc finis puisqu'il y a un nombre fini de types distincts.
\end{rmq}

\begin{rmq}\label{CasSSSC}
    Si $G$ est semi-simple simplement connexe, quasi-déployé sur $\Ktnr$, il a déjà été prouvé dans \cite[5.2.10.(ii)]{BT2} que $(\mathrm{Orb}(\widetilde{\Fc})_{G(\Ktnr)})^{\Galnr}/G(\Kt)=1$ pour toute \linebreak\linebreak $\Galnr$-facette $\Fc$. On peut aussi le déduire de \cite[5.2.10.(i)]{BT2} (qui dit que $G(\Ktnr)$ est conforme) et du point (2) (a) du théorème \ref{NoyauH1}.
\end{rmq}

On déduit du théorème \ref{NoyauH1} les cas particuliers suivants :

\begin{coro}\label{InjComplet}
    Soit $\widetilde{\Fc}$ une $\Gamma$-multifacette de $\ImmBT(G_{\Lt})$.
    Prenons $\Ht\subset G(\Lt)$ global, \linebreak $\Gamma$-invariant, et agissant de manière conforme sur $\widetilde{\Fc}$. On a :
    $$\Ker \left(H^1(\Gamma,\Ht_{(\widetilde{\Fc})})\rightarrow H^1(\Gamma,\Ht)\right)=1.$$
\end{coro}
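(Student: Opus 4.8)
Le plan est d'appliquer directement le théorème \ref{NoyauH1}. Plus précisément, je combinerais le point (1)(a) et le point (2)(a) appliqués à la $\Gamma$-multifacette $\widetilde{\Fc}$ (de type $\typebt$) afin d'identifier
\[
\Ker\left(H^1(\Gamma,\Ht_{(\widetilde{\Fc})})\rightarrow H^1(\Gamma,\Ht)\right)\;\cong\;\left(\{\omega\cdot\typebt\prec\typebt_{\mathrm{max}}\mid\omega\in\Xi_{\Ht}\}^{\Gamma}\right)/\,\Xi_H .
\]
Tout le travail étant déjà contenu dans ce théorème, il ne resterait qu'à vérifier que l'ensemble de droite est réduit à un point.

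Pour cela, je rappellerais d'abord que $\widetilde{\Fc}$, étant une $\Gamma$-multifacette, est incidente à une $\Gamma$-chambre, de sorte que $\typebt\prec\typebt_{\mathrm{max}}$. Ensuite, le type de $g\cdot\widetilde{\Fc}$ vaut $\widetilde{\xi}(g)\cdot\typebt$ pour tout $g\in\Ht$ ; l'hypothèse que $\Ht$ agit de manière conforme sur $\widetilde{\Fc}$ (au sens de la définition \ref{DefMultifacette}, c'est-à-dire envoie $\widetilde{\Fc}$ sur des multifacettes de même type) se traduit donc exactement par le fait que $\omega\cdot\typebt=\typebt$ pour tout $\omega\in\Xi_{\Ht}$. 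Par conséquent $\{\omega\cdot\typebt\prec\typebt_{\mathrm{max}}\mid\omega\in\Xi_{\Ht}\}=\{\typebt\}$ ; ses $\Gamma$-invariants forment encore ce singleton (de toute façon $\typebt$ est fortement $\Gamma$-invariant puisque $\widetilde{\Fc}$ l'est), et le quotient par l'action de $\Xi_H$ reste un point. Le noyau est donc trivial.

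Il n'y a pas réellement d'obstacle : le point (2)(a) du théorème \ref{NoyauH1} ramène l'énoncé à une observation purement combinatoire au niveau des types, et le seul point méritant une ligne de justification est l'équivalence entre « $\Ht$ agit conformément sur $\widetilde{\Fc}$ » et « $\Xi_{\Ht}$ fixe $\typebt$ », qui découle immédiatement de la description de l'action de $\Ht$ sur les types via le morphisme $\widetilde{\xi}$.
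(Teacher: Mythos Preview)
Your proof is correct and follows essentially the same approach as the paper: both apply Theorem~\ref{NoyauH1} to identify the kernel with the orbit of $\typebt$ under $\Xi_{\Ht}$ (modulo $\Xi_H$), then observe that the conformity hypothesis means $\Xi_{\Ht}$ fixes $\typebt$, so this orbit is a singleton. The paper's version is just more terse.
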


\begin{proof}
Notons $\typebt$, le type de $\widetilde{\Fc}$. L'hypothèse signifie que $\Ht$ agit trivialement sur $\typebt$. Par conséquent, $(\mathrm{Orb}(\typebt)_{\Ht})^{\Gamma}$ est trivial. On conclut alors grâce au théorème \ref{NoyauH1}.
\end{proof}

\begin{coro}\label{TrivialiteCasChambres}
    Prenons $\Ht\subset G(\Lt)$, un sous-groupe global $\Gamma$-invariant.
    Soit $\widetilde{\Cc}$ une \linebreak $\Gamma$-chambre de $\ImmBT(G_{\Lt})$. On a :
    $$\Ker\left( H^1(\Gamma,\Ht_{\widetilde{\Cc}})\rightarrow H^1(\Gamma,\Ht) \right) = 1.$$
\end{coro}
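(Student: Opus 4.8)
Le plan est d'obtenir ce résultat comme le cas particulier $\typebt=\typebt_{\mathrm{max}}$ du théorème~\ref{NoyauH1}. Une $\Gamma$-chambre $\widetilde{\Cc}$ est en particulier une $\Gamma$-facette, donc une $\Gamma$-multifacette à un seul terme, de sorte que son multistabilisateur $\Ht_{(\widetilde{\Cc})}$ coïncide avec le stabilisateur $\Ht_{\widetilde{\Cc}}$ ; et d'après la proposition~\ref{UnramTypes}(2), son type est précisément $\typebt_{\mathrm{max}}$ (c'est d'ailleurs la définition de $\typebt_{\mathrm{max}}$, qui ne dépend pas du choix de la $\Gamma$-chambre). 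En appliquant successivement les points (1)(a) et (2)(a) du théorème~\ref{NoyauH1} à $\widetilde{\Cc}$, on obtient les identifications
\[
\Ker\!\left(H^1(\Gamma,\Ht_{\widetilde{\Cc}})\rightarrow H^1(\Gamma,\Ht)\right)\;\cong\;\bigl(\mathrm{Orb}((\widetilde{\Cc}))_{\Ht}\bigr)^{\Gamma}/H\;\cong\;\Bigl(\{\,\omega\cdot\typebt_{\mathrm{max}}\prec\typebt_{\mathrm{max}}\mid\omega\in\Xi_{\Ht}\,\}^{\Gamma}\Bigr)/\Xi_H.
\]

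Il reste à vérifier que l'ensemble combinatoire figurant à droite est réduit au seul élément $\typebt_{\mathrm{max}}$. Le point clé est que tout $\omega\in\Xi_{\Ht}$ est un automorphisme du diagramme de Dynkin affine $\widetilde{\DynD}$, donc une bijection de son ensemble de sommets : en particulier, $\omega\cdot\typebt_{\mathrm{max}}$ a le même cardinal que $\typebt_{\mathrm{max}}$. Or $\typebt_{\mathrm{max}}$ est le type de la \emph{plus grande} $\Gamma$-facette, de sorte que l'incidence $\omega\cdot\typebt_{\mathrm{max}}\prec\typebt_{\mathrm{max}}$ se lit, sur le diagramme, comme une inclusion des sous-ensembles de sommets correspondants ; comme une inclusion d'ensembles finis de même cardinal est une égalité, on en déduit $\omega\cdot\typebt_{\mathrm{max}}=\typebt_{\mathrm{max}}$ pour tout $\omega$ contribuant à l'ensemble. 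Cet ensemble vaut donc $\{\typebt_{\mathrm{max}}\}$, qui est de surcroît fixé par $\Gamma$ puisque $\typebt_{\mathrm{max}}$ est $\Gamma$-stable par construction ; le quotient par $\Xi_H$ est ainsi un singleton et le noyau est trivial, comme voulu.

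Signalons que le corollaire~\ref{InjComplet} fournirait une preuve alternative dès lors que $\Ht$ agit de manière conforme sur $\widetilde{\Cc}$, mais cette hypothèse n'est pas supposée ici (de sorte que l'énoncé est, formellement, un peu plus général) : la force du calcul précédent est qu'il n'utilise jamais que $\Xi_{\Ht}$ stabilise $\typebt_{\mathrm{max}}$, seulement que les $\omega\cdot\typebt_{\mathrm{max}}$ qui sont effectivement incidents à $\typebt_{\mathrm{max}}$ lui sont nécessairement égaux. Le seul point technique demandant un peu de soin est la traduction de la relation d'incidence des ($\Lt$-)multitypes en une inclusion de sous-ensembles de sommets de $\widetilde{\DynD}$, combinée à la maximalité de $\widetilde{\Cc}$ parmi les $\Gamma$-facettes ; ces deux faits font partie de la description des types rappelée dans la section~\ref{SectionExistenceImmeuble}. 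Aucune difficulté sérieuse n'est à attendre.
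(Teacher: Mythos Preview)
Ta démonstration est correcte et suit essentiellement la même approche que celle de l'article : appliquer le théorème~\ref{NoyauH1} avec $\typebt=\typebt_{\mathrm{max}}$, puis constater que l'ensemble des $\omega\cdot\typebt_{\mathrm{max}}$ incidents à $\typebt_{\mathrm{max}}$ se réduit à $\{\typebt_{\mathrm{max}}\}$. L'article se contente d'affirmer que cette réduction est évidente, tandis que tu explicites l'argument de cardinalité (et mentionnes le quotient par $\Xi_H$, que l'article passe sous silence puisque le représentant est déjà unique) ; cela n'altère en rien l'identité de la stratégie.
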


\begin{proof}
    Notons $\typebt_\mathrm{max}$, le type de $\widetilde{\Cc}$. D'après le théorème \ref{NoyauH1}, le noyau de l'énoncé est égal au sous-ensemble de $(\mathrm{Orb}(\typebt_\mathrm{max})_{\Ht})^{\Gamma}$ composé des types incidents à $\typebt_{\mathrm{max}}$. Cet ensemble est donc bien évidemment réduit à $\{\typebt_\mathrm{max}\}$.
\end{proof}

Attardons-nous maintenant sur quelques résultats exprimant dans quelle mesure l'extension $\Lt/\Kt$ peut être changée pour les calculs de cohomologie.

\begin{lem}\label{ActionMemeRang}
    Supposons que $G$ ait même rang relatif sur $\Lt$ que sur $\Kt$. Prenons $S$ un $\Kt$-tore déployé maximal. Notons $Z:=Z_G(S)$ et $N:=N_G(S)$. Prenons également un sous-groupe global $\Gamma$-invariant $\Ht$ de $G(\Lt)$ contenant $Z(\Lt)^1$ (ou de manière équivalente, $G(\Lt)^b$) et notons $H:=\Ht^\Gamma$. On a les assertions suivantes :

    \begin{enumerate}
        \item La flèche naturelle $(H\cap N(\Kt))/Z(\Kt)^1\rightarrow (\Ht\cap N(\Lt))/Z(\Lt)^1$ est un isomorphisme.
        
        \item L'immeuble $\ImmBT^e(G)$ s'identifie canoniquement à un sous-ensemble de $\ImmBT^e(G_{\Lt})$ qui envoie un appartement sur $\Kt$ sur un appartement $\Gamma$-invariant sur $\Lt$.
        
        \item Considérons $\Ac$ l'appartement étendu associé à $S$ dans $\ImmBT^e(G)\subset\ImmBT^e(G_{\Lt})$. Le point (1) signifie également que $H_{\Ac}$ et $\Ht_{\Ac}$ ont la même image dans $\Aut_{\mathrm{aff}}(\Ac)$.
        
        \item On a les isomorphismes naturels $\Xi_H\cong \Xi_{\Ht}$ et $\Xi^e_{H}\cong \Xi^e_{\Ht}$.
    \end{enumerate}
\end{lem}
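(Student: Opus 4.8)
I treat the four assertions in the order (2), (1), (3), (4): assertion (3) is merely a reformulation of (1), and (4) is deduced from (1) together with Proposition~\ref{UnramTypes}.

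\emph{Assertion (2).} By the tame descent theorem (\cite[Proposition 5.1.1.]{TheseRousseau}), $\ImmBT^e(G)$ identifies with $\ImmBT^e(G_\Lt)^\Gamma$, and by Theorem~\ref{ExistenceImmeuble} its apartments are the $\Lt$-apartments attached to the maximal split tori of $G$ defined and split over $\Kt$. As $G$ has the same relative rank over $\Kt$ and over $\Lt$, such a torus $S$ is maximal split over both; the $\Lt$-apartment $\Ac(S)$ is $\Gamma$-stable since $S$ is defined over $\Kt$, its $\Gamma$-action is affine with linear part factoring through the trivial $\Gamma$-action on $X_*(S)$ (and on $V_G=V_{G_\Lt}$), hence is a translation action, and it fixes a point; therefore it is trivial. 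Thus $\Ac:=\Ac(S)$ is simultaneously the $\Kt$- and the $\Lt$-apartment for $S$, and $\Gamma$ fixes it pointwise. In particular every $\Lt$-facet of $\Ac$ is $\Gamma$-invariant, so $\Gamma$ acts trivially on the set of $\Lt$-types and $\typebt_{\mathrm{max}}$ is the type of an $\Lt$-chamber.

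\emph{Assertion (1).} Put $N:=N_G(S)$ and $Z:=Z_G(S)$, which are defined over $\Kt$. Recall from Section~\ref{SectionExistenceImmeuble} that $\nu\colon N(\Lt)\to\Aut_{\mathrm{aff}}(\Ac)$ has kernel $Z(\Lt)^1$ (and likewise over $\Kt$), and that $Z(\Kt)^1=Z(\Lt)^1\cap Z(\Kt)$. Since $Z(\Kt)^1\subseteq Z(\Lt)^1\subseteq\Ht$ and $Z(\Kt)^1$ is normal in $N(\Kt)$, the map is a well-defined group homomorphism. It is injective: if $n\in H\cap N(\Kt)$ lies in $Z(\Lt)^1$, then $n$ centralizes $S$ and is $\Kt$-rational, so $n\in Z(\Kt)\cap Z(\Lt)^1=Z(\Kt)^1$. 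For surjectivity, let $\tilde n\in\Ht\cap N(\Lt)$. By (2), $\Ac$ is common to $\Kt$ and $\Lt$, and under the rank hypothesis the relative valued root datum of $G$ is unchanged by the unramified base change $\Lt/\Kt$; hence $\nu(N(\Kt))=\nu(N(\Lt))$, both being the extended affine Weyl group acting on $\Ac$. Pick $n_0\in N(\Kt)$ with $\nu(n_0)=\nu(\tilde n)$; then $\tilde n^{-1}n_0\in\ker\nu=Z(\Lt)^1\subseteq\Ht$, so $n_0=\tilde n\cdot(\tilde n^{-1}n_0)\in\Ht$, and being $\Kt$-rational, $n_0\in\Ht\cap G(\Kt)=\Ht^\Gamma=H$. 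Thus $n_0\in H\cap N(\Kt)$ represents the class of $\tilde n$. The hard part here is the identification $\nu(N(\Kt))=\nu(N(\Lt))$: this is exactly where the rank hypothesis is genuinely used, and it may need a reference to \cite{BT1} or \cite{TheseRousseau}; everything else is bookkeeping.

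\emph{Assertions (3) and (4).} Since $N(\Lt)$ is the full stabilizer of the extended apartment $\Ac$ in $G(\Lt)$, we have $\Ht_\Ac=\Ht\cap N(\Lt)$ and $H_\Ac=H\cap N(\Kt)$, and the restrictions of $\nu$ to them have kernels $Z(\Lt)^1$ and $Z(\Kt)^1$; so (3) is precisely the isomorphism of (1). For (4): by Lemma~\ref{décompSSGroupes}, $H=G(\Kt)^+(H\cap N(\Kt))$ with $G(\Kt)^+\subseteq G(\Kt)^b\subseteq\ker\xi\cap\ker\xi^e$, whence $\Xi_H=\xi(H\cap N(\Kt))\cong(H\cap N(\Kt))/\bigl((H\cap N(\Kt))\cap G(\Kt)^c\bigr)$ and $\Xi^e_H\cong(H\cap N(\Kt))/\bigl((H\cap N(\Kt))\cap G(\Kt)^b\bigr)$, and similarly over $\Lt$. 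By (2), $\Gamma$ acts trivially on $\Lt$-types and $\typebt_{\mathrm{max}}$ is a full $\Lt$-chamber type, so Proposition~\ref{UnramTypes} shows that a $\Kt$-rational element preserves $\Kt$-types if and only if it preserves $\Lt$-types, i.e. $G(\Kt)^c=G(\Kt)\cap G(\Lt)^c$; combined with $G(\Kt)^1=G(\Kt)\cap G(\Lt)^1$ (Section~\ref{SectionExistenceImmeuble}) this gives $G(\Kt)^b=G(\Kt)\cap G(\Lt)^b$. Arguing as in the surjectivity step (using $Z(\Lt)^1\subseteq G(\Lt)^b$ and Corollary~\ref{TrèsConformeEstConforme} for the lift-back), the isomorphism of (1) carries $(H\cap N(\Kt))\cap G(\Kt)^c$ onto $(\Ht\cap N(\Lt))\cap G(\Lt)^c$ and $(H\cap N(\Kt))\cap G(\Kt)^b$ onto $(\Ht\cap N(\Lt))\cap G(\Lt)^b$; passing to the corresponding quotients yields $\Xi_H\cong\Xi_{\Ht}$ and $\Xi^e_H\cong\Xi^e_{\Ht}$.
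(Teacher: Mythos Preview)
Your argument is essentially correct and close to the paper's, but the organization of (1) differs and you leave open precisely the step the paper works out explicitly.

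For (1), you reduce everything to the single claim $\nu(N(\Kt))=\nu(N(\Lt))$ and flag it as the ``hard part'' needing a reference. The paper does not invoke a reference here; it \emph{proves} this by splitting the extended affine Weyl group into its two pieces via the short exact sequence
\[
1\longrightarrow (H\cap Z(\Kt))/Z(\Kt)^1\longrightarrow (H\cap N(\Kt))/Z(\Kt)^1\longrightarrow (H\cap N(\Kt))/(H\cap Z(\Kt))\longrightarrow 1
\]
(and the analogous sequence over $\Lt$). The Weyl-group quotient is handled by the fact that $N/Z$ is a finite constant group scheme, so $N(\Kt)/Z(\Kt)\to N(\Lt)/Z(\Lt)$ is bijective, together with $HZ(\Kt)=G(\Kt)$ and $\Ht Z(\Lt)=G(\Lt)$ (from \cite[6.11.(i)]{HomAbstraits}) to get surjectivity of $H\cap N$ onto it. The translation part is handled by choosing a $\ZZ$-basis of $X^*(Z)$ of cardinality the common relative rank $r$, so that $Z(\Kt)/Z(\Kt)^1\cong\ZZ^r\cong Z(\Lt)/Z(\Lt)^1$ naturally (the key point being that $\Lt/\Kt$ is unramified, so the valuation map is the same); one then checks that any $\tilde h\in\Ht\cap Z(\Lt)$ differs from an element of $H\cap Z(\Kt)$ by something in $Z(\Lt)^1\subseteq\Ht$. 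The five lemma finishes. Your direct $\nu$-argument is a clean repackaging of the same content, but without this two-step verification it remains an assertion.

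For (4), the paper also first establishes $G(\Kt)^c=G(\Kt)\cap G(\Lt)^c$ and $G(\Kt)^b=G(\Kt)\cap G(\Lt)^b$ via Proposition~\ref{UnramTypes} (exactly as you do), but then concludes by showing directly $\Ht=H\cdot\Ht^b$: since $Z(\Lt)^1\subseteq\Ht$ forces $\Ht^b=G(\Lt)^b$, point (1) gives $(H\cap N(\Kt))\,Z(\Lt)^1=\Ht\cap N(\Lt)=\Ht_\Ac$, and multiplying by $G(\Lt)^+$ and using Lemma~\ref{décompSSGroupes} yields $\Ht=(H\cap N(\Kt))\,G(\Lt)^b$. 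Your route via quotients of $H\cap N(\Kt)$ and $\Ht\cap N(\Lt)$ is equivalent and arguably more transparent; both rely on the same two ingredients.

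Your treatment of (2) is more detailed than the paper's (which simply cites \cite[Proposition 2.3.1]{TheseRousseau}), and your (3) matches the paper's.
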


\begin{proof}
    Notons que $\Ht$ contient $Z(\Lt)^1$ si et seulement s'il contient $G(\Lt)^b$ puisque ce dernier vaut $G(\Lt)^+Z(\Lt)^1$ d'après le lemme \ref{décompSSGroupes}.
    \begin{enumerate}
        \item On a le diagramme commutatif à lignes exactes suivant :
        \[\begin{tikzcd}
        	1 & {(H\cap N(\Kt))/(H\cap Z(\Kt))} & {(H\cap N(\Kt))/Z(\Kt)^1} & {(H\cap Z(\Kt))/Z(\Kt)^1} & 1 \\
        	1 & {(\Ht\cap N(\Lt))/(H\cap Z(\Kt))} & {(\Ht\cap N(\Lt))/Z(\Lt)^1} & {(\Ht\cap Z(\Lt))/Z(\Lt)^1} & 1.
        	\arrow[from=1-1, to=1-2]
        	\arrow[from=1-2, to=1-3]
        	\arrow[hook, from=1-2, to=2-2]
        	\arrow[from=1-3, to=1-4]
        	\arrow[hook, from=1-3, to=2-3]
        	\arrow[from=1-4, to=1-5]
        	\arrow[hook, from=1-4, to=2-4]
        	\arrow[from=2-1, to=2-2]
        	\arrow[from=2-2, to=2-3]
        	\arrow[from=2-3, to=2-4]
        	\arrow[from=2-4, to=2-5]
        \end{tikzcd}\]
        
        Notons déjà que $N(\Kt)/Z(\Kt)\rightarrow N(\Lt)/Z(\Lt)$ est un isomorphisme puisque $N/Z$ est un groupe fini constant (cf. \cite[25.16]{Milne}). Comme $H Z(\Kt) = G(\Kt)$ et \linebreak $\Ht Z(\Lt)=G(\Kt)$ (puisque $G(\Kt)^+Z(\Kt)=G(\Kt)$ et $G(\Lt)^+Z(\Lt)=G(\Lt)$ d'après \cite[6.11.(i) Proposition.]{HomAbstraits}), $H\cap N(\Kt)$ (resp. $\Ht\cap N(\Lt)$) se surjecte dans $N(\Kt)/Z(\Kt)$ (resp. $N(\Lt)/Z(\Lt)$). Ceci donne que la flèche verticale de gauche est un isomorphisme.
    
        Prenons une base $\mathcal{B}$ de caractères de $Z$. Elle est donc de cardinal $r$, où $r$ est le rang relatif de $G$ sur $\Kt$. Comme il a même rang relatif sur $\Lt$, il s'agit également d'une base de caractères de $Z_{\Lt}$. La flèche $z\mapsto (v(\chi(z)))_{\chi\in \mathcal{B}}$ définit un isomorphisme de $Z(\Kt)/Z(\Kt)^1$ vers $\ZZ^r$. Comme $\Lt/\Kt$ est non ramifié, elle s'étend en un isomorphisme $Z(\Lt)/Z(\Lt)^1\cong \ZZ^r$. D'où un isomorphisme naturel $Z(\Kt)/Z(\Kt)^1\cong Z(\Lt)/Z(\Lt)^1$. Étant donné $h\in \Ht\cap Z(\Lt)$, il existe donc $z\in Z(\Kt)$ et $z^1\in Z(\Lt)^1$ tel que $h=zz^1$. Comme $Z(\Lt)^1\subset \Ht$, il s'avère que $z\in Z(\Kt)\cap \Ht=Z(\Kt)\cap H$. Ceci prouve donc que la flèche verticale de droite est un isomorphisme.
    
        En conclusion, la flèche verticale centrale est un isomorphisme puisque c'est le cas des flèches de gauche et de droite. D'où l'isomorphisme souhaité. 

        \item Il s'agit d'une conséquence immédiate de \cite[Proposition 2.3.1.]{TheseRousseau} et du théorème de descente non ramifiée.

        \item Rappelons que l'on a une flèche d'action $N(\Lt)\rightarrow \Aut_{\mathrm{aff}}(\Ac)$ dont le noyau est $Z(\Lt)^1$ (cf. fin de la section \ref{SectionExistenceImmeuble}). D'après le point précédent, cette flèche est compatible à celle associée à $N(\Kt)$. Il suffit alors d'observer que $H_\Ac=H\cap N(\Kt)$ et $\Ht_{\Ac}=\Ht\cap N(\Lt)$ pour conclure grâce au point (1).

        \item Observons tout d'abord que $H^b=\Ht^b\cap H$ et que $H^c=\Ht^c\cap H$. En effet, c'est une conséquence de la proposition \ref{UnramTypes} puisque $\Gamma$ agit trivialement sur $\ImmBT^e(G_{\Lt})$ et donc sur les types, et l'égalité des rangs relatifs signifie que toute $\Lt$-chambre est une $\Kt$-chambre. En d'autres termes, être conforme sur $\Lt$ est équivalent à être conforme sur $\Kt$.
        
        Montrons alors que $\Ht=H \, \Ht^b$ pour conclure. Cela impliquera bien entendu $\Ht=H \, \Ht^c$. Comme $Z(\Lt)^1\subset \Ht$, on a en fait $\Ht^b=G(\Lt)^b$. Le point (1) dit que $(H\cap N(\Kt)) \, Z(\Lt)^1=(\Ht\cap N(\Lt))=\Ht_{\Ac}$. En multipliant par $G(\Lt)^+$ et en utilisant le lemme \ref{décompSSGroupes}, on obtient $(H\cap N(\Kt)) \, G(\Lt)^b = \Ht_{\Ac} \, G(\Lt)^+=\Ht$. D'où le résultat.
    \end{enumerate}
\end{proof}

\begin{prop}
    Considérons $\Lt'/\Kt$, une extension galoisienne non ramifiée contenant $\Lt$ de groupe de Galois $\Gamma'$. Supposons que $G$ ait le même rang relatif sur $\Lt'$ que sur $\Lt$. Soit $\widetilde{\Fc}'$, une $\Gamma'$-multifacette. Elle induit une $\Gamma$-multifacette que l'on note $\widetilde{\Fc}$.
    
    Prenons également un sous-groupe global $\Gamma$-invariant $\Ht'$ de $G(\Lt')$ contenant $G(\Lt')^b$, et notons $\Ht:=(\Ht')^{\Gal(\Lt'/\Lt)}$ et $H:=(\Ht')^{\Gamma'}$. On a les égalités :
    \begin{enumerate}
         \item \begin{enumerate}
            \item $\Ker \left( H^1(\Gamma,\Ht^c) \rightarrow H^1(\Gamma,\Ht) \right )=\Ker \left( H^1(\Gamma',(\Ht')^c) \rightarrow H^1(\Gamma',\Ht') \right)$.
            \item $\Ker \left( H^1(\Gamma,\Ht^b) \rightarrow H^1(\Gamma,\Ht) \right )=\Ker \left( H^1(\Gamma',(\Ht')^b) \rightarrow H^1(\Gamma',\Ht') \right)$.
        \end{enumerate}

        \item \begin{enumerate}
            \item $\Ker \left( H^1(\Gamma,\Ht_{\widetilde{\Fc}}) \rightarrow H^1(\Gamma,\Ht) \right )=\Ker \left( H^1(\Gamma',(\Ht')_{\widetilde{\Fc}'}) \rightarrow H^1(\Gamma',\Ht') \right)$.
            \item $\Ker \left( H^1(\Gamma,\Ht^1_{\widetilde{\Fc}}) \rightarrow H^1(\Gamma,\Ht) \right)=\Ker \left( H^1(\Gamma',(\Ht')^1_{\widetilde{\Fc}'}) \rightarrow H^1(\Gamma',\Ht') \right)$.
        \end{enumerate}
    \end{enumerate}
\end{prop}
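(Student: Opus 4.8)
\emph{Plan de preuve.} L'idée est de ramener chacune des quatre égalités à une identité combinatoire via les théorèmes \ref{SuiteExacteType} et \ref{NoyauH1}, puis de constater que, sous l'hypothèse d'égalité des rangs relatifs, les données combinatoires sur $\Lt$ et sur $\Lt'$ sont littéralement les mêmes. Notons d'abord que $\Ht=(\Ht')^{\Gal(\Lt'/\Lt)}$ est bien un sous-groupe global $\Gamma$-invariant de $G(\Lt)$ contenant $G(\Lt)^b$ (cela se vérifie directement à l'aide du lemme \ref{GK+Galois}), de sorte que les théorèmes ci-dessus s'appliquent tant sur $\Lt$ (pour $\Gamma$, $\Ht$) que sur $\Lt'$ (pour $\Gamma'$, $\Ht'$).

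On commencerait par exploiter l'hypothèse de rang. En appliquant le lemme \ref{ActionMemeRang} au corps de base $\Lt$, à l'extension $\Lt'/\Lt$ (de groupe $\Gal(\Lt'/\Lt)$) et au sous-groupe $\Ht'$ — qui contient $G(\Lt')^b$, donc $Z(\Lt')^1$ pour un $\Lt$-tore déployé maximal $S$, les hypothèses étant alors satisfaites — on obtient que $\ImmBT^e(G_{\Lt})$ se plonge canoniquement dans $\ImmBT^e(G_{\Lt'})$ en envoyant un $\Lt$-appartement sur un $\Lt'$-appartement $\Gal(\Lt'/\Lt)$-invariant. Comme $G$ a même rang relatif sur $\Lt$ et sur $\Lt'$, tout $\Lt'$-tore déployé maximal est déjà déployé sur $\Lt$, donc tout $\Lt'$-appartement est un $\Lt$-appartement (cf. théorème \ref{ExistenceImmeuble}) ; le plongement est alors un isomorphisme de complexes polysimpliciaux et $\Gal(\Lt'/\Lt)$ agit trivialement sur $\ImmBT^e(G_{\Lt'})$. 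Il s'ensuit que le diagramme de Dynkin affine relatif et les types sur $\Lt$ et sur $\Lt'$ coïncident, qu'une $\Gamma$-multifacette de $\ImmBT(G_{\Lt})$ est la même chose qu'une $\Gamma'$-multifacette de $\ImmBT(G_{\Lt'})$ (en particulier « $\widetilde{\Fc}'$ induit $\widetilde{\Fc}$ » signifie simplement $\widetilde{\Fc}=\widetilde{\Fc}'$, avec le même type $\typebt$, et le type $\typebt_{\mathrm{max}}$ d'une $\Gamma$-chambre égale celui d'une $\Gamma'$-chambre), et, d'après le point (4) du lemme \ref{ActionMemeRang}, que $\Xi_{\Ht}=\Xi_{\Ht'}$ et $\Xi^e_{\Ht}=\Xi^e_{\Ht'}$ comme sous-groupes du groupe (commun) d'automorphismes de Dynkin, les morphismes type et type étendu sur $\Lt$ et sur $\Lt'$ coïncidant sur $\Ht$ comme sur $H$. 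Enfin $\Gal(\Lt'/\Lt)$ agit trivialement sur $\Xi_{\Ht'}$ : la conjugaison par $\sigma\in\Gal(\Lt'/\Lt)$ envoie $\widetilde{\xi}(h)$ sur $\widetilde{\xi}(\sigma(h))=\widetilde{\xi}(h)$ puisque $h\in\Ht=(\Ht')^{\Gal(\Lt'/\Lt)}$ et $\Xi_{\Ht'}=\widetilde{\xi}(\Ht)$ (lemme \ref{ActionMemeRang}.(4)) ; l'action de $\Gamma'$ se factorise donc par $\Gamma$ et s'identifie à l'action de $\Gamma$ sur $\Xi_{\Ht}$, et de même pour $\Xi^e$.

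Pour les noyaux du point (1) : le théorème \ref{SuiteExacteType}.(2).(a) identifie le membre de gauche à l'ensemble pointé $(\Xi_{\Ht})^{\Gamma}/\widetilde{\xi}(H)$ et le membre de droite au quotient analogue formé sur $\Lt'$ ; d'après le paragraphe précédent il s'agit du même quotient, d'où (1).(a) ; le théorème \ref{SuiteExacteType}.(2).(b) joint à $\Xi^e_{\Ht}=\Xi^e_{\Ht'}$ (le facteur $V_G$ étant inchangé) donne (1).(b) de la même façon. Pour le point (2) : le théorème \ref{NoyauH1}.(2).(a) identifie le membre de gauche à $\left(\{\omega\cdot\typebt\prec\typebt_{\mathrm{max}}\mid\omega\in\Xi_{\Ht}\}^{\Gamma}\right)/\Xi_H$ et le membre de droite au quotient analogue formé à partir de $\Xi_{\Ht'}=\Xi_{\Ht}$, de $\typebt$, de $\typebt_{\mathrm{max}}$ et de $\Xi_H$ (même $H$) ; ces ensembles coïncident, d'où (2).(a), et le point (2).(b) s'en déduit en adjoignant le facteur inchangé $V_G$ via le théorème \ref{NoyauH1}.(2).(b). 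Toutes ces identifications étant induites par l'égalité $\ImmBT(G_{\Lt})=\ImmBT(G_{\Lt'})$ et l'inclusion $G(\Lt)\subset G(\Lt')$, c'est-à-dire précisément par l'inflation le long de $\Gamma'\twoheadrightarrow\Gamma$, les deux noyaux sont littéralement égaux, et pas seulement en bijection.

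L'obstacle principal se situe dans le deuxième paragraphe : il faut s'assurer que l'égalité des rangs relatifs force bien $\Gal(\Lt'/\Lt)$ à agir trivialement sur tout l'immeuble étendu $\ImmBT^e(G_{\Lt'})$ (et pas seulement sur le sous-ensemble $\ImmBT(G_{\Lt})$), puis que cette trivialité se propage aux objets $\Xi_{\Ht}=\Xi_{\Ht'}$, $\Xi^e_{\Ht}=\Xi^e_{\Ht'}$ de manière à rendre les actions de $\Gamma$ et de $\Gamma'$ littéralement identiques. Une fois ce point acquis, (1) et (2) ne consistent plus qu'à citer les théorèmes \ref{SuiteExacteType} et \ref{NoyauH1} et à comparer terme à terme les deux quotients combinatoires ; on prendra simplement garde au fait que (2) est énoncé via les (multi)stabilisateurs $\Ht_{\widetilde{\Fc}}$, mais comme $\widetilde{\Fc}$ et $\widetilde{\Fc}'$ désignent la même multifacette de l'immeuble commun, la description par multistabilisateurs du théorème \ref{NoyauH1} s'applique identiquement des deux côtés.
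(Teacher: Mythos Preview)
Votre plan est exactement celui du papier : ramener chaque noyau à une donnée combinatoire via les théorèmes \ref{SuiteExacteType} et \ref{NoyauH1}, puis invoquer le lemme \ref{ActionMemeRang} pour identifier $\Xi_{\Ht}\cong\Xi_{\Ht'}$, $\Xi^e_{\Ht}\cong\Xi^e_{\Ht'}$, les types et $\typebt_{\mathrm{max}}$. Cette partie est correcte et suffit à conclure.

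Il y a cependant une affirmation fausse dans votre paragraphe de préparation : vous écrivez que « tout $\Lt'$-tore déployé maximal est déjà déployé sur $\Lt$ », que « le plongement est alors un isomorphisme de complexes polysimpliciaux » et que « $\Gal(\Lt'/\Lt)$ agit trivialement sur $\ImmBT^e(G_{\Lt'})$ ». Rien de tout cela n'est vrai en général, même sous l'hypothèse d'égalité des rangs relatifs. Prenez $G=\mathrm{PGL}_2$, $\Lt=\QQ_p$, $\Lt'$ une extension quadratique non ramifiée : les deux rangs valent $1$, mais l'arbre de $\mathrm{PGL}_2(\Lt')$ est strictement plus gros que celui de $\mathrm{PGL}_2(\Lt)$, et le Frobenius permute les arêtes issues d'un sommet fixe. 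En particulier l'égalité $\widetilde{\Fc}=\widetilde{\Fc}'$ n'a pas de sens littéral ; $\widetilde{\Fc}$ est la $\Lt$-multifacette $(\widetilde{\Fc}')^{\Gal(\Lt'/\Lt)}$ via la proposition \ref{CorrespUnram}.

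Cette erreur n'invalide pas votre argument, car vous n'utilisez en fin de compte que des conséquences qui, elles, sont vraies : (i) un $\Lt$-appartement s'envoie sur un $\Lt'$-appartement \emph{entier} (égalité des dimensions), donc une $\Lt$-chambre est une $\Lt'$-chambre et les $\Lt$-types coïncident avec les $\Lt'$-types ; (ii) tout $\Lt'$-type est représenté dans l'adhérence d'une $\Lt$-chambre, qui est fixée par $\Gal(\Lt'/\Lt)$, donc $\Gal(\Lt'/\Lt)$ agit trivialement sur les $\Lt'$-types et l'action de $\Gamma'$ sur $\Xi_{\Ht'}$ se factorise par $\Gamma$ ; (iii) $\widetilde{\Fc}$ et $\widetilde{\Fc}'$ ont même type (proposition \ref{UnramTypes}). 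C'est précisément ainsi que le papier procède : il constate que $\Gamma$ et $\Gamma'$ agissent de la même façon sur les $\Lt$-types, sans jamais prétendre que les immeubles sont égaux ni que $\Gal(\Lt'/\Lt)$ agit trivialement sur tout $\ImmBT(G_{\Lt'})$. Il suffit donc de remplacer votre justification fautive par (i)--(iii) ; le reste de votre preuve est alors identique à celle du papier.
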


\begin{proof}
    Observons que $\Lt'/\Lt$ est galoisien. D'après le lemme \ref{ActionMemeRang}, puisque $G$ a même rang relatif sur $\Lt'$ que sur $\Lt$, l'inclusion $\ImmBT^e(G_{\Lt})\subset \ImmBT^e(G_{\Lt'})$ envoie un appartement sur un appartement invariant par $\Gal(\Lt'/\Lt)$. Cette inclusion est donc telle que $\Gamma$ et $\Gamma'$ agissent de la même manière sur les $\Lt$-types (qui donc sont en correspondance avec les $\Lt'$-types). Par ailleurs, notons qu'une $\Gamma'$-chambre est naturellement une $\Gamma$-chambre, de sorte que le type $\typebt_{\mathrm{max}}$ est le même sur $L$ et sur $L'$. Notons alors que facette $\widetilde{\Fc}$ n'est autre que l'image de $\widetilde{\Fc}'$ sous cette inclusion.
    \begin{enumerate}
        \item \begin{enumerate}
            \item D'après le point (2) du théorème \ref{SuiteExacteType} (et son caractère fonctoriel), il suffit de montrer que la flèche $(\Xi_{\Ht})^{\Gamma}/\widetilde{\xi}(H)\rightarrow (\Xi_{\Ht'})^{\Gamma'}/\widetilde{\xi'}(H)$ est un isomorphisme. C'est une conséquence immédiate de l'isomorphisme $\Xi_{\Ht}\overset{\sim}{\rightarrow}\Xi_{\Ht'}$ d'après le point (4) du lemme \ref{ActionMemeRang}, ce dernier isomorphisme identifiant également $\widetilde{\xi}(H)$ avec $\widetilde{\xi'}(H)$.

            \item Ce cas se traite de la même manière.
        \end{enumerate}
        
        \item \begin{enumerate}
            \item On a le diagramme commutatif suivant d'après le point (2) (a) du théorème \ref{NoyauH1} :
            \[\begin{tikzcd}
            	{\Ker \left( H^1(\Gamma,\Ht_{\widetilde{\Fc}}) \rightarrow H^1(\Gamma,\Ht) \right )} & {\Ker \left( H^1(\Gamma',\Ht'_{\widetilde{\Fc}'}) \rightarrow H^1(\Gamma',\Ht') \right )} \\
            	{(\mathrm{Orb}(\widetilde{\Fc})_{\Ht})^{\Gamma}/H} & {(\mathrm{Orb}(\widetilde{\Fc'})_{\Ht'})^{\Gamma'}/H} \\
            	{\left(\{\omega \cdot\typebt\prec \typebt_{\mathrm{max}} \mid \omega \in \Xi_{\Ht} \}^{\Gamma}\right)/\Xi_{H}} & {\left(\{\omega \cdot\typebt\prec \typebt_{\mathrm{max}} \mid \omega \in \Xi_{\Ht'} \}^{\Gamma'}\right)/\Xi_{H}}
            	\arrow[from=1-1, to=1-2]
            	\arrow["\cong", from=2-1, to=1-1]
            	\arrow[from=2-1, to=2-2]
            	\arrow["\cong"', from=2-1, to=3-1]
            	\arrow["\cong"', from=2-2, to=1-2]
            	\arrow["\cong", from=2-2, to=3-2]
            	\arrow[from=3-1, to=3-2]
            \end{tikzcd}\]
            et la dernière flèche horizontale est un isomorphisme d'après le point (4) du lemme \ref{ActionMemeRang} puisque ce dernier donne $\Xi_{\Ht}\overset{\sim}{\rightarrow}\Xi_{\Ht'}$.
            
            \item Ce cas se traite de manière analogue.
        \end{enumerate}
    \end{enumerate}
\end{proof}

\begin{rmq} \label{MemeRangNoyauTriv}
    On peut en particulier appliquer cette proposition lorsque $\Lt'=\Ktnr$ et avec une extension galoisienne finie $\Lt/\Kt$ telle que $G$ a même rang sur $\Lt$ que sur $\Ktnr$. En conséquence, les noyaux sont triviaux si $G$ est déployé sur $\Kt$, ou encore si $G$ est semi-simple et résiduellement déployé. 

    Les noyaux sont également triviaux dans le cas où $G$ est un groupe absolument presque simple quasi-déployé sur $K$ et déployée par une extension totalement ramifiée, car dans ce cas, $G$ est résiduellement déployé. En effet, un groupe quasi-déployé admet une $K$-extension galoisienne déployante minimale $K'$ (qui n'est autre que l'extension galoisienne de groupe de Galois le noyau de la $*$-action). Cette extension est totalement ramifiée par hypothèse. En conséquence, $G$ est résiduellement déployé car, dans le cas contraire, il existerait une extension non ramifiée entre $K$ et $K'$.
\end{rmq} 

Notons que, d'après \cite[2.9. Calculs galoisiens.]{GilleSemisimpleSchémas}, étant donné un $\Rt$-schéma en groupes $\Gc$, tout $\Gamma$-cocycle dans $Z^1(\Gamma, \Gc(\Rt_{\Lt}))$  (où $\Rt_{\Lt}$ est l'anneau d'entiers de $\Lt$) définit un $\Gc$-torseur sur $\Rt$, et donc un élément de $H^1(\Rt,\Gc)$ (en fait, tout $\Gc$-torseur sur $\Rt$ trivialisé sur $\Rt_L$ provient d'un unique tel cocycle d'après \cite[Lemme 2.2.1.]{GilleSemisimpleSchémas}). De même, un cocycle de $Z^1(\Gamma,G(\Lt))$ définit un élément de $H^1(\Kt,G)$. Par ailleurs, deux torseurs sont isomorphes si et seulement si les cocycles associés sont cohomologues.

On définit donc le tordu de $\Gc$ par un cocycle $z\in Z^1(\Gamma, \Gc(\Rt_{\Lt}))$, noté ${}^z\Gc$, comme étant le tordu au travers du torseur qu'il induit (cf. \cite[2.1.]{GilleSemisimpleSchémas}). On définit de même le tordu de $G$ par un cocycle de $Z^1(\Gamma,G(\Lt))$. Deux cocycles cohomologues induisent bien entendu des tordus isomorphes.

Notons enfin que tordre $\Gc$ par un cocycle $z\in Z^1(\Gamma, \Gc(\Rt_{\Lt}))$ est compatible à la torsion de $\Gc(R_L)$ par le même cocycle dans le sens suivant : $({}^z \Gc)(R_L)$ est égal à ${}^z(\Gc(R_L))$ en tant que $\Gamma$-module (il en est bien sûr de même pour $G$).
\medskip

Terminons cette section avec quelques propriétés sur le comportement par torsion par un cocycle de l'immeuble, des facettes, et des stabilisateurs :
\medskip

\begin{prop}\label{TorsionImmeuble}
    Prenons un cocycle $z \in Z^1(\Gamma,G(L))$.
    \begin{enumerate}
        \item Les tordus ${}^zG(L)$ et ${}^z\ImmBT(G_{L})$ sont tels que d'une part, ${}^zG(L)$ soit $G(L)$ muni de l'action donnée par $\sigma \star g:=z(\sigma)\sigma(g)z(\sigma)^{-1}$, et d'autre part, tel que ${}^z\ImmBT(G_{L})$ soit $\ImmBT(G_{L})$ muni de l'action $\sigma \star x = z(\sigma)\sigma(x)$. Par ailleurs, ces deux actions sont compatibles, autrement dit : $\sigma\star(g\cdot x)=(\sigma\star g)\cdot(\sigma\star x)$. De plus, $({}^z\ImmBT(G_L))^\Gamma$ s'identifie à $\ImmBT({}^zG)$. 

        \item Soit $z'$ un cocycle cohomologue à $z$ via un élément $g_0\in G(L)$ (de telle sorte à ce que $z'=\sigma\mapsto g_0^{-1} z(\sigma) \sigma(g_0)$). Alors on a les isomorphismes suivants :
        \begin{align*}
        \begin{gathered}
        {}^{z'}G(L) \overset{\sim}{\rightarrow} {}^{z}G(L) \\
        g\mapsto g_0 g g_0^{-1}
        \end{gathered}
        & 
        \text{  et  }
        \begin{gathered}
        {}^{z'}\ImmBT(G_{L}) \overset{\sim}{\rightarrow} {}^{z}\ImmBT(G_{L}) \\
        x\mapsto g_0 \cdot x
        \end{gathered}
        \end{align*}
        où le premier isomorphisme est un isomorphisme de $\Gamma$-groupes, et où le second isomorphisme est un isomorphisme de $\Gamma \ltimes G(L)$-ensembles. Par ailleurs, ${}^zG$ et ${}^{z'}G$ sont isomorphes.
    \end{enumerate}
\end{prop}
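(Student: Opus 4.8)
The plan is to trace through the concrete description of the twist of a $\Gamma$-object by a cocycle and then reduce each assertion to a short computation with cocycle identities. Recall that the cocycle $z\in Z^1(\Gamma,G(L))$ has associated descent datum on $G_L$ given, for $\sigma\in\Gamma$, by the semilinear automorphism $\mathrm{Int}(z(\sigma))\circ\sigma\colon g\mapsto z(\sigma)\sigma(g)z(\sigma)^{-1}$; this is by definition the $K$-group ${}^zG$, whence ${}^zG(L)$ is $G(L)$ with the action $\sigma\star g=z(\sigma)\sigma(g)z(\sigma)^{-1}$, and moreover $G_L={}^zG_L$ as $L$-groups, so $\ImmBT({}^zG_L)=\ImmBT(G_L)$ as $G(L)$-sets. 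First I would invoke the functoriality of the Bruhat--Tits construction recalled in Section~\ref{SectionExistenceImmeuble} together with Rousseau's \cite[Proposition 2.4.6]{TheseRousseau}: a (semi)linear automorphism of $G_L$ induces a polysimplicial automorphism of $\ImmBT(G_L)$ compatible with the $G(L)$-action; applied to $\mathrm{Int}(z(\sigma))$ this gives $x\mapsto z(\sigma)\cdot x$, and applied to $\sigma$ it gives the Galois action $x\mapsto\sigma(x)$, so the composite $\mathrm{Int}(z(\sigma))\circ\sigma$ induces $x\mapsto z(\sigma)\sigma(x)$. Since the $\Gamma$-action on $\ImmBT({}^zG_L)$ coming from the $K$-structure of ${}^zG$ is exactly the transport of this descent datum, it is the action $\sigma\star x=z(\sigma)\sigma(x)$; this is the asserted description of ${}^z\ImmBT(G_L)$.

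The compatibility $\sigma\star(g\cdot x)=(\sigma\star g)\cdot(\sigma\star x)$ is then a one-line verification: the left side is $z(\sigma)(\sigma(g)\cdot\sigma(x))=z(\sigma)\sigma(g)\cdot\sigma(x)$ because $\sigma$ acts compatibly on $G(L)$ and on $\ImmBT(G_L)$, while the right side is $(z(\sigma)\sigma(g)z(\sigma)^{-1})\cdot(z(\sigma)\sigma(x))=z(\sigma)\sigma(g)\cdot\sigma(x)$. For the identification $({}^z\ImmBT(G_L))^\Gamma\cong\ImmBT({}^zG)$ I would apply the tamely ramified descent theorem (\cite[Proposition 5.1.1.]{TheseRousseau}) to the $K$-group ${}^zG$ --- which has an extended building because $K$ is henselian (Theorem~\ref{ExistenceImmeuble}) --- and read it through the equality $\ImmBT({}^zG_L)=\ImmBT(G_L)$ established above; this finishes~(1).

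For~(2), write $z'(\sigma)=g_0^{-1}z(\sigma)\sigma(g_0)$, so that $g_0z'(\sigma)=z(\sigma)\sigma(g_0)$ and $z'(\sigma)^{-1}g_0^{-1}=\sigma(g_0)^{-1}z(\sigma)^{-1}$. Using these, $g_0(z'(\sigma)\sigma(g)z'(\sigma)^{-1})g_0^{-1}=z(\sigma)\,\sigma(g_0gg_0^{-1})\,z(\sigma)^{-1}$, so $g\mapsto g_0gg_0^{-1}$ intertwines the starred actions and, being a conjugation, is an isomorphism of $\Gamma$-groups ${}^{z'}G(L)\overset{\sim}{\to}{}^zG(L)$; similarly $g_0\cdot(z'(\sigma)\sigma(x))=z(\sigma)\sigma(g_0\cdot x)$ and $(g_0gg_0^{-1})\cdot(g_0\cdot x)=g_0\cdot(g\cdot x)$, which exhibits $x\mapsto g_0\cdot x$ as an isomorphism of $\Gamma\ltimes G(L)$-sets ${}^{z'}\ImmBT(G_L)\overset{\sim}{\to}{}^z\ImmBT(G_L)$, compatible with the previous group isomorphism. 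Finally, for ${}^{z'}G\cong{}^zG$ as $K$-groups, I would note that $\mathrm{Int}(g_0)\colon G_L\to G_L$ carries the descent datum $\mathrm{Int}(z'(\sigma))\circ\sigma$ of ${}^{z'}G$ to that of ${}^zG$, since $\mathrm{Int}(g_0)\circ\mathrm{Int}(z'(\sigma))\circ\sigma=\mathrm{Int}(g_0z'(\sigma))\circ\sigma=\mathrm{Int}(z(\sigma)\sigma(g_0))\circ\sigma=\mathrm{Int}(z(\sigma))\circ\sigma\circ\mathrm{Int}(g_0)$, hence descends to a $K$-isomorphism; alternatively this is an instance of the fact recalled before the statement that cohomologous cocycles give isomorphic torsors, hence isomorphic twists.

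The one delicate point is the very first step: one must be sure that the abstractly defined inner twist ${}^zG$ has exactly this building-level incarnation, i.e.\ that forming $\ImmBT$ is functorial for semilinear automorphisms of $G_L$ and that the resulting $\Gamma$-action is literally $\sigma\star x=z(\sigma)\sigma(x)$. This rests on the functoriality recalled in Section~\ref{SectionExistenceImmeuble} and on the unramified/tame descent theorem of Section~3; granting those, everything else is bookkeeping with the cocycle relations.
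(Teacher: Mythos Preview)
Your proof is correct and follows essentially the same route as the paper. The paper's argument is terser: for~(1) it simply invokes Serre's general twisting formalism \cite[I.\S5.3, Proposition~34]{CohoGalois}, viewing $\ImmBT(G_L)$ as a $\Gamma$-set carrying a compatible action of the $\Gamma$-group $G(L)$, which immediately yields the twisted actions and their compatibility without appealing to functoriality of the building under semilinear automorphisms; the fixed-point identification is then Rousseau's descent \cite[Proposition~5.1.1]{TheseRousseau}, exactly as you do. For~(2) the paper just says the verification is immediate and that ${}^zG\cong{}^{z'}G$ follows from cohomologous cocycles giving isomorphic torsors --- both points you spell out explicitly. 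Your unpacking is a faithful expansion of the paper's citations.
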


\pagebreak

\begin{proof}
    ${}$
    \begin{enumerate}
        \item Il s'agit d'une conséquence immédiate de \cite[5.3. Torsion]{CohoGalois} et notamment de \cite[Proposition 34.]{CohoGalois} pour la compatibilité. En effet, $\ImmBT(G_{\Lt})$ peut être vu comme un $\Gamma$-ensemble muni d'une action compatible du $\Gamma$-groupe $G(\Ktnr)$.
        
        Pour ce qui est des points fixes, cela provient du théorème de descente modérément ramifiée (\cite[Proposition 5.1.1.]{TheseRousseau}), notant que ${}^z G$ est réductif sur $\Kt$ puisque l'est sur $\Lt$, et du fait que $({}^z G)(L) = {}^z(G(L))$ en tant que $\Gamma$-groupes.

        \item La vérification est immédiate. Pour ce qui est de l'isomorphisme entre ${}^zG$ et ${}^{z'}G$, cela est une conséquence du fait que $z$ et $z'$ proviennent de torseurs isomorphes puisqu'ils sont cohomologues.
    \end{enumerate}
\end{proof}

\begin{prop}
    Prenons $\widetilde{\Fc}$ une $\Gamma$-multifacette de $\ImmBT(G_{\Lt})$ et $z \in Z^1(\Gamma,G(L)_{(\widetilde{\Fc})})$. Écrivons $\widetilde{\Fc}=\bigsqcup_{i\in I}\widetilde{\Fc_i}$, sa décomposition en $\Gamma$-facettes et notons $\Fc=\bigsqcup_{i\in I} \Fc_i$ la $\Kt$-multifacette associée. Le cocycle $z$ définit pour tout $i\in I$ une classe dans $Z^1(\Gamma,G(L)_{\widetilde{\Fc_i}})$ et dans $Z^1(\Gamma,G(L))$ que l'on note aussi $z$. Alors :

    \begin{enumerate}
        \item La multifacette tordue ${}^z\widetilde{\Fc}$ est également compatible à ${}^z\ImmBT(G_{L})$, dans le sens suivant : $\widetilde{\Fc}$ est $\Gamma$-fortement invariante pour l'action $\star$ introduite dans le point (1) la proposition \ref{TorsionImmeuble} et le $\Gamma \ltimes G(L)_{(\widetilde{F})}$-ensemble induit est exactement ${}^z\widetilde{\Fc}$. L'ensemble des points fixes est noté ${}^z \Fc$. Ce dernier est une multifacette de $\ImmBT({}^z G)$ et admet la décomposition en facettes ${}^z \Fc=\bigsqcup_{i\in I} {}^z \Fc_i$.

        \item Soit $z'\in Z^1(\Gamma,G(L)_{(\widetilde{\Fc})})$ un cocycle cohomologue à $z$ dans $Z^1(\Gamma,G(L))$, donc via un élément $g_0\in G(L)$ (de telle sorte à ce que $z'=\sigma\mapsto g_0^{-1} z(\sigma) \sigma(g_0)$). Alors l'isomorphisme du point (2) de la proposition \ref{TorsionImmeuble} envoie ${}^{z'}\widetilde{\Fc}$ sur la facette $g_0 \widetilde{\Fc}$ qui est $\Gamma$-invariante dans ${}^{z}\ImmBT(G_{L})$. En conséquence, ${}^z\Fc$ est envoyé sur $(g_0 \widetilde{\Fc})^\Gamma$.

        En particulier, si $g_0\in G(L)_{(\widetilde{F})}$ (et donc $z$ et $z'$ sont cohomologues dans $Z^1(\Gamma,G(L)_{(\widetilde{\Fc})})$, alors ${}^{z'}\widetilde{\Fc}$ est envoyé sur ${}^{z}\widetilde{\Fc}$ (et ${}^z\Fc$ sur ${}^{z'}\Fc$).
    \end{enumerate}
\end{prop}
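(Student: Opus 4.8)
The plan is to reduce everything to bookkeeping, keeping systematically separate the two ways of regarding a multifacette: as a bare subset of the building, which is unchanged by twisting, and as a $\Gamma$-equivariant object, whose $\Gamma$-action is the thing that changes. For point (1), I would first note that since $z\in Z^1(\Gamma,G(L)_{(\widetilde{\Fc})})$ each $z(\sigma)$ stabilizes every $\widetilde{\Fc_i}$, and since each $\widetilde{\Fc_i}$ is a $\Gamma$-facette it is invariant for the ordinary $\Gamma$-action; hence, for the twisted action $\sigma\star x=z(\sigma)\sigma(x)$ of Proposition \ref{TorsionImmeuble}(1), one gets $\sigma\star\widetilde{\Fc_i}=z(\sigma)\sigma(\widetilde{\Fc_i})=z(\sigma)\widetilde{\Fc_i}=\widetilde{\Fc_i}$, so $\widetilde{\Fc}$ is strongly $\Gamma$-invariant for $\star$. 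By the very definition of twisting an object equipped with compatible actions of a $\Gamma$-group (the formalism of \cite[5.3.]{CohoGalois} already invoked for Proposition \ref{TorsionImmeuble}), the $\Gamma\ltimes G(L)_{(\widetilde{\Fc})}$-set thus obtained from $\widetilde{\Fc}$ is exactly ${}^z\widetilde{\Fc}$.

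Next I would identify the fixed points. The polysimplicial structure underlying ${}^z\ImmBT(G_L)$ is that of $\ImmBT(G_L)$ — twisting has moved no point — so ${}^z\widetilde{\Fc}$ is again a $\Gamma$-multifacette, its facettes still lying in one common chamber closure. Since ${}^zG$ is reductive over $\Kt$ and $({}^z\ImmBT(G_L))^\Gamma\cong\ImmBT({}^zG)$ by Proposition \ref{TorsionImmeuble}(1), I would apply the correspondence of Proposition \ref{CorrespUnram} to the group ${}^zG$ over $\Kt$: this gives that ${}^z\Fc:=({}^z\widetilde{\Fc})^\Gamma$ is a $\Kt$-multifacette of $\ImmBT({}^zG)$, with decomposition into facettes $\bigsqcup_{i\in I}({}^z\widetilde{\Fc_i})^\Gamma=\bigsqcup_{i\in I}{}^z\Fc_i$.

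For point (2), write $\star$ and $\star'$ for the twisted actions attached to $z$ and $z'$. On underlying sets the isomorphism $\phi\colon{}^{z'}\ImmBT(G_L)\to{}^z\ImmBT(G_L)$ of Proposition \ref{TorsionImmeuble}(2) is $x\mapsto g_0\cdot x$, so it carries the subset $\widetilde{\Fc}={}^{z'}\widetilde{\Fc}$ onto $g_0\widetilde{\Fc}$; the cocycle relation $z(\sigma)\sigma(g_0)=g_0\,z'(\sigma)$ (coming from $z'(\sigma)=g_0^{-1}z(\sigma)\sigma(g_0)$) then gives $\sigma\star(g_0\cdot x)=g_0\cdot(\sigma\star' x)$, which is precisely the $\Gamma$-equivariance of $\phi$ already recorded in Proposition \ref{TorsionImmeuble}(2). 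Applying point (1) to $z'\in Z^1(\Gamma,G(L)_{(\widetilde{\Fc})})$ shows $\widetilde{\Fc}$ is strongly $\Gamma$-invariant for $\star'$, hence $g_0\widetilde{\Fc}$ is strongly $\Gamma$-invariant for $\star$; and since $\phi$ is a $\Gamma$-equivariant isomorphism it matches up the respective fixed-point sets, sending ${}^{z'}\Fc=(\widetilde{\Fc})^{\Gamma,\star'}$ onto $(g_0\widetilde{\Fc})^\Gamma$. The last assertion is then immediate: when $g_0\in G(L)_{(\widetilde{\Fc})}$ one has $g_0\widetilde{\Fc}=\widetilde{\Fc}$, so ${}^{z'}\widetilde{\Fc}$ is carried onto ${}^z\widetilde{\Fc}$ and ${}^{z'}\Fc$ onto ${}^z\Fc$.

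I do not expect a genuine obstacle here; what I would be most careful about is the consistent separation of the geometric and the equivariant viewpoints, and in particular the verification that the descent dictionary of Proposition \ref{CorrespUnram} really is available for the twisted group ${}^zG$ — which is why one must invoke its reductivity over $\Kt$ and the identification $({}^z\ImmBT(G_L))^\Gamma\cong\ImmBT({}^zG)$. Once that is in place, everything reduces to the one-line cocycle computation above.
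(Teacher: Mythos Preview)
Your proof is correct and follows essentially the same route as the paper: both arguments invoke the twisting formalism of \cite[5.3.]{CohoGalois} to see that the inclusions $\widetilde{\Fc}_i\subset\widetilde{\Fc}\subset\ImmBT(G_L)$, being morphisms of $\Gamma\ltimes G(L)_{(\widetilde{\Fc})}$-sets, survive the twist, and then appeal to Proposition~\ref{CorrespUnram} (applied to ${}^zG$) for the descent to $\Kt$; for point~(2) the paper simply declares the verification immediate, whereas you spell out the one-line cocycle identity $z(\sigma)\sigma(g_0)=g_0\,z'(\sigma)$ that makes $\phi$ equivariant. Your explicit separation of the geometric and equivariant viewpoints is a useful clarification but not a different strategy.
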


\begin{proof}
    ${}$
    \begin{enumerate}
        \item Comme précédemment, on utilise \cite[5.3. Torsion]{CohoGalois}. Puisque $\widetilde{\Fc}$ est fortement \linebreak $\Gamma$-invariante et que $G(L)_{(\widetilde{\Fc})}$ opère sur $\widetilde{\Fc}$ de manière compatible à $\Gamma$, on peut considérer le tordu ${}^z\widetilde{\Fc}$. On obtient de même les ${}^z\widetilde{\Fc}_i$ pour tout $i\in I$. Comme, pour tout $i\in I$, on a $\widetilde{\Fc}_i\subset \widetilde{\Fc}\subset\ImmBT(G_L)$ en tant que $\Gamma \ltimes G(L)_{(\widetilde{F})}$-ensembles, alors de même, ${}^z\widetilde{\Fc}_i\subset {}^z\widetilde{\Fc}\subset{}^z\ImmBT(G_L)$ en tant que $\Gamma \ltimes G(L)_{(\widetilde{F})}$-ensembles. D'où la compatibilité et la forte $\Gamma$-invariance de ${}^z\widetilde{\Fc}$. La proposition \ref{CorrespUnram} nous dit alors que ${}^z \Fc$ est une multifacette de $\ImmBT({}^z G)$ de décomposition $\bigsqcup_{i\in I}{}^z \Fc_i$. 

        \item La vérification est immédiate.
    \end{enumerate}
\end{proof}

\begin{prop}
    Prenons $\Ht$, un sous-groupe global $\Gamma$-invariant de $G(\Lt)$ et posons \linebreak $H:=\Ht^{\Gamma}$. Prenons également $z\in Z^1(\Gamma,\widetilde{H})$. On a :
    
    \begin{enumerate}
        \item Le sous-groupe ${}^z H:=({}^z \Ht)^{\Gamma}$ est global dans $({}^z G)(\Kt)$.
    \end{enumerate}
    
    Supposons de plus que $z\in Z^1(\Gamma,\widetilde{H}_{(\widetilde{\Fc})})$ (qui définit bien entendu un cocycle dans $Z^1(\Gamma,G(L)_{(\widetilde{\Fc})})$ et dans $Z^1(\Gamma,\widetilde{H})$ que l'on note aussi $z$). On a :
    
    \begin{enumerate}
        \setcounter{enumi}{1}
        \item Le sous-groupe ${}^z H_{(\Fc)}:=({}^z \Ht_{(\widetilde{\Fc})})^{\Gamma}$ est le multistabilisateur de ${}^z \Fc$ dans $\ImmBT({}^z G)$ relativement à ${}^z H$. Autrement dit, ${}^z H_{(\Fc)}=({}^z H)_{({}^z \Fc)}$. 

        \item Supposons que $\Lt=\Ktnr$. Si de plus $H_{(\Fc)}$ admet un modèle de Bruhat-Tits $\mathcal{H}_{(\Fc)}$, alors $({}^z H)_{({}^z \Fc)}$ également et un est donné par ${}^z \mathcal{H}_{(\Fc)}$.
    \end{enumerate}
\end{prop}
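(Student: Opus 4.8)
Tout repose sur le point (1) de la proposition \ref{TorsionImmeuble} : le groupe ${}^z G(\Lt)$ n'est autre que $G(\Lt)$ muni de l'action tordue $\sigma\star g=z(\sigma)\sigma(g)z(\sigma)^{-1}$, et ${}^z\ImmBT(G_{\Lt})$ est $\ImmBT(G_{\Lt})$ muni de $\sigma\star x=z(\sigma)\sigma(x)$, l'action de $G(\Lt)$ sur l'immeuble restant inchangée. En particulier, les sous-ensembles $G(\Lt)^+$, $\widetilde{H}$, les $\widetilde{\Fc}_i$, $\widetilde{H}_{(\widetilde{\Fc})}$, ainsi que la topologie adique de $G(\Lt)$, ne sont pas modifiés par la torsion : seule change l'action de $\Gamma$. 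Le plan est donc de ramener chacun des trois points à cette observation, combinée respectivement avec le lemme \ref{GK+Galois} pour (1), la proposition \ref{CompatibiliteStab} pour (2), et le formalisme de torsion par un torseur rappelé juste avant la proposition \ref{TorsionImmeuble} pour (3).

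Pour (1), je vérifierais d'abord que $\widetilde{H}$, vu dans ${}^z G(\Lt)$, est stable pour l'action $\star$ (immédiat puisque $z(\sigma)\in\widetilde{H}$ et $\widetilde{H}$ est $\Gamma$-invariant), de sorte que ${}^z H=({}^z\widetilde{H})^{\Gamma}$ soit un sous-groupe de $({}^z G)(\Kt)=({}^z G)(\Lt)^{\Gamma}$ (rappelons que ${}^z G$ est réductif sur $\Kt$). L'ouverture de ${}^z H$ dans $({}^z G)(\Kt)$ découle de l'ouverture de $\widetilde{H}$ dans $G(\Lt)={}^z G(\Lt)$ (même groupe topologique) et du fait que $\Kt\hookrightarrow\Lt$, donc $({}^z G)(\Kt)\hookrightarrow{}^z G(\Lt)$, est un plongement topologique d'image fermée, de sorte que la topologie induite sur $({}^z G)(\Kt)$ soit sa topologie adique. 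Enfin, la fonctorialité de la construction $(-)^+$ (\cite[6.1.]{HomAbstraits}) et sa compatibilité aux torsions intérieures donnent $({}^z G)(\Lt)^+=G(\Lt)^+$ comme sous-ensemble de $G(\Lt)$ ; comme $G(\Lt)^+\subset\widetilde{H}$, le lemme \ref{GK+Galois} appliqué à ${}^z G$ donne $({}^z G)(\Kt)^+\subset\bigl(({}^z G)(\Lt)^+\bigr)^{\Gamma}\subset({}^z\widetilde{H})^{\Gamma}={}^z H$, d'où la globalité.

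Pour (2), je noterais que ${}^z\widetilde{H}$ est, d'après (1), un sous-groupe global $\Gamma$-invariant de $({}^z G)(\Lt)$, et que ${}^z\widetilde{\Fc}$ est, d'après la proposition précédente (et la proposition \ref{CorrespUnram}), une $\Gamma$-multifacette de $\ImmBT(({}^z G)_{\Lt})$ vérifiant $({}^z\widetilde{\Fc})^{\Gamma}={}^z\Fc$. La proposition \ref{CompatibiliteStab} appliquée à ces données donne $({}^z\widetilde{H})_{({}^z\widetilde{\Fc})}\cap{}^z H=({}^z H)_{({}^z\widetilde{\Fc})}=({}^z H)_{({}^z\Fc)}$. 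Il reste à identifier le membre de gauche avec ${}^z H_{(\Fc)}=({}^z\widetilde{H}_{(\widetilde{\Fc})})^{\Gamma}$ : puisque ${}^z\widetilde{\Fc}_i=\widetilde{\Fc}_i$ et ${}^z\widetilde{H}=\widetilde{H}$ en tant qu'ensembles et que l'action de $G(\Lt)$ sur l'immeuble est inchangée, on a $({}^z\widetilde{H})_{({}^z\widetilde{\Fc})}=\widetilde{H}_{(\widetilde{\Fc})}$ comme ensembles ; et comme $z$ est à valeurs dans $\widetilde{H}_{(\widetilde{\Fc})}$ et $\widetilde{\Fc}$ est $\Gamma$-invariante, ce sous-ensemble est $\star$-stable et le $\Gamma$-groupe induit est exactement ${}^z\widetilde{H}_{(\widetilde{\Fc})}$, d'où $({}^z\widetilde{H})_{({}^z\widetilde{\Fc})}\cap{}^z H=({}^z\widetilde{H}_{(\widetilde{\Fc})})^{\Gamma}$.

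Pour (3), avec $\Lt=\Ktnr$ : le cocycle $z$ vit dans $Z^1(\Gamma,\widetilde{H}_{(\widetilde{\Fc})})=Z^1(\Gamma,\mathcal{H}_{(\Fc)}(\Rtnr))$, donc définit un $\mathcal{H}_{(\Fc)}$-torseur sur $\Rt$ et un tordu ${}^z\mathcal{H}_{(\Fc)}$, qui est encore un $\Rt$-schéma en groupes lisse et séparé (ces propriétés sont fppf-locales, donc stables par torsion par un torseur sous un schéma en groupes lisse) de fibre générique ${}^z(G_{\Kt})={}^z G$. La compatibilité de la torsion de $\mathcal{G}$ à celle de $\mathcal{G}(\Rt_{\Lt})$ rappelée plus haut donne $({}^z\mathcal{H}_{(\Fc)})(\Rtnr)={}^z\bigl(\mathcal{H}_{(\Fc)}(\Rtnr)\bigr)={}^z\widetilde{H}_{(\widetilde{\Fc})}$ comme $\Gamma$-modules, ce qui vaut $({}^z\widetilde{H})_{({}^z\widetilde{\Fc})}$ d'après le calcul de (2). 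Par la définition \ref{defStab}, ${}^z\mathcal{H}_{(\Fc)}$ est alors un schéma en groupes multistabilisateur de ${}^z\Fc$ relativement à ${}^z H$, c'est-à-dire un modèle de Bruhat-Tits de $({}^z H)_{({}^z\Fc)}={}^z H_{(\Fc)}$.

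L'essentiel de la démonstration étant de nature bureaucratique (suivre soigneusement ce que la torsion change et ne change pas), l'obstacle principal — plus technique que conceptuel — est double : d'une part la compatibilité de la construction $(-)^+$ aux torsions intérieures, nécessaire pour l'égalité $({}^z G)(\Lt)^+=G(\Lt)^+$ du point (1), et d'autre part la préservation de la lissité et de la séparation sous torsion par un torseur au point (3).
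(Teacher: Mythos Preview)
Your proof is correct and follows essentially the same route as the paper's: point (1) via the fact that ${}^z\widetilde{H}$ is global (underlying group unchanged) combined with the descent of globality to $\Gamma$-fixed points (the paper invokes proposition~\ref{PropDesH}(2) directly, whereas you unpack its content via lemma~\ref{GK+Galois}); point (2) via proposition~\ref{CompatibiliteStab} applied to the twisted data; and point (3) by checking the $\Rtnr$-points of ${}^z\mathcal{H}_{(\Fc)}$. The only difference is one of verbosity: you spell out the openness argument, the equality $({}^z G)(\Lt)^+=G(\Lt)^+$, and the preservation of lissité/séparation under torsion, all of which the paper treats as implicit --- so the ``obstacles'' you flag at the end are routine rather than genuine.
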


\begin{proof}
    Observons que les groupes tordus ${}^z \Ht_{(\widetilde{\Fc})}$, ${}^z \Ht_{\Fc_i}$ pour tout $i\in I$, ${}^z \Ht$ et ${}^z G(\Lt)$ sont munis de $\Gamma$-actions compatibles. Notons aussi que ${}^z \Ht$ est global puisque son groupe sous-jacent est $\Ht$.
    
    \begin{enumerate}
        \item Comme ${}^z \Ht$ est global, ${}^z H$ est aussi global d'après le point (2) de la proposition \ref{PropDesH}.

        \item Pour ce qui est du multistabilisateur, on observe d'après la proposition \ref{CompatibiliteStab} et la compatibilité des $\Gamma$-groupes :
        $$({}^z H)_{({}^z \Fc)}={}^z H \cap {}^z \Ht_{(\widetilde{\Fc})} = ({}^z \Ht_{(\widetilde{\Fc})})^{\Gamma} = {}^z H_{(\Fc)}.$$

        \item Supposons maintenant que $H_{(\Fc)}$ admette un modèle de Bruhat-Tits $\mathcal{H}_{(\Fc)}$. Observons alors que le groupe ${}^z \mathcal{H}_{(\Fc)}$ a comme $\Rtnr$-points (muni de sa $\Gamma$-action) ${}^z \Ht_{(\widetilde{\Fc})}$ et donc comme $\Rt$-points ${}^z H_{(\Fc)}=({}^z H)_{({}^z \Fc)}$. C'est donc bien un modèle de Bruhat-Tits de $({}^z H)_{({}^z \Fc)}$.
    \end{enumerate}
\end{proof}

\section{Cas des points hyperspéciaux}

Intéressons-nous au cas des points hyperspéciaux. Rappelons la définition :

\begin{déf}\label{DefHyperspecial}
    On dit que $x\in \ImmBT(G)$ est un \textbf{point hyperspécial} de $G$ si $G$ est déployé sur $\Ktnr$ et si $x$ est un sommet spécial de $\ImmBT(G_{\Ktnr})$ (via l'identification $\ImmBT(G)\cong \ImmBT(G_{\Ktnr})^{\Galnr}$).
\end{déf}

La notion de point hyperspécial dépend donc de $\ImmBT(G)$, mais également de $G$. Notons aussi qu'un point hyperspécial est un sommet de $\ImmBT(G)$ d'après la proposition \ref{CorrespUnram}. Rappelons maintenant quelques résultats reliant les points hyperspéciaux et les modèles réductifs :

\begin{lem}\label{modelesReductifs}
    Soit $G$ un groupe réductif sur $\Kt$ et $x$ un point hyperspécial de $G$ (le groupe $G$ est donc déployé sur $\Ktnr$).
    On a les énoncés suivants :
    \begin{enumerate}
        \item Le modèle de Bruhat-Tits affine de $G(\Kt)^1_x$ est réductif (à fibres connexes) sur $\Rt$.
        \item Réciproquement, tout modèle réductif de $G$ s'obtient de cette manière.
        \item En particulier, si $G$ est un $\Rt$-groupe réductif, alors $D(G)$ (resp. $G$) est $\Kt$-anisotrope si et seulement si $G(\Rt)=G(\Kt)^1$ (resp. $G(\Rt)=G(\Kt)$).
    \end{enumerate}
\end{lem}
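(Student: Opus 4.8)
\emph{Stratégie et point (1).} On ramène tout, via le théorème de descente non ramifiée et la proposition \ref{CompatibiliteStab}, au cas où $G$ est déployé sur $\Ktnr$, où l'énoncé est classique. Posons $\Ht:=G(\Ktnr)^1$, sous-groupe global $\Galnr$-invariant d'après la proposition \ref{PropDesH}, et $H:=\Ht^{\Galnr}=G(\Kt)^1$ (cf. section \ref{SectionExistenceImmeuble}). Soit $\widetilde x$ la $\Galnr$-facette associée à $x$ par la proposition \ref{CorrespUnram} : puisque $x$ est un sommet hyperspécial, $\widetilde x=\{x\}$ est un sommet spécial de $\ImmBT(G_{\Ktnr})$ et $G(\Kt)^1_x=(\Ht_{\widetilde x})^{\Galnr}$ d'après la proposition \ref{CompatibiliteStab}. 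Comme $G_{\Ktnr}$ est déployé et $\widetilde x$ spécial, le stabilisateur $\Ht_{\widetilde x}=G(\Ktnr)^1_{\widetilde x}$ est le groupe des $\Rtnr$-points du modèle de Chevalley $\widetilde{\Gc}$ de $G_{\Ktnr}$ en $\widetilde x$, réductif à fibres connexes sur $\Rtnr$ (théorie de Bruhat-Tits, cf. \cite[3.8.1]{Corvallis} et \cite[4.6]{BT2}). Ce groupe de points étant $\Galnr$-stable dans $G(\Ktnr)$, l'unicité des $\Rtnr$-modèles lisses affines à points fixés munit $\widetilde{\Gc}$ d'une donnée de descente le long de $\Rtnr/\Rt$, effective puisqu'elle provient d'une sous-extension finie ; on en tire un $\Rt$-modèle affine et lisse $\Gc$ de $G$ vérifiant $\Gc(\Rtnr)=G(\Ktnr)^1_{\widetilde x}$, donc $\Gc(\Rt)=G(\Kt)^1_x$ : c'est le modèle de Bruhat-Tits affine de $G(\Kt)^1_x$. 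Enfin $\Gc$ est réductif : sa fibre générique est $G$, et sa fibre spéciale, dont l'extension des scalaires au corps résiduel de $\Rtnr$ est celle de $\widetilde{\Gc}$, est réductive connexe ; un $\Rt$-schéma en groupes affine et lisse à fibres réductives connexes étant réductif, on conclut.

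\emph{Point (2).} Soit $\Gc$ un modèle réductif de $G$ sur $\Rt$. Alors $\Gc_{\Rtnr}$ est réductif sur l'anneau strictement hensélien $\Rtnr$, donc déployé (\sga{Exp. XXII, 2.1--2.3}) ; en particulier $G_{\Ktnr}$ est déployé. Le groupe $\Gc_{\Rtnr}(\Rtnr)$ fixe un unique point $\widetilde x$ de $\ImmBT(G_{\Ktnr})$, qui en est un sommet spécial, et $\Gc(\Rtnr)=\Gc_{\Rtnr}(\Rtnr)=G(\Ktnr)^1_{\widetilde x}$ (même référence qu'en (1)). Comme $\Gc$ est défini sur $\Rt$, le groupe $\Gc(\Rtnr)$ est $\Galnr$-stable, si bien que son unique point fixe $\widetilde x$ est $\Galnr$-invariant ; il descend donc par la proposition \ref{CorrespUnram} en un sommet $x\in\ImmBT(G)$, hyperspécial au sens de la définition \ref{DefHyperspecial}. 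Les $\Rt$-modèles lisses affines $\Gc$ et le modèle de Bruhat-Tits affine de $G(\Kt)^1_x$ (donné par le point (1)) ont alors les mêmes $\Rtnr$-points, donc coïncident par unicité (\cite[Corollary 2.10.11]{kaletha_prasad_2023}, cf. la remarque suivant la définition \ref{defStab}).

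\emph{Point (3) et difficulté principale.} D'après (1) et (2), si $G$ est un $\Rt$-groupe réductif, alors $G(\Rt)=G(\Kt)^1_x$ pour le point hyperspécial $x$ associé. L'immeuble $\ImmBT(G)$ est celui du groupe dérivé $D(G)$ ; il est réduit à un point exactement lorsque $D(G)$ est $\Kt$-anisotrope, et dans ce cas $G(\Kt)^1$ fixe $x$, d'où $G(\Rt)=G(\Kt)^1$. Sinon, $G(\Kt)^+\subset G(\Kt)^1$ agit transitivement sur les chambres en préservant les types (lemme \ref{HTransitif}) et $\ImmBT(G)$ contient au moins deux sommets du type de $x$, de sorte qu'un élément de $G(\Kt)^+$ déplace $x$ et $G(\Rt)\subsetneq G(\Kt)^1$. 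Ceci établit la première équivalence. Pour la seconde, rappelons que $G$ est $\Kt$-anisotrope si et seulement si $D(G)$ et le radical $R(G)$ le sont, et que $R(G)$ est $\Kt$-anisotrope si et seulement si $G(\Kt)=G(\Kt)^1$ (cf. section \ref{SectionExistenceImmeuble}) : ainsi $G$ est $\Kt$-anisotrope si et seulement si à la fois $G(\Rt)=G(\Kt)^1$ et $G(\Kt)=G(\Kt)^1$, ce qui équivaut à $G(\Rt)=G(\Kt)$. La principale difficulté reste l'entrée classique utilisée en (1) et (2) — la réductivité du modèle paraborique en un sommet \emph{spécial} d'un groupe déployé, de $\Rtnr$-points exactement le stabilisateur du sommet dans $G(\Ktnr)^1$ — ainsi que le soin à apporter à la descente de $\Rtnr$ à $\Rt$.
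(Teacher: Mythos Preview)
Your proof is correct and follows essentially the same approach as the paper for points (1) and (2): reduce to $\Rtnr$ where $G$ is split, invoke the classical Bruhat--Tits results (the paper cites \cite[4.6.22, 4.6.28, 4.6.31]{BT2}) identifying reductive models with stabilisers of special vertices, and descend via $\Galnr$-invariance of the vertex; the paper obtains this invariance from the equality of stabilisers $G(\Ktnr)^1_x=G(\Ktnr)^1_{\sigma(x)}$ together with Proposition~\ref{PropDesHFacettes}(3), which is exactly your ``unique fixed point'' argument.

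For the converse direction in point (3), you argue via the building: if $D(G)$ is isotropic then $\ImmBT(G)$ has positive dimension, hence at least two vertices of the type of $x$, and some element of $G(\Kt)^+\subset G(\Kt)^1$ displaces $x$. The paper instead argues by boundedness: $G(\Rt)=G(\Kt)^1$ forces $D(G)(\Kt)\subset G(\Rt)$ to be bounded, whence $D(G)$ admits no $\Kt$-cocharacter. Both arguments are valid; the paper's is slightly more direct (no need to locate a second vertex of the right type), while yours stays entirely inside the building framework developed in the earlier sections.
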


\begin{proof}
    ${}$
    \begin{enumerate}
        \item Par définition, le point $x$ est spécial dans $\ImmBT(G_{\Ktnr})$. D'après \cite[4.6.22.]{BT2} et \cite[4.6.28.(ii)]{BT2}, le modèle affine associé à $G(\Ktnr)^1_x$ est réductif (à fibres connexes). Comme le modèle affine de $G(\Kt)^1_x$ est simplement le descendu à $\Rt$ (qui existe en utilisant le procédé \cite[5.1.30.]{BT2} puisque $x$ est $\Galnr$-invariant), on a le résultat.
        
        \item Réciproquement, \cite[4.6.31.]{BT2} nous dit qu'un modèle réductif $\Gc$ de $G$ est isomorphe sur $\Rtnr$ au schéma associé à $G(\Ktnr)^1_x$ pour un certain point spécial $x\in \ImmBT(G_{\Ktnr})$. Comme $\Gc$ est défini sur $\Rt$, $G(\Ktnr)^1_x$ est $\Galnr$-invariant et donc \linebreak $G(\Ktnr)^1_x=G(\Ktnr)^1_{\sigma(x)}$ pour tout $\sigma\in \Galnr$. D'après le point (3) de la proposition \ref{PropDesHFacettes}, $x$ est donc aussi $\Galnr$-invariant. Il provient donc d'un point sur $\ImmBT(G)$ qui est donc hyperspécial et alors $\Gc(\Rt)=G(\Kt)^1_x$.
        
        \item Enfin, si $D(G)$ est anisotrope, $G(\Rt)$ est le stabilisateur sous l'action de $G(\Kt)^1$ de l'unique point (hyperspécial) de $\ImmBT(G)$. C'est donc exactement $G(\Kt)^1$. Si de plus $G$ est anisotrope, $G(\Kt)^1=G(\Kt)$ et on a le résultat.

        Réciproquement, si $G(\Rt)=G(\Kt)$, alors $G(\Kt)$ est borné, et donc ne peut pas contenir l'image d'un $\Kt$-cocaractère (qui est non borné). Donc $G$ est $\Kt$-anisotrope.
        Si cette fois $G(\Rt)=G(\Kt)^1$, alors $D(G)(\Kt)\subset G(\Kt)^1$ est borné et on raisonne comme précédemment.
    \end{enumerate}
\end{proof}

On en déduit donc :

\begin{prop}\label{critereAnisotropie}
    Soit $G$, un groupe réductif sur $\Rt$. Les propriétés suivantes sont équivalentes :
    \begin{enumerate}
        \item $D(G)$ (resp. $G$) est anisotrope sur $\kappa$.
        \item $D(G)$ (resp. $G$) est anisotrope sur $\Kt$.
        \item $G(\Rt)=G(\Kt)^1$ (resp. $G(\Rt)=G(\Kt)$).
    \end{enumerate}
\end{prop}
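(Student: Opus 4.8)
La démarche est la suivante. L'équivalence (2)~$\Leftrightarrow$~(3) n'est autre que le point (3) du lemme~\ref{modelesReductifs}, appliqué à la fois à $D(G)$ et à $G$. Il ne reste donc qu'à établir (1)~$\Leftrightarrow$~(2), c'est-à-dire, pour $D(G)$ comme pour $G$, que l'anisotropie sur le corps résiduel $\kappa$ équivaut à l'anisotropie sur le corps des fractions $\Kt$ ; on utilisera de façon essentielle que $G$, étant réductif sur $\Rt$, a la même allure sur ses deux fibres.

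On traite d'abord l'énoncé relatif à $D(G)$ au moyen des sous-groupes paraboliques. Rappelons que, pour un groupe réductif $G'$ sur un corps $F$, le groupe dérivé $D(G')$ est $F$-anisotrope si et seulement si $G'$ n'admet aucun $F$-sous-groupe parabolique propre (de manière équivalente : le parabolique minimal est $G'$ tout entier, i.e. le système de racines relatif est vide, i.e. le tore $F$-déployé maximal est central). On considère alors le $\Rt$-schéma $\mathrm{Par}(G)$ des sous-groupes paraboliques de $G$ : d'après \sga{Exp.~XXVI} il est lisse et projectif sur $\Rt$, ses composantes connexes (les \emph{types}) sont à fibres géométriquement connexes, et celle du parabolique impropre, $\mathrm{Par}_{t_0}(G)$, est réduite à $\Spec\Rt$. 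Supposons que $G_{\Kt}$ admette un $\Kt$-parabolique propre $P$ et notons $\mathcal{C}$ la composante de $\mathrm{Par}(G)$ dont la fibre générique contient $P$. Comme le type de $P$ n'est pas $t_0$, $\mathcal{C}$ est distincte de $\mathrm{Par}_{t_0}(G)$, donc lisse et projective sur $\Rt$, et sa fibre spéciale ne contient pas le parabolique impropre de $G_\kappa$. Puisque $\Rt$ est local et $\mathcal{C}\to\Spec\Rt$ propre, le critère valuatif de propreté appliqué au $\Kt$-point $P$ donne $\mathcal{C}(\Rt)\neq\emptyset$, d'où $\mathcal{C}(\kappa)\neq\emptyset$ par réduction : on obtient un $\kappa$-parabolique propre de $G_\kappa$, donc $D(G)_\kappa$ est isotrope. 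Réciproquement, un $\kappa$-parabolique propre de $G_\kappa$ fournit un $\kappa$-point d'une composante $\mathcal{C}'\neq\mathrm{Par}_{t_0}(G)$, qui se relève en un $\Rt$-point par lissité de $\mathcal{C}'$ et hensélianité de $\Rt$, et dont la fibre générique est un $\Kt$-parabolique propre de $G_{\Kt}$. Ceci établit (1)~$\Leftrightarrow$~(2) pour $D(G)$.

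On en déduit l'énoncé relatif à $G$ en y adjoignant le cas des tores. On vérifie directement que, sur tout corps $F$, le groupe $G_F$ est $F$-anisotrope si et seulement si $D(G)_F$ et le tore coradical $(G/D(G))_F$ le sont tous les deux (un sous-tore $F$-déployé non trivial de $G_F$ est soit contenu dans $D(G)_F$, soit d'image non triviale dans $(G/D(G))_F$, et inversement ces deux groupes fournissent des sous-tores $F$-déployés de $G_F$ ; le rang relatif est additif, $G$ étant le produit presque direct de son groupe dérivé et de son radical, lequel est isogène au tore coradical). Le cas de $D(G)$ venant d'être traité, il suffit de voir que le tore $G/D(G)$, qui est un tore sur $\Rt$ puisque $G$ est réductif sur $\Rt$, est $\kappa$-anisotrope si et seulement s'il est $\Kt$-anisotrope. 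Or il a bonne réduction, donc est déployé sur $\Rnr$ ; par suite $(G/D(G))_{\Kt}$ est déployé sur $\Knr$, et la réduction identifie $X^*\bigl((G/D(G))_{\Kt}\bigr)$ à $X^*\bigl((G/D(G))_\kappa\bigr)$ comme modules sous $\Galnr\cong\Gal(\kappa^s/\kappa)$. Comme un tore sur un corps, de module des caractères $M$, est anisotrope précisément lorsque $(M\otimes\QQ)^{\Galnr}=0$, les deux conditions d'anisotropie coïncident, ce qui achève la preuve.

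Le point délicat n'est pas une étape isolée, mais la mise en place du dictionnaire : rendre précise l'équivalence entre anisotropie et absence de parabolique propre (en isolant bien la partie $D(G)$, puisqu'un tore déployé central échappe aux sous-groupes paraboliques, d'où la nécessité de l'argument séparé sur les tores dans le cas de $G$), puis vérifier que les composantes connexes de $\mathrm{Par}(G)$ se transportent de façon bijective et compatible entre fibre générique et fibre spéciale. C'est l'emploi d'un schéma propre de paraboliques, et non d'un raisonnement direct avec les tores déployés qui ne forment pas un espace de modules propre, qui rend possible la descente dans le sens délicat, de la fibre générique vers la fibre spéciale.
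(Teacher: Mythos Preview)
Ta démonstration est correcte et suit essentiellement la même stratégie que celle de l'article : l'équivalence (2)$\Leftrightarrow$(3) vient du lemme~\ref{modelesReductifs}, et pour (1)$\Leftrightarrow$(2) on sépare la partie semi-simple (traitée via le schéma lisse et projectif des sous-groupes paraboliques, le critère valuatif de propreté dans un sens et le lemme de Hensel dans l'autre) de la partie torique (traitée par l'identification des modules de caractères comme $\Galnr$-modules). La seule différence est de présentation : l'article formule la dichotomie en termes de cocaractères centraux/non centraux, tandis que tu la formules en termes d'anisotropie de $D(G)$ et de $G/D(G)$, ce qui revient au même.
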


\begin{proof}
    L'équivalence entre $(2)$ et $(3)$ est une conséquence du lemme \ref{modelesReductifs}. Montrons maintenant que $(1)$ et $(2)$ sont équivalents.

    Rappelons que $G$ est isogène à $D(G)\times R(G)$, de telle sorte à ce qu'il y a une correspondance entre les cocaractères non centraux de $G$ et les cocaractères de $D(G)$, et les cocaractères centraux et les cocaractères de $R(G)$.

    Rappelons d'après la décomposition de \sga{Exp. XXVI, Corollaire 3.5.} que le \linebreak $\Rt$-schéma des sous-groupes paraboliques propres de $G$, noté $\underline{\mathrm{Par}}(G)^+$ est lisse et projectif. D'après le critère valuatif de propreté, on a $\underline{\mathrm{Par}}(G)^+(\Rt)=\underline{\mathrm{Par}}(G)^+(\Kt)$. Or, on a une flèche naturelle $\underline{\mathrm{Par}}(G)^+(\Rt)\rightarrow \underline{\mathrm{Par}}(G)^+(\kappa)$. Par conséquent, si $\underline{\mathrm{Par}}(G)^+(\Kt)$ est non vide, alors $\underline{\mathrm{Par}}(G)^+(\kappa)$ aussi. Donc si $G$ a un cocaractère non central sur $\Kt$, alors il en a un sur $\kappa$.
    
    Or, la lissité permet également d'utiliser le lemme de Hensel, de telle sorte que la flèche $\underline{\mathrm{Par}}(G)^+(\Rt)\rightarrow \underline{\mathrm{Par}}(G)^+(\kappa)$ est surjective. On a donc finalement :
    $$\underline{\mathrm{Par}}(G)^+(\Kt)=\underline{\mathrm{Par}}(G)^+(\Rt)\twoheadrightarrow \underline{\mathrm{Par}}(G)^+(\kappa)$$

    Par conséquent, si $G$ n'admet aucun parabolique propre sur $\Kt$, alors il n'en admet pas non plus sur $\kappa$. Autrement dit, si $G$ n'a pas de cocaractère non central sur $\Kt$, alors il n'en admet pas non plus sur $\kappa$.

    Maintenant, occupons-nous des caractères centraux. On peut revenir au cas d'un tore $T$. C'est une conséquence immédiate du fait que :
    $$\Hom_{{\Ktnr}}(\GG_{m,{\Ktnr}},T_{\Ktnr})\overset{\sim}{\leftarrow} \Hom_{{\Rtnr}}(\GG_{m,{\Rtnr}},T_{\Rtnr}) \overset{\sim}{\rightarrow} \Hom_{{\kappa^s}}(\GG_{m,{\kappa^s}},T_{\kappa^s})$$ 
    en tant que $\Gal(\kappa^s/\kappa)$-groupes, puisque $T$ est déployé sur $\Rtnr$.
\end{proof}

Ceci étant, il existe une preuve relativement simple du cas des tores qui semble être connue de certains spécialistes, mais dont on ne trouve pas de référence dans la littérature. \linebreak C'est l'objet du lemme suivant :

\begin{lem}\label{RedTores}
    Soit un $\Kt$-tore déployé sur $\Ktnr$. Il admet donc un modèle torique sur $\Rt$ que l'on note $T$. Notons $\widehat{T}^\circ=\Hom_{{\Rt^{\mathrm{n.r.}}}}(\mathbb{G}_{m,{\Rt^{\mathrm{n.r.}}}},T_{{\Rt^{\mathrm{n.r.}}}})$, c'est un $\Galnr$-groupe. 
    \begin{enumerate}
        \item On a un isomorphisme canonique de $\Galnr$-modules :
    $$ T(\Ktnr)^1\times \widehat{T}^\circ=T(\Rtnr)\times \widehat{T}^\circ \cong  T(\Ktnr).$$

        \item Pour tout $i\geq 1$, on a :
    $$\Ker\left( H^i(\Galnr,T(\Ktnr)^1)\rightarrow H^i(\Galnr,T(\Ktnr)) \right) = 0.$$
    \end{enumerate}
\end{lem}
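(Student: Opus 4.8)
The plan is to establish part (1) and then read off part (2) as an immediate corollary. For part (1), I would first exploit that $\Ktnr/\Kt$ is unramified: a uniformizer $\pi$ of $\Rt$ stays a uniformizer of $\Rtnr$ and the normalized valuation $v$ on $\Ktnr$ is $\Galnr$-invariant, so the exact sequence of $\Galnr$-modules $1\to (\Rtnr)^\times\to (\Ktnr)^\times\xrightarrow{\,v\,}\ZZ\to 0$ (trivial action on $\ZZ$) is split by $n\mapsto\pi^n$. This gives a $\Galnr$-equivariant decomposition $(\Ktnr)^\times\cong (\Rtnr)^\times\times\ZZ$.

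Next I would use that $T$, being a torus model over $\Rt$ of a torus split over $\Ktnr$, is already split over the strictly henselian ring $\Rtnr$. Hence $\widehat{T}^\circ$ is a finitely generated free $\ZZ$-module and, for every $\Rtnr$-algebra $A$, evaluation gives an isomorphism $\widehat{T}^\circ\otimes_\ZZ A^\times\xrightarrow{\sim}T(A)$, $\lambda\otimes a\mapsto\lambda(a)$, which is $\Galnr$-equivariant since $\sigma(\lambda\otimes a)=\sigma(\lambda)\otimes\sigma(a)$ is sent to $(\sigma\circ\lambda\circ\sigma^{-1})(\sigma(a))=\sigma(\lambda(a))$. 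Tensoring the decomposition of the previous paragraph with the free module $\widehat{T}^\circ$ and applying this with $A=\Rtnr$ and $A=\Ktnr$ then yields the desired $\Galnr$-equivariant isomorphism $T(\Ktnr)\cong T(\Rtnr)\times\widehat{T}^\circ$. I would close part (1) by checking $T(\Rtnr)=T(\Ktnr)^1$: writing $t\in T(\Ktnr)$ as $\prod_i\lambda_i(a_i)$ for a $\ZZ$-basis $(\lambda_i)$ of $\widehat{T}^\circ$ and $a_i\in(\Ktnr)^\times$, one has $v(\chi(t))=\sum_i\langle\chi,\lambda_i\rangle\,v(a_i)$ for $\chi\in X^*(T_{\Ktnr})$, and since the pairing between characters and cocharacters is perfect this vanishes for all $\chi$ precisely when every $v(a_i)=0$, i.e. when $t\in T(\Rtnr)$.

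Part (2) is then formal: the isomorphism of (1) identifies $T(\Ktnr)^1$ with a $\Galnr$-module direct factor of $T(\Ktnr)$, so the inclusion $\iota$ admits a $\Galnr$-equivariant retraction $p$ with $p\circ\iota=\id$; applying $H^i(\Galnr,-)$ gives $p_*\circ\iota_*=\id$ for every $i\geq 1$, whence $\iota_*$ is injective and its kernel is $0$.

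The only delicate point — the one I would write out most carefully — is the $\Galnr$-equivariance of all these identifications. The splitting $T_{\Rtnr}\cong\GG_{m,\Rtnr}^{\,n}$ exists only over $\Rtnr$, not over $\Rt$, so the Galois action on $T(\Rtnr)$ is the diagonal one on $((\Rtnr)^\times)^n$ twisted by the natural action on $\widehat{T}^\circ$; this is exactly why $\widehat{T}^\circ$ rather than a constant lattice appears in the statement, and once this bookkeeping is fixed the rest is routine.
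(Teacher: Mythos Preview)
Your proposal is correct and follows essentially the same route as the paper: split the valuation sequence $1\to(\Rtnr)^\times\to(\Ktnr)^\times\to\ZZ\to0$ by a uniformizer of $\Rt$, tensor with the cocharacter lattice $\widehat{T}^\circ$ using the evaluation isomorphism $\widehat{T}^\circ\otimes_\ZZ A^\times\cong T(A)$, identify $T(\Rtnr)=T(\Ktnr)^1$, and read off the injectivity on cohomology from the resulting direct factor decomposition. Your write-up is in fact slightly more explicit than the paper's on the equivariance check and on the equality $T(\Rtnr)=T(\Ktnr)^1$, but the argument is the same.
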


\begin{proof}
    ${}$
    \begin{enumerate}
        \item On a la suite exacte naturelle de $\Galnr$-groupes :
    \[\begin{tikzcd}
    0 \arrow{r} & ({\Rt^{\mathrm{n.r.}}})^\times \arrow{r} & (\Kt^{\mathrm{n.r.}})^\times \arrow{r} & \ZZ
    \arrow{r} & 0.
    \end{tikzcd}\]
    Elle est scindée par $1\mapsto \pi$ (où $\pi$ est une uniformisante de $\Kt$, et donc aussi de $\Ktnr$). Cette section est $\Galnr$-invariante. En tensorisant la suite exacte précédente par $\widehat{T}^\circ$, on obtient la suite exacte de $\Galnr$-groupes :
    \[\begin{tikzcd}
    0 \arrow{r} & T({\Rt^{\mathrm{n.r.}}})\arrow{r} & T({\Kt^{\mathrm{n.r.}}}) \arrow{r} & \widehat{T}^\circ
    \arrow{r} & 0.
    \end{tikzcd}\]
    car on a des isomorphismes canoniques $\widehat{T}^\circ \otimes_\ZZ ({\Rt^{\mathrm{n.r.}}})^\times \cong T({\Rt^{\mathrm{n.r.}}})$ et \linebreak $\widehat{T}^\circ \otimes_\ZZ ({\Kt^{\mathrm{n.r.}}})^\times \cong T({\Kt^{\mathrm{n.r.}}})$ donnés par $\theta \otimes x\mapsto \theta(x)$. Elle est aussi scindée par $\theta\mapsto \theta\otimes \pi \cong \theta \mapsto \theta(\pi)$, section qui est aussi $\Galnr$-invariante. D'où l'isomorphisme de $\Galnr$-modules.
    
    Notons d'ailleurs que, par définition de la suite exacte, on a $T(\Rtnr)=T(\Ktnr)^1$.

        \item La dernière assertion provient alors du fait que cet isomorphisme induit l'isomorphisme canonique : 
    $$H^i(\Galnr,T(\Rtnr))\times H^i(\Galnr,\widehat{T}^\circ)\cong H^i(\Galnr,T(\Ktnr)).$$
    
    et donc l'injectivité voulue.
    \end{enumerate}
\end{proof}

Notons que Bruhat et Tits ont déjà traité le cas des points hyperspéciaux lorsque $G$ est semi-simple dans \cite[5.2.14. Proposition.]{BT2}. On peut en fait ajuster leur preuve pour inclure le cas réductif :

\begin{prop}\label{CasReductif}
    Soit $x$ un point hyperspécial de $G$. On a :
    $$\Ker\left( H^1(\Galnr,G(\Ktnr)^1_{x})\rightarrow H^1(\Galnr,G(\Ktnr)) \right) = 1.$$
\end{prop}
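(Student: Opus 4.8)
The plan is to trade the cohomology of the (large, topological) group $G(\Ktnr)^1_x$ for the flat cohomology of a smooth affine $\Rt$-group scheme, and then to quote the Grothendieck--Serre conjecture over a Henselian discrete valuation ring. First I would invoke Lemma \ref{modelesReductifs}(1): since $x$ is hyperspecial, the affine Bruhat--Tits model of $G(\Kt)^1_x$ is a reductive $\Rt$-group scheme $\Gc$ with connected fibres, whose generic fibre is $G$ and which satisfies $\Gc(\Rtnr)=G(\Ktnr)^1_x$ and $\Gc(\Rt)=G(\Kt)^1_x$. (This is exactly where the hypotheses of Section \ref{SectionExistenceImmeuble} and Definition \ref{DefHyperspecial} are used, the residue field not being assumed perfect.) Next I would recall the Galois-descent dictionary used before Proposition \ref{TorsionImmeuble}, following \cite{GilleSemisimpleSchémas}: a cocycle $z\in Z^1(\Galnr,\Gc(\Rtnr))$ defines a $\Gc$-torsor $P$ over $\Rt$, trivialised over $\Rtnr$, and $z\mapsto P$ induces a bijection between $H^1(\Galnr,\Gc(\Rtnr))$ and the isomorphism classes of such torsors; moreover, viewing $z$ in $Z^1(\Galnr,G(\Ktnr))$ through the inclusion $\Gc(\Rtnr)\subseteq G(\Ktnr)$, the resulting class in $H^1(\Kt,G)$ is precisely the class of the generic fibre $P_{\Kt}$. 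In other words, the composite of $H^1(\Galnr,\Gc(\Rtnr))\to H^1(\Galnr,G(\Ktnr))$ with the inflation $H^1(\Galnr,G(\Ktnr))\to H^1(\Kt,G)$ is the restriction-to-the-generic-fibre map $H^1(\Rt,\Gc)\to H^1(\Kt,\Gc_{\Kt})=H^1(\Kt,G)$ (restricted to classes trivialised over $\Rtnr$).

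The argument then goes as follows. Let $[z]$ lie in $\Ker\!\left(H^1(\Galnr,\Gc(\Rtnr))\to H^1(\Galnr,G(\Ktnr))\right)$, with associated $\Gc$-torsor $P$ over $\Rt$. Because the inflation map $H^1(\Galnr,G(\Ktnr))\to H^1(\Kt,G)$ is injective (beginning of the non-abelian inflation--restriction sequence $1\to H^1(\Galnr,G(\Ktnr))\to H^1(\Kt,G)\to H^1(\Knr,G_{\Knr})$), the image of $[z]$ in $H^1(\Kt,G)$ vanishes; by the compatibility just stated, this image is $[P_{\Kt}]$, so $P_{\Kt}$ is trivial. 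Now apply the Grothendieck--Serre conjecture for the Henselian discrete valuation ring $\Rt$ to the reductive $\Rt$-group scheme $\Gc$: the map $H^1(\Rt,\Gc)\to H^1(\Kt,\Gc_{\Kt})$ is injective, hence $P$ is already trivial over $\Rt$. By the bijection of \cite{GilleSemisimpleSchémas} this means $z$ is a coboundary in $\Gc(\Rtnr)$, i.e. $[z]=1$. This is the desired triviality.

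The only substantive input is the Grothendieck--Serre conjecture over a Henselian DVR (with possibly imperfect residue field), which I would use as a black box, exactly as announced in the introduction; the rest is formal bookkeeping once the reductive model of Lemma \ref{modelesReductifs} is in hand, so the main thing to watch is the careful identification of the three maps in the paragraph above (in particular that ``a cocycle becomes a coboundary over $G(\Ktnr)$'' translates into ``the induced $\Kt$-torsor is trivial''). Alternatively, one may route the same computation through Theorem \ref{NoyauH1} point (1)\,(b) with $\Ht=G(\Ktnr)$ and $\widetilde{\Fc}=\{x\}$, which identifies the kernel with $\left(\mathrm{Orb}((\widetilde{x},\ast))_{G(\Ktnr)}\right)^{\Galnr}/G(\Kt)$, and then prove directly that this orbit-quotient is a single point by the very same torsor/Grothendieck--Serre mechanism; this is the ``adjustment to the reductive case'' of Bruhat--Tits's proof alluded to just before the statement.
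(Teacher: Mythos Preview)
Your argument is correct, but it takes a genuinely different route from the paper's own proof. You invoke the Grothendieck--Serre conjecture for reductive group schemes over a Henselian DVR (Nisnevich's theorem) as a black box: once Lemma~\ref{modelesReductifs} hands you the reductive model $\Gc$, the kernel in question embeds into $\Ker\bigl(H^1(\Rt,\Gc)\to H^1(\Kt,G)\bigr)$, which vanishes. This is exactly the argument the introduction calls ``trivial d'après la conjecture de Grothendieck--Serre''. The paper's proof, by contrast, is deliberately self-contained and avoids Grothendieck--Serre entirely: it unwinds the kernel via Theorem~\ref{NoyauH1}(1)(b) as an orbit quotient in the extended building, then uses the Bruhat decomposition (Proposition~\ref{Bruhat}) and the speciality of $x$ to replace an arbitrary $g\in G(\Ktnr)$ sending $\overline{x}$ to $\overline{y}$ by some $t\in T(\Ktnr)$, and finally reduces to the elementary torus case, Lemma~\ref{RedTores}. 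So the ``adjustment of Bruhat--Tits's proof to the reductive case'' announced before the statement is \emph{not} the torsor/Grothendieck--Serre mechanism you describe in your last sentence; it is rather the insertion of Lemma~\ref{RedTores} (the splitting $T(\Ktnr)\cong T(\Rtnr)\times\widehat{T}^\circ$) to handle the radical, which is the new ingredient compared to the semisimple case of \cite[5.2.14]{BT2}. Your approach is shorter and conceptually clean but imports a substantial external theorem; the paper's approach is longer but keeps everything internal to the building-theoretic toolkit developed in the article, and in particular gives an independent proof of this instance of Grothendieck--Serre rather than consuming it.
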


\begin{proof}
    Rappelons que, comme $x$ est hyperspécial, il s'agit d'un sommet à la fois dans $\ImmBT(G)$ et dans $\ImmBT(G_{\Ktnr})$. Prenons $\overline{x}=(x,\lambda)$, un point de $\ImmBT^e(G)$. Grâce au théorème \ref{NoyauH1}, la question revient à montrer que $(\mathrm{Orb}(\overline{x})_{G(\Ktnr)})^{\Galnr}/G(\Kt)=1$.
    
    Prenons alors un point $\overline{y}=(y,\mu)$ de $\ImmBT^e(G)$ tel qu'il existe $g\in G(\Ktnr)$ tel que $g\cdot \overline{x} = \overline{y}$. Par définition, $x$ et $y$ (resp. $\overline{x}$ et $\overline{y}$) sont des sommets hyperspéciaux dans $\ImmBT(G_{\Ktnr})$ (resp. un point dans $\ImmBT^e(G)$ et dans $\ImmBT^e(G_{\Ktnr})$). Comme $G(K)^+$ agit transitivement sur les \linebreak $K$-chambres et est très conforme, il existe $g'\in G(\Kt)^+$ tel que $x':=g'\cdot x$ soit toujours hyperspécial et tel que $x'$ et $y$ soient dans la même adhérence d'une $\Kt$-chambre (et donc même adhérence d'une $\Ktnr$-chambre, que l'on note $\mathcal{C}$). Par ailleurs, $g'\cdot \lambda = \lambda$ puisque $G(\Kt)^+\subset G(\Kt)^1$. Quitte à remplacer $\overline{x}:=(x,\lambda)$ par $(x',\lambda)$, on peut donc supposer cela. On peut en fait supposer que $x$ et $y$ vivent dans un $\Ktnr$-appartement spécial (de telle sorte à ce que le $\Ktnr$-tore déployé maximal associé $T$ soit défini sur $\Kt$ et contienne un $\Kt$-tore déployé maximal). Comme $G$ est déployé sur $\Ktnr$, le tore $T$ est un $\Ktnr$-tore maximal, et donc est également son propre centralisateur.

    Notons $\mathcal{I}:=G(\Kt)^+_{\mathcal{C}}$. Notons $N(\Ktnr)$ le normalisateur associé à un appartement où se trouve cette chambre. La décomposition de Bruhat (proposition \ref{Bruhat})
    donne alors que $G(\Ktnr)=\mathcal{I} \: N(\Ktnr) \: \mathcal{I}$. On peut donc écrire $g=i\,n\,i'$ avec des notations évidentes. \linebreak Par conséquent, $i\,n\,i'\cdot \overline{x} = \overline{y}$. Donc $n\cdot \overline{x} = \overline{y}$ puisque $\mathcal{I}$ fixe $\overline{x}$ et $\overline{y}$ (car fixant la chambre où ils sont et $\mathcal{I}\subset G(\Ktnr)^1$).

    Par ailleurs, puisque $x$ est spécial sur $\Ktnr$, $G(\Ktnr)^b_x\cap N(\Ktnr)$ se surjecte sur le groupe de Weyl (vectoriel) de $G_{\Ktnr}$, c'est à dire $N(\Ktnr)/T(\Ktnr)$ (cf. \cite[4.6.22.]{BT2}). Il existe donc $n'\in G(\Ktnr)^b_x\cap N(\Ktnr)$ tel que $n'$ et $n$ ont même image dans le groupe de Weyl. Autrement dit, $t:=n\,n'^{-1}\in T(\Ktnr)$. Or, $n'^{-1}\cdot \overline{x}=\overline{x}$. Donc $t\cdot \overline{x} = n\cdot \overline{x}$.

    Considérons $\sigma \mapsto t^{-1} \, \sigma(t)$. Il s'agit d'un cobord dans $B^1(\Galnr,T(\Ktnr))$ et également d'un cocycle dans $Z^1(\Galnr,T(\Ktnr)^1)$. En effet :
    $$t\cdot \overline{x} = \overline{y} = \sigma(\overline{y}) = \sigma(t\cdot \overline{x})=\sigma(t) \cdot \overline{x}$$
    puisque $\overline{x}$ et $\overline{y}$ sont $\Galnr$-invariants. Par conséquent, $t^{-1}\,\sigma(t)$ fixe $\overline{x}$. Or, $T(\Ktnr)$ agit par translation sur l'appartement (étendu). Donc s'il fixe $\overline{x}$, il fixe l'appartement (étendu). Cela signifie que l'on a en fait $t^{-1}\,\sigma(t)\in T(\Ktnr)^1$. La classe de cohomologie associée vit donc dans $$\Ker\left(H^1(\Galnr,T(\Ktnr)^1)\rightarrow H^1(\Galnr,T(\Ktnr)\right).$$
    Ce noyau est en fait trivial d'après le lemme \ref{RedTores}. Par conséquent, il existe $t'\in T(\Ktnr)^1$ tel que $\sigma \mapsto t^{-1}\,\sigma(t)=\sigma \mapsto t'^{-1}\,\sigma(t')$, ou encore tel que $t\,t'^{-1}$ soit $\Galnr$-invariant, et donc vit dans $G(\Kt)$. Par conséquent, $t\,t'^{-1}\cdot \overline{x} = t\cdot \overline{x} = \overline{y}$. Donc $\overline{x}$ et $\overline{y}$ sont dans la même orbite sous $G(\Kt)$.
\end{proof}

\begin{rmq}
    On a la factorisation $H^1(\Galnr,G(\Ktnr)^1_{\widetilde{x}})\rightarrow H^1(\Galnr,G(\Ktnr)^1) \rightarrow H^1(\Galnr,G(\Ktnr))$. Le théorème précédent implique donc :
    $$\Ker\left( H^1(\Galnr,G(\Ktnr)^1_{\widetilde{x}})\rightarrow H^1(\Galnr,G(\Ktnr)^1) \right) = 1.$$
    Une démonstration alternative de ce résultat peut être aussi obtenu en reprenant la preuve précédente avec l'immeuble réduit.
\end{rmq}

\begin{rmq}
    Il n'est toutefois pas toujours vrai que $\Ker( H^1(\Galnr,G(\Ktnr)^1)\rightarrow H^1(\Galnr,G(\Ktnr)))$ soit trivial, même si $G$ admet un point hyperspécial. Considérons l'exemple suivant :
    \medskip

    Soit $D$ une algèbre à division de degré $d$ sur un corps $k$. Considérons ici $\Kt=k((t))$. Grâce à la proposition \ref{critereAnisotropie} appliqué à $\mathrm{GL}_1(D\otimes_k k[[t]])$, l'algèbre $D$ définit une algèbre à division $D\otimes_k k((t))$ sur $k((t))$. Elle est par ailleurs déployée sur $\Kt^{\mathrm{n.r.}}$. Prenons ici $G=\mathrm{GL}_1(D)_{\Kt}$ (qui admet d'ailleurs le modèle réductif $G=\mathrm{GL}_1(D)_{k[[t]]}$). Le groupe $\mathrm{GL}_1(D)_{\Ktnr}$ admet un unique caractère donné par la norme réduite. Par conséquent, $\mathrm{GL}_1(D)(\Ktnr)^1$ est donné par le noyau de la valuation de la norme réduite sur $\Ktnr$ (qui est surjectif puisque $D$ est déployée sur $\Ktnr$).

    Or, on a la décomposition :
    $$(D\otimes_k k((t)))^\times = k((t))^\times (D\otimes_k k[[t]])^\times.$$
    En effet, un élément de $D\otimes_k k((t))^\times$ s'écrit $t^i x$ avec $x$ de réduction modulo $t$ non nulle, $x_0\in k^\times$. Par conséquent, $(t^ix_0)(x_0^{-1} x)$ est la décomposition voulue. En effet, $t^ix_0\in k((t))^\times$ et $x_0^{-1} x$ est de la forme $1-ty\in D\otimes_kk[[t]]^\times$, d'inverse donné par $\sum_{k=0}^{+\infty}(ty)^{k}\in D\otimes_kk[[t]]^\times$.
    
    Par conséquent, l'image de $(D\otimes_k k((t)))^\times$ par la valuation de la norme réduite est donnée par $k((t))^\times$ puisque $(D\otimes_k k[[t]])^\times$ est borné. Comme la norme sur $k((t))$ est compatible avec la norme réduite de $D_{k((t))}$, on a que l'image est finalement $d\ZZ$ (cf. \cite[Theorem 1.4.]{TignolWadsworth}). La suite exacte de cohomologie implique alors : $$\Ker( H^1(\Galnr,G(\Ktnr)^1)\rightarrow H^1(\Galnr,G(\Ktnr)) )= \ZZ/d\ZZ \not = 1.$$
\end{rmq}

\begin{rmq}\label{ContreEXHyperspé}
    Il s'avère que $\Ker( H^1(\Galnr,G(\Ktnr)_{\widetilde{x}})\rightarrow H^1(\Galnr,G(\Ktnr)))$ n'est pas toujours trivial. Un contre-exemple est donné dans \cite[5.2.15. Remarque.]{BT2}. Explicitons cela. 
    \medskip
    
    Prenons l'extension $\widetilde{L}/\Kt=\CC((t))/\RR((t))$, et $h$ la forme hermitienne donnée par \linebreak $z_1\overline{z_1}-z_2\overline{z_2}$. On prend $G=\mathrm{U}(h)$ (aussi noté $\mathrm{U}(1,1)$). C'est une forme quasi-déployée de $\mathrm{GL}_2$ qui vérifie d'une part $D(G)=\mathrm{SU}(h)\cong \mathrm{SL}_2$, et d'autre part $Z(G)\cong \mathrm{R}^1_{\widetilde{L}/\Kt}(\GG_m)$, qui n'est pas déployé.

    D'après le théorème \ref{NoyauH1}, il suffit de trouver deux $\widetilde{L}$-points hyperspéciaux $\Gamma$-invariants dans la même orbite par $G(\widetilde{L})$ et dont les $\widetilde{L}$-types ne sont pas conjugués par $G(\Kt)$ pour que le noyau soit non nul.
    \medskip
    
    L'immeuble de $G$ est exactement celui de $\mathrm{SL}_2$. Son diagramme de Dynkin affine relatif est donné par $\dynkin A2$ dont les deux sommets sont spéciaux (cf. \cite[4.2.23.]{BT2} et \cite[(1.4.6)]{BT1}).

    Ces deux points ne sont cependant pas conjugués dans $G(\Kt)$. En fait, ce dernier agit en préservant les types. En effet, d'une part on a $G(\Kt)=G(\Kt)^1$ puisque le radical de $G$ est anisotrope. D'autre part, comme $G_{\Lt}=\mathrm{GL}_2$, le morphisme de Kottwitz de $G$ (cf. \cite[Chapter 11]{kaletha_prasad_2023}) est obtenu en restreignant celui de $\mathrm{GL}_2$. Pour les mêmes raisons que la remarque \ref{Xi1etXiDifferents}, on conclut que $G(\Kt)^0=G(\Kt)^1$ et donc que $G(\Kt)$ agit trivialement sur les types.
    \medskip

    Pour $\mathrm{GL}_2$, on a le même diagramme de Dynkin. Les deux sommets du diagramme sont d'ailleurs fixes par Galois, car dans le cas contraire, la descente non ramifiée nous dirait que le diagramme de $G$ serait composé d'un unique point. Les deux sommets du diagramme de $G$ sont donc hyperspéciaux.
    \medskip

    Il suffit maintenant de trouver deux sommets de types différents conjugués par $\mathrm{GL}_2(\widetilde{L})$. Cela a déjà été fait dans la remarque \ref{Xi1etXiDifferents}. Ceci conclut donc.
\end{rmq}

\section{Application : cas des groupes adjoints quasi-déployés}\label{PartieCasAdjointQSplit}

Terminons enfin cet article en utilisant tout ce que l'on a montré dans les parties précédentes pour calculer de manière exacte les noyaux
$$\Ker \left(H^1(\Galnr,G(\Knr)^0_{\widetilde{\mathcal{F}}})\rightarrow H^1(\Galnr,G(\Knr))\right)$$
et
$$\Ker \left(H^1(\Galnr,G(\Knr)_{\widetilde{\mathcal{F}}})\rightarrow H^1(\Galnr,G(\Knr))\right)$$

\noindent pour les $K$-groupes $G$ semi-simples adjoints et quasi-déployés sur $\Kt$, et où $\widetilde{\mathcal{F}}$ est une \linebreak $\Galnr$-facette de l'immeuble $\ImmBT(G_{\Knr})$.\\

Occupons-nous d'abord du cas des sous-groupes parahoriques. D'après \cite[5.2.12. Proposition.]{BT2}, les sous-groupes parahoriques sur $\Kt$ sont donnés par les stabilisateurs de facettes sous l'action de la composante résiduellement neutre $G(\Kt)^0$. Or, puisque $G$ est quasi-déployé et adjoint, on a le lemme suivant :

\pagebreak

\begin{lem}
    On a les égalités : $G(\Kt)^0=G(\Kt)^b=G(\Kt)^c$.
\end{lem}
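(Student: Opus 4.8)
Le plan est le suivant. On traite d'abord l'égalité facile : comme $G$ est semi-simple, son radical ne contient aucun $\Kt$-sous-tore déployé, donc $G(\Kt)^1=G(\Kt)$ d'après la section \ref{SectionExistenceImmeuble}; puisque $G(\Kt)^b=G(\Kt)^c\cap G(\Kt)^1$, cela donne déjà $G(\Kt)^b=G(\Kt)^c$, et ce sans utiliser les hypothèses "adjoint" et "quasi-déployé". Il restera donc à établir $G(\Kt)^0=G(\Kt)^c$, par double inclusion. L'inclusion $G(\Kt)^0\subset G(\Kt)^c$ est immédiate : d'après \cite[5.2.12.(i) Proposition.]{BT2}, le sous-groupe $G(\Kt)^0$ agit trivialement sur les types, donc est contenu dans le noyau $G(\Kt)^c$ du morphisme type $\xi$.

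Pour l'inclusion réciproque, j'utiliserais le lemme \ref{décompSSGroupes} : comme $G$ est quasi-déployé sur $\Kt$, un sous-groupe de Levi minimal $Z$ est un tore maximal $T:=Z_G(S)$ (où $S$ est un $\Kt$-tore déployé maximal), et l'on a $G(\Kt)^c=G(\Kt)^b=G(\Kt)^+\,T(\Kt)^1$. D'une part, $G(\Kt)^+\subset G(\Kt)^0$ d'après \cite[5.2.11.]{BT2}; il ne reste donc qu'à montrer $T(\Kt)^1\subset G(\Kt)^0$. Pour cela, j'emploierais le morphisme de Kottwitz $\kappa_T\colon T(\Kt)\to\pi_1(T)_I=X_*(T)_I$ (cf. \cite[11.5]{kaletha_prasad_2023}), qui est fonctoriel, donc compatible via l'inclusion $T\hookrightarrow G$ au morphisme $\kappa_G$ de noyau $G(\Kt)^0$. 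Il suffit alors de voir que $T(\Kt)^1=\Ker\kappa_T$, car on aura ensuite $T(\Kt)^1=\Ker\kappa_T\subset\Ker\kappa_G=G(\Kt)^0$. Or $T(\Kt)^1=\kappa_T^{-1}\bigl((X_*(T)_I)_{\mathrm{tors}}\bigr)$ d'après \cite[Lemma 11.5.2]{kaletha_prasad_2023}, et tout se ramène donc à établir que le groupe abélien $X_*(T)_I$ est sans torsion.

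C'est ici que les deux hypothèses interviennent pleinement. Comme $G$ est adjoint, $X_*(T)$ s'identifie au réseau des copoids $P^\vee$, dont les copoids fondamentaux (base duale de celle des racines simples) forment une $\ZZ$-base. Comme $G$ est quasi-déployé sur $\Kt$, l'action de Galois --- et donc celle de l'inertie $I$ --- sur la donnée radicielle de $G$ préserve une base de racines simples (c'est la $*$-action), et permute par suite les copoids fondamentaux. Ainsi $X_*(T)=P^\vee$ est un $\ZZ[I]$-module de permutation, et ses coinvariants $X_*(T)_I$ forment un groupe abélien libre de type fini, en particulier sans torsion; d'où $T(\Kt)^1=\Ker\kappa_T\subset G(\Kt)^0$, puis $G(\Kt)^c=G(\Kt)^b=G(\Kt)^+\,T(\Kt)^1\subset G(\Kt)^0$, et finalement les égalités annoncées. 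L'obstacle principal sera de justifier proprement la nullité de la torsion de $X_*(T)_I$ : il faudra vérifier avec soin que l'action de l'inertie se fait bien par la $*$-action (conséquence du caractère quasi-déployé \emph{sur $\Kt$}, l'inertie agissant à travers le groupe de Galois de l'extension déployante), que la base duale des racines simples est alors permutée, et que les coinvariants d'un $\ZZ[I]$-module de permutation sont sans torsion; il faudra aussi s'assurer que la description $T(\Kt)^1=\kappa_T^{-1}((X_*(T)_I)_{\mathrm{tors}})$ et la fonctorialité du morphisme de Kottwitz restent valables dans ce cadre hensélien à corps résiduel éventuellement imparfait, ce qui résulte du développement de \cite{kaletha_prasad_2023}.
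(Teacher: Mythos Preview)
Your proof is correct and follows the same overall architecture as the paper's: the equality $G(\Kt)^b=G(\Kt)^c$ comes from semi-simplicity, and the key step is the decomposition $G(\Kt)^b=G(\Kt)^+\,T(\Kt)^1$ from Lemma~\ref{décompSSGroupes}, reducing everything to $T(\Kt)^1\subset G(\Kt)^0$. The difference lies in how this last inclusion is established. The paper invokes \cite[4.4.16.~Proposition.]{BT2} to say that $T$ is an \emph{induced} torus, so its finite-type N\'eron model is already connected; then \cite[5.2.11.]{BT2}, which describes $G(\Kt)^0$ as generated by $G(\Kt)^+$ and the $\Rt$-points of the identity component of that N\'eron model, gives directly $G(\Kt)^0=G(\Kt)^+\,T(\Kt)^1$. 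Your route via the Kottwitz morphism and the torsion-freeness of $X_*(T)_I$ is the dual reformulation of the same fact (``$T$ induced'' $\Leftrightarrow$ ``$X_*(T)$ is a permutation module''), so the two arguments are morally identical. The paper's version has the practical advantage of staying entirely within \cite{BT2}, thereby sidestepping the concern you yourself flag about the validity of the Kottwitz machinery from \cite{kaletha_prasad_2023} over a henselian base with possibly imperfect residue field; your argument is cleaner conceptually but requires that extra verification.
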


\begin{proof}
    Notons que, puisque $G$ est semi-simple, l'immeuble étendu est égal à l'immeuble réduit et donc $G(\Kt)^b=G(\Kt)^c$.

    Soit $T$ un $\Kt$-tore maximal contenant un tore déployé maximal. D'après \cite[4.4.16. Proposition.]{BT2}, il s'agit d'un tore induit. Son schéma canonique (c'est-à-dire son modèle de Néron de type fini) est donc lisse et connexe. Ses $\Rt$-points sont donnés par $T(\Kt)^1$.
    
    Par ailleurs, d'après \cite[5.2.11.]{BT2}, $G(\Kt)^0$ est engendré par $G(\Kt)^+$ et les $\Rt$-points de la composante de l'identité du schéma canonique de $T$, c'est-à-dire ici $T(\Kt)^1$ d'après la discussion précédente.

    Or, d'après le lemme \ref{décompSSGroupes}, $G(\Kt)^b=G(\Kt)^+\, T(\Kt)^1$. D'où $G(\Kt)^0=G(\Kt)^b$.
\end{proof}

La question se ramène alors à se demander si la flèche composée suivante a un noyau trivial :
$$H^1(\Galnr,G(\Ktnr)^c_{\widetilde{\Fc}})\rightarrow H^1(\Galnr,G(\Ktnr)^c)\rightarrow H^1(\Galnr,G(\Ktnr))$$

pour $\widetilde{\Fc}$, une facette $\Galnr$-invariante de $\ImmBT(G_{\Ktnr})$.
\medskip

La première flèche est de noyau trivial d'après le corollaire \ref{InjComplet}. Intéressons-nous alors à la seconde flèche.
\medskip

Pour cela, on a besoin de démontrer que tout groupe réductif quasi-déployé est résiduellement quasi-déployé. Cela est déjà connu lorsque le corps résiduel $\kappa$ est parfait (cf. \cite[Proposition 9.10.5]{kaletha_prasad_2023}). Il s'avère que le résultat subsiste en général, mais notre preuve nécessite d'utiliser la théorie des groupes pseudo-réductifs.
\medskip

Rappelons qu'un sous-groupe pseudo-parabolique d'un groupe pseudo-réductif est appelé pseudo sous-groupe de Borel s'il est un pseudo-parabolique minimal sur la clôture séparable. C'est en fait équivalent à exiger que ce soit un pseudo parabolique résoluble. Un groupe pseudo-réductif possédant un pseudo sous-groupe de Borel est dit quasi-déployé. Dans ce cas, tous ces sous-groupes pseudo-paraboliques minimaux sont des pseudo sous-groupes de Borel puisqu'ils sont conjugués. Tout ceci est expliqué au début de \cite[Section C.2]{conrad_prasad_2016}).

Ces définitions s'étendent naturellement au cas des groupes lisses connexes affines puisqu'il y a une correspondance entre ses sous-groupes pseudo-paraboliques et ceux de son quotient pseudo-réductif maximal (cf. \cite[Proposition 2.2.10]{conrad_gabber_prasad_2015}).
\medskip

Par ailleurs, introduisons (en toute généralité) la définition suivante :

\begin{déf}\label{defParahorique}
    Supposons que le modèle de Bruhat-Tits affine de $G(\Kt)^1_\Fc$ existe. On le note $\Gc^1_\Fc$. On définit alors le \textbf{schéma en groupes parahorique} (resp. le \textbf{sous-groupe parahorique}) associé à une facette $\Fc$ de $\ImmBT(G)$ comme étant $\Gc^0_\Fc:=(\Gc^1_\Fc)^\circ$ (resp. \linebreak $G(\Kt)^0_\Fc:=(\Gc^1_\Fc)^\circ(\Rt)$).
\end{déf}

\begin{rmq}
    Notons les schémas en groupes affines $\mathcal{G}^1_\Fc$ sont construits par Bruhat et Tits dans le cas où $G$ est quasi-déployé sur $\Ktnr$ dans \cite[5.1.30.]{BT2}.  

    Ceci étant, cette définition coïncide avec la définition de \cite{BT2} dans le cas quasi-déployé sur $\Ktnr$. En effet, d'une part, dans le cas quasi-déployé, \cite[4.6.21. Proposition. (ii)]{BT2} combiné avec \cite[4.6.26.]{BT2} et \cite[4.6.28. Proposition.]{BT2} implique que $(\Gc^1_\Fc)^\circ$ est bien le schéma en groupes parahorique associé au sous-groupe parahorique de \cite[5.2.6. Définition.]{BT2}. En général, comme indiqué dans \cite[5.1.30.]{BT2}, les schémas se descendent sur $\Rt$ et ses $R$-points sont les sous-groupes parahoriques d'après le dernier paragraphe de \cite[5.2.8. Proposition.]{BT2}.
\end{rmq}

\pagebreak

On peut donc proposer une généralisation de \cite[Proposition 9.10.1]{kaletha_prasad_2023}, qui donne des propositions équivalentes au fait d'être résiduellement quasi-déployé :

\begin{prop}\label{CritereResidQSplit}
    Soit $G$ un groupe réductif sur $\Kt$, quasi-déployé sur $\Ktnr$.  Les propositions suivantes sont équivalentes :
    \begin{enumerate}
        \item Il existe une $\Kt$-chambre $\Cc$ tel que le $\kappa$-groupe $(\Gc^0_{\Cc})_\kappa$ est résoluble.
        \item Il existe une $\Ktnr$-chambre $\Galnr$-invariante dans $\ImmBT(G_{\Ktnr})$.
        \item Toute $\Galnr$-chambre dans $\ImmBT(G_{\Ktnr})$ est une $\Ktnr$-chambre $\Galnr$-invariante.
        \item 
        Pour toute $\Kt$-chambre $\Cc$, le $\kappa$-groupe $(\Gc^0_{\Cc})_\kappa$ est résoluble.
        \item Pour toute $\Kt$-facette, le $\kappa$-groupe $(\Gc^0_{\Fc})_\kappa$ est quasi-déployé.
    \end{enumerate}
\end{prop}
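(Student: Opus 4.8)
The plan is to prove the equivalences by establishing a cycle of implications, using the affine Tits index, the theory of parahoric group schemes, and — for the non-perfect residual field case — the theory of pseudo-reductive groups. I would organize the argument as $(2)\Leftrightarrow(3)$, then $(3)\Rightarrow(5)\Rightarrow(4)\Rightarrow(1)$, then close the loop with $(1)\Rightarrow(2)$.

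First, the equivalence $(2)\Leftrightarrow(3)$ is essentially formal: a $\Galnr$-chamber is by definition a maximal $\Galnr$-facet, and under the correspondence of Proposition \ref{CorrespUnram}, $\Galnr$-facets correspond bijectively and incidence-preservingly to $\Kt$-facets, with $\Galnr$-chambers corresponding to $\Kt$-chambers. So if some $\Galnr$-chamber is already a $\Ktnr$-chamber, then $\typebt_{\mathrm{max}}$ is the type of a $\Ktnr$-chamber; since all $\Galnr$-chambers share the type $\typebt_{\mathrm{max}}$ (Proposition \ref{UnramTypes}(2)), and a $\Ktnr$-facet with the type of a $\Ktnr$-chamber is a $\Ktnr$-chamber, every $\Galnr$-chamber is a $\Ktnr$-chamber, giving $(3)$; the converse is trivial.

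Next, for the link with parahoric group schemes I would use the reduction theory of \cite[3.5.2]{Corvallis} (already invoked in the excerpt): the Tits index of the reductive quotient $(\Gc^0_{\Fc})_\kappa$ is obtained from the affine Tits index of $G_{\Ktnr}$ by deleting the orbit of vertices corresponding to $\widetilde{\Fc}$. When $\Fc$ is a $\Kt$-chamber, $\widetilde{\Fc}$ is a $\Galnr$-chamber, and deleting the orbit $\typebt_{\mathrm{max}}$ leaves a $\kappa$-group whose relative Dynkin diagram is empty — i.e. $(\Gc^0_{\Fc})_\kappa$ has no proper $\kappa$-parabolic iff it is $\kappa$-anisotropic, but more usefully here: it is solvable iff $\typebt_{\mathrm{max}}$ is the full vertex set of the $\Ktnr$-diagram, which is exactly the statement that the $\Galnr$-chamber is a $\Ktnr$-chamber. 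This gives $(3)\Leftrightarrow(1)$ and $(3)\Leftrightarrow(4)$ directly. For $(3)\Rightarrow(5)$ I would argue: if $G$ is residually quasi-split, then for any $\Kt$-facet $\Fc$ the group $(\Gc^0_{\Fc})_\kappa$ contains the (residually defined) reductive quotient of a $\Kt$-chamber's parahoric as a Levi-type piece; since deleting fewer vertices from a diagram with a $\Galnr$-stable chamber still leaves something quasi-split — more precisely, a pseudo-parabolic descending from the Iwahori sits inside $(\Gc^0_{\Fc})_\kappa$ and is solvable — one obtains a pseudo-Borel, hence $(\Gc^0_{\Fc})_\kappa$ is quasi-split. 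Then $(5)\Rightarrow(4)$ is immediate by specializing to chambers (a quasi-split group that is also "minimal" among parahorics, i.e. attached to a chamber, must have solvable identity component since its pseudo-Borel is everything), and $(4)\Rightarrow(1)$ is trivial.

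The main obstacle I expect is the careful handling of the \textbf{non-perfect residual field} case in the steps involving $(\Gc^0_{\Fc})_\kappa$: when $\kappa$ is imperfect, the reductive quotient may fail to be reductive and one genuinely needs pseudo-reductive theory — specifically the correspondence between pseudo-parabolics of a smooth connected affine group and those of its maximal pseudo-reductive quotient (\cite[Proposition 2.2.10]{conrad_gabber_prasad_2015}), and the characterization of quasi-split pseudo-reductive groups via pseudo-Borels (\cite[Section C.2]{conrad_prasad_2016}). The delicate point is verifying that the reduction-theory dictionary of \cite[3.5.2]{Corvallis} remains valid at the level of pseudo-reductive quotients, so that "$(\Gc^0_{\Cc})_\kappa$ solvable" is genuinely equivalent to "the deleted orbit is the whole vertex set", and that solvability of the chamber's parahoric fibre propagates to quasi-splitness of larger parahoric fibres. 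Once that dictionary is in place, everything else is bookkeeping with the affine Tits index and Galois descent as already developed in the excerpt.
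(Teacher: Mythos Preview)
Your plan is correct and essentially matches the paper's proof: the same cycle of implications, the same use of pseudo-reductive theory for the imperfect residue field, and the same idea that the Iwahori special fibre sits as a solvable pseudo-parabolic inside larger parahoric special fibres. The one difference worth noting is that the paper bypasses the Tits-index formalism of \cite[3.5.2]{Corvallis} entirely and works directly with the parabolics--parahorics correspondence \cite[5.1.32.(i)]{BT2} over $\Ktnr$, passing to $\kappa^s$ where pseudo-reductive groups are automatically pseudo-split; this makes the step ``no proper pseudo-parabolic $\Rightarrow$ solvable'' immediate via \cite[Proposition 3.5.1.(4)]{conrad_gabber_prasad_2015} and dissolves precisely the obstacle you flagged about whether \cite[3.5.2]{Corvallis} survives for imperfect $\kappa$. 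Similarly, for $(2)\Rightarrow(3)$ the paper uses $G(\Kt)$-conjugacy of $\Galnr$-chambers (via Proposition~\ref{CorrespUnram}) rather than the type argument, but your version via Proposition~\ref{UnramTypes} works equally well.
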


\begin{proof}
${}$
\begin{enumerate}
    \item[$(1)\implies(2)$] La $\Kt$-chambre $\Cc$ provient d'une $\Galnr$-chambre $\widetilde{\Cc}$. Elle provient d'une $\Galnr$-chambre $\widetilde{\Cc}$ qui est donc une $\Ktnr$-chambre. Cela induit une compatibilité $(\Gc^0_{\Cc})_{\Rtnr}=\Gc^0_{\widetilde{\Cc}}$, donc $(\Gc^0_{\Cc})_{\kappa^s}=(\Gc^0_{\widetilde{\Cc}})_{\kappa^s}$ et ce dernier est donc résoluble. Il ne possède donc pas de sous-groupe parabolique non trivial. D'après la correspondance paraboliques-parahoriques sur $\Ktnr$ (\cite[5.1.32.(i) Proposition.]{BT2}), on en déduit que $\widetilde{\Cc}$ est une $\Ktnr$-chambre.
    \item[$(2)\implies(3)$] D'après la proposition \ref{CorrespUnram}, les $\Galnr$-chambres sont $G(\Kt)$-conjugués puisque les \linebreak $\Kt$-chambres le sont. Par conséquent, si une $\Galnr$-chambre est une $\Ktnr$-chambre, alors toutes les $\Galnr$-chambres le sont par conjugaison.
    \item[$(3)\implies(4)$] Prenons une $\Kt$-chambre $\Cc$. Elle provient d'une $\Galnr$-chambre $\widetilde{\Cc}$ qui est donc une $\Ktnr$-chambre. Comme précédemment, on a $(\Gc^0_{\Cc})_{\kappa^s}=(\Gc^0_{\widetilde{\Cc}})_{\kappa^s}$. Ce dernier groupe ne possède pas de sous-groupe parabolique non trivial et est donc résoluble d'après \cite[Proposition 3.5.1.(4)]{conrad_gabber_prasad_2015}, puisque son quotient pseudo-réductif est pseudo-déployé.
    \item[$(4)\implies(1)$] C'est trivial.
    \item[$(4)\implies(5)$] Soit $\Fc$, une $\Kt$-facette et $\Cc$, une $\Kt$-chambre contenant $\Fc$. D'après la correspondance paraboliques-parahoriques, l'image de $(\Gc^0_{\Cc})_\kappa\rightarrow (\Gc^0_{\Fc})_\kappa$ est un sous-groupe pseudo-parabolique de $(\Gc^0_{\Fc})_\kappa$. Par hypothèse, il est résoluble. Donc $(\Gc^0_{\Fc})_\kappa$ est quasi-déployé.
    \item[$(5)\implies(4)$] Réciproquement, prenons une $\Kt$-chambre $\Cc$. Par hypothèse, $(\Gc^0_{\Cc})_\kappa$ est quasi-déployé. Il est donc résoluble car il n'admet aucun sous-groupe pseudo-parabolique non trivial.
\end{enumerate}
\end{proof}

\begin{rmq}
    L'hypothèse de quasi-déploiement sur $\Ktnr$ intervient notamment dans la preuve pour utiliser la correspondance paraboliques-parahoriques. On ignore si la correspondance reste valide en général, et donc a fortiori si l'hypothèse de quasi-déploiement sur $\Ktnr$ peut être supprimée.
\end{rmq}

\pagebreak

On en déduit alors le résultat que l'on souhaite :

\begin{prop}\label{QSplitEstResQSplit}
    Tout $\Kt$-groupe réductif quasi-déployé est résiduellement quasi-déployé.
\end{prop}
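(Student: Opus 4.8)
The plan is to verify condition (2) of Proposition \ref{CritereResidQSplit}, i.e. to produce a $\Ktnr$-chamber of $\ImmBT(G_{\Ktnr})$ which is $\Galnr$-invariant; since $G$ is quasi-déployé sur $\Kt$, it is quasi-déployé sur $\Ktnr$, so that proposition applies. I would first set up the apartments. Choose a $\Kt$-tore déployé maximal $S$; quasi-déploiement gives that $T:=Z_G(S)$ is a $\Kt$-tore maximal. Let $\widetilde{S}$ be the largest subtorus of $T_{\Ktnr}$ déployé sur $\Ktnr$. Its cocharacter lattice is the part of $X_*(T)$ fixed by the inertia, which is a normal subgroup of the absolute Galois group of $\Kt$, so $\widetilde{S}$ is defined over $\Kt$; moreover $Z_{G_{\Ktnr}}(\widetilde S)\subseteq Z_{G_{\Ktnr}}(S_{\Ktnr})=T_{\Ktnr}$ and $\widetilde S\subseteq T_{\Ktnr}$, whence $Z_{G_{\Ktnr}}(\widetilde S)=T_{\Ktnr}$ is a torus, which forces $\widetilde S$ to be a tore déployé maximal of $G_{\Ktnr}$ (a strictly larger split torus inside $G_{\Ktnr}$ would have centralizer strictly inside $T_{\Ktnr}$ yet containing a split torus bigger than $\widetilde S$, impossible). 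By unramified descent (\cite[5.]{BT2}, \cite[Theorem 3.8.]{unram_prasad}) the appartement $\widetilde{\Ac}:=\Ac(\widetilde S)$ of $\ImmBT(G_{\Ktnr})$ is $\Galnr$-invariant, with $\widetilde{\Ac}^{\Galnr}=\Ac(S)\subseteq\ImmBT(G)$.

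Next I would pick a $\Kt$-chambre $\Cc\subseteq\Ac(S)$ and let $\widetilde{\Cc}$ be the $\Galnr$-facette attached to $\Cc$ by Proposition \ref{CorrespUnram}, so $\widetilde{\Cc}^{\Galnr}=\Cc$; since the barycentre of $\Cc$ lies in $\Ac(S)\subseteq\widetilde{\Ac}$ and $\widetilde{\Ac}$ is a subcomplexe, we get $\widetilde{\Cc}\subseteq\widetilde{\Ac}$. The claim to prove is then that $\widetilde{\Cc}$ is a $\Ktnr$-chambre, i.e. no affine root of $(G_{\Ktnr},\widetilde S)$ vanishes identically on $\widetilde{\Cc}$. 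The argument is an averaging over $\Galnr$: if an affine root $\widetilde\psi$ vanished on $\widetilde{\Cc}$, then, $\widetilde{\Cc}$ being $\Galnr$-invariant and $\Galnr$ permuting (through a finite quotient) the affine roots, the whole orbit $\widetilde\psi_1,\dots,\widetilde\psi_m$ of $\widetilde\psi$ would vanish on $\widetilde{\Cc}$, so the $\Galnr$-invariant affine function $f:=\widetilde\psi_1+\cdots+\widetilde\psi_m$ would vanish on $\widetilde{\Cc}$; restricting $f$ to $\widetilde{\Ac}^{\Galnr}=\Ac(S)$ gives an affine function vanishing on the open set $\Cc\subseteq\Ac(S)$, hence $f|_{\Ac(S)}\equiv 0$, so $\mathrm{grad}(f)$ restricts to $0$ on $X_*(S)\otimes\RR$, that is $\mathrm{grad}(f)|_S=0$ in $X^*(S)\otimes\RR$.

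The contradiction comes from the nondegeneracy forced by quasi-déploiement: each $\mathrm{grad}(\widetilde\psi_i)$ is a positive multiple of a root $\beta_i$ of $(G_{\Ktnr},\widetilde S)$, the $\beta_i$ form a $\Galnr$-orbit, and since $\Galnr$ acts trivially on $X^*(S)$ all the restrictions $\beta_i|_S$ equal one and the same element $\bar\beta$. But $\bar\beta\neq 0$: if $\beta|_S=0$ for a root $\beta$ of $(G_{\Ktnr},\widetilde S)$ then $\mathfrak g_\beta\subseteq \mathrm{Lie}(Z_{G_{\Ktnr}}(S_{\Ktnr}))=\mathrm{Lie}(T_{\Ktnr})$, which carries only the trivial $\widetilde S$-weight (as $\widetilde S\subseteq T$), forcing $\beta=0$. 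Hence $\mathrm{grad}(f)|_S$ is a strictly positive multiple of $\bar\beta\neq 0$, contradicting $\mathrm{grad}(f)|_S=0$. Therefore $\widetilde{\Cc}$ is a $\Galnr$-invariant $\Ktnr$-chambre, condition (2) of Proposition \ref{CritereResidQSplit} holds, and $G$ is résiduellement quasi-déployé.

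The part I expect to be the main obstacle is not the averaging computation itself but the bookkeeping that makes it meaningful: namely checking carefully, with $\kappa$ imparfait and a possibly wildly ramified déploiement over $\Ktnr$, that $\widetilde S$ is genuinely a tore déployé maximal of $G_{\Ktnr}$, that $\widetilde{\Ac}^{\Galnr}=\Ac(S)$ and $\widetilde{\Cc}\subseteq\widetilde{\Ac}$ via the correspondence of Proposition \ref{CorrespUnram}, and that the affine-root system of $(G_{\Ktnr},\widetilde S)$ behaves as expected (its existence in this generality being exactly what Section \ref{SectionExistenceImmeuble} secures). Once these are in place, the key input is the equality $Z_G(S)=T$ from quasi-déploiement, which is precisely what makes every root restrict nontrivially to $S$ and thus prevents any $\Galnr$-orbit of affine roots from cutting $\widetilde{\Cc}$ below top dimension.
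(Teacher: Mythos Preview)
Your argument is correct and takes a genuinely different route from the paper. The paper verifies condition~(5) of Proposition~\ref{CritereResidQSplit}: for each $\Kt$-facette $\Fc$ it shows that $(\Gc^0_\Fc)_\kappa$ is pseudo-quasi-déployé by invoking the structural results \cite[4.6.4.(ii), 4.6.26.]{BT2}, which give a closed split subtorus $\mathcal S\subseteq\Gc^0_\Fc$ with generic fibre $S$ whose centralizer is the identity component of the Néron model of $T$; since that centralizer is commutative on the special fibre, one obtains a solvable pseudo-parabolic. You instead verify condition~(2) directly, by an averaging argument in the apartment $\widetilde{\Ac}$: if an affine root vanished on the $\Galnr$-facette $\widetilde{\Cc}$, summing over its finite Galois orbit would produce a $\Galnr$-invariant affine function vanishing on $\Cc$, hence on all of $\Ac(S)$, contradicting the fact that every root of $(G_{\Ktnr},\widetilde S)$ restricts nontrivially to $S$ (because $Z_G(S)=T$ is a torus).

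Both proofs ultimately rest on exactly the same input from quasi-splitness, namely that $Z_G(S)$ is a torus. What your approach buys is that it stays entirely within the geometry of the apartment and avoids the integral-model machinery of \cite{BT2} and the language of pseudo-parabolics; what the paper's approach buys is brevity, since the relevant structure of $(\Gc^0_\Fc)_\kappa$ is already packaged in \cite[4.6.4., 4.6.26.]{BT2}. The bookkeeping you flag as the main obstacle (that $\widetilde S$ is a maximal $\Ktnr$-split torus, that $\widetilde{\Ac}^{\Galnr}=\Ac(S)$, that $\widetilde{\Cc}\subseteq\widetilde{\Ac}$, and that the Galois orbit of an affine root is finite because $\Galnr$ acts on $\widetilde{\Ac}$ through a finite quotient once one centres at a point of $\Ac(S)$) is all standard and poses no difficulty.
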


\begin{proof}
    Prenons un groupe réductif $G$ quasi-déployé. D'après la proposition \ref{CritereResidQSplit}, il suffit de montrer que, pour toute facette $\Kt$-facette $\Fc$, le $\kappa$-groupe $(\Gc^0_{\Fc})_\kappa$ est quasi-déployé.  
    
    Prenons $S$, un tore déployé maximal de $G$. Son centralisateur dans $G$ est un tore $T$. D'après \cite[4.6.4.(ii) Proposition.]{BT2} et \cite[4.6.26.]{BT2}, $\Gc^0_{\Fc}$ admet un unique sous-tore déployé fermé $\mathcal{S}$ de fibre générique $S$ et son centralisateur $\mathcal{T}$ dans $\Gc^0_{\Fc}$ est la composante de l'identité du modèle de Néron de $T$. En particulier, le centralisateur de $\mathcal{S}_\kappa$ est $\mathcal{T}_\kappa$, qui est commutatif. Prenons alors un cocaractère tel que le centralisateur de son image dans son pseudo-parabolique associé soit celui de $\mathcal{S}_\kappa$, c'est-à-dire $\mathcal{T}_\kappa$. Ce sous-groupe pseudo-parabolique est donc résoluble. Cela prouve que $(\Gc^0_{\Fc})_\kappa$ est quasi-déployé.
\end{proof}

Revenons à notre problème. Notons pour la suite $\xi^\mathrm{n.r.}$ le morphisme "type" sur $\Ktnr$, et $\Xi^\mathrm{n.r.}$ l'image de $G(\Ktnr)$ par ce morphisme. On peut alors prouver la trivialité du noyau de la seconde flèche :

\begin{prop}\label{EgaliteTypeAdjoint}
    On a l'égalité $\xi^\mathrm{n.r.}(G(\Kt))=(\Xi^\mathrm{n.r.})^{\Galnr}$ et le fait que ces deux groupes soient canoniquement isomorphes à $\Xi^\mathrm{ext}$. En conséquence : $$\Ker\left(H^1(\Galnr,G(\Ktnr)^c)\rightarrow H^1(\Galnr,G(\Ktnr))\right)=1.$$
\end{prop}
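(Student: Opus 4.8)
The plan is to deduce the statement from Theorem~\ref{SuiteExacteType} applied to $\Ht = G(\Ktnr)$ and $\Gamma = \Galnr$, for which we need exactly the two claimed facts: that $\xi^{\mathrm{n.r.}}(G(\Kt)) = (\Xi^{\mathrm{n.r.}})^{\Galnr}$, and that this common group is canonically isomorphic to $\Xi^{\mathrm{ext}}$. Indeed, by point~(2)(a) of Theorem~\ref{SuiteExacteType} (taking $\Ht = G(\Ktnr)$, so $\Ht^c = G(\Ktnr)^c$ and $H = G(\Kt)$), the kernel in question is identified with the pointed set $(\Xi_{G(\Ktnr)})^{\Galnr}/\widetilde{\xi}(G(\Kt)) = (\Xi^{\mathrm{n.r.}})^{\Galnr}/\xi^{\mathrm{n.r.}}(G(\Kt))$, which is trivial precisely when the two groups coincide. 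So the whole content is the equality $\xi^{\mathrm{n.r.}}(G(\Kt)) = (\Xi^{\mathrm{n.r.}})^{\Galnr}$.

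To prove this equality I would use the maps from Theorem~\ref{SuiteExacteType} parts (3)--(5), (8). Since $G$ is quasi-déployé, Proposition~\ref{QSplitEstResQSplit} gives that $G$ is résiduellement quasi-déployé, so $\widetilde{\DynD}$ admits a special vertex $x$ inside $\typebt_{\mathrm{max}}$ (a $\Galnr$-chambre is an $L$-chambre). Moreover, since $G$ is adjoint and quasi-déployé, its minimal Levi $Z$ is a maximal torus $T$ which, by \cite[4.6.4.(ii)]{BT2} and \cite[4.4.16.]{BT2}, is an induced torus; its canonical model is connected and has $R^{\mathrm{n.r.}}$-points $T(\Ktnr)^1$, so the special vertex can be taken $\Galnr$-invariant. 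Concretely: being adjoint, $G$ contains a unique point fixed by $Z(\Kt)^1$ on the $\Kt$-side corresponding to a special $\Galnr$-fixed vertex; I would argue that the $\Galnr$-action on $\widetilde{\DynD}$ fixes a special vertex of $\typebt_{\mathrm{max}}$ because the quasi-split structure means the $\ast$-action (which is what $\Galnr$ induces up to inner automorphisms, via the residually quasi-split chamber) preserves a $\Galnr$-fixed special vertex of the affine diagram. Granting this, point~(7) of Theorem~\ref{SuiteExacteType} shows $\Ker\big((\Xi_{G(\Ktnr),\typebt_{\mathrm{max}}})^{\Galnr} \to \Xi^{\mathrm{ext}}\big)$ is trivial, and point~(8) shows the restriction of $(\Xi_{G(\Ktnr),\typebt_{\mathrm{max}}})^{\Galnr} \to \Xi^{\mathrm{ext}}$ to $\xi^{\mathrm{n.r.}}(G(\Kt))$ has image $\Xi$ and trivial kernel. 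So it remains to show $(\Xi^{\mathrm{n.r.}})^{\Galnr} \subseteq (\Xi_{G(\Ktnr),\typebt_{\mathrm{max}}})^{\Galnr}$ maps \emph{onto} $\Xi^{\mathrm{ext}}$ and that $\Xi \to \Xi^{\mathrm{ext}}$ is also onto; combined with injectivity of both into $\Xi^{\mathrm{ext}}$, this forces all four groups ($\xi^{\mathrm{n.r.}}(G(\Kt))$, $(\Xi^{\mathrm{n.r.}})^{\Galnr}$, $\Xi$, $\Xi^{\mathrm{ext}}$) to be canonically isomorphic, giving both assertions at once.

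For the surjectivity onto $\Xi^{\mathrm{ext}}$, I would use that $G$ is adjoint. For an adjoint group, every automorphism of the (relative) Dynkin diagram lifting to a diagram automorphism that is realized vectorially by a Weyl group element — i.e. every element of $\Xi^{\mathrm{ext}}$ — is induced by a point of $G(\Kt)$ acting on the extended building, because the identity component of the center is trivial and the normalizer $N(\Kt)$ surjects onto the relevant quotient. Concretely, for adjoint quasi-split $G$, the full automorphism group $\Xi^{\mathrm{ext}}$ of $\DynD$ is realized already inside $\Xi$: using the pinned structure, each element of $\Xi^{\mathrm{ext}}$ is induced by a pinned automorphism, hence by an element of $G(\Kt)$ (via the adjoint/pinning identification), so $\Xi \twoheadrightarrow \Xi^{\mathrm{ext}}$. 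Since $\xi^{\mathrm{n.r.}}(G(\Kt)) \subseteq (\Xi^{\mathrm{n.r.}})^{\Galnr}$ maps injectively into $\Xi^{\mathrm{ext}}$ with image containing $\Xi = \Xi^{\mathrm{ext}}$, and $(\Xi^{\mathrm{n.r.}})^{\Galnr}$ also maps injectively into $\Xi^{\mathrm{ext}}$, the inclusion $\xi^{\mathrm{n.r.}}(G(\Kt)) \subseteq (\Xi^{\mathrm{n.r.}})^{\Galnr}$ is an equality, both equal to $\Xi^{\mathrm{ext}}$.

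The main obstacle I anticipate is the existence of a $\Galnr$-fixed special vertex in $\typebt_{\mathrm{max}}$ for adjoint quasi-déployé $G$ over a possibly imperfect residue field — i.e. justifying that résiduellement quasi-déployé together with the adjoint/induced-torus structure really produces a $\Galnr$-fixed (not merely $\Galnr$-stable) special vertex, so that point~(7) of Theorem~\ref{SuiteExacteType} applies. Once that combinatorial input is secured, everything else is bookkeeping with the exact sequences of Theorem~\ref{SuiteExacteType} and the standard fact that for an adjoint quasi-split group $\Xi^{\mathrm{ext}}$ is realized by $K$-points via a pinning.
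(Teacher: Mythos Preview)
Your strategy is exactly the paper's: reduce via Theorem~\ref{SuiteExacteType}(2)(a) to $\xi^{\mathrm{n.r.}}(G(\Kt)) = (\Xi^{\mathrm{n.r.}})^{\Galnr}$, then use the map $(\Xi^{\mathrm{n.r.}})^{\Galnr}\to\Xi^{\mathrm{ext}}$ from~(4), prove it injective via a $\Galnr$-fixed special vertex (point~(7)) and surjective via $\Xi=\Xi^{\mathrm{ext}}$ (point~(8)). Note incidentally that since $G$ is résiduellement quasi-déployé, $\typebt_{\mathrm{max}}$ is the full chamber type, so $(\Xi^{\mathrm{n.r.}})^{\Galnr}=(\Xi_{G(\Ktnr),\typebt_{\mathrm{max}}})^{\Galnr}$ automatically; there is nothing to check there.

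The two inputs you flag are resolved differently in the paper than in your sketch. For the $\Galnr$-fixed special vertex, your induced-torus argument does not by itself produce such a point; the paper instead observes that a quasi-split group carries a Chevalley valuation (hence a special point) which remains special after any separable extension, by quasi-split descent \cite[4.2.3--4.2.4]{BT2}. For $\Xi=\Xi^{\mathrm{ext}}$, your pinning heuristic is off (pinned automorphisms are outer, not elements of $G(\Kt)$); the paper simply cites \cite[Proposition~6.6.2]{kaletha_prasad_2023}, the underlying reason being that for adjoint $G$ the cocharacter lattice of a maximal split torus equals the full coweight lattice, so every element of $\Xi^{\mathrm{ext}}$ is realized by a translation coming from $N(\Kt)$.
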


\begin{proof}
    Notons que $G$ est résiduellement quasi-déployé d'après la proposition \ref{QSplitEstResQSplit}. Par conséquent, d'après le point (4) du théorème \ref{SuiteExacteType}, il y a un morphisme canonique $(\Xi^\mathrm{n.r.})^{\Galnr}\rightarrow \Xi^\mathrm{ext}$. Sa restriction à $\xi^\mathrm{n.r.}(G(\Kt))$ a comme image $\Xi$, qui vaut dans notre cas de figure $\Xi^\mathrm{ext}$ d'après \cite[Proposition 6.6.2]{kaletha_prasad_2023}. Le morphisme canonique précédent est donc surjectif.

    Par ailleurs, puisque $G$ est quasi-déployé, il admet un sommet spécial qui le reste après passage à n'importe quelle extension séparable. En effet, le cas déployé est évident. On obtient alors le cas général par descente quasi-déployée (cf. \cite[4.2.3.-4.2.4.]{BT2}, une valuation de Chevalley sur un groupe déployé représente un point spécial, et cette valuation, donc ce point, se descend).
    
    Le point (7) du théorème \ref{SuiteExacteType} dit alors que $\Ker \left ( (\Xi^\mathrm{n.r.})^{\Galnr}\rightarrow \Xi^\mathrm{ext} \right )$ est trivial.

    On en déduit donc finalement que $\xi^\mathrm{n.r.}(G(\Kt))$ et $(\Xi^\mathrm{n.r.})^{\Galnr}$ sont isomorphes à $\Xi^\mathrm{ext}$ à travers le même morphisme, de telle sorte à ce que $\xi^\mathrm{n.r.}(G(\Kt))=(\Xi^\mathrm{n.r.})^{\Galnr}$ comme voulu. La trivialité du noyau provient alors du point (2) du théorème \ref{SuiteExacteType}.
\end{proof}

De tout ceci, on en déduit finalement le théorème :

\begin{thm}\label{ThmParahoQSplit}
    Soit $G$ un groupe semi-simple adjoint et quasi-déployé sur $\Kt$. On a :
    $$\Ker \left(H^1(\Galnr,G(\Knr)^0_{\widetilde{\mathcal{F}}})\rightarrow H^1(\Galnr,G(\Knr))\right)=1$$
    \noindent où $\widetilde{\mathcal{F}}$ est une facette $\Galnr$-invariante de l'immeuble $\ImmBT(G_{\Knr})$.
\end{thm}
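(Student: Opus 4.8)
The plan is to deduce the statement formally from two vanishing results already proved, glued by an elementary pointed-set argument. First I would observe that, $G$ being semi-simple adjoint and quasi-déployé over $K$, the base change $G_{\Ktnr}$ is again semi-simple adjoint and quasi-déployé (a Borel defined over $K$ stays one over $\Ktnr$), so the lemma preceding the theorem, applied over $\Ktnr$, gives $G(\Ktnr)^0=G(\Ktnr)^b=G(\Ktnr)^c$. Consequently $G(\Knr)^0_{\widetilde{\Fc}}=G(\Knr)^c_{\widetilde{\Fc}}$, and it suffices to show that the composite
\[
H^1(\Galnr,G(\Ktnr)^c_{\widetilde{\Fc}})\longrightarrow H^1(\Galnr,G(\Ktnr)^c)\longrightarrow H^1(\Galnr,G(\Ktnr))
\]
has trivial kernel.

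For the first arrow I would apply le corollaire \ref{InjComplet} with $\Lt=\Ktnr$, $\Ht=G(\Ktnr)^c$ and the given $\Galnr$-facette $\widetilde{\Fc}$: the group $G(\Ktnr)^c$ is the kernel of the type morphism $\xi^{\mathrm{n.r.}}$, hence acts trivially --- a fortiori conformally --- on every $\Galnr$-facette; it is $\Galnr$-invariant by la proposition \ref{PropDesH}(3); and it is a global subgroup of $G(\Ktnr)$, being open (it has finite index $|\Xi^{\mathrm{n.r.}}|$ and contains a parahoric) and containing $G(\Ktnr)^+$. Le corollaire \ref{InjComplet} then yields $\Ker\bigl(H^1(\Galnr,G(\Ktnr)^c_{\widetilde{\Fc}})\to H^1(\Galnr,G(\Ktnr)^c)\bigr)=1$. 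For the second arrow, la proposition \ref{EgaliteTypeAdjoint} already asserts $\Ker\bigl(H^1(\Galnr,G(\Ktnr)^c)\to H^1(\Galnr,G(\Ktnr))\bigr)=1$.

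To conclude I would invoke the elementary fact that, for a composite $A\xrightarrow{f}B\xrightarrow{g}C$ of maps of pointed sets with $\Ker g$ reduced to the base point, one has $\Ker(g\circ f)=f^{-1}(\Ker g)=\Ker f$; since $\Ker f$ is also trivial, the composite has trivial kernel, which is exactly the claim. The genuine content of the theorem does not lie in this assembly but in the preparatory results it invokes, above all la proposition \ref{EgaliteTypeAdjoint} (and, behind it, la proposition \ref{QSplitEstResQSplit} that a quasi-déployé group is résiduellement quasi-déployé, together with the identity $\xi^{\mathrm{n.r.}}(G(K))=(\Xi^{\mathrm{n.r.}})^{\Galnr}$). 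The only point in the present argument that requires a little care is the verification that $G(\Ktnr)^c$ satisfies the hypotheses of le corollaire \ref{InjComplet}, in particular that it is open.
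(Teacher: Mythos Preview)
Your proof is correct and follows exactly the paper's own argument: the lemma (applied over $\Ktnr$, as you make explicit) gives $G(\Ktnr)^0=G(\Ktnr)^c$, the map factors through $H^1(\Galnr,G(\Ktnr)^c)$, and the two kernels vanish by le corollaire~\ref{InjComplet} and la proposition~\ref{EgaliteTypeAdjoint} respectively. Your justification that $G(\Ktnr)^c$ meets the hypotheses of le corollaire~\ref{InjComplet} is slightly more detailed than the paper's, but the route is the same.
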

\medskip

Intéressons-nous cette fois au cas des stabilisateurs de facettes. On souhaite alors déterminer le noyau de l'application :
$$H^1(\Galnr,G(\Ktnr)_{\widetilde{\Fc}})\rightarrow H^1(\Galnr,G(\Ktnr))$$
pour $\widetilde{\Fc}$, une facette $\Galnr$-invariante de $\ImmBT(G_{\Ktnr})$.
\medskip

La stratégie est de se ramener au cas absolument presque simple et de réaliser des calculs explicites.
\medskip

Notons $G_{\Kt}:=\prod_{i\in I}G_i$, la décomposition de $G_{\Kt}$ en produit de groupes $\Kt$-presque simples. On a alors la bijection équivariante et compatible à Galois : $\ImmBT(G_{\Ktnr})\cong \prod_{i\in I} \ImmBT(G_{i,\Ktnr})$, et donc une décomposition $\widetilde{\Fc}=\prod_{i\in I} \widetilde{\Fc_i}$ en facettes $\Galnr$-invariantes. Ceci donne alors :
$$\Ker \left(H^1(\Galnr,G(\Ktnr)_{\widetilde{\Fc}})\rightarrow H^1(\Galnr,G(\Ktnr))\right) = \prod_{i\in I} \Ker\left(H^1(\Galnr,G_i(\Ktnr)_{\widetilde{\Fc}_i})\rightarrow H^1(\Galnr,G_i(\Ktnr))\right).$$

Le problème se ramène alors au cas où $G$ est un $\Kt$-groupe $\Kt$-presque simple. Il s'écrit donc $G:=\mathrm{R}_{\Lt/\Kt}(G')$ où $G'$ est un $\Lt$-groupe adjoint absolument presque simple et $\Lt/\Kt$ est une extension finie séparable. Le calcul du noyau se ramène ensuite au cas absolument presque simple grâce au lemme suivant :

\begin{lem}\label{CompNoyauResWeil}
    Soit $\Lt/\Kt$, une extension finie séparable et $H'$ un groupe réductif sur $\Lt$. \linebreak Notons $\Galnr_{\Lt}:=\Gal(\Ltnr/\Lt)$ et $H:=\mathrm{R}_{\Lt/\Kt}(H')$. Prenons une facette $\Galnr$-invariante $\widetilde{\Fc}$ dans $\ImmBT(H_{\Ktnr})$. Elle induit une $\Kt$-facette dans $\ImmBT(H)\cong \ImmBT(H')$, et correspond alors à une facette $\Galnr_{\Lt}$-invariante $\widetilde{\Fc}'$ dans $\ImmBT(H'_{\Ltnr})$. On a alors les identifications :
    $$H^1(\Galnr,H(\Ktnr))=H^1(\Galnr_{\Lt},H'(\Ltnr))$$
    $$H^1(\Galnr,H(\Ktnr)_{\widetilde{\Fc}})=H^1(\Galnr_{\Lt},H'(\Ltnr)_{\widetilde{\Fc}'})$$
    et ce, de manière fonctorielle, de telle sorte que :
    $$\Ker \left(H^1(\Galnr,H(\Ktnr)_{\widetilde{\Fc}})\rightarrow H^1(\Galnr,H(\Ktnr))\right)=\Ker \left(H^1(\Galnr_{\Lt},H'(\Ltnr)_{\widetilde{\Fc}'})\rightarrow H^1(\Galnr_{\Lt},H'(\Ltnr))\right).$$
\end{lem}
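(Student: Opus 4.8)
The proof is an instance of Shapiro's lemma: the point is that $H(\Ktnr)$, together with the stabilizer $H(\Ktnr)_{\widetilde{\Fc}}$, is the $\Galnr$-module coinduced from the corresponding $\Galnr_{\Lt}$-module over $H'(\Ltnr)$. I would begin with the ring-theoretic input. Since $\Lt/\Kt$ is finite separable, $\Lt\otimes_{\Kt}\Ktnr$ is a finite étale $\Ktnr$-algebra, hence a finite product of fields $\prod_{j}M_j$, and the finite group $\Galnr=\Gal(\Ktnr/\Kt)$ permutes the factors; it does so transitively because $\Lt$ is a field. Each $M_j$ is $\Lt\cdot\Ktnr$ for a suitable embedding, hence unramified over $\Lt$ (it is the fraction field of a factor of the ind-étale $\Rt_{\Lt}$-algebra $\Rt_{\Lt}\otimes_{\Rt}\Rtnr$), and its residue field contains $\lambda\cdot\kappa^s$ where $\lambda$ is the residue field of $\Lt$. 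Now $\lambda\cdot\kappa^s$ is purely inseparable over the separably closed field $\kappa^s$, hence is itself separably closed, so it equals $\lambda^s$, and therefore $M_j\cong\Ltnr$ over $\Lt$; moreover, identifying $\{M_j\}$ with a quotient of $\Hom_{\Kt}(\Lt,\Kt^s)$ by the inertia group $\Gal(\Kt^s/\Ktnr)$, one checks that the stabilizer in $\Galnr$ of a given factor is exactly $\Galnr_{\Lt}=\Gal(\Ltnr/\Lt)$. In short, $\Lt\otimes_{\Kt}\Ktnr\cong\mathrm{Coind}_{\Galnr_{\Lt}}^{\Galnr}\Ltnr$ as $\Galnr$-algebras (induced $=$ coinduced, the index being finite).

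From this everything follows formally. Since $H=\mathrm{R}_{\Lt/\Kt}(H')$ and Weil restriction commutes with the base change $\Ktnr/\Kt$, one has $H(\Ktnr)=H'(\Lt\otimes_{\Kt}\Ktnr)=\prod_j H'(M_j)$, which exhibits $H(\Ktnr)=\mathrm{Coind}_{\Galnr_{\Lt}}^{\Galnr}H'(\Ltnr)$ as a $\Galnr$-group; Shapiro's lemma then gives $H^1(\Galnr,H(\Ktnr))=H^1(\Galnr_{\Lt},H'(\Ltnr))$ functorially. For the stabilizers I would use the Weil-restriction property of buildings recalled in Section \ref{SectionExistenceImmeuble}: $\ImmBT(H_{\Ktnr})=\ImmBT\bigl(\mathrm{R}_{(\Lt\otimes_{\Kt}\Ktnr)/\Ktnr}(H'_{\Lt\otimes_{\Kt}\Ktnr})\bigr)=\prod_j\ImmBT(H'_{M_j})$, $\Galnr$-equivariantly for the permutation of factors. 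A $\Galnr$-invariant facet $\widetilde{\Fc}$ is then a product $\prod_j\widetilde{\Fc}_j$ of facets whose component over the distinguished factor $M_1=\Ltnr$ is $\Galnr_{\Lt}$-invariant; this component is the facet $\widetilde{\Fc}'$ of the statement, and $H(\Ktnr)_{\widetilde{\Fc}}=\prod_j H'(M_j)_{\widetilde{\Fc}_j}=\mathrm{Coind}_{\Galnr_{\Lt}}^{\Galnr}\bigl(H'(\Ltnr)_{\widetilde{\Fc}'}\bigr)$. Shapiro again yields $H^1(\Galnr,H(\Ktnr)_{\widetilde{\Fc}})=H^1(\Galnr_{\Lt},H'(\Ltnr)_{\widetilde{\Fc}'})$. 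Because both Shapiro isomorphisms are natural in the coefficient group and the inclusion $H(\Ktnr)_{\widetilde{\Fc}}\hookrightarrow H(\Ktnr)$ is $\mathrm{Coind}$ applied to $H'(\Ltnr)_{\widetilde{\Fc}'}\hookrightarrow H'(\Ltnr)$, the resulting square of pointed sets commutes, and the claimed identification of kernels drops out.

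The step I expect to be the real work is the identification of the factors $M_j$ with $\Ltnr$ and of their stabilizers with $\Galnr_{\Lt}$: this is where the (allowed) imperfection of $\kappa$ must be handled, via the elementary but not-quite-trivial observation that a purely inseparable extension of a separably closed field is again separably closed, which is exactly what forces $\lambda\cdot\kappa^s=\lambda^s$. Everything else — Shapiro's lemma for non-abelian $H^1$ of (co)induced groups, and the compatibility of $\mathrm{R}_{\Lt/\Kt}$ with base change and with the formation of Bruhat–Tits buildings — is formal once $\Lt\otimes_{\Kt}\Ktnr$ has been recognized as a coinduced $\Galnr$-algebra.
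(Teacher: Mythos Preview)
Your proposal is correct and follows essentially the same route as the paper: decompose $\Lt\otimes_{\Kt}\Ktnr$ as a $\Galnr$-algebra induced from $\Ltnr$, use the compatibility of buildings with Weil restriction and products to identify the stabilizer as an induced group, and apply Shapiro's lemma twice together with its naturality. The paper establishes the tensor decomposition via the intermediate field $\Lt_{\mathrm{n.r.}}:=\Ktnr\cap\Lt$ (yielding $\Ktnr\otimes_{\Kt}\Lt\cong\prod_{\sigma\in\Hom_{\Kt}(\Lt_{\mathrm{n.r.}},\Ktnr)}{}^{\sigma}\Ltnr$ directly) rather than via your residue-field argument, but the endpoint is identical; note also that $\Galnr$ is profinite rather than finite, though this slip is harmless since the permutation action on the finitely many factors factors through a finite quotient.
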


\begin{proof}
    Remarquons que $H_{\Ktnr}$ est donné par $\mathrm{R}_{\Lt\otimes_{\Kt}\Ktnr/\Ktnr}(H'_{\Lt\otimes_{\Kt}\Ktnr})$. 
    
    Posons $\Lt_{\mathrm{n.r.}}:=\Ktnr\cap \Lt$, l'extension maximale non ramifiée de $\Kt$ dans $\Lt$. On a alors : $\Lt\otimes_{\Lt_{\mathrm{n.r.}}}\Ktnr \cong \Lt \Ktnr = \Ltnr$. Considérons l'identification $\Galnr_{\Lt}:=\Gal(\Ltnr/\Lt)\cong \Gal(\Ktnr/\Lt_{\mathrm{n.r.}})$. Il s'agit d'un sous-groupe ouvert de $\Galnr$. Posons $\Sigma:=\Hom_{\Kt}(\Lt_{\mathrm{n.r.}},\Ktnr)$ et observons également les isomorphismes de $\Galnr$-modules suivants :
    \begin{align*}
        \Ktnr \otimes_{\Kt} \Lt\cong (\Ktnr \otimes_{\Kt} \Lt_{\mathrm{n.r.}})\otimes_{\Lt_{\mathrm{n.r.}}}\Lt & \cong (\prod_{\sigma\in\Sigma}{}^\sigma\Ktnr) \otimes_{\Lt_{\mathrm{n.r.}}}\Lt\\
        {} & \cong \prod_{\sigma\in \Sigma}({}^\sigma\Ktnr\otimes_{\Lt_{\mathrm{n.r.}}}\Lt) \cong \prod_{\sigma\in \Sigma}{}^\sigma\Ltnr
    \end{align*}
    
    Or, comme $\Ktnr/\Lt_{\mathrm{n.r.}}$ est séparable, $\Sigma$ se relève dans $\Galnr$. Notons toujours $\Sigma$ un de ses relevés. Il s'agit alors d'un ensemble de représentants dans $\Galnr$ pour $\Galnr/\Galnr_{\Lt}$. On peut donc utiliser le lemme de Shapiro (en cohomologie des groupes) :
    $$H^1(\Galnr,H(\Ktnr))=H^1(\Galnr,H'(\Ktnr \otimes_{\Kt} \Lt))=H^1(\Galnr,\prod_{\sigma\in \Sigma}{}^\sigma H'(\Ltnr))=H^1(\Galnr_{\Lt},H'(\Ltnr)).$$
    
    Occupons-nous désormais de $H^1(\Galnr,H(\Ktnr)_{\widetilde{\Fc}})$. Notons que :
    $$\mathrm{R}_{\Lt\otimes_{\Kt}\Ktnr/\Ktnr}(H'_{\Lt\otimes_{\Kt}\Ktnr})=\prod_{\sigma\in \Sigma}{}^\sigma\mathrm{R}_{\Ltnr/\Ktnr}(H'_{\Ltnr}).$$ La compatibilité des immeubles aux restrictions de Weil séparables (cf. preuve de \cite[Proposition 5.1.5.]{TheseRousseau}) et au produit donne les bijections équivariantes et compatibles à Galois :
    $$\ImmBT(H_{\Ktnr})\cong \prod_{\sigma\in \Sigma} \ImmBT({}^\sigma\mathrm{R}_{\Ltnr/\Ktnr}(H'_{\Ltnr}))\cong \prod_{\sigma\in \Sigma} {}^\sigma\ImmBT(H'_{\Ltnr}).$$

    \pagebreak
    
    Notons que $\widetilde{\Fc}'$ est l'image de $\widetilde{\Fc}$ dans $\ImmBT(H'_{\Ltnr})$ (c'est à dire en regardant le facteur tel que $\sigma=\id$). Dans ce cas, l'image de $\widetilde{\Fc}$ sous la correspondance ci-dessus est $({}^\sigma\widetilde{\Fc}')_{\sigma \in \Sigma}$. Cette identification induit alors l'identification : $H(\Ktnr)_{\widetilde{\Fc}}\cong \prod_{\sigma\in \Sigma}{}^\sigma H'(\Ltnr)_{{}^\sigma\widetilde{\Fc}'}=\prod_{\sigma\in \Sigma}{}^\sigma( H'(\Ltnr)_{\widetilde{\Fc}'})$. On peut donc une fois de plus appliquer le lemme de Shapiro :
    $$H^1(\Galnr,H(\Ktnr)_{\widetilde{\Fc}})=H^1(\Galnr,\prod_{\sigma\in \Sigma}{}^\sigma( H'(\Ltnr)_{\widetilde{\Fc}'}))=H^1(\Galnr_{\Lt},H'(\Ltnr)_{\widetilde{\Fc}'}).$$
    La fonctorialité de l'isomorphisme de Shapiro permet d'en déduire l'égalité des noyaux voulue.
\end{proof}

Continuons notre investigation. D'après la remarque \ref{MemeRangNoyauTriv}, les groupes semi-simples résiduellement déployés sont tels que le noyau
$$H^1(\Galnr,G(\Ktnr)_{\widetilde{\Fc}})\rightarrow H^1(\Galnr,G(\Ktnr))$$
est trivial pour toute $\Galnr$-facette $\widetilde{\Fc}$. Comme on s'est ramené au cas absolument presque simple, et que $G$ est quasi-déployé, cela élimine donc les groupes déployés, et les groupes de la forme ${}^2X_y$ déployés par une extension (quadratique) ramifiée (comme expliqué dans la remarque \ref{MemeRangNoyauTriv}). Par ailleurs, on remarque que les groupes de type ${}^6D_4$ et ${}^3D_4$ ont même rang, on peut donc éliminer la situation d'un groupe de type ${}^6D_4$ devenant de type  ${}^3D_4$ sur $\Knr$. Enfin, puisqu'un groupe de type ${}^6D_4$ est déployé par une extension de groupe de Galois $S_3$, il n'existe aucune extension galoisienne telle qu'il devienne de type ${}^2D_4$.
\medskip

Il ne reste donc que les groupes de type ${}^2E_6$, ${}^2A_n$ (pour $n\geq 1$), ${}^2D_n$ (pour $n\geq 4$), ${}^3D_4$ et ${}^6D_4$ déployés sur $\Ktnr$ à traiter.
\medskip

Ensuite, comme $G$ est quasi-déployé, il est résiduellement quasi-déployé d'après la proposition \ref{QSplitEstResQSplit}, et donc $\typebt_\mathrm{max}$, le type d'une $\Galnr$-chambre, est exactement le type d'une \linebreak $\Ktnr$-chambre. Le point (2) (a) du théorème \ref{NoyauH1} se simplifie alors en :
$$\Ker \left(H^1(\Galnr,G(\Ktnr)_{\widetilde{\Fc}})\rightarrow H^1(\Galnr,G(\Ktnr))\right)\cong (\mathrm{Orb}(\typebt)_{\Xi^{\mathrm{n.r.}}})^{\Galnr}/\Xi$$

où $\typebt$ est le $\Ktnr$-type de $\widetilde{\Fc}$.
\medskip

Par ailleurs, puisque $G$ est quasi-déployé et adjoint, d'après la proposition \ref{EgaliteTypeAdjoint}, on a $\Xi^\mathrm{n.r.}=\Xi_{\Knr}^\mathrm{ext}$ et $\Xi=(\Xi^\mathrm{n.r.})^{\Galnr}=\Xi^\mathrm{ext}$. On utilise donc de manière interchangeable les notations $\Xi$ et $\Xi^\mathrm{ext}$ (resp. $\Xi^\mathrm{n.r.}$ et $\Xi_{\Knr}^\mathrm{ext}$) dans la suite.
\medskip

Récoltons quelques données au sujet des cas restants. La liste en \cite[4.2.23.]{BT2} permet alors de déterminer les échelonnages sur $\Kt$, et même plus précisément l'action de Galois sur l'échelonnage sur $\Ktnr$, et la liste \cite[Remark 1.3.76]{kaletha_prasad_2023} donne les diagrammes de Dynkin affines associés et les groupes $\Xi$ et $\Xi^\mathrm{n.r.}$. On en déduit alors la table \ref{table:diagramqsplit}. 

Notons que l'on a rajouté des numérotations sur certains diagrammes pour faciliter les raisonnements dans la suite.
\medskip

\tikzset{/Dynkin diagram/fold style/.style={white}}

\begin{sidewaystable}
\centering
\caption{Échelonnages, actions de $\Xi^\mathrm{n.r.}$, actions de Galois, et actions de $\Xi$ pour des groupes non ramifiés.}
\label{table:diagramqsplit}

\begin{tabular}{ c c c c c c }

Type \medskip & $\Xi^\mathrm{n.r.}$ &  Générateurs de \color{blue}{$\Xi^\mathrm{n.r.}$}  & Générateur de \color{red}{$\Galnr$} & $\Xi$ & Générateur de \color{green}{$\Xi$} \\ 

\begin{tabular}{ c }
    ${}^2A_{2n}$\\
    ($n\geq 1$)
\end{tabular} \medskip & $\ZZ/2n\ZZ$ &
\begin{dynkinDiagram}[affine mark=*,fold, fold radius=0.7cm,edge length=0.7cm]A[1]{**.****.**}
\draw[blue,-latex] (root 0) to [in=-165,out=-40,relative] (root 1);
\draw[blue,-latex] (root 1) to [in=-145,out=-60,relative] (root 2);
\draw[blue,dotted,-latex] (root 2) to [in=-145,out=-60,relative] (root 3);
\draw[blue,-latex] (root 3) to [in=-165,out=-40,relative] (root 4);
\draw[blue,-latex] (root 4) to [in=-165,out=-40,relative] (root 5);
\draw[blue,-latex] (root 5) to [in=-145,out=-60,relative] (root 6);
\draw[blue,dotted,-latex] (root 6) to [in=-145,out=-60,relative] (root 7);
\draw[blue,-latex] (root 7) to [in=-165,out=-40,relative] (root 8);
\draw[blue,-latex] (root 8) to [in=-165,out=-40,relative] (root 0);
\end{dynkinDiagram}
&
\begin{dynkinDiagram}[affine mark=*,fold, fold radius=0.7cm,edge length=0.7cm]A[1]{**.****.**}
\draw[red,latex-latex] (root 8) to (root 1);
\draw[red,latex-latex] (root 7) to (root 2);
\draw[red,latex-latex] (root 6) to (root 3);
\draw[red,latex-latex] (root 5) to (root 4);
\end{dynkinDiagram}
& 0 & $\emptyset$
\\

\begin{tabular}{ c }
    ${}^2A_{2n-1}$\\
    ($n\geq 1$)
\end{tabular} \medskip & $\ZZ/(2n+1)\ZZ$ &
\begin{dynkinDiagram}[affine mark=*,fold, fold radius=0.7cm,edge length=0.7cm]A[1]{**.*****.**}
\draw[blue,-latex] (root 0) to [in=-165,out=-40,relative] (root 1);
\draw[blue,-latex] (root 1) to [in=-145,out=-60,relative] (root 2);
\draw[blue,dotted,-latex] (root 2) to [in=-145,out=-60,relative] (root 3);
\draw[blue,-latex] (root 3) to [in=-165,out=-40,relative] (root 4);
\draw[blue,-latex] (root 4) to [in=-165,out=-40,relative] (root 5);
\draw[blue,-latex] (root 5) to [in=-145,out=-60,relative] (root 6);
\draw[blue,-latex] (root 6) to [in=-145,out=-60,relative] (root 7);
\draw[blue,dotted,-latex] (root 7) to [in=-165,out=-40,relative] (root 8);
\draw[blue,-latex] (root 8) to [in=-165,out=-40,relative] (root 9);
\draw[blue,-latex] (root 9) to [in=-165,out=-40,relative] (root 0);
\end{dynkinDiagram}
&
\begin{dynkinDiagram}[affine mark=*,fold, fold radius=0.7cm,edge length=0.7cm]A[1]{**.*****.**}
\draw[red,latex-latex] (root 9) to (root 1);
\draw[red,latex-latex] (root 8) to (root 2);
\draw[red,latex-latex] (root 7) to (root 3);
\draw[red,latex-latex] (root 6) to (root 4);
\end{dynkinDiagram}
& $\ZZ/2\ZZ$ & 
\begin{dynkinDiagram}[affine mark=*,fold, fold radius=0.7cm,edge length=0.7cm]A[1]{**.*****.**}
\draw[green,latex-latex] (root 0) to (root 5);
\draw[green,latex-latex] (root 1) to (root 6);
\draw[green,latex-latex] (root 2) to (root 7);
\draw[green,latex-latex] (root 3) to (root 8);
\draw[green,latex-latex] (root 4) to (root 9);
\end{dynkinDiagram}
\\

\begin{tabular}{ c }
    ${}^2D_{2n}$\\
    ($n\geq 2$)
\end{tabular} \medskip & $(\ZZ/2\ZZ)^2$ &
\begin{tabular}{ c }
    $\tau :$\begin{dynkinDiagram}[affine mark=*,labels={1,2,,,,,,3,4},label directions={,,,,,,},edge length=0.7cm]D[1]{***.*.****}
    \draw[blue,latex-latex] (root 0) to (root 7);
    \draw[blue,latex-latex] (root 1) to (root 8);
    \end{dynkinDiagram}\\
    $\tau' :$\begin{dynkinDiagram}[affine mark=*,labels={1,2,,,,,,3,4},edge length=0.7cm]D[1]{***.*.****}
    \draw[blue,latex-latex] (root 0) to (root 1);
    \draw[blue,latex-latex] (root 7) to (root 8);
    \end{dynkinDiagram}
\end{tabular}
&

$\sigma :$\begin{dynkinDiagram}[affine mark=*,labels={1,2,,,,,,3,4},edge length=0.7cm]D[1]{***.*.****}
\draw[red,latex-latex] (root 7) to (root 8);
\end{dynkinDiagram}
& $\ZZ/2\ZZ$ & 
\begin{dynkinDiagram}[affine mark=*,labels={1,2,,,,,,3,4},edge length=0.7cm]D[1]{***.*.****}
\draw[green,latex-latex] (root 0) to (root 1);
\draw[green,latex-latex] (root 7) to (root 8);
\end{dynkinDiagram}
\\

\begin{tabular}{ c }
    ${}^2D_{2n-1}$\\
    ($n\geq 3$)
\end{tabular} \medskip & $\ZZ/4\ZZ$ &
$\varphi :$\begin{dynkinDiagram}[affine mark=*,labels={1,2,,,,,3,4},edge length=0.7cm]D[1]{}
\draw[blue,-latex] (root 0) to (root 6);
\draw[blue,-latex] (root 6) to (root 1);
\draw[blue,-latex] (root 1) to (root 7);
\draw[blue,-latex] (root 7) to (root 0);
\end{dynkinDiagram}
&
$\sigma :$\begin{dynkinDiagram}[affine mark=*,labels={1,2,,,,,3,4},edge length=0.7cm]D[1]{}
\draw[red,latex-latex] (root 6) to (root 7);
\end{dynkinDiagram}
& $\ZZ/2\ZZ$ &
$$\begin{dynkinDiagram}[affine mark=*,labels={1,2,,,,,,3,4},edge length=0.7cm]D[1]{***.*.****}
\draw[green,latex-latex] (root 0) to (root 1);
\draw[green,latex-latex] (root 7) to (root 8);
\end{dynkinDiagram}$$
\\

${}^3D_4$ et ${}^6D_4$ \medskip & $(\ZZ/2\ZZ)^2$ &
\begin{dynkinDiagram}[affine mark=*,labels={1,2,0,3,4},edge length=0.7cm]D[1]{4}
\draw[blue,latex-latex] (root 0) to (root 3);
\draw[blue,latex-latex] (root 1) to (root 4);
\end{dynkinDiagram}
\begin{dynkinDiagram}[affine mark=*,labels={1,2,0,3,4},edge length=0.7cm]D[1]{4}
\draw[blue,latex-latex] (root 0) to (root 1);
\draw[blue,latex-latex] (root 3) to (root 4);
\end{dynkinDiagram}
&
\begin{dynkinDiagram}[affine mark=*,labels={1,0,2,3,4},ply=3,edge length=0.7cm]D[1]{4}
\draw[red,-latex] (root 2) to [in=-145,out=-35,relative] (root 3);
\draw[red,-latex] (root 3) to [in=-145,out=-35,relative] (root 4);
\draw[red,-latex] (root 4) to [in=-145,out=-35,relative] (root 2);
\end{dynkinDiagram}
\text{ (et }
\begin{dynkinDiagram}[affine mark=*,labels={1,0,2,3,4},ply=3,edge length=0.7cm]D[1]{4}
\draw[red,latex-latex] (root 4) to [in=-145,out=-35,relative] (root 2);
\end{dynkinDiagram}
\text{ si ${}^6D_4$)}
& $0$ & $\emptyset$
\\  

${}^2 E_6$ \medskip & $\ZZ/3\ZZ$ &
\begin{dynkinDiagram}[affine mark=*,edge length=0.525cm]E[1]{6}
\draw[blue,-latex] (root 1) to [in=-110,out=-70,relative] (root 6);
\draw[blue,-latex] (root 6) to [in=-145,out=-35,relative] (root 0);
\draw[blue,-latex] (root 0) to [in=-145,out=-35,relative] (root 1);
\draw[blue,-latex] (root 3) to [in=-120,out=-70,relative] (root 5);
\draw[blue,-latex] (root 5) to [in=-145,out=-45,relative] (root 2);
\draw[blue,-latex] (root 2) to [in=-145,out=-45,relative] (root 3);
\end{dynkinDiagram}
&
\begin{dynkinDiagram}[affine mark=*,edge length=0.525cm]E[1]{6}
\draw[red,latex-latex] (root 1) to [in=-110,out=-70,relative] (root 6);
\draw[red,latex-latex] (root 3) to [in=-120,out=-70,relative] (root 5);
\end{dynkinDiagram}
& $0$ & $\emptyset$
\end{tabular}
\end{sidewaystable}

\pagebreak

Commençons par le type ${}^2A_n$ (pour $n\geq 1$) :

\begin{prop}
    Considérons le diagramme de Dynkin affine de type $A_n$ (pour $n\geq 1$) muni de l'action de Galois $\Galnr$ donnée par la symétrie axiale de la table \ref{table:diagramqsplit}. Son $\Xi^{\mathrm{n.r.}}$ vaut alors $\ZZ/(n+1)\ZZ$ et est donné par la rotation décrite dans la table \ref{table:diagramqsplit}. Considérons un type $\typebt$ de ce diagramme et notons $m$ le cardinal de son orbite par $\Xi^{\mathrm{n.r.}}$ (on a donc $m\mid n+1$). On a :
    \begin{itemize}
        \item Si $m$ est impair, alors $(\mathrm{Orb}(\typebt)_{\Xi^\mathrm{n.r.}})^{\Galnr}$ est trivial.
        \item Si $m$ est pair, alors $n+1$ aussi, l'ensemble $(\mathrm{Orb}(\typebt)_{\Xi^\mathrm{n.r.}})^{\Galnr}$ contient $2$ éléments, et on a de plus les cas suivants :
        \begin{itemize}
            \item Si $\frac{n+1}{m}$ est impair, alors $(\mathrm{Orb}(\typebt)_{\Xi^\mathrm{n.r.}})^{\Galnr}/\Xi$ est trivial.
            \item Sinon, si $\frac{n+1}{m}$ est pair, alors $(\mathrm{Orb}(\typebt)_{\Xi^\mathrm{n.r.}})^{\Galnr}/\Xi$ contient $2$ éléments.
        \end{itemize}
    \end{itemize}
    En particulier, $(\mathrm{Orb}(\typebt)_{\Xi^\mathrm{n.r.}})^{\Galnr}/\Xi$ est trivial si $n\not \equiv 3 \pmod 4$.
\end{prop}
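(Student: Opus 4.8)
The plan is to translate everything into elementary arithmetic in $\ZZ/N\ZZ$, where $N := n+1$ is the number of vertices of the affine diagram of type $A_n$. Following Table \ref{table:diagramqsplit}, I identify the vertex set with $\ZZ/N\ZZ$ so that $\Xi^{\mathrm{n.r.}} \cong \ZZ/N\ZZ$ is generated by the rotation $\rho \colon v \mapsto v+1$ and the image of a generator of $\Galnr$ in $\Aut(\widetilde{\DynD})$ is the reflection $\sigma \colon v \mapsto -v$. The two basic identities I will use are $\sigma\rho\sigma^{-1} = \rho^{-1}$ (so $\Galnr$ acts on $\Xi^{\mathrm{n.r.}}$ by inversion, and hence $\Xi = (\Xi^{\mathrm{n.r.}})^{\Galnr}$ is trivial when $N$ is odd and equals $\langle \rho^{N/2}\rangle \cong \ZZ/2\ZZ$ when $N$ is even) and the compatibility $\sigma\cdot(\omega\cdot\typebt) = (\sigma\omega\sigma^{-1})\cdot(\sigma\cdot\typebt)$ for $\omega \in \Xi^{\mathrm{n.r.}}$. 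As in the situation of Theorem \ref{NoyauH1} to which the proposition is applied, I take $\typebt$ to be the type of a $\Galnr$-facet, so that $\sigma\cdot\typebt = \typebt$.

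Next I would describe the orbit as a $\Galnr$-set. The stabiliser of $\typebt$ in $\Xi^{\mathrm{n.r.}} \cong \ZZ/N\ZZ$ is the unique subgroup of order $N/m$, namely $\langle\rho^m\rangle$; hence the map $\rho^j\cdot\typebt \mapsto j \bmod m$ is a $\Xi^{\mathrm{n.r.}}$-equivariant bijection $\mathrm{Orb}(\typebt)_{\Xi^{\mathrm{n.r.}}} \overset{\sim}{\rightarrow} \ZZ/m\ZZ$ under which $\rho^a$ acts by $x \mapsto x+a$. Combining $\sigma\cdot\typebt = \typebt$ with the compatibility identity gives $\sigma\cdot(\rho^j\cdot\typebt) = \rho^{-j}\cdot\typebt$, so $\sigma$ acts on $\ZZ/m\ZZ$ by $x \mapsto -x$. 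Consequently $(\mathrm{Orb}(\typebt)_{\Xi^{\mathrm{n.r.}}})^{\Galnr}$ corresponds to $\{x \in \ZZ/m\ZZ \mid 2x = 0\}$: this is $\{0\}$ (a single element, namely $\typebt$ itself) when $m$ is odd, and $\{0, m/2\}$ (two elements) when $m$ is even; moreover $m \mid N$ forces $N$ even as soon as $m$ is even. This already gives the first bullet and the opening assertions of the second.

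It remains to divide by $\Xi$ in the case $m$ even, hence $N$ even and $\Xi = \langle\rho^{N/2}\rangle$. This group acts on $\{0, m/2\} \subset \ZZ/m\ZZ$ by translation by $N/2 \bmod m$, and writing $N/2 = (m/2)(N/m)$ one reads off that $N/2 \equiv 0 \pmod m$ if $N/m$ is even and $N/2 \equiv m/2 \pmod m$ if $N/m$ is odd. Thus $\Xi$ acts trivially (quotient of size $2$) in the first case and transitively (quotient of size $1$) in the second, which is exactly the dichotomy of the two sub-cases. Finally, $(\mathrm{Orb}(\typebt)_{\Xi^{\mathrm{n.r.}}})^{\Galnr}/\Xi$ is non-trivial only when $m$ and $N/m$ are both even, which forces $4 \mid N = n+1$, i.e. $n \equiv 3 \pmod 4$; hence the quotient is trivial whenever $n \not\equiv 3 \pmod 4$.

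There is no serious obstacle: the argument is elementary arithmetic in cyclic groups once the actions are pinned down. The only points that require care are (i) reading off from Table \ref{table:diagramqsplit} that the $\Galnr$-action and the $\Xi^{\mathrm{n.r.}}$-action are, respectively, negation and translation on $\ZZ/N\ZZ$, together with the conjugation relation $\sigma\rho\sigma^{-1} = \rho^{-1}$; (ii) using that $\typebt$ is $\Galnr$-fixed to get a genuine base point in the orbit (without this the count for even $m$ can fail, since the fixed-point set can then be empty); and (iii) the reduction of $N/2$ modulo $m$, whose outcome is governed precisely by the parity of $N/m$.
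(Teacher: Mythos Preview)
Your proof is correct and follows essentially the same approach as the paper: both identify the automorphism group as dihedral with $\sigma\rho\sigma^{-1}=\rho^{-1}$, deduce from $\Galnr$-invariance of $\typebt$ that the fixed points in the orbit satisfy $2k\equiv 0\pmod m$, and then compare $N/2$ with $m/2$ modulo $m$ to determine the $\Xi$-action. Your presentation is slightly more streamlined in that you identify the orbit with $\ZZ/m\ZZ$ from the outset, but the computation is identical.
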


\begin{proof}
    Pour le type $A_n$, le groupe des automorphismes est donné par le groupe diédral. Il est donc donné par la présentation $\langle r,\sigma \mid r^{n+1} = 1, \sigma^2 =1, \sigma r \sigma =r^{-1} \rangle$. Le groupe $\Xi^{\mathrm{n.r.}}$ associé est alors le sous-groupe engendré par $r$ (qui est donc $\ZZ/ (n+1)\ZZ$), et $\Galnr$ agit au travers du sous-groupe engendré par $\sigma$.

    Considérons donc un type $\Galnr$-invariant $\typebt$, qui est donc donné par un sous-ensemble de sommets. Essayons de voir si l'orbite de $\typebt$ par $\Xi^{\mathrm{n.r.}}$ admet un autre type $\Galnr$-invariant. Soit $\typebt'$ un éventuel type de la sorte. Considérons $m\in \NN^*$, le plus petit entier strictement positif tel que $r^m \cdot \typebt = \typebt$ (c'est aussi le cardinal de l'orbite par $\Xi^{\mathrm{n.r.}}$).
    
    Considérons donc $k\in \{0,...,m-1\}$ tel que $r^k\cdot \typebt = \typebt'$. Comme $\typebt'$ est $\Galnr$-invariant, on a $\sigma \cdot \typebt' = \typebt'$. On a donc :
    $$r^k \cdot \typebt = \typebt' = \sigma \cdot \typebt' = \sigma \cdot (r^k \cdot \typebt) = \sigma r^k \cdot (\sigma\cdot \typebt) = \sigma r^k \sigma \cdot \typebt = r^{-k} \cdot \typebt$$

    On conclut donc que $r^{2k}\cdot \typebt = \typebt$. Comme $2k \in \{0,...,2m-2\}$, ou bien $2k=0$, ou bien $2k=m$ par minimalité de $m$ : c'est à dire $k=0$ ou $k=\frac{m}{2}$. Si $m$ est impair, la seconde possibilité est à proscrire et seul donc $k=0$ est valide. Sinon, les deux possibilités sont valides. Il y a donc 1 élément $\Galnr$-invariants dans l'orbite de $\typebt$ par $\Xi^{\mathrm{n.r.}}$ si $m$ est impair, et $2$ sinon.
    \medskip

    Comme $m\mid n+1$, si $n$ est pair, alors $m$ est toujours impair. Par conséquent, il y a donc 1 élément $\Galnr$-invariants dans l'orbite de $\typebt$ par $\Xi^{\mathrm{n.r.}}$. Étudions maintenant le cas où $n=2n'+1$ est impair et où $m=2m'$ est pair. On a donc $m'\mid n'+1$.

    Considérons maintenant $\Xi$, qui, selon la table \ref{table:diagramqsplit}, n'est autre que le groupe $\langle r^{n'+1} \rangle$.

    \noindent Quand est-ce que $\typebt$ et $\typebt'=r^{m'}\cdot \typebt$ sont conjugués par cette rotation ? On a :
    $$r^{n'+1} \cdot \typebt= r^{m'}\cdot \typebt \iff r^{m'\frac{n'+1}{m'}} \cdot \typebt = r^{m'}\cdot \typebt$$

    Si $\frac{n'+1}{m'}=\frac{n+1}{m}$ est impair, alors $r^{m'\frac{n'+1}{m'}}=r^{m'}$ puisque $r^{2m'}\cdot \typebt = r^m \cdot \typebt = \typebt$, et dans ce cas l'égalité est satisfaite. On en déduit alors que $\typebt$ et $\typebt'$ sont conjugués par cette rotation.

    Dans le cas contraire, si $\frac{n+1}{m}$ est pair, on a $r^{m'\frac{n'+1}{m'}}\cdot \typebt=\typebt$. Il faut donc satisfaire $\typebt=r^{m'}\cdot \typebt$. Cela est impossible par minimalité de $m$. Les deux types $\typebt$ et $\typebt'$ ne sont alors pas conjugués par cette rotation.

    Notons que le second cas ne se produit jamais si $n'$ est pair (ou encore si $n$ est congru à $1$ modulo $4$). On a donc considéré tous les cas.
\end{proof}

\begin{rmq}\label{ContreEXInjectivite}
    \cite[5.2.13]{BT2} donne un exemple de groupe quasi-déployé adjoint $G$ de type ${}^2A_3$ et déployé par une extension non ramifiée avec une $\Gamma$-facette $\widetilde{\Fc}$ tel que \linebreak $(\mathrm{Orb}(\widetilde{\Fc})_{G(\Lt)})^{\Gamma}/G(\Kt)$ est non trivial, et donc, d'après le point (1) (a) du théorème \ref{NoyauH1}, tel que \linebreak $\Ker\left(H^1(\Gamma,G(\Lt)_{\widetilde{\Fc}}) \rightarrow H^1(\Gamma,G(\Lt) \right)$ est non trivial.

    Le calcul précédent généralise en fait ce résultat. Dans l'exemple \cite[5.2.13]{BT2}, on choisit en fait une $\Galnr$-arête tel que son orbite par $\Xi^\mathrm{n.r.}$ soit de cardinal $m=2$. Comme $\frac{n+1}{m}=\frac{4}{2}=2$, la proposition précédente permet de conclure que $(\mathrm{Orb}(\typebt)_{\Xi^\mathrm{n.r.}})^{\Galnr}/\Xi$ admet deux éléments.
\end{rmq}

Occupons-nous maintenant du cas du type ${}^2D_n$ (pour $n\geq 4$).
\medskip 

Pour cela, on a besoin des morphismes introduits dans la table \ref{table:diagramqsplit}, c'est à dire $\tau,\tau',\sigma$ et $\varphi$. Ils sont par ailleurs définis peu importe si $n$ est pair ou impair. Plus précisément, $\tau$ est la symétrie par rapport à l'axe vertical central, $\tau'$ est la symétrie par rapport à l'axe horizontal (ou la rotation des deux branches extrémales), $\sigma$ est la rotation de la branche $3-4$, et $\varphi$ est en fait $\tau \circ \sigma$. Notons d'ailleurs que $\varphi^2 = \tau'$.

Introduisons également le symbole $\oplus$ pour désigner la "concaténation de types". Autrement dit, à deux types, cela associe le type donné par l'union des sommets composant chacun des types.
\medskip

On peut alors formuler le résultat :

\begin{prop}
    Considérons le digramme de Dynkin affine de type $D_{n}$ (pour $n\geq 4$) muni des actions indiquées par la table \ref{table:diagramqsplit} (pour $D_4$, on regarde le cas non trialitaire). Prenons $\typebt$, un type $\Galnr$-invariant de ce diagramme. Il s'écrit alors $\widetilde{S}\oplus \widetilde{R}$ où $\widetilde{S}$ est un type sans les quatre sommets numérotés de la table \ref{table:diagramqsplit} (donc $\Galnr$-invariant) et $\widetilde{R}$, un type $\Galnr$-invariant dont les sommets sont parmi les quatre sommets numérotés.
        Alors on a :
    \begin{enumerate}
        \item Si $\widetilde{R}$ admet zéro ou quatre sommets, alors :
        \begin{enumerate}
            \item Si $\tau(\widetilde{S})=\widetilde{S}$, alors les ensembles $(\mathrm{Orb}(\typebt)_{\Xi^\mathrm{n.r.}})^{\Galnr}$ et $(\mathrm{Orb}(\typebt)_{\Xi^\mathrm{n.r.}})^{\Galnr}/\Xi$ sont tous deux triviaux.
            \item Sinon, ils sont tous deux de cardinal $2$.
        \end{enumerate}
        \item Si $\widetilde{R}$ admet un nombre impair de sommets, alors l'ensemble $(\mathrm{Orb}(\typebt)_{\Xi^\mathrm{n.r.}})^{\Galnr}$ est de cardinal $2$ et $(\mathrm{Orb}(\typebt)_{\Xi^\mathrm{n.r.}})^{\Galnr}/\Xi$ est trivial.
        \item Si $\widetilde{R}$ admet deux sommets, alors les ensembles $(\mathrm{Orb}(\typebt)_{\Xi^\mathrm{n.r.}})^{\Galnr}$ et $(\mathrm{Orb}(\typebt)_{\Xi^\mathrm{n.r.}})^{\Galnr}/\Xi$ sont tous deux de cardinal $2$.
    \end{enumerate}
\end{prop}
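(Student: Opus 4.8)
L'idée est de tout ramener à un calcul combinatoire explicite sur le diagramme affine $\widetilde{\DynD}$ de type $D_n^{(1)}$, en utilisant les descriptions de $\Xi^{\mathrm{n.r.}}$ et de l'action de $\Galnr$ données dans la table \ref{table:diagramqsplit}. On commence par fixer les notations : on voit $\widetilde{\DynD}$ comme une chaîne centrale reliant deux « fourches », les quatre sommets de fourche étant les sommets numérotés $1,2,3,4$, et on note comment agissent les automorphismes concernés. Le générateur $\sigma$ de $\Galnr$ échange $3$ et $4$ et fixe tout sommet de la chaîne ; $\tau'$ échange $1\leftrightarrow 2$ et $3\leftrightarrow 4$ en fixant la chaîne ; $\tau$ (cas $D_{2n}$) et $\varphi$ (cas $D_{2n-1}$) renversent la chaîne et échangent les deux fourches, avec $\varphi^{2}=\tau'$ (le cas $D_4$ relève du motif $D_{2n}$). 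Dans tous les cas on lit sur la table que $\Xi=(\Xi^{\mathrm{n.r.}})^{\Galnr}=\langle\tau'\rangle\cong\ZZ/2\ZZ$, et que la conjugaison par $\sigma$ fixe $e$ et $\tau'$ et échange $\tau\leftrightarrow\tau\tau'$ (resp. $\varphi\leftrightarrow\varphi^{3}$) — ce que l'on vérifie soit directement, soit en observant que c'est le seul automorphisme non trivial de $\Xi^{\mathrm{n.r.}}$ compatible avec la valeur de $\Xi$ annoncée.

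Le point clé est la réduction suivante : pour $\omega\in\Xi^{\mathrm{n.r.}}$, le type $\omega\cdot\typebt$ appartient à $(\mathrm{Orb}(\typebt)_{\Xi^{\mathrm{n.r.}}})^{\Galnr}$ si et seulement s'il est fixé par $\sigma$, c'est-à-dire $\sigma\omega\cdot\typebt=\omega\cdot\typebt$ ; comme $\sigma\cdot\typebt=\typebt$ ($\typebt$ étant $\Galnr$-invariant) et en posant ${}^\sigma\omega:=\sigma\omega\sigma^{-1}\in\Xi^{\mathrm{n.r.}}$, cela se récrit ${}^\sigma\omega\cdot\typebt=\omega\cdot\typebt$. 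En parcourant les quatre éléments de $\Xi^{\mathrm{n.r.}}$ à l'aide de la table de conjugaison ci-dessus, on obtient : pour $\omega\in\Xi$ la condition est automatique (donc $\typebt$ et $\tau'\cdot\typebt$ sont toujours dans $(\mathrm{Orb}(\typebt)_{\Xi^{\mathrm{n.r.}}})^{\Galnr}$), tandis que pour les deux autres éléments elle équivaut à $\tau'\cdot\typebt=\typebt$. Ainsi $(\mathrm{Orb}(\typebt)_{\Xi^{\mathrm{n.r.}}})^{\Galnr}=\{\typebt,\tau'\cdot\typebt\}$ lorsque $\tau'\cdot\typebt\neq\typebt$, et $(\mathrm{Orb}(\typebt)_{\Xi^{\mathrm{n.r.}}})^{\Galnr}=\{\typebt,\tau\cdot\typebt\}$ (resp. $\{\typebt,\varphi\cdot\typebt\}$) lorsque $\tau'\cdot\typebt=\typebt$, puisque dans ce dernier cas $\tau\tau'\cdot\typebt=\tau\cdot\typebt$ (resp. $\varphi^{3}\cdot\typebt=\varphi\cdot\typebt$), de sorte qu'aucun autre élément n'apparaît.

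Il reste à traduire ces deux alternatives en la trichotomie de l'énoncé et à passer au quotient par $\Xi$. Comme $\tau'$ fixe $\widetilde{S}$ et agit sur les sommets de fourche par la double transposition, $\tau'\cdot\typebt=\typebt$ équivaut à $\tau'\cdot\widetilde{R}=\widetilde{R}$ ; parmi les huit possibilités pour $\widetilde{R}$ (l'hypothèse de $\Galnr$-invariance de $\typebt$ imposant à $\widetilde{R}$ d'être $\sigma$-stable, il ne reste que $\emptyset,\{1\},\{2\},\{1,2\},\{3,4\},\{1,3,4\},\{2,3,4\},\{1,2,3,4\}$), cela se produit exactement quand $|\widetilde{R}|$ est pair, d'où le découpage en les cas (1), (2), (3). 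Dans le cas (2) ($|\widetilde{R}|$ impair) on a $\tau'\cdot\typebt\neq\typebt$, donc $(\mathrm{Orb}(\typebt)_{\Xi^{\mathrm{n.r.}}})^{\Galnr}=\{\typebt,\tau'\cdot\typebt\}$ est de cardinal $2$ (les deux types diffèrent par leur composante $\widetilde{R}$), et comme $\tau'\in\Xi$ le quotient est trivial. Dans les cas (1) et (3), $\tau'\cdot\typebt=\typebt$, donc l'ensemble des points fixes est $\{\typebt,\tau\cdot\typebt\}$ (resp. $\{\typebt,\varphi\cdot\typebt\}$) et on vérifie que $\Xi=\langle\tau'\rangle$ agit trivialement dessus (via $\tau\tau'\cdot\typebt=\tau\cdot\typebt$) ; son image dans le quotient a donc le même cardinal, égal à $2$ précisément lorsque $\tau\cdot\typebt\neq\typebt$ (resp. $\varphi\cdot\typebt\neq\typebt$), i.e.\ lorsque $\tau$ bouge $\typebt$. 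Si $|\widetilde{R}|=2$, alors $\widetilde{R}\in\{\{1,2\},\{3,4\}\}$ est bougé par $\tau$ (qui échange les fourches), donc $\tau\cdot\typebt\neq\typebt$ inconditionnellement — cas (3). Si $|\widetilde{R}|\in\{0,4\}$, alors $\widetilde{R}$ est $\tau$-stable, donc $\tau\cdot\typebt=\typebt$ si et seulement si $\tau\cdot\widetilde{S}=\widetilde{S}$, c'est-à-dire si et seulement si $\widetilde{S}$ est symétrique par renversement de la chaîne centrale, ce qui est la condition $\tau(\widetilde{S})=\widetilde{S}$ — d'où les sous-cas (1)(a) et (1)(b).

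Le seul point réellement délicat — et celui qu'il faut traiter avec soin — est le cas $D_{2n-1}$, où $\tau\notin\Xi^{\mathrm{n.r.}}$ : $\tau$ n'y est qu'un automorphisme abstrait de $\widetilde{\DynD}$ (renversement de la chaîne composé avec l'échange des fourches), et il faut mener l'argument avec $\varphi$ et $\varphi^{3}$ à la place de $\tau$ et $\tau\tau'$ ; comme $\varphi$ renverse lui aussi la chaîne et échange les fourches, toutes les identités utilisées ci-dessus ($\varphi^{2}=\tau'$, puis $\varphi^{3}\cdot\typebt=\varphi\cdot\typebt$ lorsque $\tau'\cdot\typebt=\typebt$, et $\varphi\cdot\typebt=\typebt\iff\tau(\widetilde{S})=\widetilde{S}$ lorsque $\widetilde{R}$ est $\varphi$-stable) restent valables mot pour mot, et l'énoncé peut donc s'exprimer uniformément en termes de $\tau$. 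Les vérifications restantes — la distinction effective des deux types exhibés (par comparaison de leurs composantes $\widetilde{S}$ et $\widetilde{R}$) et le cas particulier $D_4$ (où la chaîne centrale est réduite à un sommet, de sorte que $\tau(\widetilde{S})=\widetilde{S}$ est automatique et que l'on est toujours dans (1)(a)) — sont de routine.
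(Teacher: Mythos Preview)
Your proof is correct and follows essentially the same approach as the paper: a direct combinatorial analysis on the affine $D_n$ diagram, after observing that the even and odd cases can be treated uniformly (the paper notes that the $\langle\varphi\rangle$-orbit coincides with the $\langle\tau,\tau'\rangle$-orbit; you make the equivalent observation via $\varphi^2=\tau'$ and the fact that $\varphi$ acts on $\widetilde{S}$ and on the even-size $\widetilde{R}$'s exactly as $\tau$ does). The only real difference is organizational: the paper simply enumerates the eight possible $\widetilde{R}$'s and records the answer in each case, whereas you first extract the criterion ``$\omega\cdot\typebt$ is $\Galnr$-fixed $\iff \omega^{-1}\,{}^\sigma\omega$ fixes $\typebt$'', reduce everything to whether $\tau'\cdot\typebt=\typebt$ (i.e.\ to the parity of $|\widetilde{R}|$), and only then split into the subcases --- this is slightly more conceptual and explains \emph{why} the dichotomy on $|\widetilde{R}|$ arises, but the actual computations are identical.
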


\begin{proof}
    Bien entendu, puisque $\typebt$ et $\widetilde{S}$ sont $\Galnr$-invariants, il en est de même pour $\widetilde{R}$. On observe alors que les seules possibilités pour $\widetilde{R}$ sont :
    $$\emptyset,(1),(2),(1,2),(3,4),(1,3,4),(2,3,4),(1,2,3,4)$$
    Il faut donc traiter chacun de ces cas.
    
    Observons par ailleurs que l'orbite de $\typebt$ sous $\langle \varphi \rangle$ est la même que sous $\langle \tau,\tau' \rangle$. En effet, cela est une conséquence du fait que $\varphi=\tau \circ \sigma$ et $\varphi^2 = \tau'$. En conséquence, les calculs sont les mêmes peu importe si $n$ est pair ou impair.
    
    De rapides calculs grâce à la table \ref{table:diagramqsplit} permettent alors d'obtenir :
    \begin{enumerate}
        \item $\widetilde{R}=\emptyset$. On trouve que $(\mathrm{Orb}(\typebt)_{\Xi^\mathrm{n.r.}})^{\Galnr}=\{\widetilde{S},\tau(\widetilde{S})\}$. Il est donc trivial si et seulement si $\widetilde{S}=\tau(\widetilde{S})$. On observe ensuite qu'il en est de même pour $(\mathrm{Orb}(\typebt)_{\Xi^\mathrm{n.r.}})^{\Galnr}/\Xi$.
        \item $\widetilde{R}\in \{(1),(2)\}$. On trouve que $(\mathrm{Orb}(\typebt)_{\Xi^\mathrm{n.r.}})^{\Galnr}=\{\widetilde{S}\oplus(1),\widetilde{S}\oplus(2)\}$. On observe ensuite que $(\mathrm{Orb}(\typebt)_{\Xi^\mathrm{n.r.}})^{\Galnr}/\Xi$ est trivial.
        \item $\widetilde{R}=(1,2)$. On trouve $(\mathrm{Orb}(\typebt)_{\Xi^\mathrm{n.r.}})^{\Galnr}=\{\widetilde{S}\oplus(1,2),\tau(\widetilde{S})\oplus(3,4)\}$. On observe ensuite que $(\mathrm{Orb}(\typebt)_{\Xi^\mathrm{n.r.}})^{\Galnr}/\Xi$ admet deux éléments.
        \item $\widetilde{R}=(3,4)$. On trouve $(\mathrm{Orb}(\typebt)_{\Xi^\mathrm{n.r.}})^{\Galnr}=\{\widetilde{S}\oplus(3,4),\tau(\widetilde{S})\oplus(1,2)\}$. On observe ensuite que $(\mathrm{Orb}(\typebt)_{\Xi^\mathrm{n.r.}})^{\Galnr}/\Xi$ admet deux éléments.
        \item $\widetilde{R}\in \{(1,3,4),(2,3,4)\}$. On trouve $(\mathrm{Orb}(\typebt)_{\Xi^\mathrm{n.r.}})^{\Galnr}=\{\widetilde{S}\oplus(1,3,4),\widetilde{S}\oplus(2,3,4)\}$. On observe ensuite que $(\mathrm{Orb}(\typebt)_{\Xi^\mathrm{n.r.}})^{\Galnr}/\Xi$ est trivial.
        \item $\widetilde{R}=(1,2,3,4)$. On trouve $(\mathrm{Orb}(\typebt)_{\Xi^\mathrm{n.r.}})^{\Galnr}=\{\widetilde{S}\oplus(1,2,3,4),\tau(\widetilde{S})\oplus(1,2,3,4)\}$. Il est donc trivial si et seulement si $\widetilde{S}=\tau(\widetilde{S})$. On observe ensuite qu'il en est de même pour $(\mathrm{Orb}(\typebt)_{\Xi^\mathrm{n.r.}})^{\Galnr}/\Xi$. 
    \end{enumerate}
    Ceci conclut donc.
\end{proof}

\begin{prop}
    Considérons le diagramme de Dynkin affine de type $D_4$ muni de l'action de Galois $\Galnr$ donnée, ou bien par la rotation de $3$ points, ou bien toutes les permutations possibles de ces $3$ points (autrement dit le cas trialitaire de la table \ref{table:diagramqsplit}). Son groupe $\Xi^{\mathrm{n.r.}}$ vaut alors $(\ZZ/2\ZZ)^2$. On a alors que $(\mathrm{Orb}(\typebt)_{\Xi^\mathrm{n.r.}})^{\Galnr}$ est trivial pour tout type de ce diagramme.
\end{prop}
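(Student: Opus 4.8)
Le plan est de ramener l'énoncé à un court calcul combinatoire sur l'ensemble des sommets du diagramme de Dynkin affine de type $D_4$. Rappelons que ce diagramme est une « étoile » : il possède un unique sommet central $c$ — l'unique sommet de valence strictement supérieure à $1$, donc fixé par tout automorphisme du diagramme — relié à quatre sommets-feuilles $\ell_1,\ell_2,\ell_3,\ell_4$. D'après la table \ref{table:diagramqsplit}, le groupe $\Xi^{\mathrm{n.r.}}\cong(\ZZ/2\ZZ)^2$ s'identifie au sous-groupe de Klein $V$ de $\mathfrak{S}_{\{\ell_1,\ell_2,\ell_3,\ell_4\}}$ formé de l'identité et des trois doubles transpositions des feuilles (en particulier $V$ fixe $c$), tandis que $\Galnr$ fixe $c$, fixe une feuille — disons $\ell_4$ — et agit transitivement sur $\{\ell_1,\ell_2,\ell_3\}$ (par un $3$-cycle dans le cas ${}^3D_4$, par $\mathfrak{S}_3$ tout entier dans le cas ${}^6D_4$). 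Tout ce qui suit ne dépend que de ces données structurelles, que je me contenterais de lire dans la table.

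D'abord, j'observerais que, puisque $\Galnr$ et $V$ fixent tous deux $c$, l'appartenance ou non de $c$ à un type ne joue aucun rôle ; le problème se ramène donc immédiatement aux types qui sont des parties $S$ de l'ensemble des feuilles. Ensuite, quitte à remplacer $\typebt$ par un autre élément de son orbite sous $\Xi^{\mathrm{n.r.}}$ (l'orbite est inchangée), je peux supposer $S$ invariant sous $\Galnr$ — ce qui est de toute façon automatique dans l'application du théorème \ref{NoyauH1}, où $\typebt$ est le type d'une $\Galnr$-facette. Les parties $\Galnr$-invariantes de l'ensemble des feuilles sont les réunions de $\Galnr$-orbites, c'est-à-dire exactement $\emptyset$, $\{\ell_4\}$, $\{\ell_1,\ell_2,\ell_3\}$ et $\{\ell_1,\ell_2,\ell_3,\ell_4\}$ : il ne reste donc que quatre cas.

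Je parcourrais alors ces quatre cas en calculant l'orbite sous $V$ et en en extrayant les éléments $\Galnr$-invariants. Pour $\emptyset$ et $\{\ell_1,\ell_2,\ell_3,\ell_4\}$, l'orbite est réduite à un point, trivialement $\Galnr$-invariant. Le cas clé est $\{\ell_4\}$ : comme $V$ agit simplement transitivement sur les quatre feuilles, son orbite est l'ensemble des quatre singletons, parmi lesquels seul $\{\ell_4\}$ est $\Galnr$-invariant (un $3$-cycle, a fortiori $\mathfrak{S}_3$, ne fixe aucun des $\ell_1,\ell_2,\ell_3$). Dualement, $\{\ell_1,\ell_2,\ell_3\}$ étant le complémentaire de $\{\ell_4\}$ et chaque élément de $V$ étant une permutation des quatre feuilles, l'orbite de $\{\ell_1,\ell_2,\ell_3\}$ est formée des complémentaires des quatre singletons, dont seul $\{\ell_1,\ell_2,\ell_3\}$ est $\Galnr$-invariant. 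Dans tous les cas, $(\mathrm{Orb}(\typebt)_{\Xi^{\mathrm{n.r.}}})^{\Galnr}$ est un singleton, donc trivial ; et comme $\Xi=0$ ici d'après la table, l'énoncé analogue pour le quotient par $\Xi$ est identique. Je ne m'attends à aucun obstacle véritable : le seul point demandant de la vigilance est de transcrire correctement les actions de $\Galnr$ et de $\Xi^{\mathrm{n.r.}}$ depuis la table \ref{table:diagramqsplit}, et de constater que l'action du groupe de Klein sur les quatre feuilles est bien simplement transitive — c'est précisément ce qui force les orbites des « petits » types à être maximales tout en ne contenant qu'un unique élément invariant.
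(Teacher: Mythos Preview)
Ta démonstration est correcte et suit essentiellement la même approche que celle de l'article : une vérification directe, cas par cas, des types $\Galnr$-invariants. Ta réduction préliminaire aux parties des feuilles (en observant que le sommet central est fixé par $\Xi^{\mathrm{n.r.}}$ comme par $\Galnr$) et ton usage de la simple transitivité du groupe de Klein sur les feuilles rendent l'argument un peu plus concis que celui de l'article, qui énumère les sept types $\Galnr$-invariants et les élimine via des considérations de cardinalité et de présence du sommet central ; mais il s'agit du même calcul combinatoire sous deux présentations voisines.
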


\begin{proof}
    Réutilisons la numérotation de la table \ref{table:diagramqsplit}. On identifie un type avec le $n$-uplet de ses points. Observons alors que les seuls types $\Galnr$-invariants sont $(0)$, $(1)$, $(0,1)$, $(2,3,4)$, $(0,2,3,4)$, $(1,2,3,4)$ et $(0,1,2,3,4)$. Comme l'action par $\Xi^\mathrm{n.r.}$ préserve la taille des types, on peut déjà dire que $(\mathrm{Orb}(\typebt)_{\Xi^\mathrm{n.r.}})^{\Galnr}$ est trivial pour $\typebt$ dans $\{(0,1), (2,3,4), (0,1,2,3,4)\}$. Comme $(0)$ est fixe par $\Xi^\mathrm{n.r.}$, on peut également éliminer $(0)$ et $(1)$. De même, tout type dans l'orbite de $(0,2,3,4)$ par $\Xi^\mathrm{n.r.}$ doit contenir $0$, donc $(1,2,3,4)$ ne peut pas être dans l'orbite. On a donc traité tous les cas et $(\mathrm{Orb}(\typebt)_{\Xi^\mathrm{n.r.}})^{\Galnr}$ est trivial pour tout type $\Galnr$-invariant $\typebt$.
\end{proof}

\begin{prop}
    Considérons le diagramme de Dynkin affine de type $E_6$ muni de l'action de Galois $\Galnr$ donnée par la symétrie axiale de la table \ref{table:diagramqsplit}. Son $\Xi^{\mathrm{n.r.}}$ vaut $\ZZ/3\ZZ$ et est donné par la rotation décrite dans la table \ref{table:diagramqsplit}. On a alors que $(\mathrm{Orb}(\typebt)_{\Xi^\mathrm{n.r.}})^{\Galnr}$ est trivial pour tout type de ce diagramme.
\end{prop}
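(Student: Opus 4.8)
The plan is to argue exactly as in the \({}^2A_n\) case, the point being that here the relevant automorphism group is \(S_3\) and that \(3\) is odd. First I would record the combinatorics read off Table \ref{table:diagramqsplit}: the affine diagram \(\widetilde{\DynD}\) of type \(E_6\) is a star with one central node and three arms of length \(2\); every diagram automorphism permutes the three arms while preserving the distance to the centre, so \(\Aut(\widetilde{\DynD})\cong S_3\); moreover \(\Xi^{\mathrm{n.r.}}=\langle r\rangle\cong\ZZ/3\ZZ\) with \(r\) the cyclic permutation of the three arms, while \(\Galnr\) is generated by the reflection \(\sigma\) exchanging two arms and fixing the third. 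Since in \(S_3\) a transposition conjugates a \(3\)-cycle to its inverse, this gives the key relation \(\sigma\, r\,\sigma^{-1}=r^{-1}\) (equivalently \(\sigma r\sigma=r^{-1}\), as \(\sigma^2=\id\)); this is the analogue of the dihedral relation used in the \({}^2A_n\) argument.

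Next, let \(\typebt\) be a \(\Galnr\)-invariant type, i.e.\ a \(\sigma\)-stable set of nodes, and suppose that \(\typebt':=r^{k}\cdot\typebt\) (\(k\in\{0,1,2\}\)) is again \(\Galnr\)-invariant. Then, using \(\sigma\cdot\typebt=\typebt\) and the relation above,
\[
\typebt'=\sigma\cdot\typebt'=\sigma\,r^{k}\cdot\typebt=(\sigma r^{k}\sigma^{-1})\cdot(\sigma\cdot\typebt)=r^{-k}\cdot\typebt,
\]
so comparing with \(\typebt'=r^{k}\cdot\typebt\) yields \(r^{2k}\cdot\typebt=\typebt\). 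In \(\ZZ/3\ZZ\) one has \(r^{2k}=r^{-k}\), hence \(r^{-k}\cdot\typebt=\typebt\), that is \(\typebt=r^{k}\cdot\typebt=\typebt'\). Therefore \((\mathrm{Orb}(\typebt)_{\Xi^{\mathrm{n.r.}}})^{\Galnr}=\{\typebt\}\) is trivial; since \(\Xi=0\) here by Table \ref{table:diagramqsplit}, the quotient by \(\Xi\) changes nothing. (For a type \(\typebt\) not \(\Galnr\)-invariant one argues similarly: if \(\sigma\cdot\typebt\notin\langle r\rangle\cdot\typebt\) then the orbit contains no \(\Galnr\)-invariant element, and otherwise the same computation pins down a unique one; in all cases the set has at most one element.)

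There is essentially no obstacle in this proof. The only thing to get right is the identification of \(\sigma\) and \(r\) inside \(\Aut(\widetilde{\DynD})\cong S_3\), i.e.\ the relation \(\sigma r\sigma^{-1}=r^{-1}\), which is immediate from Table \ref{table:diagramqsplit}; once this is in hand, the fact that \(\Xi^{\mathrm{n.r.}}\) has odd order makes the conclusion automatic, in contrast with the \({}^2A_n\) and \({}^2D_n\) cases where even orbit sizes were exactly what produced the extra \(\Galnr\)-invariant types.
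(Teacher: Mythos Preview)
Your proof is correct and follows essentially the same approach as the paper: both use the dihedral relation $\sigma r\sigma^{-1}=r^{-1}$ (the paper writes it as $\sigma\circ r=r^2\circ\sigma$) to deduce that a $\Galnr$-invariant element $r^k\cdot\typebt$ in the orbit must satisfy $r^{2k}\cdot\typebt=\typebt$, and then exploit that $\Xi^{\mathrm{n.r.}}$ has odd order $3$ to conclude $r^k\cdot\typebt=\typebt$. The paper's version is slightly terser (it argues only for $k=1$, the case $k=2$ being symmetric), but the idea is identical.
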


\begin{proof}
    La preuve est essentiellement la même que pour le cas $A_n$. Soit $\typebt$, un type de ce diagramme. Prenons $r\in \Xi^{\mathrm{n.r.}}$ et $\sigma\in \Galnr$. Une fois encore, on a $\sigma \circ r = r^2 \circ \sigma$. Si $r\cdot \typebt$ est $\Galnr$-invariant, il est tel que : $$r \cdot \typebt = (\sigma \circ r) \cdot \typebt = (r^2 \circ \sigma) \cdot \typebt = r^2 \cdot (\sigma \cdot \typebt)=r^2 \cdot \typebt$$
    En conséquence, $\typebt=r\cdot \typebt=r^2 \cdot \typebt$ et donc $(\mathrm{Orb}(\typebt)_{\Xi^\mathrm{n.r.}})^{\Galnr}$ est trivial.
\end{proof}

\pagebreak

Résumons tout ceci grâce au tableau \ref{tab:ValeursQSplit} (en reprenant les notations des propositions précédentes) :

\begin{table}[h]
    \renewcommand{\arraystretch}{2}
    \centering
    \begin{tabular}{c c c}
        Type de $G$ \medskip & $\#(\mathrm{Orb}(\typebt)_{\Xi^\mathrm{n.r.}})^{\Galnr}$ & $\#(\mathrm{Orb}(\typebt)_{\Xi^\mathrm{n.r.}})^{\Galnr}/\Xi$ \\
        \hline
        \begin{tabular}{c c} 
            ${}^2 A_n$ déployé sur $\Ktnr$ \\
            (pour $n\geq 1$)
        \end{tabular}
        \medskip
        & 
        \begin{tabular}{c c} 
             $m$ impair : & 1 \\
             $m$ pair : & 2 
        \end{tabular}
         & 
        \begin{tabular}{c c}
             $m$ impair : & 1 \\
             $m$ pair et $\frac{n+1}{m}$ impair : & 1 \\
             $m$ pair et $\frac{n+1}{m}$ pair : & 2 \\
        \end{tabular} \\     
        \arrayrulecolor{lightgray}\hline
        \begin{tabular}{c c} 
            ${}^2 D_n$ déployé sur $\Ktnr$ \\
            (pour $n\geq 4$)
        \end{tabular}
        \medskip
        & 
        \begin{tabular}{c c}
            $\tau(\widetilde{S}) = \widetilde{S}$ et $\# \widetilde{R}\in \{0,4\}$ : & 1\\
            $\tau(\widetilde{S}) \not = \widetilde{S}$ et $\# \widetilde{R}\in \{0,4\}$ : & 2\\
            $\# \widetilde{R}=\{1,2,3\}$ : & 2 \\
        \end{tabular}
        & 
        \begin{tabular}{c c}
            $\tau(\widetilde{S}) = \widetilde{S}$ et $\# \widetilde{R}\in \{0,4\}$ : & 1\\
            $\tau(\widetilde{S}) \not = \widetilde{S}$ et $\# \widetilde{R}\in \{0,4\}$ : & 2\\
            $\# \widetilde{R}\in \{1,3\}$ : & 1 \\
            $\# \widetilde{R}=2$ : & 2 \\
        \end{tabular} \\
        \arrayrulecolor{lightgray}\hline
        Autres types \medskip & 1 & 1 \\
    \end{tabular}
    \caption{Résumé des calculs précédents.}
    \label{tab:ValeursQSplit}
\end{table}

On observe en particulier que seules les valeurs $1$ ou $2$ sont présentes, et que $2$ n'apparaît que lorsque $G$ est de type ${}^2A_n$ (pour $n\geq 1$) ou ${}^2D_n$ (pour $n\geq 4$), déployé sur $\Ktnr$.
\medskip 

En conclusion, on obtient le théorème suivant :

\begin{thm}\label{ThmStabQSplit}
    Soit $G$ un groupe semi-simple adjoint et quasi-déployé sur $K$. Soit également $\widetilde{\mathcal{F}}$, une facette $\Galnr$-invariante de l'immeuble $\ImmBT(G_{\Knr})$. Alors le noyau :
    $$\Ker\left(H^1(\Galnr,G(\Knr)_{\widetilde{\mathcal{F}}})\rightarrow H^1(\Galnr,G(\Knr)\right)$$
    est de cardinal $2^k$ où $k$ est un entier majoré par le nombre de facteurs restriction de Weil d'un groupe absolument presque simple de type ${}^2D_n$ (pour $n\geq 4$) ou ${}^2A_{4n+3}$ (pour $n\geq 0$) déployé par une extension non ramifiée.
\end{thm}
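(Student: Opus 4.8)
The plan is to reduce in several stages to an explicit combinatorial computation on affine Dynkin diagrams, and then to read off the answer from the tables already assembled above. First I would use the decomposition $G_{\Kt}\cong\prod_{i\in I}G_i$ into $\Kt$-almost-simple factors: since $\ImmBT(G_{\Knr})\cong\prod_i\ImmBT(G_{i,\Knr})$ equivariantly and compatibly with $\Galnr$, the facet $\widetilde{\mathcal{F}}$ splits as $\prod_i\widetilde{\mathcal{F}}_i$ with each $\widetilde{\mathcal{F}}_i$ a $\Galnr$-facet, and the kernel of the statement becomes the product over $i$ of the analogous kernels for the $G_i$. Writing each almost-simple factor as $G_i=\mathrm{R}_{\Lt_i/\Kt}(G_i')$ with $G_i'$ adjoint absolutely almost simple over a finite separable extension $\Lt_i/\Kt$, Lemma \ref{CompNoyauResWeil} identifies the $i$-th kernel with the corresponding kernel for $G_i'$ over $\Lt_i^{\mathrm{n.r.}}$. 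So it suffices to treat a single adjoint absolutely almost simple quasi-split group $G$ and to show that its kernel has cardinality $1$ or $2$, the value $2$ occurring only for types ${}^2D_n$ ($n\geq4$) and ${}^2A_{4n+3}$ ($n\geq0$) split by an unramified extension.

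For such a $G$ the structural inputs are already in place. By Proposition \ref{QSplitEstResQSplit} the group $G$ is residually quasi-split, so $\typebt_{\mathrm{max}}$ is the $\Knr$-type of a $\Knr$-chamber, and point (2)(a) of Theorem \ref{NoyauH1} reduces the kernel to $(\mathrm{Orb}(\typebt)_{\Xi^{\mathrm{n.r.}}})^{\Galnr}/\Xi$, where $\typebt$ is the $\Knr$-type of $\widetilde{\mathcal{F}}$; moreover Proposition \ref{EgaliteTypeAdjoint} gives $\Xi^{\mathrm{n.r.}}=\Xi^{\mathrm{ext}}_{\Knr}$ and $\Xi=(\Xi^{\mathrm{n.r.}})^{\Galnr}=\Xi^{\mathrm{ext}}$, so the entire computation is purely a matter of the affine Dynkin diagram with its $\Galnr$-action. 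I would then prune the list of types: Remark \ref{MemeRangNoyauTriv} kills the residually split cases (split groups, and the ${}^2X_y$ split by a ramified quadratic extension), a rank comparison removes a ${}^6D_4$ becoming ${}^3D_4$ over $\Knr$, and since ${}^6D_4$ is split by an $S_3$-extension it cannot become ${}^2D_4$. What remains is exactly the types ${}^2E_6$, ${}^2A_n$ ($n\geq1$), ${}^2D_n$ ($n\geq4$), ${}^3D_4$ and ${}^6D_4$ split over $\Knr$, whose diagrams, groups $\Xi^{\mathrm{n.r.}}$, Galois actions and groups $\Xi$ are recorded in Table \ref{table:diagramqsplit}.

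The core of the proof is then a type-by-type computation of $\#(\mathrm{Orb}(\typebt)_{\Xi^{\mathrm{n.r.}}})^{\Galnr}$ and $\#(\mathrm{Orb}(\typebt)_{\Xi^{\mathrm{n.r.}}})^{\Galnr}/\Xi$. For type $A_n$ I would use the dihedral relation $\sigma r\sigma=r^{-1}$ between the rotation $r$ generating $\Xi^{\mathrm{n.r.}}\cong\ZZ/(n+1)\ZZ$ and the reflection through which $\Galnr$ acts: if $m$ is the size of the $\Xi^{\mathrm{n.r.}}$-orbit of $\typebt$, a conjugate $r^k\cdot\typebt$ is $\Galnr$-invariant iff $r^{2k}\cdot\typebt=\typebt$, hence $k\in\{0,m/2\}$, and comparing $r^{m/2}\cdot\typebt$ with $\typebt$ modulo $\Xi=\langle r^{(n+1)/2}\rangle$ shows the quotient has two elements precisely when $m$ and $(n+1)/m$ are both even, which forces $n\equiv3\pmod4$. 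For type $D_n$ I would write $\typebt=\widetilde{S}\oplus\widetilde{R}$ separating the four branch vertices, note that the orbit under $\langle\varphi\rangle$ equals that under $\langle\tau,\tau'\rangle$ (from $\varphi=\tau\circ\sigma$ and $\varphi^2=\tau'$), so the parity of $n$ is irrelevant, and run through the eight admissible $\Galnr$-invariant shapes of $\widetilde{R}$. For ${}^2E_6$ the same dihedral trick (now $\sigma r=r^2\sigma$) shows $(\mathrm{Orb}(\typebt)_{\Xi^{\mathrm{n.r.}}})^{\Galnr}$ is always trivial, and for ${}^3D_4$ and ${}^6D_4$ a direct enumeration of the few $\Galnr$-invariant types does the same. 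Assembling the results into Table \ref{tab:ValeursQSplit}, the kernel of a single almost-simple factor always has cardinality $1$ or $2$, with $2$ confined to types ${}^2D_n$ ($n\geq4$) and ${}^2A_{4n+3}$ ($n\geq0$) split by an unramified extension; taking the product over the almost-simple factors yields $2^k$ with $k$ bounded by the number of such Weil-restriction factors. I expect the only real difficulty to be bookkeeping: carrying out the $D_n$ case cleanly, that is, checking all eight shapes of $\widetilde{R}$ and, inside each, tracking the $\Xi$-action correctly.
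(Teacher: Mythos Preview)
Your proposal is correct and follows essentially the same approach as the paper: the same reduction to absolutely almost simple factors via the product decomposition and Lemma~\ref{CompNoyauResWeil}, the same structural inputs (Proposition~\ref{QSplitEstResQSplit}, Theorem~\ref{NoyauH1}(2)(a), Proposition~\ref{EgaliteTypeAdjoint}) to reduce to the combinatorial quotient $(\mathrm{Orb}(\typebt)_{\Xi^{\mathrm{n.r.}}})^{\Galnr}/\Xi$, the same pruning via Remark~\ref{MemeRangNoyauTriv}, and the same case-by-case computations on the diagrams of Table~\ref{table:diagramqsplit} summarized in Table~\ref{tab:ValeursQSplit}. Your anticipated difficulty---the bookkeeping in the $D_n$ case over the eight shapes of $\widetilde{R}$---is exactly where the paper spends its effort as well.
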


\begin{rmq}
    Bien entendu, il est possible de calculer explicitement ce noyau en se réduisant au cas absolument presque simple grâce à la compatibilité du noyau au produit et à la restriction de Weil (cf. le lemme \ref{CompNoyauResWeil}) et en utilisant la table \ref{tab:ValeursQSplit}.
\end{rmq}

\pagebreak

\bibliographystyle{alpha}
\bibliography{bibliography}

\bigskip

\end{document}